\def\ps@pprintTitle{%
   \let\@oddhead\@empty
   \let\@evenhead\@empty
   \def\@oddfoot{\reset@font\hfil\thepage\hfil}
   \let\@evenfoot\@oddfoot
}
\journal{}
\newcommand\rurl[1]{%
  \href{http://#1}{\nolinkurl{#1}}%
}
\theoremstyle{definition}
\newtheorem{dn}{Definition}[section]
\theoremstyle{definition}
\newtheorem{defn}{Definition}[subsection]
\newtheorem{rema}[defn]{Remark}
\newtheorem{note}[defn]{Note}
\theoremstyle{plain}
\newtheorem{thm}[defn]{Theorem}
\newtheorem{prop}[defn]{Proposition}
\newtheorem{conj}[defn]{Conjecture}
\newtheorem{corl}[defn]{Corollary}
\newtheorem{lema}[defn]{Lemma}
\theoremstyle{definition}
\newtheorem{rems}[defn]{Remark}
\def\DD{D\kern-.7em\raise0.4ex\hbox{\char '55}\kern.33em}
\begin{document}
\fontsize{11.5pt}{11.5}\selectfont

\begin{frontmatter}

\title{A note on the modular representation on the $\mathbb Z/2$-homology groups of the fourth power of real projective space and its application}

%\author{\DD\d{\u a}ng V\~o Ph\'uc\\ \textit{(This article is dedicated to Professors Grant Walker of the University of Manchester)}}
\author{\DD\d{\u a}ng V\~o Ph\'uc}
%\address{Faculty of Education Studies, University of Kh\'anh H\`oa,\\ 01 Nguy\~\ecircumflex n Ch\'anh, Nha Trang, Kh\'anh H\`oa, Viet Nam\\[1mm]\textit{(Dedicated to Prof. Bill Singer on the occasion of his 76th birthday)}}
\address{{\fontsize{10pt}{10}\selectfont Faculty of Education Studies, University of Khanh Hoa,\\ 01 Nguyen Chanh, Nha Trang, Khanh Hoa, Viet Nam}}
\ead{dangphuc150488@gmail.com}

\begin{abstract}
We write $BV_h$ for the classifying space of the elementary Abelian 2-group $V_h$ of rank $h,$ which is homotopy equivalent to the cartesian product of $h$ copies of $\mathbb RP^{\infty}.$ Its cohomology with $\mathbb Z/2$-coefficients can be identified with the graded unstable algebra $P^{\otimes h} = \mathbb Z/2[t_1, \ldots, t_h]= \{P^{\otimes h}_n\}_{n\geq 0}$ over the Steenrod ring $\mathcal A$, where grading is by the degree of the homogeneous terms $P^{\otimes h}_n$ of degree $n$ in $h$ generators with the degree of each $t_i$ being one. Let $GL_h$ be the usual general linear group of rank $h$ over $\mathbb Z/2.$ The algebra $P^{\otimes h}$ admits a left action of $\mathcal A$ as well as a right action of $GL_h.$ A central problem of homotopy theory is to determine the structure of the space of $GL_h$-coinvariants,  $\mathbb Z/2\otimes_{GL_h}{\rm Ann}_{\overline{\mathcal A}}H_n(BV_h; \mathbb Z/2) ,$ where ${\rm Ann}_{\overline{\mathcal A}}H_n(BV_h; \mathbb Z/2) ={\rm Ann}_{\overline{\mathcal A}}[P^{\otimes h}_n]^{*}$ denotes the space of primitive homology classes, considered as a representation of $GL_h$ for all $n.$ Solving this problem is very difficult and still open for $h\geq 4.$

In this Note, our intent is of studying the dimension of $\mathbb Z/2\otimes_{GL_h}{\rm Ann}_{\overline{\mathcal A}}[P^{\otimes h}_n]^{*}$ for the case $h = 4$ and the "generic" degrees $n$ of the form $n_{k, r, s} = k(2^{s} - 1) + r.2^{s},$ where $k,\, r,\, s$ are positive integers. Applying the results, we investigate the behavior of the Singer cohomological "transfer" of rank $4.$ Singer's transfer is a homomorphism from a certain subquotient of the divided power algebra $\Gamma(a_1^{(1)}, \ldots, a_h^{(1)})$ to mod-2 cohomology groups ${\rm Ext}_{\mathcal A}^{h, h+*}(\mathbb Z/2, \mathbb Z/2)$ of the algebra $\mathcal A.$ Singer's algebraic transfer is one of the relatively efficient tools in determining the cohomology of the Steenrod algebra. Additionally, in higher ranks, by using the results on $\mathcal A$-generators for $P^{\otimes 5}$ and $P^{\otimes 6},$ we show in Appendix that the transfer of rank 5 is an isomorphism in som certain degrees of the form $n_{k, r, s}$, and that the transfer of rank 6 does not detect the non-zero elements $h_2^{2}g_1 = h_4Ph_2\in {\rm Ext}_{\mathcal A}^{6, 6+n_{6, 10, 1}}(\mathbb Z/2, \mathbb Z/2)$, and $D_2\in {\rm Ext}_{\mathcal A}^{6, 6+n_{6, 10, 2}}(\mathbb Z/2, \mathbb Z/2).$ Besides, we also probe the behavior of the Singer transfer of ranks 7 and 8 in internal degrees $\leq 15.$
\end{abstract}

\begin{keyword}

Adams spectral sequences, Primary cohomology operations, Steenrod algebra, lambda algebra, Peterson hit problem, Actions of groups on commutative rings, Algebraic transfer

%% keywords here, in the form: keyword \sep keyword

%% PACS codes here, in the form: \PACS code \sep code

%% MSC codes here, in the form: \MSC code \sep code
%% or \MSC[2008] code \sep code (2000 is the default)

\MSC[2010] 55Q45, 13A50, 55S10, 55S05, 55T15, 55R12
\end{keyword}
\end{frontmatter}

\tableofcontents

\section{Introduction}

{\bf The hit problem and its dual.} Let $V_h$ be an elementary Abelian 2-group of rank $h$, which it will usually be more useful to consider as an $h$-dimensional vector space over the prime field $\mathbb Z/2$ of characteristic $2$. Denote by $BV_h$ the classifying space of $V_h,$ which is homotopy equivalent to the cartesian product of $h$ copies of the infinite dimensional projective space, $\mathbb RP^{\infty}.$ Writing $\mathcal A$ for the Steenrod algebra over $\mathbb Z/2,$ which is defined to be the algebra of stable cohomology operations for singular cohomology with coefficients in $\mathbb Z/2.$ The algebra $\mathcal A,$ which acts on the cohomology ring of a space, is widely studied by mathematicians whose interests range from Algebraic topology and homotopy theory, to manifold theory, combinatorics, and more. Let $P^{\otimes h} = \mathbb Z/2[t_1,\ldots, t_h] = \{P^{\otimes h}_n\}_{n\geq 0}$ be the graded unstable algebra over $\mathcal A,$ where each $t_i$ has grading one, and $P^{\otimes h}_n = \langle \{f\in P^{\otimes h}|\, \mbox{$f$ is a homogeneous polynomial of degree $n$}\}\rangle.$ It has been shown (see \cite{A.H.R}) that the ring of endomorphisms of the algebra $P^{\otimes h}$ over $\mathcal A$  is isomorphic to $\mathbb Z/2[M_h(\mathbb Z/2)],$ the ring of the semi-group ring of $h\times h$-matrices over $\mathbb Z/2$. So, we can be identified $P^{\otimes h}$ with the $\mathbb Z/2$-cohomology of $BV_h.$ As known, the algebra $\mathcal A$ is generated by the Steenrod squares $Sq^{i}$ ($i\geq 0$) and subject to Adem's relations: $Sq^iSq^j  =  \sum_{0\leq t\leq [i/2]}\binom{j-t-1}{i-2t}Sq^{i+j-t}Sq^t,\,\,0 < i < 2j,$ where the binomial coefficients are to be interpreted mod 2. The action of $\mathcal A$ over the polynomial algebra $P^{\otimes h} $ is determined by instability axioms. By Cartan's formula, it suffices to determine $Sq^{i}(t_j)$ and the instability axioms give $Sq^{1}(t_j)= t_j^{2}$ while $Sq^{i}(t_j) = 0$ if $i > 1.$  %Moreover, it should be noted that $P^{\otimes h}$ is also considered as the symmetric algebra $S^{*}(V_h^{*}) = \{S^{n}(V_h^{*})\}_{n\geq 0}$ on the dual $V_h^{*}$ of $V_h.$ 
One of the unsolved problems in Algebraic topology is to find minimal sets of $\mathcal A$-generators for $P^{\otimes h}.$ It is the same as the problem of investigating a basis for the space of the "non-hit" elements $\{QP_n^{\otimes h} := P_n^{\otimes h}/\overline{\mathcal A}P_n^{\otimes h} =  (\mathbb Z/2\otimes_{\mathcal A}P^{\otimes h})_n\}_{n\geq 0},$ where $\overline{\mathcal A}P_n^{\otimes h}:=P_n^{\otimes h}\cap \overline{\mathcal A}P^{\otimes h}$ and $\overline{\mathcal A}$ denotes the set of positive degree elements in $\mathcal A.$ This is called "hit problem" in literature \cite{Peterson} and is known for all integers $h$ less than or equal to $4$ (see \cite{Peterson}, \cite{Kameko}, \cite{Sum1, Sum2}). Though much work has been done (see, for instance \cite{Phuc-Sum, Phuc3, Phuc4, Phuc5, Phuc6, Phuc7, Phuc10, Phuc11}, \cite{MM}, \cite{MKR}, \cite{Sum4}, \cite{Tin, Tin3}, \cite{Walker-Wood}), solving the hit problem in the general case seems to be out of reach with the present techniques, even when $h$ equal to $5$ and in some certain generic degrees. Remarkably, by the works \cite{Kameko}, \cite{Sum4}, and \cite{Wood}, we need only to study this problem in degrees $n$ of the form  $ k(2^{s} - 1) + r.2^{s},$ whenever $k, s, r$ are positive integers satisfying $\mu(r) < k < h.$ Here $\mu(r)$ denotes the smallest integer $m$ such that $r$ can be represented as $r = \sum_{1\leq i\leq m}(2^{u_i}-1),$ where $u_i > 0.$ Several other aspects of the hit problem has been discovered in the recent works of Ault-Singer \cite{A.S}, Inoue \cite{Inoue},  Janfada-Wood \cite{J.W}, Pengelley-William \cite{P.W}. The dual of the hit problem is to determine a subring of elements of the Pontrjagin ring $H_*(BV_h; \mathbb Z/2) = [P^{\otimes h}]^{*},$ which is mapped to zero by all Steenrod squares of positive degrees, frequently denoted by ${\rm Ann}_{\overline{\mathcal A}}[P^{\otimes h}]^{*}.$ 

\medskip

{\bf The Singer cohomological transfer.} We knew that the $\mathbb Z/2$-cohomology of the Steenrod algebra is the $E_2$-term for the (2-local) Adams spectral sequence, whose abutment is the 2-component of the stable homotopy groups of spheres. More specifically, the $E_2$-page of this spectral sequence may be identified as ${\rm Ext}_{\mathcal A}^{*,*}(\mathbb Z/2, \mathbb Z/2).$ This is what is meant by the aphorism "the cohomology of the Steenrod algebra is an approximation to the stable homotopy groups of spheres".
% is the initial page of the Adams spectral sequence converging to stable homotopy groups of the spheres, 
So, studying the structure of the Adams $E_2$-terms becomes very important in homotopy theory. The study of the hit problem and its dual is closely related to describe these $E_2$-terms via a cohomological "transfer" of William Singer \cite{Singer}. This transfer is constructed as follows. Consider the polynomial ring of one variable, $P^{\otimes 1} = \mathbb Z/2[t_1].$ The canonical $\mathcal A$-action on $P^{\otimes 1}$ is extended to an $\mathcal A$-action on $\mathbb Z/2[t_1, t_1^{-1}],$ the ring of finite Laurent series with require that $Sq^{n}(t_1^{-1}) = t_1^{n-1}$ for all $n\geq 1.$  Then, $\mathcal P = \langle \{t_1^{a}|\ a\geq -1\}\rangle$ is $\mathcal A$-submodule of $\mathbb Z/2[t_1, t_1^{-1}].$ One has a short-exact sequence:  
\begin{equation}\label{dkn1}
 0\to P^{\otimes 1}\xrightarrow{q} \mathcal{P}\xrightarrow{\pi} \Sigma^{-1}\mathbb Z/2,
\end{equation}
where $q$ is the inclusion and $\pi$ is given by $\pi(t_1^{a}) = 0$ if $a\neq -1$ and $\pi(t_1^{-1}) = 1.$ Denote by $e_1$ the element in ${\rm Ext}_{\mathcal A}^1(\Sigma^{-1}\mathbb Z/2, P^{\otimes 1}),$ which is represented by the cocyle associated to \eqref{dkn1}. For each $i\geq 1,$ the short-exact sequence
\begin{equation}\label{dkn2}
 0\to P^{\otimes (i+1)}\cong P^{\otimes i}\otimes_{\mathbb Z/2} P^{\otimes 1}\xrightarrow{1\otimes_{\mathbb Z/2}q} P^{\otimes i}\otimes_{\mathbb Z/2}\mathcal{P}\xrightarrow{1\otimes_{\mathbb Z/2}\pi} \Sigma^{-1}P^{\otimes i},
\end{equation}
determines a class $(e_1\times P^{\otimes i})\in {\rm Ext}_{\mathcal A}^1(\Sigma^{-1}P^{\otimes i}, P^{\otimes (i+1)}).$ Then, using the cross product and Yoneda product, we have the element
$$ e_h = (e_1\times P^{\otimes (h-1)})\circ (e_1\times P^{\otimes (h-2)})\circ \cdots \circ (e_1\times P^{\otimes 1})\circ e_1 \in {\rm Ext}_{\mathcal A}^h(\Sigma^{-h}\mathbb Z/2, P^{\otimes h}).$$
Let $\Delta(e_1\times P^{\otimes i}): {\rm Tor}_{h-i}^{\mathcal A}(\mathbb Z/2, \Sigma^{-h}P^{\otimes i})\to {\rm Tor}_{h-i-1}^{\mathcal A}(\mathbb Z/2, P^{\otimes (i+1)})$ be the connecting homomorphism associated to \eqref{dkn2}. Then, we have a composition of the connecting homomorphisms $$ \overline{\varphi}^{\mathcal A}_h = \Delta(e_1\times  P^{\otimes (h-1)})\circ \Delta(e_1\times P^{\otimes (h-2)})\circ \cdots \circ \Delta(e_1\times P^{\otimes 1})\circ \Delta(e_1)$$from ${\rm Tor}_h^{\mathcal A}(\mathbb Z/2, \Sigma^{-h}\mathbb Z/2)$ to ${\rm Tor}_0^{\mathcal A}(\mathbb Z/2, P^{\otimes h}) = QP^{\otimes h} = \mathbb Z/2\otimes_{\mathcal A}P^{\otimes h}$, determined by \mbox{$\overline{\varphi}_h(z) = e_h\cap z$} for any $z\in {\rm Tor}_h^{\mathcal A}(\mathbb Z/2, \Sigma^{-h}\mathbb Z/2).$ Here $\cap$ denotes the \textit{cap product} in homology with $\mathbb Z/2$-coefficients. The image of $\overline{\varphi}_h$ is a submodule of the invariant space $[QP^{\otimes h}]^{GL_h}.$ Hence, $\overline{\varphi}^{\mathcal A}_h$ induces homomorphism $ \varphi_h^{\mathcal A}: {\rm Tor}_h^{\mathcal A}(\mathbb Z/2, \Sigma^{-h}\mathbb Z/2)\to [QP^{\otimes h}]^{GL_h}.$ Because the supension $\Sigma^{-h}$ induces an isomorphism ${\rm Tor}_{h, n}^{\mathcal A}(\mathbb Z/2, \Sigma^{-h}\mathbb Z/2)\cong {\rm Tor}_{h, h+n}^{\mathcal A}(\mathbb Z/2, \mathbb Z/2),$ we have the homomorphism $$ \varphi_h^{\mathcal A}: {\rm Tor}_{h, h+n}^{\mathcal A}(\mathbb Z/2, \mathbb Z/2)\to [QP^{\otimes h}_{n}]^{GL_h}.$$ Then its dual $Tr_h^{\mathcal A}: \mathbb Z/2\otimes_{GL_h}{\rm Ann}_{\overline{\mathcal A}}[P_{n}^{\otimes h}]^{*}\to {\rm Ext}_{\mathcal A}^{h, h+n}(\mathbb Z/2, \mathbb Z/2)$ is called the \textit{cohomological transfer} (or \textit{algebraic transfer}) of rank $h$. This transfer can be seen as an algebraic formulation of a stable transfer $B(V_h)_+^{S}\to S^{0}.$ It has been shown that $Tr_h^{\mathcal A}$ is an isomorphism for $h$ less than or equal to $3$ (see the work of Singer \cite{Singer} and of Boardman \cite{Boardman}). In addition, the "total" algebraic transfer $\{Tr_h^{\mathcal A}\}_{h\geq 0}$ is an algebraic homomorphism (see also \cite{Singer}). These events show that $Tr_h^{\mathcal A}$ is highly nontrivial and should be an useful tool to study the mysterious Ext groups. Specifically, Singer \cite{Singer} states the following yet left-open: \textit{$Tr_h^{\mathcal A}$ is a monomorphism, for all positive integers $h$}. One should note that by the works of Minami \cite{Minami, Minami2}, hit problems and this transfer also make a useful contribution for surveying the Kervaire invariant one problem. A review of known results on this problem is necessary for the interested readers. The Kervaire invariant was first introduced by Browder's work \cite{Browder}, where he shows that the classes $h_j^{2}\in {\rm Ext}_{\mathcal {A}}^{2,2^{j+1}}(\mathbb Z/2,\mathbb Z/2)$ are the permanent cycles in the classical Adams spectral sequence at the prime 2 \cite{Adams}, if and only if smooth framed manifolds of Kervaire invariant one exist only in dimensions $2^{j+1}-2.$ The hypothetical element in stable $2^{j+1}-2$-stem, $\pi^{S}_{2^{j+1}-2},$ represented by such a framed manifold is denoted by $\theta_j.$ These $\theta_j$ are known to be exist for $j\leq 5$ (see, for instance Lin, and Mahowald \cite{L.M}). Many open issues in Algebraic and Differential topology depend on knowing whether or not the Kervaire invariant one elements $\theta_j$ for $j\geq 6.$ In 2016, Hill, Hopkins, and Ravenel \cite{Hill} indicated that these elements do not exist for $j\geq 7$. The case $j = 6$ is still no answer. Thus, smooth framed manifolds of Kervaire invariant one therefore exist only in dimensions $2, 6, 14, 30, 62,$ and possibly $126.$
%More precisely, it has been shown that the Hopf elements are represented by the elements $h_i\in {\rm Ext}_{\mathcal {A}}^{1,2^{i}}(\mathbb Z/2,\mathbb Z/2)$ on the $E_2$-page of the Adams spectral sequence. By Adam's solution of the Hopf invariant one problem \cite{Adams}, only $h_i,$ for $0\leq i\leq 3,$ survive to the $E_{\infty}$-page. By Browder's work \cite{Browder}, the Kervaire classes $\theta_i\in \pi_{2^{i+1}-2}^{\mathbb S}$ , if they exist, are represented by the elements $h_i^{2}\in {\rm Ext}_{\mathcal {A}}^{2,2^{i+1}}(\mathbb Z/2,\mathbb Z/2)$ on $E_2$-page. People have been known that $h_i^{2}$ survives to $E_{\infty}$-page for $i\leq 5,$ but when $i\geq 7,$ the surprising work of Hill, Hopkins, and Ravenel \cite{Hill} indicated that those elements cannot survive to the $E_{\infty}$-page. The case $i = 6$  is unknown. 

\medskip

{\bf The lambda algebra.} Beyond Singer's algebraic transfer, the mod 2 lambda algebra $\Lambda$ of six authors \cite{Bousfield} is also an important tool for computing the cohomology of $\mathcal A.$ This algebra considered as the $E_1$-term of the Adams spectral sequence converging to the 2-component of the stable homotopy groups of spheres. (Note that a spectral sequence arises from a filtration of the dual chain complex and it provides an alternative way to determine the cohomology of the dual chain complex.) Let us briefly review the related concepts of this algebra. Firstly, we know that $\Lambda$ is an associative differential bigraded algebra with generators $\lambda_{n}\in \Lambda^{1, n}$ ($n\geq 0$) and the Adem relations
\begin{equation}\label{qh}
\lambda_i\lambda_{2i+n+1} = \sum_{j\geq 0}\binom{n-j-1}{j}\lambda_{i+n-j}\lambda_{2i+1+j} \ \ \ (i\geq 0,\ n\geq 0)
\end{equation}
with differential 
\begin{equation}\label{vp}
\delta(\lambda_{n-1}) = \sum_{j\geq 1}\binom{n-j-1}{j}\lambda_{n-j-1}\lambda_{j-1}\  \ \ (n\geq 1),
\end{equation}
where the binomial coefficients $\binom{n-j-1}{j}$ modulo $2.$ Now, for non-negative integers $j_1, \ldots, j_h,$ the element $\lambda_{j_1}\ldots\lambda_{j_h}$ in $\Lambda$ is said to be \textit{a monomial of length $h$}. Then, by the relations \eqref{qh}, the $\mathbb Z/2$-vector subspace $\Lambda^{h, n} = \langle \{\lambda_{j_1}\ldots \lambda_{j_h}|\ j_m\geq 0,\ 1\leq m \leq h,\ \sum_{1\leq m\leq h}j_m = n\}\rangle$ of $\Lambda$ has an additive basis consisting of all admissible monomials of length $h$ (i.e., those of the form $\lambda_{j_1}\ldots \lambda_{j_h}$ where $j_i\leq 2j_{i+1}$ for all $0< i <h$.) Moreover, it is known that there is an endomorphism $Sq^{0}$ of  $\Lambda,$ determined by $Sq^{0}(\lambda_{j_1}\ldots \lambda_{j_h}) = \lambda_{2j_1+1}\ldots \lambda_{2j_h+1}.$ Further, this $Sq^{0}$ respects the relations in \eqref{qh} and commutes with the differential $\delta$ in \eqref{vp}. So, it induces the first Steenrod operation in Ext groups (see Palmieri \cite{Palmieri} for more details). It is known that in standard notation,  $H_*(V_h; \mathbb Z/2) = H_*(BV_h; \mathbb Z/2) = [P^{\otimes h}]^{*} = \Gamma(x_1^{(1)}, \ldots, x_h^{(1)}),$ the divided polynomial algebra on $V_h$ over $\mathbb Z/2,$ generated by $x_1^{(1)}, \ldots, x_h^{(1)},$ where $x_i^{(1)}$ is linear dual to $t_i.$ The (right) action of the Steenrod algebra on $[P^{\otimes h}]^{*}$ is determined by the usual Cartan formula and for $k > 0,$ 
$$ (x^{(n)}_i)Sq^{k} = \left\{\begin{array}{ll}
 \binom{n-k}{k}x^{(n-k)}&\mbox{if $2k < n$},\\
0&\mbox{otherwise}.
\end{array}\right.$$
This shows that $[P^{\otimes h}]^{*}$ has a natural right $\mathcal A$-module structure. An interesting $\mathbb Z/2$-linear map $\psi_h$ from $[P^{\otimes h}_{n}]^{*}$ to $\Lambda^{h, n},$ established by Ch\ohorn n and H\`a \cite{C.H2}, is determined by \mbox{$\psi_h(x_1^{(j_1)}\ldots x_h^{(j_h)}) =  \lambda_{j_h}$} if \mbox{$h = 1,$} and $\psi_h(x_1^{(j_1)}\ldots x_h^{(j_h)}) = \sum_{k\geq j_h}\psi_{h-1}(((x_{1}^{(j_{1})}\ldots x_{h-1}^{(j_{h-1})}))Sq^{k-j_h})\lambda_k$ otherwise. The authors indicated that this homomorphism can be considered as a representation in the algebra $\Lambda$ of the algebraic transfer, and that if $\zeta\in {\rm Ann}_{\overline{\mathcal A}}[P_{n}^{\otimes h}]^{*},$ then the image of $\zeta$ under $\psi_h$ is a cycle in $\Lambda^{h, n}$ and is a representative of the image of the class $[\zeta]$ under $Tr_h^{\mathcal A}.$ 

\medskip

The structure of the coinvariant space $\mathbb Z/2\otimes_{GL_h}{\rm Ann}_{\overline{\mathcal A}}[P_n^{\otimes h}]^{*}$ and the behavior of the Singer transfer have been proved surprisingly difficult by many researchers; for instance the interested reader can see the works by Boardman \cite{Boardman}, %Bruner-H\`a-H\uhorn ng \cite{B.H.H},
 Ch\ohorn n and H\`a \cite{C.H1, C.H2}, Crabb-Hubbuck \cite{C.H}, Crossley \cite{Crossley}, H\`a \cite{Ha}, H\uhorn ng \cite{Hung}, Minami \cite{Minami}, Nam \cite{Nam}, the present writer \cite{Phuc3, Phuc4, Phuc6, Phuc7, Phuc9, Phuc10, Phuc11}, Sum \cite{Sum3, Sum4}, etc. These problems are still unknown for $h\geq 4.$ In the rank 4 case, by the work of Sum \cite{Sum1}, it is sufficient to study them in the "generic degrees" $n$ of the following forms: 
\begin{enumerate}
\item[(i)] $2^{s+1} - i$ for $1\leq i\leq 3,$
\item[(ii)] $2^{s+m+1} +2^{s+1} - 3,$
\item [(iii)] $2^{s+m} + 2^{s} - 2,$
\item[(iv)] $2^{s+m+u} + 2^{s+m} + 2^{s} - 3,$
\end{enumerate}
for any positive integers $s,\, m,\, u$. It is easy to see that all these degrees $n$ can be written as $n = k(2^{s}-1) + r.2^{s},$ where $k,\, r,\, s$ are positive integers and $0<\mu(r) < k < 4.$  The information on the behavior of $Tr_4^{\mathcal A}$ in degrees of the form (i) is known by Sum \cite{Sum3}. The remaining cases have been partially studied by the present author \cite{Phuc9, Phuc11}. 

In this work, we will continue to probe the problems above for the case of the generic degree $n = 2^{s+m} + 2^{s} - 2$ with $m\in \{2, 4\}$ and $s$ an arbitrary positive integer. Thence, we show that the Singer transfer is an isomorphism in bidegree $(4, 4+n).$ Moreover, in Appendix, basing the works by Bruner \cite{Bruner}, Mothebe, and Uys \cite{Mothebe}, Moetele, and Mothebe \cite{MM}, Mothebe, Kaelo, and Ramatebele \cite{MKR}, Lin \cite{Lin}, Sum \cite{Sum1, Sum2, Sum, Sum4}, Walker, and Wood \cite{Walker-Wood}, Tin \cite{Tin, Tin3}, and our previous works \cite{Phuc4, Phuc5, Phuc6}, we would like to survey the behavior of the transfer of ranks $> 4$ in some internal degrees. Most notably, in the rank 6 case, we shall show that the transfer $Tr_6^{\mathcal A}$ does not detect the non-zero elements:
$$ \begin{array}{ll}
\medskip
& h_1Ph_1\in {\rm Ext}_{\mathcal A}^{6, 6+10}(\mathbb Z/2, \mathbb Z/2),\ \ \ \ \ \ \ \  \ \ \  h_0Ph_2\in {\rm Ext}_{\mathcal A}^{6, 6+11}(\mathbb Z/2, \mathbb Z/2),\\ &h_2^{2}g_1 = h_4Ph_2\in {\rm Ext}_{\mathcal A}^{6, 6+26}(\mathbb Z/2, \mathbb Z/2),\ D_2\in {\rm Ext}_{\mathcal A}^{6, 6+58}(\mathbb Z/2, \mathbb Z/2),
\end{array}$$
 and that in the rank 7 case, the transfer $Tr_7^{\mathcal A}$ does not detect the non-zero element $h_0^{2}Ph_2= h_1^{2}Ph_1\in {\rm Ext}_{\mathcal A}^{7, 7+11}(\mathbb Z/2, \mathbb Z/2).$ The results for the elements $h_1Ph_1,\, h_0Ph_2,$ and $h_0^{2}Ph_2= h_1^{2}Ph_1$ were also proved by Ch\ohorn n, and H\`a \cite{C.H1, C.H2} using other techniques. In addition, in ranks $h = 6,\, 7,\, 8,$ we refute the results by Moetele, and Mothebe \cite{MM} on the dimensions of $QP_{13}^{\otimes h}$, from which we also indicate that the Singer transfer is an isomorphism in bidegree $(h, h+13).$ Our method is based on techniques of the hit problem of four variables in \cite{Sum1, Sum2} and a representation in the lambda algebra of $Tr_h^{\mathcal A}$. The techniques can be applied to study Singer's transfer of a higher rank in some certain generic degrees. Our approach is different from the ones of  Singer \cite{Singer}, H\uhorn ng \cite{Hung}, Ch\ohorn n, and H\`a \cite{C.H1, C.H2},  Nam \cite{Nam} in studying the image of the cohomological transfer.
%Ch\ohorn  n-H\`a \cite{C.H1, C.H2}, H\uhorn ng \cite{Hung}, H\uhorn ng-Qu\`ynh \cite{H.Q}, Nam \cite{Nam} and Singer \cite{Singer} and can be applied to survey the cohomological transfer of a higher rank in some certain generic degrees. 

\section{Main results}

As mentioned above, and more precisely, we shall explicitly determine the structure of the coinvariants $\mathbb Z/2 \otimes_{GL_4} {\rm Ann}_{\overline{\mathcal A}}[P_{n}^{\otimes 4}]^{*}$ at the generic degrees $n = 5.2^{s}-2$ and $17.2^{s}-2$ for every positive integer $s.$  Then, we claim that the fourth cohomological transfer is an isomorphism in these internal degrees. The content of the present Note subsection-wise can be described as follows.

\subsection{The degree \boldmath{$n_s :=5.2^{s}-2$}}\label{s2.1}

Notice that the Kameko homomorphism 
$$ \begin{array}{ll}
(\widetilde {Sq^0_*})_{n_s}: QP^{\otimes 4}_{n_s}  &\longrightarrow QP^{\otimes 4}_{\frac{n_s-4}{2}}.\\
\ \ \ \ \ \ \ \ \ \ \ \ \ \ \ \ \mbox{[}\prod_{1\leq j\leq 4}t_j^{a_j}\mbox{]}&\longmapsto \left\{\begin{array}{ll}
\mbox{[}\prod_{1\leq j\leq 4}t_j^{\frac{a_j-1}{2}}\mbox{]}& \text{if $a_j$ odd, for all $j$},\\
0& \text{otherwise}
\end{array}\right.
\end{array}$$ 
is an epimorphism of $\mathbb Z/2[GL_4]$-modules, hence we have an isomorphism $$QP^{\otimes 4}_{n_s} \cong {\rm Ker}((\widetilde {Sq^0_*})_{n_s}) \bigoplus QP^{\otimes 4}_{\frac{n_s-4}{2}},$$
which implies that
\begin{equation}\label{bdt}
 \dim (\mathbb Z/2 \otimes_{GL_4} {\rm Ann}_{\overline{\mathcal A}}[P_{n_s}^{\otimes 4}]^{*})\leq \dim ([{\rm Ker}((\widetilde {Sq^0_*})_{n_s})]^{GL_4}) + \dim (\mathbb Z/2 \otimes_{GL_4} {\rm Ann}_{\overline{\mathcal A}}[P_{\frac{n_s-4}{2}}^{\otimes 4}]^{*}).
\end{equation}
According to Lin \cite{Lin}, we deduce that
\begin{equation}\label{kqL}
{\rm Ext}_{\mathcal A}^{4, 4+n_s}(\mathbb Z/2, \mathbb Z/2) = \left\{\begin{array}{ll}
\langle h_1^{2}h_2^{2} \rangle = 0 &\mbox{if $s = 1$},\\[1mm]
\langle h_1^{3}h_4, f_0 \rangle =\langle h_0^{2}h_2h_4, f_0 \rangle &\mbox{if $s = 2$},\\[1mm]
\langle h_0^{2}h_3h_5, e_1 \rangle  &\mbox{if $s = 3$},\\[1mm]
\langle h_0^{2}h_sh_{s+2} \rangle  &\mbox{if $s\geq 4$}.
\end{array}\right.
\end{equation}
By using these data and an admissible basis of ${\rm Ker}((\widetilde {Sq^0_*})_{n_s})$ in \cite{Sum1, Sum2}, we obtain the following results.

{\bf The case \boldmath{$s = 1$}}. We have

 \begin{thm}\label{dlc1}
The coinvariant space $\mathbb Z/2 \otimes_{GL_4} {\rm Ann}_{\overline{\mathcal A}}[P_{n_1}^{\otimes 4}]^{*}$ is trivial.
\end{thm}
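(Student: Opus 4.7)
The plan is to bound $\dim\bigl(\mathbb Z/2\otimes_{GL_4}{\rm Ann}_{\overline{\mathcal A}}[P_{n_1}^{\otimes 4}]^{*}\bigr)$ from above via the Kameko-splitting inequality \eqref{bdt} with $s=1$, and to verify that both summands on its right-hand side vanish. Since $n_1=8$ and $(n_1-4)/2=2$, dualising reduces the task to the two claims
\[
[QP^{\otimes 4}_{2}]^{GL_4}=0 \qquad\text{and}\qquad [{\rm Ker}((\widetilde{Sq^0_*})_{8})]^{GL_4}=0.
\]

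The degree-$2$ claim is elementary. Because the only hit polynomials in $P^{\otimes 4}_{2}$ are the squares $t_i^{2}=Sq^{1}(t_i)$, the module $QP^{\otimes 4}_{2}$ has basis $\{t_it_j:1\leq i<j\leq 4\}$ of size $6$. A $\Sigma_{4}$-invariant combination is necessarily a multiple of $\omega_0=\sum_{1\leq i<j\leq 4}t_it_j$, and applying the transvection $\rho:t_1\mapsto t_1+t_2$, $t_i\mapsto t_i$ for $i\geq 2$ yields $\rho(\omega_0)+\omega_0 = t_2^{2}+t_2t_3+t_2t_4 \equiv t_2t_3+t_2t_4 \pmod{\overline{\mathcal A}P^{\otimes 4}_{2}}$, which is nonzero in $QP^{\otimes 4}_{2}$. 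Hence no nonzero $GL_4$-invariant exists and $[QP^{\otimes 4}_{2}]^{GL_4}=0$.

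The real content is the Kameko-kernel claim. I would start from Sum's explicit admissible monomial basis of $QP^{\otimes 4}_{8}$ in \cite{Sum1, Sum2} and restrict it to those admissibles whose exponent tuple has at least one even coordinate; these span ${\rm Ker}((\widetilde{Sq^0_*})_{8})$. Writing a putative fixed vector as $x=\sum_{i}c_{i}e_{i}$ with $c_{i}\in\mathbb Z/2$, I first impose invariance under $\Sigma_{4}\subset GL_{4}$ permuting the $t_j$, which collapses the free parameters to one per $\Sigma_{4}$-orbit of exponent multisets. Next I apply the transvection $\rho$ above (and, if needed, its $\Sigma_{4}$-conjugates) to the resulting $\Sigma_{4}$-symmetric candidate, re-expand $\rho(x)+x$ in terms of the fixed admissible basis using the hit relations in $P^{\otimes 4}_{8}$ tabulated in \cite{Sum1}, and equate all coefficients of admissibles to zero. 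The resulting homogeneous linear system over $\mathbb Z/2$ is forced to admit only the trivial solution, giving $[{\rm Ker}((\widetilde{Sq^0_*})_{8})]^{GL_4}=0$. Combined with the degree-$2$ analysis and \eqref{bdt}, this proves the theorem.

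The main obstacle will be the bookkeeping in the last step: rewriting $\rho(x)$ in admissible form in degree $8$ requires repeated application of Sum's hit relations for four-variable polynomials, and keeping the resulting linear system tractable is where the real work lies. To make this manageable it is natural to stratify the Kameko kernel according to the multiset of exponents modulo $2$ (spike type), so that $\rho$ acts block-diagonally on the strata and the number of relations that must be tracked simultaneously stays small.
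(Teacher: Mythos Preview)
Your proposal is correct and follows essentially the same approach as the paper: both use the Kameko-splitting inequality \eqref{bdt}, reduce to showing $[QP^{\otimes 4}_{2}]^{GL_4}=0$ and $[{\rm Ker}((\widetilde{Sq^0_*})_{8})]^{GL_4}=0$, and handle the latter by first computing $\Sigma_4$-invariants from Sum's admissible basis and then killing them with the transvection $\theta_4$. The only minor differences are organizational: the paper cites Sum \cite{Sum3} for the degree-$2$ vanishing rather than proving it directly as you do, and for the Kameko kernel it splits ${\rm Ker}_{n_1}$ into the zero-divisor part $\underline{{\rm Ker}_{n_1}}$ and the positive part $\widehat{{\rm Ker}_{n_1}}$, further decomposing each into explicit $S_4$-submodules (Lemmas \ref{bdc1-1}, \ref{bdc1-2}) to obtain exactly four $S_4$-invariants $[p_{1,1}],\ldots,[p_{1,4}]$, whose vanishing under $\theta_4$ is then checked in Proposition \ref{mdc1}---this is precisely the stratification and bookkeeping you anticipate.
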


In order to prove the theorem, we use a basis of ${\rm Ker}((\widetilde {Sq^0_*})_{n_1})$ and show that ${\rm Ker}((\widetilde {Sq^0_*})_{n_1})$ is trivial. At the same time, combining the inequality \eqref{bdt} and a theorem of Sum \cite{Sum3} that the coinvariant space $(\mathbb Z/2 \otimes_{GL_4} {\rm Ann}_{\overline{\mathcal A}}[P_{\frac{n_1-4}{2}}^{\otimes 4}]^{*})$ is trivial. Note that, this result has also been investigated in \cite{B.H.H}, in a different way. Now, by Theorem \ref{dlc1} and the equality \eqref{kqL}, it may be concluded that

\begin{corl}\label{hqc1}
The rank 4 algebraic transfer $$Tr_4^{\mathcal A}: \mathbb Z/2 \otimes_{GL_4} {\rm Ann}_{\overline{\mathcal A}}[P_{n_1}^{\otimes 4}]^{*}\to {\rm Ext}_{\mathcal A}^{4, 4+n_1}(\mathbb Z/2, \mathbb Z/2)$$
is a trivial isomorphism.
\end{corl}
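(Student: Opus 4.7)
The plan is to read the corollary as a direct accounting of two independently established facts: the vanishing of the domain, which is Theorem \ref{dlc1}, and the vanishing of the codomain, which is Lin's computation \eqref{kqL} in the special case $s=1$. Once both sides of the transfer are shown to be zero, the $\mathbb Z/2$-linear map $Tr_4^{\mathcal A}$ between them is automatically an isomorphism in the unique, trivial way, and no further content needs to be verified.

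Concretely, I would first invoke Theorem \ref{dlc1} to obtain $\mathbb Z/2 \otimes_{GL_4} {\rm Ann}_{\overline{\mathcal A}}[P_{n_1}^{\otimes 4}]^{*} = 0$; this was itself built from the inequality \eqref{bdt}, the admissible basis of ${\rm Ker}((\widetilde{Sq^0_*})_{n_1})$ recalled from Sum \cite{Sum1, Sum2}, and Sum's theorem \cite{Sum3} in the intermediate degree $\frac{n_1-4}{2}=2$. Next I would read off from \eqref{kqL} that ${\rm Ext}_{\mathcal A}^{4,\,4+n_1}(\mathbb Z/2,\mathbb Z/2) = \langle h_1^{2}h_2^{2}\rangle$. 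To justify that this one-generator space is in fact zero, I would appeal to the well-known Adams relation $h_i h_{i+1}=0$ in ${\rm Ext}_{\mathcal A}^{*,*}(\mathbb Z/2,\mathbb Z/2)$; taking $i=1$ gives $h_1 h_2 = 0$, and hence $h_1^{2}h_2^{2} = (h_1 h_2)^{2} = 0$, so the codomain vanishes as well.

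Finally, having identified both source and target of $Tr_4^{\mathcal A}$ with the zero $\mathbb Z/2$-vector space, I would conclude that $Tr_4^{\mathcal A}$ is the zero homomorphism and, tautologically, an isomorphism — the \emph{trivial} isomorphism asserted in the corollary. There is no genuine obstacle here: the entire difficulty has already been absorbed into Theorem \ref{dlc1} and into Lin's Ext-calculation, so the corollary is nothing more than the formal combination of these two inputs.
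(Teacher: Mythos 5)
Your proposal is correct and follows essentially the same route as the paper: Theorem \ref{dlc1} gives vanishing of the domain, \eqref{kqL} gives vanishing of the codomain, and the zero map between zero spaces is tautologically an isomorphism. Your added justification that $h_1^2 h_2^2 = (h_1 h_2)^2 = 0$ via the Adams relation $h_i h_{i+1}=0$ and commutativity of $\mathrm{Ext}_{\mathcal A}^{*,*}(\mathbb Z/2,\mathbb Z/2)$ is correct, though the paper simply cites Lin's computation directly, which already records the equality $\langle h_1^2 h_2^2\rangle = 0$.
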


Moreover, we have the following remarks.

\begin{rems}
We consider the generic degree $12.2^{s} - 4$ with $s$ an arbitrary positive integer. It is easy to see that $\mu(12.2^{s}-4) = 4$ for all $s > 0.$ (Recall that for each positive integer $n,$ by $\mu(n)$ one means the smallest
number $k$ for which it is possible to write $n = \sum_{1\leq i\leq k}(2^{d_i}-1),$ in which $d_i > 0.$) Then, by Kameko's Theorem \cite{Kameko}, it follows that the iterated Kameko map $$ (\widetilde {Sq^0_*})^{s}_{12.2^{s}-4}: QP^{\otimes 4}_{12.2^{s}-4}  \longrightarrow QP^{\otimes 4}_{8}$$ is an isomorphism of $\mathbb Z/2GL_4$-modules, for any $s\geq 0.$ Combining this fact and Theorem \ref{dlc1}, we conclude that the coinvariants $\mathbb Z/2 \otimes_{GL_4} {\rm Ann}_{\overline{\mathcal A}}[P_{12.2^{s}-4}^{\otimes 4}]^{*}$ is trivial for every non-negative integer $s.$ This also explained in \cite{B.H.H}, in another way. On the other hand, by calculations of Lin \cite{Lin}, clearly ${\rm Ext}_{\mathcal A}^{4, 12.2^{s}}(\mathbb Z/2, \mathbb Z/2) = \mathbb Z/2. g_s$ for all $s > 0.$ Therefore, the transfer homomorphism
$$Tr_4^{\mathcal A}: \mathbb Z/2 \otimes_{GL_4} {\rm Ann}_{\overline{\mathcal A}}[P_{12.2^{s}-4}^{\otimes 4}]^{*}\to {\rm Ext}_{\mathcal A}^{4, 12.2^{s}}(\mathbb Z/2, \mathbb Z/2)$$
is a monomorphism, but not an isomorphism for arbitrary $s > 0.$
\end{rems}

{\bf The case \boldmath{$s = 2$}}. It is known, from a theorem of Sum \cite{Sum3}, that $\mathbb Z/2 \otimes_{GL_4} {\rm Ann}_{\overline{\mathcal A}}[P_{\frac{n_2-4}{2}}^{\otimes 4}]^{*}$ is generated by $[x_1^{(0)}x_2^{(0)}x_3^{(0)}x_4^{(7)}].$ From this and a direct computation using a basis of ${\rm Ker}((\widetilde {Sq^0_*})_{n_2})$ and the inequality \eqref{bdt}, we obtain 

\begin{thm}\label{dlc2}
$\mathbb Z/2 \otimes_{GL_4} {\rm Ann}_{\overline{\mathcal A}}[P_{n_2}^{\otimes 4}]^{*}$ is the vector space of dimension $2$ with a basis consisting of 2 classes represented by $x_1^{(1)}x_2^{(1)}x_3^{(1)}x_4^{(15)}$ and 
$$ \begin{array}{ll}
\zeta&=  x_1^{(3)}x_2^{(5)}x_3^{(1)}x_4^{(9)}+
 x_1^{(3)}x_2^{(5)}x_3^{(2)}x_4^{(8)}+
 x_1^{(3)}x_2^{(6)}x_3^{(1)}x_4^{(8)}+
 x_1^{(3)}x_2^{(6)}x_3^{(2)}x_4^{(7)}+
\medskip
 x_1^{(3)}x_2^{(5)}x_3^{(4)}x_4^{(6)}\\
&\quad+
 x_1^{(3)}x_2^{(6)}x_3^{(3)}x_4^{(6)}+
 x_1^{(5)}x_2^{(6)}x_3^{(1)}x_4^{(6)}+
 x_1^{(3)}x_2^{(5)}x_3^{(5)}x_4^{(5)}+
 x_1^{(3)}x_2^{(6)}x_3^{(4)}x_4^{(5)}+
\medskip
 x_1^{(5)}x_2^{(6)}x_3^{(2)}x_4^{(5)}\\
&\quad+
 x_1^{(3)}x_2^{(9)}x_3^{(1)}x_4^{(5)}+
 x_1^{(5)}x_2^{(7)}x_3^{(1)}x_4^{(5)}+
 x_1^{(3)}x_2^{(9)}x_3^{(2)}x_4^{(4)}+
 x_1^{(5)}x_2^{(7)}x_3^{(2)}x_4^{(4)}+
\medskip
 x_1^{(3)}x_2^{(10)}x_3^{(1)}x_4^{(4)}\\
&\quad+
 x_1^{(6)}x_2^{(7)}x_3^{(1)}x_4^{(4)}+
 x_1^{(5)}x_2^{(6)}x_3^{(4)}x_4^{(3)}+
 x_1^{(6)}x_2^{(7)}x_3^{(2)}x_4^{(3)}+
 x_1^{(3)}x_2^{(10)}x_3^{(2)}x_4^{(3)}+
\medskip
 x_1^{(3)}x_2^{(11)}x_3^{(2)}x_4^{(2)}\\
&\quad+
 x_1^{(5)}x_2^{(9)}x_3^{(2)}x_4^{(2)}+
 x_1^{(6)}x_2^{(10)}x_3^{(1)}x_4^{(1)}+
 x_1^{(3)}x_2^{(5)}x_3^{(9)}x_4^{(1)}+
 x_1^{(3)}x_2^{(5)}x_3^{(8)}x_4^{(2)}+
\medskip
 x_1^{(3)}x_2^{(6)}x_3^{(8)}x_4^{(1)}\\
&\quad+
 x_1^{(3)}x_2^{(6)}x_3^{(7)}x_4^{(2)}+
 x_1^{(3)}x_2^{(5)}x_3^{(6)}x_4^{(4)}+
 x_1^{(3)}x_2^{(6)}x_3^{(6)}x_4^{(3)}+
 x_1^{(5)}x_2^{(6)}x_3^{(6)}x_4^{(1)}+
\medskip
 x_1^{(3)}x_2^{(5)}x_3^{(5)}x_4^{(5)}\\
&\quad+
 x_1^{(3)}x_2^{(6)}x_3^{(5)}x_4^{(4)}+
 x_1^{(5)}x_2^{(6)}x_3^{(5)}x_4^{(2)}+
 x_1^{(3)}x_2^{(9)}x_3^{(5)}x_4^{(1)}+
 x_1^{(5)}x_2^{(7)}x_3^{(5)}x_4^{(1)}+
\medskip
 x_1^{(3)}x_2^{(9)}x_3^{(4)}x_4^{(2)}\\
&\quad+
 x_1^{(5)}x_2^{(7)}x_3^{(4)}x_4^{(2)}+
 x_1^{(3)}x_2^{(10)}x_3^{(4)}x_4^{(1)}+
 x_1^{(6)}x_2^{(7)}x_3^{(4)}x_4^{(1)}+
 x_1^{(5)}x_2^{(6)}x_3^{(3)}x_4^{(4)}+
\medskip
 x_1^{(6)}x_2^{(7)}x_3^{(3)}x_4^{(2)}\\
&\quad+
 x_1^{(3)}x_2^{(10)}x_3^{(3)}x_4^{(2)}+
 x_1^{(3)}x_2^{(11)}x_3^{(2)}x_4^{(2)}+
 x_1^{(5)}x_2^{(9)}x_3^{(2)}x_4^{(2)}+
 x_1^{(6)}x_2^{(10)}x_3^{(1)}x_4^{(1)}+
\medskip
 x_1^{(3)}x_2^{(12)}x_3^{(1)}x_4^{(2)}\\
&\quad+
 x_1^{(7)}x_2^{(8)}x_3^{(1)}x_4^{(2)}+
 x_1^{(11)}x_2^{(4)}x_3^{(1)}x_4^{(2)}+
 x_1^{(13)}x_2^{(2)}x_3^{(1)}x_4^{(2)}+
 x_1^{(14)}x_2^{(1)}x_3^{(1)}x_4^{(2)}+
\medskip
 x_1^{(12)}x_2^{(3)}x_3^{(1)}x_4^{(2)}\\
&\quad+
 x_1^{(8)}x_2^{(7)}x_3^{(1)}x_4^{(2)}+
 x_1^{(4)}x_2^{(11)}x_3^{(1)}x_4^{(2)}+
 x_1^{(2)}x_2^{(13)}x_3^{(1)}x_4^{(2)}+
 x_1^{(1)}x_2^{(14)}x_3^{(1)}x_4^{(2)}+
\medskip
 x_1^{(6)}x_2^{(6)}x_3^{(3)}x_4^{(3)}\\
&\quad+
 x_1^{(5)}x_2^{(5)}x_3^{(5)}x_4^{(3)}+
 x_1^{(3)}x_2^{(3)}x_3^{(9)}x_4^{(3)}+
 x_1^{(5)}x_2^{(3)}x_3^{(7)}x_4^{(3)}+
 x_1^{(7)}x_2^{(7)}x_3^{(2)}x_4^{(2)}+
\medskip
 x_1^{(6)}x_2^{(9)}x_3^{(1)}x_4^{(2)}\\
&\quad+
 x_1^{(9)}x_2^{(6)}x_3^{(1)}x_4^{(2)}+
 x_1^{(10)}x_2^{(5)}x_3^{(1)}x_4^{(2)}+
 x_1^{(5)}x_2^{(10)}x_3^{(2)}x_4^{(1)}+
 x_1^{(13)}x_2^{(3)}x_3^{(1)}x_4^{(1)}+
\medskip
 x_1^{(5)}x_2^{(11)}x_3^{(1)}x_4^{(1)}\\
&\quad+
 x_1^{(9)}x_2^{(7)}x_3^{(1)}x_4^{(1)}.
\end{array}$$
\end{thm}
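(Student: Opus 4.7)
The plan is to specialize the Kameko-type inequality \eqref{bdt} to $s=2$, where $n_2=18$ and $\frac{n_2-4}{2}=7$, and combine it with Sum's theorem \cite{Sum3}, which asserts that $\dim\bigl(\mathbb Z/2\otimes_{GL_4}{\rm Ann}_{\overline{\mathcal A}}[P_{7}^{\otimes 4}]^{*}\bigr)=1$ with generator $[x_1^{(0)}x_2^{(0)}x_3^{(0)}x_4^{(7)}]$. This immediately yields
$$\dim\bigl(\mathbb Z/2\otimes_{GL_4}{\rm Ann}_{\overline{\mathcal A}}[P_{n_2}^{\otimes 4}]^{*}\bigr)\leq \dim\bigl([{\rm Ker}((\widetilde{Sq^0_*})_{18})]^{GL_4}\bigr)+1,$$
so the problem reduces to (a) showing the kernel-invariants contribute at most one dimension and (b) exhibiting two linearly independent coinvariant classes that force the upper bound to be attained.

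The first basis class $[x_1^{(1)}x_2^{(1)}x_3^{(1)}x_4^{(15)}]$ arises as the lift of Sum's degree-$7$ generator under the dual Kameko section $x_j^{(b_j)}\mapsto x_j^{(2b_j+1)}$. Using the right-action formula $(x_i^{(n)})Sq^{k}=\binom{n-k}{k}x_i^{(n-k)}$ for $2k<n$ (and $0$ otherwise) together with the Cartan formula, one checks directly that this monomial lies in ${\rm Ann}_{\overline{\mathcal A}}[P_{18}^{\otimes 4}]^{*}$, maps to Sum's generator under $\widetilde{Sq^0_*}$, and is fixed by its $\Sigma_4$-stabilizer. For the second class $\zeta$, I would begin with Sum's explicit admissible-monomial basis of $QP_{18}^{\otimes 4}$ from \cite{Sum1, Sum2}, extract the subset spanning ${\rm Ker}((\widetilde{Sq^0_*})_{18})$ (admissible monomials whose exponents are not all odd), and search for $GL_4$-invariants within it. Since $GL_4(\mathbb Z/2)$ is generated by $\Sigma_4$ together with a single transvection, this is done in two stages: first collect $\Sigma_4$-orbit sums of admissible monomials in the kernel, then impose the transvection-invariance relation modulo the hit ideal $\overline{\mathcal A}P_{18}^{\otimes 4}$.

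The main obstacle is the sheer bookkeeping. Sum's admissible basis in bidegree $(4,18)$ is large, the transvection moves many admissible monomials outside the admissible set, and each such image must be rewritten via iterated Adem relations before it can be paired against a $\Sigma_4$-orbit sum. The end product is that exactly one nontrivial $GL_4$-invariant $\Sigma_4$-symmetric combination survives in the kernel-piece, namely the $\sim 60$-summand expression $\zeta$ displayed in the statement; the bulk of that display reflects precisely the admissible reductions forced by the transvection. Linear independence of $[x_1^{(1)}x_2^{(1)}x_3^{(1)}x_4^{(15)}]$ and $[\zeta]$ is then immediate from the Kameko splitting, since the two classes occupy complementary summands, and the resulting dimension $2$ matches Lin's computation \eqref{kqL} that $\dim {\rm Ext}_{\mathcal A}^{4,4+18}(\mathbb Z/2,\mathbb Z/2)=2$, which both serves as an external consistency check and sets up the isomorphism claim of the subsequent corollary.
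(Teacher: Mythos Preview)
Your overall shape — combine \eqref{bdt} with Sum's one-dimensional result at degree $7$ to get $\dim\le 2$, then exhibit two independent classes — matches the paper. But there is a genuine conceptual slip in how you produce and interpret $\zeta$.

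You propose to find $\zeta$ by searching for $GL_4$-invariants inside ${\rm Ker}((\widetilde{Sq^0_*})_{18})\subset QP_{18}^{\otimes 4}$, and you identify the displayed $\sim 60$-term sum as that invariant. This conflates the two sides of the duality. The kernel lives in $QP_{18}^{\otimes 4}$ (polynomials in the $t_i$), and its $GL_4$-invariant space is computed in the paper as Proposition~\ref{mdc1}: it is one-dimensional, generated by a \emph{nine}-term element $\widehat{p_{2,1}}$ in the $t_i$. The element $\zeta$ in the statement is written in divided powers $x_i^{(j)}$; it lives in $[P_{18}^{\otimes 4}]^{*}$, the homology side, and represents a class in the \emph{coinvariants} $\mathbb Z/2\otimes_{GL_4}{\rm Ann}_{\overline{\mathcal A}}[P_{18}^{\otimes 4}]^{*}$. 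These are dual objects, not the same object, and the procedure for producing $\zeta$ is not ``find an invariant in the kernel'' but rather ``find an $\overline{\mathcal A}$-annihilated element of $[P_{18}^{\otimes 4}]^{*}$ that pairs nontrivially with $\widehat{p_{2,1}}$''. Relatedly, you never check that $\zeta$ is $\overline{\mathcal A}$-annihilated; the paper does this by verifying $(\zeta)Sq^{2^i}=0$ for $i=0,1,2,3$.

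Your independence argument is also incomplete. Saying the two classes ``occupy complementary summands'' of the Kameko splitting is a statement at the level of $QP$ and its dual, not at the coinvariant level; it does not by itself show $[\zeta]\neq 0$ in the coinvariants. The paper closes this gap in two ways: it computes $\psi_4(\zeta)$ in the lambda algebra and shows it represents $f_0\neq 0$ in ${\rm Ext}_{\mathcal A}^{4,22}$ (so $Tr_4^{\mathcal A}([\zeta])\neq 0$), and separately it exhibits the dual pairing $\langle[\widehat{p_{2,1}}],[\zeta]\rangle=1$, $\langle[\varphi(\overline{p}_{4,2})],[x_1^{(1)}x_2^{(1)}x_3^{(1)}x_4^{(15)}]\rangle=1$ with the off-diagonal pairings vanishing. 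Either route gives the required nonvanishing; your sketch supplies neither.
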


To check that $\zeta_2\in {\rm Ann}_{\overline{\mathcal A}}[P_{n_2}^{\otimes 4}]^{*},$ we need only to consider the effects of the Steenrod squares $Sq^{i}$ for $i = 1, 2, 4, 8$ because of the unstable condition. Now, noticing that $\lambda_1\in \Lambda^{1, 1},$ $\lambda_{15}\in \Lambda^{1, 15}$ and $\widetilde{f}_0 = \lambda_4\lambda_6\lambda_5\lambda_3 + \lambda_5\lambda_7\lambda^2_3 + \lambda_3^2\lambda_2\lambda_5\lambda_7 + \lambda_2\lambda_4\lambda_5\lambda_7  \in \Lambda^{4, n_2}$ are the cycles in the algebra $\Lambda.$ Moreover, they are representative of the non-zero elements \mbox{$h_i\in {\rm Ext}_{\mathcal A}^{1, 2^{i}}(\mathbb Z/2, \mathbb Z/2),$} for $i = 1, 4$ and $f_0\in {\rm Ext}_{\mathcal A}^{4, 4+n_2}(\mathbb Z/2, \mathbb Z/2)$ respectively. Then, using Theorems \ref{dlc2} and the representation of $Tr_4^{\mathcal A}$ over $\Lambda,$ we see that the cycles $ \lambda_1^{3}\lambda_{15} = \psi_4(x_1^{(1)}x_2^{(1)}x_3^{(1)}x_4^{(15)})$ and $\widetilde{f}_0 +  \delta(\lambda_3\lambda_5\lambda_{11}) = \psi_4(\zeta)$ in the lambda algebra are representative of the non-zero elements $h_0^{2}h_2h_4$ and $f_0$ respectively. This shows that $h_1^{3}h_4$ and $f_0$ are in the image of $Tr_4^{\mathcal A}.$ Combining this with Theorem \ref{dlc2} and the equality \eqref{kqL}, it follows that

\begin{corl}\label{hqc2}
The fourth cohomological transfer is a isomorphism in the internal degree $n_2.$
\end{corl}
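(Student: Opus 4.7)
The plan is to reduce the isomorphism statement to a dimension count plus a surjectivity check, and to carry out the surjectivity check by realising the generators of the coinvariant space as explicit cycles in the lambda algebra via the Ch\ohorn n--H\`a map $\psi_4$. First, combining Theorem \ref{dlc2} with the computation \eqref{kqL} for $s=2$, both
\[
\mathbb Z/2 \otimes_{GL_4} {\rm Ann}_{\overline{\mathcal A}}[P_{n_2}^{\otimes 4}]^{*} \quad \text{and} \quad {\rm Ext}_{\mathcal A}^{4, 4+n_2}(\mathbb Z/2, \mathbb Z/2) = \langle h_1^3 h_4,\, f_0\rangle
\]
have dimension $2$. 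Hence it suffices to show that the images under $Tr_4^{\mathcal A}$ of the two explicit basis elements $[x_1^{(1)}x_2^{(1)}x_3^{(1)}x_4^{(15)}]$ and $[\zeta]$ of the domain span the codomain.

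The next step uses the fact, due to Ch\ohorn n and H\`a, that for any primitive class $\eta \in {\rm Ann}_{\overline{\mathcal A}}[P_{n_2}^{\otimes 4}]^{*}$ the element $\psi_4(\eta)\in \Lambda^{4,n_2}$ is a cycle and represents $Tr_4^{\mathcal A}([\eta])$ in the $E_1$-page description of ${\rm Ext}_{\mathcal A}^{4, 4+n_2}(\mathbb Z/2,\mathbb Z/2)$. I would apply $\psi_4$ directly. For the monomial basis element, the recursive formula yields $\psi_4(x_1^{(1)}x_2^{(1)}x_3^{(1)}x_4^{(15)}) = \lambda_1^{3}\lambda_{15}$, which I recognise as the standard $\Lambda$-representative of $h_1^{3}h_4 = h_0^{2}h_2 h_4$ by iterating $Sq^0$ on the length-$4$ representative $\lambda_0^{3}\lambda_7$ of $h_0^{2}h_0 h_3$. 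For $\zeta$, the plan is to compute $\psi_4(\zeta)$ term by term, then reduce the outcome in $\Lambda^{4,n_2}$ using the Adem relations \eqref{qh}. The goal is to arrive at the identity
\[
\psi_4(\zeta) \;=\; \widetilde{f}_0 \;+\; \delta\!\left(\lambda_3\lambda_5\lambda_{11}\right),
\]
where $\widetilde{f}_0 = \lambda_4\lambda_6\lambda_5\lambda_3 + \lambda_5\lambda_7\lambda_3^{2} + \lambda_3^{2}\lambda_2\lambda_5\lambda_7 + \lambda_2\lambda_4\lambda_5\lambda_7$ is the well-known cycle representing $f_0$. Since the second summand is a boundary, $\psi_4(\zeta)$ represents $f_0$.

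Once both identifications are in hand, the image of $Tr_4^{\mathcal A}$ contains $h_1^{3}h_4$ and $f_0$, hence equals the whole codomain, and the dimension equality forces $Tr_4^{\mathcal A}$ to be an isomorphism in bidegree $(4, 4+n_2)$.

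The main obstacle will be the verification of the identity $\psi_4(\zeta) = \widetilde{f}_0 + \delta(\lambda_3\lambda_5\lambda_{11})$. Expanding $\psi_4$ over the sixty-odd summands of $\zeta$ by the recursive formula and then collapsing the result to an admissible sum in $\Lambda^{4,n_2}$ via \eqref{qh} is bookkeeping-heavy, and Adem relations routinely produce many competing length-$4$ monomials of weight $n_2$ that must cancel to leave precisely $\widetilde{f}_0$ plus a boundary. A secondary technical point is confirming that $\zeta$ is in ${\rm Ann}_{\overline{\mathcal A}}[P_{n_2}^{\otimes 4}]^{*}$; by the instability axiom for the right $\mathcal A$-action on $\Gamma(x_1^{(1)},\dots,x_4^{(1)})$ this reduces to verifying $\zeta\,Sq^{i}=0$ for $i\in\{1,2,4,8\}$, a finite but sizeable check that can be organised by the Cartan formula.
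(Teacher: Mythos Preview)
Your proposal is correct and follows essentially the same approach as the paper: both combine the dimension equality from Theorem \ref{dlc2} and \eqref{kqL} with the explicit computation of $\psi_4$ on the two basis classes, obtaining $\psi_4(x_1^{(1)}x_2^{(1)}x_3^{(1)}x_4^{(15)})=\lambda_1^{3}\lambda_{15}$ and $\psi_4(\zeta)=\widetilde{f}_0+\delta(\lambda_3\lambda_5\lambda_{11})$, and then invoke the Ch\ohorn n--H\`a representation of $Tr_4^{\mathcal A}$ to identify these with $h_1^{3}h_4$ and $f_0$. Your write-up simply makes the surjectivity-plus-dimension logic more explicit than the paper's terse presentation.
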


{\bf The case \boldmath{$s = 3$}.} We consider the element $\widetilde{\zeta}\in [P_{n_3}^{\otimes 4}]^{*},$ which is the following sum:
$$ \begin{array}{ll}
&x_1^{(11)}x_2^{(11)}x_3^{(11)}x_4^{(5)}+
 x_1^{(11)}x_2^{(11)}x_3^{(13)}x_4^{(3)}+
 x_1^{(7)}x_2^{(11)}x_3^{(17)}x_4^{(3)}+
x_1^{(11)}x_2^{(7)}x_3^{(17)}x_4^{(3)} +
\medskip
 x_1^{(7)}x_2^{(13)}x_3^{(15)}x_4^{(3)}\\
&+ x_1^{(11)}x_2^{(15)}x_3^{(9)}x_4^{(3)}+
x_1^{(15)}x_2^{(11)}x_3^{(9)}x_4^{(3)}+
 x_1^{(7)}x_2^{(19)}x_3^{(9)}x_4^{(3)}+
 x_1^{(19)}x_2^{(7)}x_3^{(9)}x_4^{(3)}+
\medskip
x_1^{(7)}x_2^{(19)}x_3^{(7)}x_4^{(5)}\\
&+
x_1^{(19)}x_2^{(7)}x_3^{(7)}x_4^{(5)}+
x_1^{(11)}x_2^{(19)}x_3^{(5)}x_4^{(3)}+
x_1^{(19)}x_2^{(11)}x_3^{(5)}x_4^{(3)} +
x_1^{(11)}x_2^{(21)}x_3^{(3)}x_4^{(3)}+
\medskip
x_1^{(19)}x_2^{(13)}x_3^{(3)}x_4^{(3)}\\
&+
 x_1^{(7)}x_2^{(23)}x_3^{(5)}x_4^{(3)}+
 x_1^{(23)}x_2^{(7)}x_3^{(5)}x_4^{(3)} +
 x_1^{(11)}x_2^{(11)}x_3^{(7)}x_4^{(9)}+
 x_1^{(11)}x_2^{(7)}x_3^{(11)}x_4^{(9)}+
\medskip
 x_1^{(7)}x_2^{(11)}x_3^{(11)}x_4^{(9)}\\
&+
 x_1^{(7)}x_2^{(25)}x_3^{(3)}x_4^{(3)}+
  x_1^{(23)}x_2^{(9)}x_3^{(3)}x_4^{(3)}+
 x_1^{(15)}x_2^{(17)}x_3^{(3)}x_4^{(3)}+
 x_1^{(15)}x_2^{(15)}x_3^{(3)}x_4^{(5)}+
\medskip
x_1^{(27)}x_2^{(5)}x_3^{(3)}x_4^{(3)}\\
&+
 x_1^{(29)}x_2^{(3)}x_3^{(3)}x_4^{(3)}+
 x_1^{(13)}x_2^{(11)}x_3^{(7)}x_4^{(7)}+
 x_1^{(11)}x_2^{(7)}x_3^{(13)}x_4^{(7)}+
 x_1^{(7)}x_2^{(13)}x_3^{(11)}x_4^{(7)} +
\medskip
 x_1^{(13)}x_2^{(7)}x_3^{(7)}x_4^{(11)}\\
&+
 x_1^{(7)}x_2^{(7)}x_3^{(13)}x_4^{(11)}+
 x_1^{(7)}x_2^{(13)}x_3^{(7)}x_4^{(11)}+
  x_1^{(11)}x_2^{(7)}x_3^{(7)}x_4^{(13)}+
 x_1^{(7)}x_2^{(11)}x_3^{(7)}x_4^{(13)}+
\medskip
 x_1^{(7)}x_2^{(7)}x_3^{(11)}x_4^{(13)}\\
&+
 x_1^{(7)}x_2^{(7)}x_3^{(7)}x_4^{(17)}+
x_1^{(7)}x_2^{(7)}x_3^{(9)}x_4^{(15)} +
 x_1^{(7)}x_2^{(11)}x_3^{(5)}x_4^{(15)}+
 x_1^{(7)}x_2^{(13)}x_3^{(3)}x_4^{(15)}+
\medskip
 x_1^{(7)}x_2^{(7)}x_3^{(19)}x_4^{(5)}\\
&+
 x_1^{(7)}x_2^{(7)}x_3^{(21)}x_4^{(3)}+
x_1^{(11)}x_2^{(7)}x_3^{(15)}x_4^{(5)}+
 x_1^{(11)}x_2^{(15)}x_3^{(7)}x_4^{(5)}+
 x_1^{(15)}x_2^{(11)}x_3^{(7)}x_4^{(5)}.
\end{array}$$
In \cite{Phuc9}, we showed that $(\mathbb Z/2 \otimes_{GL_4} {\rm Ann}_{\overline{\mathcal A}}[P_{\frac{n_3-4}{2}}^{\otimes 4}]^{*})$ is 1-dimensional. Basing this coupled with the inequality \eqref{bdt} and an admissible basis of ${\rm Ker}((\widetilde {Sq^0_*})_{n_3}),$ it may be claimed that

\begin{thm}\label{dlc3}
The space of $GL_4$-coinvariants $\mathbb Z/2 \otimes_{GL_4} {\rm Ann}_{\overline{\mathcal A}}[P_{n_3}^{\otimes 4}]^{*}$ has dimension $2$ with the basis $\{[x_1^{(0)}x_2^{(0)}x_3^{(7)}x_4^{(31)}], [\widetilde{\zeta}]\}.$ 
\end{thm}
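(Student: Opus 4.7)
The plan is to combine the Kameko splitting with a direct enumeration of $GL_4$-coinvariants on the two pieces.

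First, I would apply inequality \eqref{bdt} for $s = 3$. Since $(n_3-4)/2 = 17$, this gives
\[
\dim\bigl(\mathbb Z/2 \otimes_{GL_4} {\rm Ann}_{\overline{\mathcal A}}[P^{\otimes 4}_{n_3}]^{*}\bigr) \le \dim\bigl([\ker(\widetilde{Sq^0_*})_{n_3}]^{GL_4}\bigr) + \dim\bigl(\mathbb Z/2 \otimes_{GL_4} {\rm Ann}_{\overline{\mathcal A}}[P^{\otimes 4}_{17}]^{*}\bigr).
\]
By my earlier computation recalled in \cite{Phuc9}, the last summand equals $1$. So to prove the theorem it suffices to show that $\dim[\ker(\widetilde{Sq^0_*})_{n_3}]^{GL_4} \le 1$ and then exhibit two explicit, linearly independent coinvariant classes, which I take to be $[x_1^{(0)}x_2^{(0)}x_3^{(7)}x_4^{(31)}]$ and $[\widetilde\zeta]$.

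Second, for the upper bound on $[\ker(\widetilde{Sq^0_*})_{n_3}]^{GL_4}$, I would start from Sum's admissible monomial basis of $\ker(\widetilde{Sq^0_*})_{n_3} \subset QP^{\otimes 4}_{n_3}$ listed in \cite{Sum1, Sum2}. Writing a generic invariant $\xi$ as a $\mathbb Z/2$-linear combination of those admissibles, I would impose the congruence $g\cdot\xi \equiv \xi \pmod{\overline{\mathcal A}P^{\otimes 4}_{n_3}}$ for $g$ ranging over a generating set of $GL_4$, namely the three adjacent transpositions in $\Sigma_4$ and the transvection $t_1 \mapsto t_1 + t_2$. Expanding each image back into the admissible basis via the Peterson hit relations and equating coefficients yields a linear system whose solution space should collapse to dimension one, with surviving vector dualising to $[\widetilde\zeta]$.

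Third, to realise the matching lower bound, I would verify directly that both stated classes lie in $\mathbb Z/2 \otimes_{GL_4} {\rm Ann}_{\overline{\mathcal A}}[P^{\otimes 4}_{n_3}]^{*}$ and are linearly independent modulo hits and $GL_4$. Membership in the annihilator reduces by instability to the Steenrod squares $Sq^{2^j}$ with $2^j < n_3/2$, i.e.\ to $j = 0, 1, 2, 3, 4$; each case is computed using the right action $(x^{(n)})Sq^k = \binom{n-k}{k} x^{(n-k)}$ together with the Cartan formula. Linear independence is then automatic once the upper bound of $2$ is tight and the two classes dualise respectively to the degree-$17$ summand and to the kernel summand of the Kameko-induced splitting $QP^{\otimes 4}_{n_3} \cong \ker(\widetilde{Sq^0_*})_{n_3} \oplus QP^{\otimes 4}_{17}$.

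The genuinely hard step is the middle one. Sum's admissible basis of $\ker(\widetilde{Sq^0_*})_{n_3}$ at bidegree $(4,38)$ is large, and propagating the transvection through the Peterson hit relations creates many cancellations that must be tracked carefully. The only feasible route is to decompose $QP^{\otimes 4}_{n_3}$ along Sum's weight-vector ($\omega$-sequence) stratification (so that the transvection respects the resulting filtration), exploit the $\Sigma_4$-symmetry to cut the rank computation, and then inspect the residual $\omega$-strata case by case. Controlling this bookkeeping — and in particular isolating the specific $45$-term combination $\widetilde\zeta$ as the unique (up to scalar) nonzero coinvariant class in the kernel piece — is where essentially all of the work concentrates.
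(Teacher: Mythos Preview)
Your approach is essentially the same as the paper's, and the hard step you identify (the direct computation of $[\ker(\widetilde{Sq^0_*})_{n_3}]^{GL_4}$ from Sum's admissible basis) is exactly what the paper packages as Proposition~\ref{mdc1}, where the invariant space of the kernel is shown to be one-dimensional, generated by $[\sum_{1\le j\le 105}{\rm adm}_{3,j}]$.

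The one substantive difference is how the lower bound $\dim\ge 2$ is obtained. You propose to get it purely from the duality pairing: exhibit two candidate invariants (one from the kernel via Proposition~\ref{mdc1}, one lifted from $[QP^{\otimes 4}_{17}]^{GL_4}$ via the up-Kameko map) and check that the $2\times 2$ pairing matrix with $[\overline\zeta]=[x_1^{(0)}x_2^{(0)}x_3^{(7)}x_4^{(31)}]$ and $[\widetilde\zeta]$ is nonsingular. The paper does this too, at the end of its proof, but it \emph{first} establishes $\dim\ge 2$ by a different route: it computes $\psi_4(\overline\zeta)=\lambda_0^2\lambda_7\lambda_{31}$ and $\psi_4(\widetilde\zeta)=\overline e_1+\delta(\cdots)$ in the lambda algebra and invokes~\eqref{kqL} to see that these represent the nonzero, independent classes $h_0^2h_3h_5$ and $e_1$ in ${\rm Ext}^{4,4+n_3}_{\mathcal A}$. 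This transfer argument gives the lower bound without needing to know the invariant space in advance; your pairing argument is logically sufficient on its own (and is how the paper pins down the explicit basis), but it requires you to first verify that the up-Kameko lift $\overline\varphi(\widetilde\zeta_{17})$ is genuinely $GL_4$-invariant, since $\overline\varphi$ is only $\Sigma_4$-equivariant and the transvection $\theta_4$ must be checked by hand. The paper does precisely this check (``using Proposition~\ref{mdc1} and the relations $\theta_i(\widetilde\rho)\equiv\widetilde\rho$'').

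A small remark: you list $Sq^{2^j}$ for $j\le 4$; the paper only checks $j\le 3$. Your range is the conservative one from unstability ($2k<38$ allows $k=16$), so you are not wrong to include it.
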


By the unstable condition,  to verify that $\widetilde{\zeta}\in {\rm Ext}_{\mathcal A}^{0, n_3}(\mathbb Z/2, P^{\otimes 4}),$ we need only to compute the actions of the Steenrod squares $Sq^{2^{i}}$ for $i = 0, 1, 2, 3.$ Now, clearly $$ \overline{e}_1 = \lambda_7^{3}\lambda_{17} + (\lambda_7\lambda_{11}^{2} + \lambda_7^{2}\lambda_{15})\lambda_9 + \lambda_{15}\lambda_{11}\lambda_7\lambda_5 + \lambda_7^{2}\lambda_{11}\lambda_{13}\in \Lambda^{4, n_3}$$ is a cycle in $\Lambda$ and $e_1 = [\overline{e}_1]\in {\rm Ext}_{\mathcal A}^{4, 4+n_3}(\mathbb Z/2, \mathbb Z/2)$. Then, based on Theorems \ref{dlc3} and the representation in the lambda algebra of the rank 4 transfer, we claim that the cycles
$$ \lambda_0^{2}\lambda_7\lambda_{31} = \psi_4(x_1^{(0)}x_2^{(0)}x_3^{(7)}x_4^{(31)}), \ \ \overline{e}_1 = \psi_4(\widetilde{\zeta})$$ in $\Lambda^{4, n_3}$ are representative of the non-zero elements $h_0^{2}h_3h_5$ and $e_1$ respectively. This shows that $h_0^{2}h_3h_5$ and $e_1$ are in the image of $Tr_4^{\mathcal A}$ and therefore by Theorem \ref{dlc2} and the equality \eqref{kqL}, it follows that

\begin{corl}\label{hqc3}
The Singer transfer $$Tr_4^{\mathcal A}: \mathbb Z/2 \otimes_{GL_4} {\rm Ann}_{\overline{\mathcal A}}[P_{n_3}^{\otimes 4}]^{*}\to {\rm Ext}_{\mathcal A}^{4, 4+n_3}(\mathbb Z/2, \mathbb Z/2)$$
is an isomorphism.
\end{corl}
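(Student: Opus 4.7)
\medskip

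\noindent\textbf{Proof proposal for Corollary \ref{hqc3}.}

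The plan is to exploit the dimension count together with the lambda-algebra representation of $Tr_4^{\mathcal{A}}$ due to Ch\ohorn n--H\`a. By Theorem \ref{dlc3} the source $\mathbb{Z}/2\otimes_{GL_4}{\rm Ann}_{\overline{\mathcal{A}}}[P_{n_3}^{\otimes 4}]^{*}$ is two-dimensional, spanned by the classes of $x_1^{(0)}x_2^{(0)}x_3^{(7)}x_4^{(31)}$ and $\widetilde{\zeta}$, and by the equality \eqref{kqL} with $s=3$ the target ${\rm Ext}_{\mathcal{A}}^{4,4+n_3}(\mathbb{Z}/2,\mathbb{Z}/2)=\langle h_0^{2}h_3h_5,\, e_1\rangle$ is also two-dimensional. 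Consequently, it suffices to verify that the images of the two source generators under $Tr_4^{\mathcal{A}}$ are linearly independent; equivalently, that they span the target.

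First I would recall that, by Ch\ohorn n--H\`a, for any $\zeta\in{\rm Ann}_{\overline{\mathcal{A}}}[P_n^{\otimes h}]^{*}$ the element $\psi_h(\zeta)\in\Lambda^{h,n}$ is a cycle representing $Tr_h^{\mathcal{A}}([\zeta])$ in the cobar/lambda description of ${\rm Ext}_{\mathcal{A}}^{h,h+n}(\mathbb{Z}/2,\mathbb{Z}/2)$. I would then apply the recursive formula for $\psi_4$ to the two basis elements. Directly, $\psi_4(x_1^{(0)}x_2^{(0)}x_3^{(7)}x_4^{(31)})=\lambda_0^{2}\lambda_{7}\lambda_{31}$ (the recursion collapses since only the trivial Steenrod actions contribute), and this monomial is admissible of length four; under the standard identification, $\lambda_0^{2}\lambda_{7}\lambda_{31}$ represents the decomposable class $h_0^{2}h_3h_5$, which by \eqref{kqL} is nonzero in ${\rm Ext}_{\mathcal{A}}^{4,4+n_3}(\mathbb{Z}/2,\mathbb{Z}/2)$. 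For the second generator, the proposal is to compute $\psi_4(\widetilde{\zeta})$ term-by-term using the recursion and the right $\mathcal{A}$-action formulas recalled in the introduction, and then simplify modulo the Adem relations \eqref{qh} and coboundaries in $\Lambda$. The expected outcome is $\psi_4(\widetilde{\zeta})=\overline{e}_1$ (up to a $\delta$-boundary), where
\[
\overline{e}_1=\lambda_7^{3}\lambda_{17}+(\lambda_7\lambda_{11}^{2}+\lambda_7^{2}\lambda_{15})\lambda_9+\lambda_{15}\lambda_{11}\lambda_7\lambda_5+\lambda_7^{2}\lambda_{11}\lambda_{13}
\]
is the standard cycle known to represent the indecomposable element $e_1\in{\rm Ext}_{\mathcal{A}}^{4,4+n_3}(\mathbb{Z}/2,\mathbb{Z}/2)$; checking $\delta(\overline{e}_1)=0$ in $\Lambda$ is routine using \eqref{vp}.

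Having identified the two images as $h_0^{2}h_3h_5$ and $e_1$, the linear-independence in ${\rm Ext}_{\mathcal{A}}^{4,4+n_3}(\mathbb{Z}/2,\mathbb{Z}/2)$ follows from \eqref{kqL}. Since $Tr_4^{\mathcal{A}}$ is a linear map between two-dimensional $\mathbb{Z}/2$-spaces and is surjective, it must be an isomorphism, which is the claim of Corollary \ref{hqc3}.

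The main obstacle I expect is the explicit evaluation of $\psi_4(\widetilde{\zeta})$: $\widetilde{\zeta}$ is a sum of some forty-odd monomials, and the recursion for $\psi_4$ produces numerous terms, each requiring application of the action $(x^{(n)})Sq^{k}=\binom{n-k}{k}x^{(n-k)}$ and then reduction via the Adem relations in $\Lambda$. The bookkeeping to identify the result with $\overline{e}_1$ modulo a $\delta$-boundary is delicate; the natural strategy is to organize the terms of $\widetilde{\zeta}$ by the exponent of $x_4$ and handle them in blocks, exactly in parallel to how the basis of ${\rm Ker}((\widetilde{Sq^0_{*}})_{n_3})$ is stratified in \cite{Sum1,Sum2}. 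All the remaining verifications—cyclicity of $\overline{e}_1$, admissibility arguments, and the identification of representatives—are standard once this computation is in hand.
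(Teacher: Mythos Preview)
Your proposal is correct and follows essentially the same approach as the paper: use Theorem \ref{dlc3} and \eqref{kqL} to match dimensions, then compute $\psi_4$ on the two generators via the Ch\ohorn n--H\`a representation to hit $h_0^{2}h_3h_5$ and $e_1$. The paper in fact confirms your expectation precisely, giving $\psi_4(\widetilde{\zeta})=\overline{e}_1+\delta(\lambda_7\lambda_{11}\lambda_{21}+\lambda_7\lambda_{25}\lambda_7+\lambda_9\lambda_{15}^{2}+\lambda_1\lambda_{23}\lambda_{15})$ and $\psi_4(x_1^{(0)}x_2^{(0)}x_3^{(7)}x_4^{(31)})=\lambda_0^{2}\lambda_7\lambda_{31}$.
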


{\bf The case \boldmath{$s \geq 4$}.} Direct calculating based on an admissible basis of ${\rm Ker}((\widetilde {Sq^0_*})_{n_s}),$ we get

\begin{thm}\label{dlc4}
For each $s\geq 4,$ the space $\mathbb Z/2 \otimes_{GL_4} {\rm Ann}_{\overline{\mathcal A}}[P_{n_s}^{\otimes 4}]^{*}$ is one-dimensional and generated by $[x_1^{(0)}x_2^{(0)}x_3^{(2^{s}-1)}x_4^{(2^{s+2}-1)}].$
\end{thm}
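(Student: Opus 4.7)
My plan is to follow the same template as Theorems \ref{dlc1}--\ref{dlc3}: apply the Kameko decomposition \eqref{bdt} to split the calculation, separately control each summand, and then pin down the generator. One recognises $(n_s-4)/2 = 2^{s+1} + 2^{s-1} - 3$ as Sum's generic degree of type (ii) with parameters $s' = s-2$ and $m = 2$, so I would invoke the author's prior computations in \cite{Phuc9, Phuc11} to conclude $\mathbb Z/2 \otimes_{GL_4} {\rm Ann}_{\overline{\mathcal A}}[P_{(n_s-4)/2}^{\otimes 4}]^{*} = 0$ for every $s \geq 4$. Combined with \eqref{bdt}, this reduces the theorem to exhibiting a one-dimensional invariant subspace of $\ker((\widetilde{Sq^0_*})_{n_s})$ with the announced spike as generator.

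For the existence half, note that $t_3^{2^s-1}t_4^{2^{s+2}-1}$ is a spike in $P^{\otimes 4}_{n_s}$, so its dual class $x_1^{(0)}x_2^{(0)}x_3^{(2^s-1)}x_4^{(2^{s+2}-1)}$ is non-zero in $(QP^{\otimes 4}_{n_s})^{*} \cong {\rm Ann}_{\overline{\mathcal A}}[P^{\otimes 4}_{n_s}]^{*}$, and it sits in $\ker((\widetilde{Sq^0_*})_{n_s})$ because two of its exponents vanish. By the instability axiom, membership in ${\rm Ann}_{\overline{\mathcal A}}$ reduces to testing $Sq^{2^i}$ for $0 \leq i \leq s+1$; using $(x_j^{(a)})Sq^{k} = \binom{a-k}{k}x_j^{(a-k)}$ together with the all-ones binary expansions of $2^s - 1$ and $2^{s+2} - 1$, every relevant binomial coefficient vanishes modulo $2$. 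Its class in the coinvariants is non-zero because the underlying monomial is a $GL_4$-fixed spike modulo hit elements.

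Finally, for the upper bound $\dim([\ker((\widetilde{Sq^0_*})_{n_s})]^{GL_4}) \leq 1$, I would work directly with Sum's admissible basis of $\ker((\widetilde{Sq^0_*})_{n_s})$ from \cite{Sum1, Sum2}. Partitioning the basis by weight vector, an invariant $\alpha$ must be $\Sigma_4$-symmetric within each weight block; imposing in addition invariance under the transvection $\rho: t_1 \mapsto t_1 + t_2,\ t_j \mapsto t_j$ for $j \geq 2$ produces a homogeneous linear system in the block coefficients, which I expect to force every non-spike block coefficient to be zero. The main obstacle is that the cardinality of Sum's admissible basis of $\ker((\widetilde{Sq^0_*})_{n_s})$ grows with $s$, so a uniform case analysis is lengthy; the saving grace is that the block combinatorics and the transvection-image reductions follow a visibly stable pattern for $s\geq 4$, so that the same parametrised argument handles every $s$ at once.
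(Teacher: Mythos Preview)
Your overall architecture matches the paper exactly: Kameko decomposition \eqref{bdt}, the vanishing of the $(n_s-4)/2$ coinvariants from \cite{Phuc9}, and a direct $GL_4$-invariant analysis of ${\rm Ker}((\widetilde{Sq^0_*})_{n_s})$ using Sum's admissible basis (this last step is precisely Proposition \ref{mdc1}, which yields $[{\rm Ker}_{n_s}]^{GL_4}=\langle[\sum_{1\le j\le 105}{\rm adm}_{s,j}]\rangle$ for $s\ge 3$). One incidental correction: Sum's admissible basis for ${\rm Ker}_{n_s}$ has exactly $105$ elements for every $s\ge 3$, so your worry about its size ``growing with $s$'' is unfounded --- the pattern is stable and the parametrised argument goes through cleanly.

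The genuine gap is your lower-bound step. The claim that ``the underlying monomial is a $GL_4$-fixed spike modulo hit elements'' is false: $t_3^{2^s-1}t_4^{2^{s+2}-1}$ is an admissible spike, but it is \emph{not} $GL_4$-invariant in $QP^{\otimes 4}_{n_s}$ (the transposition $t_3\leftrightarrow t_4$ sends it to a distinct admissible monomial, and $\theta_4$ does more damage). Non-vanishing of $[\zeta_s]$ in the \emph{annihilated} space does not by itself give non-vanishing in the $GL_4$-\emph{coinvariants}. The paper closes this gap in two complementary ways: first, via the lambda-algebra representation of $Tr_4^{\mathcal A}$ it computes $\psi_4(\zeta_s)=\lambda_0^2\lambda_{2^s-1}\lambda_{2^{s+2}-1}$, a cycle representing the non-zero class $h_0^2h_sh_{s+2}\in{\rm Ext}_{\mathcal A}^{4,4+n_s}(\mathbb Z/2,\mathbb Z/2)$ (Lin \cite{Lin}), so $Tr_4^{\mathcal A}([\zeta_s])\neq 0$ forces $[\zeta_s]\neq 0$ in coinvariants; second, and more in line with your outline, since $t_3^{2^s-1}t_4^{2^{s+2}-1}={\rm adm}_{s,25}$ appears with coefficient $1$ in the explicit invariant $\sum_{j}{\rm adm}_{s,j}$ from Proposition \ref{mdc1}, the dual pairing gives $\langle[\sum_j{\rm adm}_{s,j}],[\zeta_s]\rangle=1$. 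Either route repairs your argument; you just need to replace the incorrect ``$GL_4$-fixed'' sentence with one of them.
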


%On the other hand, from our work in \cite{Phuc9}, that $(\mathbb Z/2 \otimes_{GL_4} {\rm Ann}_{\overline{\mathcal A}}[P_{\frac{n_s-4}{2}}^{\otimes h}]^{*})$ is trivial for all $s > 3.$ So, due to the inequality \eqref{bdt}, we have $ \dim(\mathbb Z/2 \otimes_{GL_4} {\rm Ann}_{\overline{\mathcal A}}[P_{n_s}^{\otimes h}]^{*})\leq 1.$ Now, for each $s \geq 4,$ it is easy to see that the elements $x_1^{(0)}x_2^{(0)}x_3^{(2^{s}-1)}x_4^{(2^{s+2}-1)}$ belong to ${\rm Ext}_{\mathcal A}^{0, n_s}(\mathbb Z/2, P^{\otimes 4}).$  
Using the representation of $Tr_4^{\mathcal A}$ over $\Lambda$, we conclude that the cycles $$ \lambda_0^{2}\lambda_{2^{s}-1}\lambda_{2^{s+2}-1} = \psi_4(x_1^{(0)}x_2^{(0)}x_3^{(2^{s}-1)}x_4^{(2^{s+2}-1)})$$ in $\Lambda^{4, n_s}$ are representative of the non-zero elements $h_0^{2}h_sh_{s+2}\in {\rm Ext}_{\mathcal A}^{4, 4+n_s}(\mathbb Z/2, \mathbb Z/2)$ for all $s\geq 4.$ This leads to $h_0^{2}h_sh_{s+2}$ being in the image of $Tr_4^{\mathcal A}.$  So, combining these data with the equality \eqref{kqL} and Theorem \ref{dlc4}, the following is immediate.

\begin{corl}\label{hqc4}
The cohomological transfers $Tr_4^{\mathcal A}: \mathbb Z/2 \otimes_{GL_4} {\rm Ann}_{\overline{\mathcal A}}[P_{n_s}^{\otimes 4}]^{*}\to {\rm Ext}_{\mathcal A}^{4, 4+n_s}(\mathbb Z/2, \mathbb Z/2)$ are isomorphisms, for all $s\geq 4.$
\end{corl}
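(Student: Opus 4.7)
By Theorem \ref{dlc4} the source $\mathbb Z/2 \otimes_{GL_4} {\rm Ann}_{\overline{\mathcal A}}[P_{n_s}^{\otimes 4}]^{*}$ is one-dimensional with generator $[x_1^{(0)}x_2^{(0)}x_3^{(2^{s}-1)}x_4^{(2^{s+2}-1)}]$, and by Lin's computation \eqref{kqL} the target ${\rm Ext}_{\mathcal A}^{4,4+n_s}(\mathbb Z/2,\mathbb Z/2)=\langle h_0^{2}h_sh_{s+2}\rangle$ is also one-dimensional. The plan therefore reduces to exhibiting a single nonzero value of $Tr_4^{\mathcal A}$.

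I would follow the strategy already employed for $s=2$ and $s=3$ and pass through the lambda algebra representation $\psi_4$ of Ch\ohorn n--H\`a, which as recalled in the introduction is compatible with $Tr_4^{\mathcal A}$: if $\zeta\in {\rm Ann}_{\overline{\mathcal A}}[P_{n_s}^{\otimes 4}]^{*}$, then $\psi_4(\zeta)$ is a $\delta$-cycle in $\Lambda^{4,n_s}$ whose cohomology class represents $Tr_4^{\mathcal A}([\zeta])$. Applied to our generator, iteration of the defining recursion for $\psi_4$ together with the two identities $(x_i^{(0)})Sq^{k}=0$ for $k>0$ and $(x^{(2^{s}-1)})Sq^{k}=\binom{2^{s}-1-k}{k}x^{(2^{s}-1-k)}=0$ for every $k>0$ (the binomial vanishes mod~$2$ by Lucas, since for any positive $k<2^{s-1}$ the integers $k$ and $2^{s}-1-k$ share a binary digit) collapses the recursion term by term to
\[
\psi_4\bigl(x_1^{(0)}x_2^{(0)}x_3^{(2^{s}-1)}x_4^{(2^{s+2}-1)}\bigr)=\lambda_0^{2}\lambda_{2^{s}-1}\lambda_{2^{s+2}-1}\in \Lambda^{4,n_s}.
\]

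Each factor $\lambda_{2^{i}-1}$ is a $\delta$-cycle representing $h_i$, and a short check via \eqref{vp} and the Leibniz rule shows that the monomial $\lambda_0^{2}\lambda_{2^{s}-1}\lambda_{2^{s+2}-1}$ is admissible and $\delta$-closed. Under the standard identification of $H(\Lambda,\delta)$ with ${\rm Ext}_{\mathcal A}^{*,*}(\mathbb Z/2,\mathbb Z/2)$ as a Yoneda-product algebra, its cohomology class is $h_0\cdot h_0\cdot h_s\cdot h_{s+2}=h_0^{2}h_sh_{s+2}$, which is nonzero by \eqref{kqL}. Consequently $Tr_4^{\mathcal A}$ sends the generator of the source to the generator of the target, and is an isomorphism for every $s\geq 4$.

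The only real obstacle lies in the recursive computation of $\psi_4$ on the chosen generator; once the Lucas-type vanishing of the binomials is recorded, the recursion collapses to a single surviving term, and the remaining verification that $\lambda_0^{2}\lambda_{2^{s}-1}\lambda_{2^{s+2}-1}$ is $\delta$-closed and represents the class $h_0^{2}h_sh_{s+2}$ is essentially routine bookkeeping.
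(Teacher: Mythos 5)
Your proof is correct and follows essentially the same route as the paper: both reduce to the one-dimensionality of source and target via Theorem \ref{dlc4} and \eqref{kqL}, compute $\psi_4(x_1^{(0)}x_2^{(0)}x_3^{(2^s-1)}x_4^{(2^{s+2}-1)})=\lambda_0^2\lambda_{2^s-1}\lambda_{2^{s+2}-1}$, and identify this cycle with $h_0^2h_sh_{s+2}$. One small phrasing slip: $k$ and $2^s-1-k$ are bitwise \emph{complements} in the bottom $s$ bits (so they share \emph{no} common $1$-bit), and it is precisely this disjointness that forces $\binom{2^s-1-k}{k}\equiv 0\pmod 2$ for $0<k$ by Lucas.
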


\subsection{The degree \boldmath{$n'_s :=17.2^{s}-2$}}\label{s2.2}

Because Kameko's squaring operation $(\widetilde {Sq^0_*})_{n'_s}: QP^{\otimes 4}_{n'_s}  \longrightarrow QP^{\otimes 4}_{\frac{n'_s-4}{2}}$ is an epimorphism of $\mathbb Z/2[GL_4]$-modules, we have
\begin{equation}\label{bdt2}
 \dim (\mathbb Z/2 \otimes_{GL_4} {\rm Ann}_{\overline{\mathcal A}}[P_{n'_s}^{\otimes 4}]^{*})\leq \dim ([{\rm Ker}((\widetilde {Sq^0_*})_{n'_s})]^{GL_4}) + \dim (\mathbb Z/2 \otimes_{GL_4} {\rm Ann}_{\overline{\mathcal A}}[P_{\frac{n'_s-4}{2}}^{\otimes 4}]^{*}).
\end{equation}
By direct calculations using a monomial basis of ${\rm Ker}((\widetilde {Sq^0_*})_{n'_s})$ in Sum \cite{Sum1, Sum2}, we get the following results.

{\bf The case {\boldmath{$s = 1$}}}. Consider the following element in $[P^{\otimes 4}_{n'_1}]^{*}$:
$$ \begin{array}{ll}
\overline{\zeta}&=  x_1^{(3)}x_2^{(13)}x_3^{(7)}x_4^{(9)}+
 x_1^{(3)}x_2^{(13)}x_3^{(11)}x_4^{(5)}+
 x_1^{(3)}x_2^{(13)}x_3^{(13)}x_4^{(3)}+
 x_1^{(5)}x_2^{(11)}x_3^{(7)}x_4^{(9)}+
\medskip
 x_1^{(5)}x_2^{(11)}x_3^{(11)}x_4^{(5)}\\
&+
 x_1^{(5)}x_2^{(11)}x_3^{(13)}x_4^{(3)}+
 x_1^{(5)}x_2^{(13)}x_3^{(7)}x_4^{(7)}+
 x_1^{(7)}x_2^{(3)}x_3^{(11)}x_4^{(11)}+
 x_1^{(7)}x_2^{(3)}x_3^{(13)}x_4^{(9)}+
\medskip
 x_1^{(7)}x_2^{(5)}x_3^{(11)}x_4^{(9)}\\
&+
 x_1^{(7)}x_2^{(5)}x_3^{(13)}x_4^{(7)}+
 x_1^{(7)}x_2^{(7)}x_3^{(7)}x_4^{(11)}+
 x_1^{(7)}x_2^{(7)}x_3^{(9)}x_4^{(9)}+
 x_1^{(7)}x_2^{(7)}x_3^{(13)}x_4^{(5)}+
\medskip
 x_1^{(7)}x_2^{(9)}x_3^{(7)}x_4^{(9)}\\
&+
 x_1^{(7)}x_2^{(11)}x_3^{(5)}x_4^{(9)}+
 x_1^{(7)}x_2^{(13)}x_3^{(3)}x_4^{(9)}+
 x_1^{(7)}x_2^{(13)}x_3^{(5)}x_4^{(7)}+
 x_1^{(9)}x_2^{(7)}x_3^{(7)}x_4^{(9)}+
\medskip
 x_1^{(9)}x_2^{(7)}x_3^{(11)}x_4^{(5)}\\
&+
 x_1^{(9)}x_2^{(7)}x_3^{(13)}x_4^{(3)}+
 x_1^{(11)}x_2^{(3)}x_3^{(7)}x_4^{(11)}+
 x_1^{(11)}x_2^{(3)}x_3^{(13)}x_4^{(5)}+
 x_1^{(11)}x_2^{(5)}x_3^{(11)}x_4^{(5)}+
\medskip
 x_1^{(11)}x_2^{(7)}x_3^{(3)}x_4^{(11)}\\
&+
 x_1^{(11)}x_2^{(7)}x_3^{(9)}x_4^{(5)}+
 x_1^{(11)}x_2^{(9)}x_3^{(7)}x_4^{(5)}+
 x_1^{(11)}x_2^{(11)}x_3^{(3)}x_4^{(7)}+
 x_1^{(11)}x_2^{(11)}x_3^{(5)}x_4^{(5)}+
\medskip
 x_1^{(11)}x_2^{(13)}x_3^{(3)}x_4^{(5)}\\
&+
 x_1^{(13)}x_2^{(3)}x_3^{(13)}x_4^{(3)}+
 x_1^{(13)}x_2^{(5)}x_3^{(11)}x_4^{(3)}+
 x_1^{(13)}x_2^{(7)}x_3^{(7)}x_4^{(5)}+
 x_1^{(13)}x_2^{(7)}x_3^{(7)}x_4^{(5)}+
\medskip
 x_1^{(13)}x_2^{(7)}x_3^{(9)}x_4^{(3)}\\
&+
 x_1^{(13)}x_2^{(9)}x_3^{(7)}x_4^{(3)}+
 x_1^{(13)}x_2^{(11)}x_3^{(5)}x_4^{(3)}+
 x_1^{(13)}x_2^{(13)}x_3^{(3)}x_4^{(3)}.
\end{array}$$

\begin{thm}\label{dlc5}
The space $\mathbb Z/2 \otimes_{GL_4} {\rm Ann}_{\overline{\mathcal A}}[P_{n'_1}^{\otimes 4}]^{*}$ is 1-dimensional and generated by $[\overline{\zeta}].$
\end{thm}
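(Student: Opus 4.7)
The plan follows the template developed in Subsection \ref{s2.1} for the family $n_s = 5\cdot 2^{s}-2$. First, I would exploit the Kameko epimorphism at $n'_1 = 32$, yielding the splitting $QP^{\otimes 4}_{n'_1} \cong {\rm Ker}((\widetilde{Sq^0_*})_{n'_1}) \oplus QP^{\otimes 4}_{14}$ and, in particular, the inequality \eqref{bdt2}. Since $(n'_1-4)/2 = 14$, I would need to assemble two pieces: (a) a sharp upper bound on $\dim[{\rm Ker}((\widetilde{Sq^0_*})_{n'_1})]^{GL_4}$; and (b) the dimension of $\mathbb Z/2\otimes_{GL_4}{\rm Ann}_{\overline{\mathcal A}}[P_{14}^{\otimes 4}]^{*}$, which is available from Sum's four-variable hit-problem calculations in \cite{Sum1, Sum3}. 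For the final dimension to equal one, these two contributions must total exactly one.

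Second, to pin down $[{\rm Ker}((\widetilde{Sq^0_*})_{n'_1})]^{GL_4}$, I would use the explicit admissible monomial basis of ${\rm Ker}((\widetilde{Sq^0_*})_{n'_1})$ tabulated in \cite{Sum1, Sum2}. Writing a hypothetical invariant $[f]$ as a $\mathbb Z/2$-linear combination of admissible monomials, I would impose invariance under the standard generators of $GL_4(\mathbb Z/2)$: the adjacent transpositions $t_i \leftrightarrow t_{i+1}$ for $i=1,2,3$ together with the elementary shear $t_1 \mapsto t_1+t_2$, $t_j\mapsto t_j$ for $j\geq 2$. Each relation, after re-expanding the image in the admissible basis modulo $\overline{\mathcal A}P^{\otimes 4}_{n'_1}$, produces a linear constraint on the coefficients of $[f]$; solving the full system should collapse the solution space to the single line spanned by the class of $\overline{\zeta}$.

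Third, I would verify directly that $\overline{\zeta}\in {\rm Ann}_{\overline{\mathcal A}}[P^{\otimes 4}_{n'_1}]^{*}$. Because of the unstable condition in the divided power algebra $\Gamma(x_1^{(1)},\ldots,x_4^{(1)})$ and the degree bound $n'_1 = 32$, it suffices to check that $\overline{\zeta}\cdot Sq^{2^{i}} = 0$ for $i\in\{0,1,2,3\}$. Each verification is a mechanical application of $(x^{(n)}_j)Sq^{k} = \binom{n-k}{k}x_j^{(n-k)}$ together with the Cartan formula, and one expects the Cartan expansion to cancel the listed summands in pairs. Coupled with the nontriviality of $[\overline{\zeta}]$ modulo the hit relations, which is witnessed by, for instance, the surviving admissible summand $x_1^{(7)}x_2^{(7)}x_3^{(9)}x_4^{(9)}$, this would establish the matching lower bound.

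The principal obstacle is the combinatorial scale of the kernel computation: at degree $32$ the admissible basis of ${\rm Ker}((\widetilde{Sq^0_*})_{n'_1})$ is large, and imposing full $GL_4$-invariance produces a correspondingly large linear system over $\mathbb Z/2$. Organising the basis by the partition-type of the exponent vector and by the $\Sigma_4$-orbit under coordinate permutation before handling the shear action, in the style of \cite{Sum1, Sum3} and the author's companion papers \cite{Phuc9, Phuc11}, will be essential to keep the bookkeeping tractable and to identify $[\overline{\zeta}]$ as the unique nonzero class in $[{\rm Ker}((\widetilde{Sq^0_*})_{n'_1})]^{GL_4}$ modulo the hit module.
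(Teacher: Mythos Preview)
Your overall strategy (Kameko splitting, computing kernel invariants from the admissible basis, verifying the annihilation condition) matches the paper's, but two key points are misidentified.

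First, the kernel piece: in this degree $n'_1=32$ the paper (Proposition~\ref{mdc2}) shows that $[{\rm Ker}((\widetilde{Sq^0_*})_{n'_1})]^{GL_4}=0$, not a line. The entire $1$-dimensional contribution in the inequality \eqref{bdt2} comes from the degree-$14$ side: by Sum \cite{Sum3} the coinvariant space at $(n'_1-4)/2=14$ is one-dimensional, and correspondingly $[QP^{\otimes 4}_{32}]^{GL_4}$ is spanned by $[\widetilde\varphi(t_1t_2t_3^{6}t_4^{6}+t_1^{3}t_2^{3}t_3^{4}t_4^{4})]$, the image under the up-Kameko map. Your claim that the linear system on the kernel ``should collapse \ldots\ to the single line spanned by the class of $\overline\zeta$'' cannot hold: $\overline\zeta$ lives in the dual $[P^{\otimes 4}_{32}]^{*}$, not in ${\rm Ker}((\widetilde{Sq^0_*})_{n'_1})\subset QP^{\otimes 4}_{32}$, and in any case every monomial of $\overline\zeta$ has all four exponents odd, so it pairs to zero with the kernel (whose admissible basis consists of monomials with at least one even exponent). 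The pairing that actually detects $\overline\zeta$ is $\langle[\widetilde\varphi(t_1t_2t_3^{6}t_4^{6}+t_1^{3}t_2^{3}t_3^{4}t_4^{4})],[\overline\zeta]\rangle=1$.

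Second, the lower bound: exhibiting a ``surviving admissible summand'' such as $x_1^{(7)}x_2^{(7)}x_3^{(9)}x_4^{(9)}$ shows at best that $\overline\zeta\neq 0$ in $[P^{\otimes 4}_{32}]^{*}$, not that $[\overline\zeta]\neq 0$ in the $GL_4$-coinvariants. The paper secures this either by the pairing above or, equivalently, by computing $\psi_4(\overline\zeta)$ in $\Lambda^{4,32}$ and identifying its class with the nonzero element $d_1\in{\rm Ext}_{\mathcal A}^{4,36}(\mathbb Z/2,\mathbb Z/2)$, so that $Tr_4^{\mathcal A}([\overline\zeta])=d_1\neq 0$. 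One of these two arguments is needed; the annihilation check $(\overline\zeta)Sq^{2^{i}}=0$ together with a single nonzero monomial is not sufficient.
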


An effective approach, based on the inequality \eqref{bdt2} coupled with a result of Sum \cite{Sum3} and the calculations of the invariant space $[{\rm Ker}_{n'_1}]^{GL_4}$, is presented in the proof of the theorem.  Now, it is easy to see that $ \overline{d}_1 = \lambda_7^2\lambda_5\lambda_{13} + \lambda_7^2\lambda_9^2 + \lambda_7\lambda_{11}\lambda_9\lambda_5 + \lambda_{15}\lambda_3\lambda_{11}\lambda_3 \in \Lambda^{4, n'_1}$ is a cycle in $\Lambda$ and is a representative of the non-zero element $d_1\in {\rm Ext}_{\mathcal A}^{4, 4+n'_1}(\mathbb Z/2, \mathbb Z/2).$ Then, using Theorem \ref{dlc5} and a representation in $\Lambda$ of $Tr_4^{\mathcal A},$ we deduce that $d_1 = Tr_4([\overline{\zeta}]) = [\psi_4(\overline{\zeta})].$ This implies that $d_1$ is in the image of $Tr_4^{\mathcal A}.$ Combining this with Theorem \ref{dlc5} and the fact that ${\rm Ext}_{\mathcal A}^{4, 4+n'_1}(\mathbb Z/2, \mathbb Z/2)$ has dimension one, we immediately obtain

\begin{corl}\label{hqc5}
The transfer $Tr_4^{\mathcal A}$ is an isomorphism when acting on $\mathbb Z/2 \otimes_{GL_4} {\rm Ann}_{\overline{\mathcal A}}[P_{n'_1}^{\otimes 4}]^{*}.$
\end{corl}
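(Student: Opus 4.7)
The plan is to exploit the fact that, once Theorem \ref{dlc5} is in hand, both sides of the transfer
$$Tr_4^{\mathcal A}: \mathbb Z/2 \otimes_{GL_4} {\rm Ann}_{\overline{\mathcal A}}[P_{n'_1}^{\otimes 4}]^{*}\longrightarrow {\rm Ext}_{\mathcal A}^{4, 4+n'_1}(\mathbb Z/2, \mathbb Z/2)$$
are one-dimensional $\mathbb Z/2$-vector spaces. Indeed, Theorem \ref{dlc5} pins down the domain as $\langle [\overline{\zeta}]\rangle$, while Lin's table \cite{Lin} identifies the codomain with $\langle d_1\rangle$. So the entire corollary reduces to checking that $Tr_4^{\mathcal A}([\overline{\zeta}])\neq 0$; equivalently, that $Tr_4^{\mathcal A}$ sends a generator to a generator.

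My first step is to invoke the Ch\ohorn n--H\`a representation of $Tr_4^{\mathcal A}$ via the $\mathbb Z/2$-linear map $\psi_4: [P_{n'_1}^{\otimes 4}]^{*}\to \Lambda^{4, n'_1}$ recalled in the introduction. Because $\overline{\zeta}$ has already been asserted to lie in ${\rm Ann}_{\overline{\mathcal A}}[P_{n'_1}^{\otimes 4}]^{*}$ by Theorem \ref{dlc5}, its image $\psi_4(\overline{\zeta})$ is automatically a cycle in $\Lambda^{4, n'_1}$ whose class in $E_2$ represents $Tr_4^{\mathcal A}([\overline{\zeta}])$. The heart of the proof therefore consists in computing $\psi_4(\overline{\zeta})$ explicitly, applying the recursive formula $\psi_4(x_1^{(j_1)}x_2^{(j_2)}x_3^{(j_3)}x_4^{(j_4)}) = \sum_{k\ge j_4}\psi_3((x_1^{(j_1)}x_2^{(j_2)}x_3^{(j_3)})Sq^{k-j_4})\lambda_k$ term by term to each of the $37$ summands of $\overline{\zeta}$.

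Next I would compare the resulting sum against the cycle $\overline{d}_1 = \lambda_7^{2}\lambda_5\lambda_{13} + \lambda_7^{2}\lambda_9^{2} + \lambda_7\lambda_{11}\lambda_9\lambda_5 + \lambda_{15}\lambda_3\lambda_{11}\lambda_3$ that Lin identifies as a representative of $d_1$. After using Adem's relations \eqref{qh} to rewrite $\psi_4(\overline{\zeta})$ in the admissible basis of $\Lambda^{4, n'_1}$, the difference $\psi_4(\overline{\zeta})-\overline{d}_1$ should manifestly be a $\delta$-boundary; I would exhibit an explicit $\eta\in \Lambda^{4, n'_1+1}$ with $\delta(\eta) = \psi_4(\overline{\zeta})-\overline{d}_1$, so that in ${\rm Ext}_{\mathcal A}^{4, 4+n'_1}(\mathbb Z/2, \mathbb Z/2)$ one has $[\psi_4(\overline{\zeta})] = [\overline{d}_1] = d_1$. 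Since $d_1\neq 0$, this forces $Tr_4^{\mathcal A}([\overline{\zeta}])\neq 0$, and a non-zero linear map between one-dimensional spaces is necessarily an isomorphism.

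The main obstacle is purely computational: $\overline{\zeta}$ is a $37$-term sum, each $\psi_4$-image expands via Adem relations into a potentially large admissible sum in $\Lambda^{4, 32}$, and it requires care to keep track of cancellations. In practice one organizes the calculation by first restricting to monomials $x_1^{(j_1)}x_2^{(j_2)}x_3^{(j_3)}x_4^{(j_4)}$ with a fixed value of $j_4$ (the fourth exponent governs which $\lambda_k$ factor appears on the right of $\psi_4$), reducing $\psi_3$ inductively, and finally collecting terms. Apart from this bookkeeping, no new conceptual input beyond Theorem \ref{dlc5}, the representation $\psi_4$, and Lin's calculation of ${\rm Ext}_{\mathcal A}^{4, 4+n'_1}(\mathbb Z/2, \mathbb Z/2)$ is needed.
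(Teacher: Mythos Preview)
Your proposal is correct and follows essentially the same route as the paper: both sides are one-dimensional by Theorem \ref{dlc5} and Lin's computations, and one shows $\psi_4(\overline{\zeta})$ represents $d_1$ in $\Lambda^{4,n'_1}$ up to a boundary (the paper records this boundary explicitly as $\delta(\lambda_7^2\lambda_{19} + \lambda_7\lambda_{19}\lambda_7)$). One small slip: the element $\eta$ with $\delta(\eta)=\psi_4(\overline{\zeta})-\overline{d}_1$ must lie in $\Lambda^{3,n'_1}$, not $\Lambda^{4,n'_1+1}$, since the differential $\delta$ raises the length by one.
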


{\bf The case \boldmath{$s = 2$}}. Using the inequality \eqref{bdt2} and a result in Sum \cite{Sum3}, we have

\begin{thm}\label{dlc6}
The coinvariant space $\mathbb Z/2 \otimes_{GL_4} {\rm Ann}_{\overline{\mathcal A}}[P_{n'_2}^{\otimes 4}]^{*}$ is the $\mathbb Z/2$-vector space of 1 dimension with the basis $\{[x_1^{(1)}x_2^{(1)}x_3^{(1)}x_4^{(63)}]\}.$
\end{thm}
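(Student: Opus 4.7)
The plan is to exploit the splitting induced by Kameko's squaring operation as already encoded in inequality \eqref{bdt2}. Since $n'_2 = 66$ and $(n'_2-4)/2 = 31 = 2^{5}-1$, the second summand on the right-hand side of \eqref{bdt2} is the coinvariant space of $\mathbb Z/2 \otimes_{GL_4} {\rm Ann}_{\overline{\mathcal A}}[P_{31}^{\otimes 4}]^{*}$, a generic degree of the form $2^{s+1}-1$ with $s=4$ that has been completely determined by Sum \cite{Sum3}: it is 1-dimensional, spanned by the class $[x_1^{(0)}x_2^{(0)}x_3^{(0)}x_4^{(31)}]$. Thus it remains to bound the first summand $\dim [{\rm Ker}((\widetilde{Sq^0_*})_{n'_2})]^{GL_4}$ from above by $0$, and then to produce one honest survivor in the coinvariants at degree $n'_2$.

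For the upper bound, I would take an explicit monomial basis of ${\rm Ker}((\widetilde{Sq^0_*})_{n'_2})$ as a $\mathbb Z/2$-vector space, assembled from the admissible basis of $QP^{\otimes 4}_{n'_2}$ listed in Sum \cite{Sum1, Sum2} by discarding those monomials whose exponents are all odd. A generic element $f \in [{\rm Ker}((\widetilde{Sq^0_*})_{n'_2})]^{GL_4}$ is then an unknown $\mathbb Z/2$-linear combination of these basis classes; invariance under the symmetric group $\Sigma_4 \subset GL_4$ already forces most coefficients to be equal, reducing the number of unknowns substantially. I would then impose invariance under the transvections $\rho_i$ ($1 \leq i \leq 4$) generating $GL_4$ together with $\Sigma_4$, rewrite each $\rho_i(f)$ modulo the hit ideal $\overline{\mathcal A}[P^{\otimes 4}]^{*}$ using the admissible straightening algorithm, and collect coefficients against the fixed admissible basis. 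The resulting homogeneous linear system, I expect to force every coefficient to vanish, giving $[{\rm Ker}((\widetilde{Sq^0_*})_{n'_2})]^{GL_4} = 0$.

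Combined with Sum's input and inequality \eqref{bdt2} this yields $\dim \bigl(\mathbb Z/2 \otimes_{GL_4} {\rm Ann}_{\overline{\mathcal A}}[P_{n'_2}^{\otimes 4}]^{*}\bigr) \leq 1$. To match this with a lower bound of one, I would verify that $x_1^{(1)}x_2^{(1)}x_3^{(1)}x_4^{(63)}$ lies in ${\rm Ann}_{\overline{\mathcal A}}[P_{n'_2}^{\otimes 4}]^{*}$ (because of the instability axiom, only the Steenrod squares $Sq^{2^{i}}$ for $0 \leq i \leq 5$ need be checked, and each kills the element on sight since the relevant binomial coefficients $\binom{1-2^{i}}{2^{i}}$ and $\binom{63-2^{i}}{2^{i}}$ mod $2$ are easy to read off), and that its class is not $GL_4$-equivalent to a hit polynomial. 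The cleanest way is to observe that $(\widetilde{Sq^0_*})_{n'_2}$ is $GL_4$-equivariant and sends this class to Sum's generator $[x_1^{(0)}x_2^{(0)}x_3^{(0)}x_4^{(31)}]$, so its image in the coinvariants at degree $31$ is non-zero; consequently its own class in $\mathbb Z/2 \otimes_{GL_4} {\rm Ann}_{\overline{\mathcal A}}[P_{n'_2}^{\otimes 4}]^{*}$ must be non-zero.

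The principal obstacle, as always in these calculations, is the sheer size of the admissible basis of ${\rm Ker}((\widetilde{Sq^0_*})_{n'_2})$ and the bookkeeping required to reduce $\rho_i(f)$ to admissible form against that basis. The reward is that the verification of triviality of the invariant part of the kernel together with the explicit Kameko lift of Sum's generator closes the argument and simultaneously pins down the basis claimed in the theorem.
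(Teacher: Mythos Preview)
Your upper-bound strategy matches the paper's: compute $[{\rm Ker}((\widetilde{Sq^0_*})_{n'_2})]^{GL_4}$ directly from Sum's admissible basis (this is exactly the $s=2$ case of Proposition~\ref{mdc2}, which gives zero), invoke Sum's result for degree $31$, and apply~\eqref{bdt2} to get $\dim\leq 1$.

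The gap is in your lower-bound step. The operation $(\widetilde{Sq^0_*})_{n'_2}$ is defined on $QP^{\otimes 4}$, mapping degree $66$ to degree $31$; on the dual side ${\rm Ann}_{\overline{\mathcal A}}[P^{\otimes 4}]^{*}$ the resulting $GL_4$-equivariant map therefore runs from degree $31$ to degree $66$, not the other way. There is no $GL_4$-equivariant map carrying $x_1^{(1)}x_2^{(1)}x_3^{(1)}x_4^{(63)}$ down to $x_1^{(0)}x_2^{(0)}x_3^{(0)}x_4^{(31)}$: the obvious candidate, the dual of the up-Kameko $\varphi(u)=t_1t_2t_3t_4u^{2}$, is only $S_4$-equivariant (check $\theta_4$). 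And in the correct direction, since $\mathbb Z/2\otimes_{GL_4}(-)$ is only right exact, knowing that Sum's class is nonzero in the source coinvariants at degree $31$ does not show its image is nonzero at degree $66$; equivalently, $[{\rm Ker}]^{GL_4}=0$ makes $(QP_{66}^{\otimes 4})^{GL_4}\to(QP_{31}^{\otimes 4})^{GL_4}$ injective but says nothing about surjectivity. The paper fills this instead via the transfer: $\psi_4(x_1^{(1)}x_2^{(1)}x_3^{(1)}x_4^{(63)})=\lambda_1^{3}\lambda_{63}$ represents $h_1^{3}h_6=h_0^{2}h_2h_6\neq 0$ in ${\rm Ext}_{\mathcal A}^{4,4+n'_2}(\mathbb Z/2,\mathbb Z/2)$, so the class must be nonzero in coinvariants. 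Alternatively, as in the proof of Theorem~\ref{dlc2}, one exhibits the explicit generator of $[QP^{\otimes 4}_{n'_2}]^{GL_4}$ (the paper records it) and pairs $x_1^{(1)}x_2^{(1)}x_3^{(1)}x_4^{(63)}$ against it.
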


We observe that $\lambda_1\in \Lambda^{1, 1}$ and $\lambda_{63}\in \Lambda^{1, 63}$ are the cycles in $\Lambda$ and $[\lambda_1] = h_1\in {\rm Ext}_{\mathcal A}^{1, 2}(\mathbb Z/2, \mathbb Z/2)$ and $[\lambda_{63}] = h_6\in {\rm Ext}_{\mathcal A}^{1, 64}(\mathbb Z/2, \mathbb Z/2).$ So, from Theorem \ref{dlc6} and the representation of the fourth transfer homomorphism over the algebra $\Lambda,$ we claim that the cycle $\lambda_1^{3}\lambda_{63} = \psi_4(x_1^{(1)}x_2^{(1)}x_3^{(1)}x_4^{(63)})$ in $\Lambda$ is a representative of the non-zero element $h_1^{3}h_6 = h_0^{2}h_2h_6\in {\rm Ext}_{\mathcal A}^{4, 4+n'_2}(\mathbb Z/2, \mathbb Z/2).$ This shows that $h_1^{3}h_6\in {\rm Im}(Tr_4^{\mathcal A})$ and so, the following corollary is immediate from the fact that $ {\rm Ext}_{\mathcal A}^{4, 4+n'_2}(\mathbb Z/2, \mathbb Z/2)$ is 1-dimensional.

\begin{corl}\label{hqc6}
The algebraic transfer $Tr_4^{\mathcal A}: \mathbb Z/2 \otimes_{GL_4} {\rm Ann}_{\overline{\mathcal A}}[P_{n'_2}^{\otimes 4}]^{*}\to {\rm Ext}_{\mathcal A}^{4, 4+n'_2}(\mathbb Z/2, \mathbb Z/2)$ is an isomorphism.
\end{corl}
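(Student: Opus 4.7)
The plan is to combine the dimension count of Theorem~\ref{dlc6} with a non-triviality argument for $Tr_4^{\mathcal A}$ carried out in the lambda algebra via the Chơn--Hà representation $\psi_4$. Since Theorem~\ref{dlc6} already pins down $\mathbb Z/2 \otimes_{GL_4} {\rm Ann}_{\overline{\mathcal A}}[P_{n'_2}^{\otimes 4}]^{*}$ as one-dimensional, generated by the class $[x_1^{(1)}x_2^{(1)}x_3^{(1)}x_4^{(63)}]$, and since ${\rm Ext}_{\mathcal A}^{4, 4+n'_2}(\mathbb Z/2,\mathbb Z/2)$ is also one-dimensional (generated by $h_0^{2}h_2h_6 = h_1^{3}h_6$, according to Lin~\cite{Lin}), it suffices to exhibit a single generator of the source whose transfer image is non-zero; the map is then automatically an isomorphism between one-dimensional $\mathbb Z/2$-vector spaces.

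First, I would verify that $x_1^{(1)}x_2^{(1)}x_3^{(1)}x_4^{(63)}$ actually lies in ${\rm Ann}_{\overline{\mathcal A}}[P_{n'_2}^{\otimes 4}]^{*}$. Because $[P_{n'_2}^{\otimes 4}]^{*}$ is unstable, one need only check the action of $Sq^{2^i}$ for $i = 0,1,\ldots$ and, by the shape of the divided-power formula for $(x^{(n)}_j)Sq^{k}$, only finitely many values $i$ give nontrivial contributions; in each case the binomial coefficient $\binom{n-k}{k}$ vanishes mod~2 on $x_4^{(63)}$ and on the $x_j^{(1)}$'s, yielding zero. Next, I would compute $\psi_4(x_1^{(1)}x_2^{(1)}x_3^{(1)}x_4^{(63)})$ using the inductive definition of~$\psi_4$; the Cartan-formula behaviour of the right $\mathcal{A}$-action on the remaining factors forces every Steenrod-square term $(\cdot)Sq^{k-j_h}$ with $k > j_h$ to vanish, so the recursion collapses and yields $\lambda_1^{3}\lambda_{63} \in \Lambda^{4, n'_2}$.

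Finally, I would check that $\lambda_1^{3}\lambda_{63}$ is a cycle under the differential~$\delta$ in \eqref{vp}, which is immediate since $\lambda_1$ and $\lambda_{63}$ are individually cycles and $\delta$ is a derivation in the algebraic-transfer sense, and that its class in ${\rm Ext}_{\mathcal A}^{4,4+n'_2}(\mathbb Z/2,\mathbb Z/2)$ is precisely the Yoneda product $h_1^{3}h_6$; this follows from the standard fact that $[\lambda_{2^{i}-1}] = h_i$ in $\Lambda$ and that the product in~$\Lambda$ descends to the Yoneda product in ${\rm Ext}$. Combining this with Lin's identification $h_1^{3}h_6 = h_0^{2}h_2h_6 \neq 0$ shows $Tr_4^{\mathcal A}([x_1^{(1)}x_2^{(1)}x_3^{(1)}x_4^{(63)}]) \neq 0$, whence the isomorphism.

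The only point requiring any genuine care is the identification of $\psi_4(x_1^{(1)}x_2^{(1)}x_3^{(1)}x_4^{(63)})$ with $\lambda_1^{3}\lambda_{63}$; all other steps are either quoted from Theorem~\ref{dlc6}, from Lin~\cite{Lin}, or from a trivial dimension count. I do not anticipate a serious obstacle, since the divided-power generator involved has an especially simple form (three copies of $x_j^{(1)}$ and one $x_4^{(63)}$), so the $\psi_h$-recursion telescopes almost immediately.
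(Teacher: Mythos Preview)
Your proposal is correct and follows essentially the same line as the paper's own argument: the paper also invokes Theorem~\ref{dlc6} for the one-dimensional source, computes $\psi_4(x_1^{(1)}x_2^{(1)}x_3^{(1)}x_4^{(63)}) = \lambda_1^{3}\lambda_{63}$, identifies this cycle with the non-zero class $h_1^{3}h_6 = h_0^{2}h_2h_6$, and concludes by the one-dimensionality of ${\rm Ext}_{\mathcal A}^{4,4+n'_2}(\mathbb Z/2,\mathbb Z/2)$ from Lin~\cite{Lin}. Your additional verification that the generator is $\overline{\mathcal A}$-annihilated and your spelled-out $\psi_4$-recursion are harmless elaborations of steps the paper leaves implicit.
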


{\bf The case \boldmath{$s \geq 3$}}. The following theorem is proved by using the equality \eqref{bdt2} and our result in \cite{Phuc9}.

\begin{thm}\label{dlc7}
Let $s$ be a positive integer such that $s\geq 3.$ Then, we have
$$ \dim \mathbb Z/2 \otimes_{GL_4} {\rm Ann}_{\overline{\mathcal A}}[P_{n'_s}^{\otimes 4}]^{*} =\left\{\begin{array}{ll}
1 &\mbox{if $s = 4$},\\
2&\mbox{if $s \neq 4$}.
\end{array}\right.$$
Furthermore, 
$$ \begin{array}{ll}
\medskip
& \mathbb Z/2 \otimes_{GL_4} {\rm Ann}_{\overline{\mathcal A}}[P_{n'_s}^{\otimes 4}]^{*} \\
&=\left\{\begin{array}{ll}
\langle [x_1^{(1)}x_2^{(7)}x_3^{(63)}x_4^{(63)}], [x_1^{(0)}x_2^{(0)}x_3^{(7)}x_4^{(127)}] \rangle &\mbox{if $s = 3$},\\[1mm]
\langle [x_1^{(1)}x_2^{(15)}x_3^{(127)}x_4^{(127)}] \rangle &\mbox{if $s = 4$},\\[1mm]
\langle [x_1^{(1)}x_2^{(2^{s-1}-1)}x_3^{(2^{s-1}-1)}x_4^{(2^{s+4}-1)}] , [x_1^{(1)}x_2^{(2^{s}-1)}x_3^{(2^{s+3}-1)}x_4^{(2^{s+3}-1)}]  \rangle &\mbox{if $s \geq 5$}.
\end{array}\right.
\end{array}$$
\end{thm}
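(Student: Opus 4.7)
\noindent\textbf{Proof proposal for Theorem \ref{dlc7}.} The plan is to exploit the splitting
$$QP^{\otimes 4}_{n'_s} \;\cong\; \mathrm{Ker}\bigl((\widetilde{Sq^0_*})_{n'_s}\bigr)\,\oplus\, QP^{\otimes 4}_{(n'_s-4)/2}$$
afforded by Kameko's epimorphism of $\mathbb{Z}/2[GL_4]$-modules, which yields inequality \eqref{bdt2}. Since $(n'_s-4)/2 = 17\cdot 2^{s-1}-3$, I would plug in the dimension computed for this reduced degree in our previous paper \cite{Phuc9}; it should contribute exactly one $GL_4$-invariant class, representable by a monomial of the form $x_1^{(0)}x_2^{(0)}x_3^{(a)}x_4^{(b)}$ (respectively $x_1^{(1)}x_2^{(c)}x_3^{(d)}x_4^{(e)}$) whose Kameko double lift gives the first (respectively second) generator listed in the theorem. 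The remaining work is then to bound and to explicitly determine $[\mathrm{Ker}((\widetilde{Sq^0_*})_{n'_s})]^{GL_4}$ and to match the resulting dimension.

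\medskip

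\noindent For the latter, I would import the admissible monomial basis of $\mathrm{Ker}((\widetilde{Sq^0_*})_{n'_s})$ established by Sum \cite{Sum1,Sum2}, decomposed according to the weight vector of the exponents and according to the partition of variables into those with exponent equal to the spike value versus the rest. The first reduction is to compute the subspace of $\Sigma_4$-invariants: the symmetric group forces the coefficients of monomials lying in a single $\Sigma_4$-orbit to be equal modulo hit relations, so I would set up, orbit by orbit, a system of linear equations modulo the $\overline{\mathcal{A}}$-action. The second reduction is to impose invariance under the elementary transvection $\rho_4:t_4\mapsto t_3+t_4$ (which, together with $\Sigma_4$, generates $GL_4$); this typically kills most $\Sigma_4$-invariant candidates and leaves a small list depending delicately on $s$.

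\medskip

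\noindent The case split is driven by how $\mu(n'_s)$ and the spike structure of $n'_s = 17\cdot 2^s-2$ interact with the generic admissible monomials. For $s=3$ the binary expansion of $n'_s$ allows for two admissible spikes of the required shape (one of class $(0,0,*,*)$ coming from Kameko, and one of class $(1,*,*,*)$ in the kernel), giving dimension $2$. For $s=4$ the additional relation $Sq^1$ kills one of the two candidates produced by the general pattern, leaving only $[x_1^{(1)}x_2^{(15)}x_3^{(127)}x_4^{(127)}]$ of dimension $1$. For $s\ge 5$, the spike indices are sufficiently separated so that both proposed generators survive and are easily seen to be $GL_4$-invariant; they are indeed linearly independent modulo hits by comparing their leading monomials. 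Verifying that each listed class lies in $\mathrm{Ann}_{\overline{\mathcal{A}}}[P^{\otimes 4}_{n'_s}]^{*}$ reduces, by the unstable condition, to checking the actions of $Sq^{1},Sq^{2},Sq^{4},\ldots,Sq^{2^{\lfloor \log_2 n'_s\rfloor}}$ on the representing sums; each action produces a polynomial in $[P^{\otimes 4}_{n'_s-2^i}]^{*}$ whose admissible expansion visibly vanishes.

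\medskip

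\noindent The main obstacle will be the exceptional case $s=4$: the upper bound from \eqref{bdt2} together with the generic pattern gives $\le 2$, but the actual dimension is $1$, so one must produce an explicit hit relation (or a nonzero $Sq^{2^i}$-image) that eliminates the would-be second generator $[x_1^{(1)}x_2^{(7)}x_3^{(2^{7}-1)}x_4^{(2^{7}-1)}]$. I expect this to follow from a length-reducing identity among admissibles in degree $n'_4 = 270$ specific to the Milnor basis element $Sq(0,0,1)$ or $Sq^8$, and isolating this relation from among the many Sum-basis monomials is the computationally demanding step of the argument.
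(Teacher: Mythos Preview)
Your overall architecture (Kameko splitting $\Rightarrow$ inequality \eqref{bdt2}, then compute kernel invariants and add the contribution from the lower degree) is the paper's, but the bookkeeping of which piece contributes what is wrong, and this cascades through your case split. The input from \cite{Phuc9} (recorded here as Theorem \ref{dlct}) gives, for $(n'_s-4)/2 = 2^{s+3}+2^{s-1}-3$, a coinvariant space of dimension $1$ when $s\in\{3,4\}$ and dimension $2$ when $s\ge 5$ --- not ``exactly one'' uniformly. On the kernel side, Proposition \ref{mdc2} shows $[\mathrm{Ker}((\widetilde{Sq^0_*})_{n'_s})]^{GL_4}$ is one-dimensional for $s=3$ and \emph{zero} for every $s\ne 3$. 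So the upper bound from \eqref{bdt2} is $1+1=2$ at $s=3$, $0+1=1$ at $s=4$, and $0+2=2$ for $s\ge 5$. In particular, for $s\ge 5$ \emph{both} listed generators come from the Kameko piece (all exponents are odd), not one from Kameko and one from the kernel; and at $s=4$ there is no ``would-be second generator'' to kill --- your proposed element $x_1^{(1)}x_2^{(7)}x_3^{(127)}x_4^{(127)}$ has degree $262\ne n'_4=270$, and no hit relation or $Sq^8$ trick is needed.

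The second gap is the lower bound. Knowing the listed elements are $\overline{\mathcal A}$-annihilated is not enough: you must show their classes are nonzero and independent in the \emph{coinvariant} space, which is precisely the subtle point. The paper does not attempt this directly. Instead it pushes each class through the Singer transfer using the $\Lambda$-representation $\psi_4$, identifies the images as the known nonzero elements $h_1h_3h_6^2$, $h_0^2h_3h_7$, $h_1h_4h_7^2$, $h_1h_{s-1}^2h_{s+4}$, $h_1h_sh_{s+3}^2$ in ${\rm Ext}^{4,*}_{\mathcal A}$, and then reads off the lower bound from Lin's table \eqref{kqL2}. Without this transfer step (or an equivalent direct argument), your sketch has no mechanism to rule out that, say, the two proposed classes at $s\ge 5$ coincide modulo $GL_4$.
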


From this theorem and the representation in $\Lambda$ of the rank 4 transfer, it may be concluded that 
$$ \begin{array}{ll} 
&[\psi_4(x_1^{(1)}x_2^{(7)}x_3^{(63)}x_4^{(63)})] = [\lambda_1\lambda_7\lambda_{63}^{2}]\\
\medskip
&= Tr_4^{\mathcal A}([x_1^{(1)}x_2^{(7)}x_3^{(63)}x_4^{(63)}])  = h_1h_3h_6^{2}\in {\rm Ext}_{\mathcal A}^{4, 4+n'_3}(\mathbb Z/2, \mathbb Z/2),\\
&[\psi_4(x_1^{(0)}x_2^{(0)}x_3^{(7)}x_4^{(127)})] = [\lambda_0\lambda_7\lambda_{127}^{2}] \\
\medskip
&= Tr_4^{\mathcal A}([x_1^{(0)}x_2^{(0)}x_3^{(7)}x_4^{(127)}]) = h_0^{2}h_3h_7\in {\rm Ext}_{\mathcal A}^{4, 4+n'_3}(\mathbb Z/2, \mathbb Z/2),\\
 &[\psi_4(x_1^{(1)}x_2^{(15)}x_3^{(127)}x_4^{(127)})] = [\lambda_1\lambda_{15}\lambda_{127}^{2}] \\
 \medskip
&= Tr_4^{\mathcal A}([x_1^{(1)}x_2^{(15)}x_3^{(127)}x_4^{(127)}]) = h_1h_4h_7^{3}\in {\rm Ext}_{\mathcal A}^{4, 4+n'_4}(\mathbb Z/2, \mathbb Z/2),\\
&[\psi_4(x_1^{(1)}x_2^{(2^{s-1}-1)}x_3^{(2^{s-1}-1)}x_4^{(2^{s+4}-1)})] = [\lambda_1\lambda^{2}_{2^{s-1}-1}\lambda_{2^{s+4}-1}] \\
\medskip
&= Tr_4^{\mathcal A}([x_1^{(1)}x_2^{(2^{s-1}-1)}x_3^{(2^{s-1}-1)}x_4^{(2^{s+4}-1)}]) = h_1h_{s-1}^{2}h_{s+4}\in {\rm Ext}_{\mathcal A}^{4, 4+n'_s}(\mathbb Z/2, \mathbb Z/2)\ \mbox{for $s\geq 5$}\\
&[\psi_4(x_1^{(1)}x_2^{(2^{s}-1)}x_3^{(2^{s+3}-1)}x_4^{(2^{s+3}-1)})] = [\lambda_1\lambda_{2^{s}-1}\lambda_{2^{s+3}-1}^{2}]\\
\medskip
 &=  Tr_4^{\mathcal A}([x_1^{(1)}x_2^{(2^{s}-1)}x_3^{(2^{s+3}-1)}x_4^{(2^{s+3}-1)}]) = h_1h_{s}h_{s+3}^{2}\in {\rm Ext}_{\mathcal A}^{4, 4+n'_s}(\mathbb Z/2, \mathbb Z/2)\ \mbox{for $s\geq 5$}.
\end{array}$$ 
Combining these with the fact that
$$ \dim {\rm Ext}_{\mathcal A}^{4, 4+n'_s}(\mathbb Z/2, \mathbb Z/2) = \left\{\begin{array}{ll}
1&\mbox{if $s = 4$},\\
2&\mbox{if $s  = 3$ and $s\geq 5$},
\end{array}\right.$$
we have immediately

\begin{corl}\label{hqc7}
For each $s > 2,$ the cohomological transfer $$Tr_4^{\mathcal A}: \mathbb Z/2 \otimes_{GL_4} {\rm Ann}_{\overline{\mathcal A}}[P_{n'_s}^{\otimes 4}]^{*}\to {\rm Ext}_{\mathcal A}^{4, 4+n'_s}(\mathbb Z/2, \mathbb Z/2)$$ is also an isomorphism.
\end{corl}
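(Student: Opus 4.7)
The plan is to deduce Corollary \ref{hqc7} by combining Theorem \ref{dlc7}, which pins down an explicit $\mathbb Z/2$-basis of $\mathbb Z/2 \otimes_{GL_4} {\rm Ann}_{\overline{\mathcal A}}[P_{n'_s}^{\otimes 4}]^{*}$ for each $s>2$, with the Ch\ohorn n--H\`a representation $\psi_4$ of the rank-$4$ Singer transfer in the lambda algebra. Concretely, I will transport each basis element $[\eta]$ of the domain through $\psi_4$ and then verify, inside $\Lambda^{4,n'_s}$, that $\psi_4(\eta)$ is a cocycle representing a prescribed generator of ${\rm Ext}_{\mathcal A}^{4,4+n'_s}(\mathbb Z/2,\mathbb Z/2)$. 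Since $Tr_4^{\mathcal A}([\eta]) = [\psi_4(\eta)]$ by construction, establishing this identification is exactly what is needed to force these Ext-generators into the image of $Tr_4^{\mathcal A}$.

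First I would tabulate the cases. For $s=3$, I take the two basis elements $x_1^{(1)}x_2^{(7)}x_3^{(63)}x_4^{(63)}$ and $x_1^{(0)}x_2^{(0)}x_3^{(7)}x_4^{(127)}$ from Theorem \ref{dlc7} and compute $\psi_4$ directly from its defining recursion. The resulting monomials $\lambda_1\lambda_7\lambda_{63}^{2}$ and $\lambda_0^{2}\lambda_7\lambda_{127}$ are admissible and visibly $\delta$-closed once one applies the differential \eqref{vp}; they are the standard lambda-algebra representatives of $h_1h_3h_6^{2}$ and $h_0^{2}h_3h_7$ respectively. The analogous calculation handles $s=4$ (yielding $\lambda_1\lambda_{15}\lambda_{127}^{2}$ representing $h_1h_4h_7^{3}$) and the two-parameter families for $s\geq 5$ (yielding $\lambda_1\lambda_{2^{s-1}-1}^{2}\lambda_{2^{s+4}-1}$ and $\lambda_1\lambda_{2^{s}-1}\lambda_{2^{s+3}-1}^{2}$ representing $h_1h_{s-1}^{2}h_{s+4}$ and $h_1h_s h_{s+3}^{2}$).

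Next I would invoke Lin's computation \cite{Lin} of ${\rm Ext}_{\mathcal A}^{4,4+n'_s}(\mathbb Z/2,\mathbb Z/2)$, which (as recorded in the dimension table preceding the corollary) has dimension $1$ for $s=4$ and dimension $2$ for $s=3$ and $s\geq 5$, with the elements just listed being a $\mathbb Z/2$-basis (in particular they are all non-zero and, in the two-dimensional cases, linearly independent). Matching cardinalities, one sees that in each case $Tr_4^{\mathcal A}$ sends a basis of the source to a basis of the target, so the map is an isomorphism.

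The main technical obstacle is the identification of each cycle $\psi_4(\eta)$ with the named Ext class: although the monomials are short, one must verify both that each is $\delta$-closed in $\Lambda$ and that the resulting cohomology class is genuinely the indecomposable $h_1h_3h_6^{2}$, $h_0^{2}h_3h_7$, $h_1h_4h_7^{3}$, etc.\ rather than a decomposable or a sum thereof. For the monomials considered here this is routine once one writes down the admissible form and uses the well-known lambda-algebra detection of the $h_i$ by $\lambda_{2^{i}-1}$; but the bookkeeping in the $s\geq 5$ family must be done carefully to ensure the two representatives are linearly independent in $\Lambda^{4,n'_s}/\mathrm{im}\,\delta$. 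With these identifications in hand, the corollary follows.
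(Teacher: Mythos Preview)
Your proposal is correct and follows essentially the same route as the paper: take the explicit basis of the domain supplied by Theorem \ref{dlc7}, push each generator through $\psi_4$ to obtain a short admissible monomial in $\Lambda^{4,n'_s}$, identify that cycle with the corresponding product of Hopf classes, and then match against Lin's dimension count of ${\rm Ext}_{\mathcal A}^{4,4+n'_s}(\mathbb Z/2,\mathbb Z/2)$ to conclude that $Tr_4^{\mathcal A}$ carries a basis to a basis. The only cosmetic difference is that you write out a bit more about verifying the $\delta$-closedness and the linear independence in the $s\ge 5$ family, whereas the paper simply records the identifications and invokes the dimension table; and you correctly normalize $\psi_4(x_1^{(0)}x_2^{(0)}x_3^{(7)}x_4^{(127)})$ as $\lambda_0^{2}\lambda_7\lambda_{127}$, fixing an evident misprint in the paper's displayed formula.
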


Thus, Corollaries \ref{hqc1} - \ref{hqc7} confirmed Singer's conjecture that $Tr_4^{\mathcal A}$ is a monomorphism in all the internal degrees $q.2^s-2$ for $q\in \{5, 17\}$ and all positive integers $s.$ Moreover, motivated by the data in this paper, we propose the following.

\begin{conj}
Let us considere degrees $2^{s+m} + 2^{s} - 2$, where $s,\, m$ are positive integers. Then, Singer's conjecture for the rank 4 transfer holds in these internal degrees.  
\end{conj}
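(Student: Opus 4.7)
The plan is to extend the arguments of Subsections~\ref{s2.1}--\ref{s2.2} to arbitrary $m$ by a double induction: an outer induction on $m$ and, for each fixed $m$, an inner induction on $s$. The workhorse throughout is the Kameko epimorphism
\[
(\widetilde{Sq^0_*})_n : QP_n^{\otimes 4} \longrightarrow QP_{(n-4)/2}^{\otimes 4},\qquad n = 2^{s+m}+2^s-2,
\]
which is $\mathbb{Z}/2[GL_4]$-linear and produces the estimate
\[
\dim\bigl(\mathbb{Z}/2\otimes_{GL_4}{\rm Ann}_{\overline{\mathcal{A}}}[P_n^{\otimes 4}]^*\bigr) \leq \dim\bigl([{\rm Ker}(\widetilde{Sq^0_*})_n]^{GL_4}\bigr) + \dim\bigl(\mathbb{Z}/2\otimes_{GL_4}{\rm Ann}_{\overline{\mathcal{A}}}[P_{(n-4)/2}^{\otimes 4}]^*\bigr).
\]
For $s\geq 2$, the reduced degree $(n-4)/2 = 2^{s+m-1}+2^{s-1}-3$ is of generic type (iv) in Sum's classification, and the second term is supplied either by the author's earlier works \cite{Phuc9, Phuc11} or by the inner inductive hypothesis. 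For $s=1$ one has $n=2^{m+1}$, which is the base of the induction and is handled by iterated Kameko reduction together with the known rank-4 structure in a small spike degree.

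Next I would control the kernel term. Using Sum's admissible-monomial basis of $QP_n^{\otimes 4}$ from \cite{Sum1, Sum2}, I would proceed exactly as in Theorems~\ref{dlc2}--\ref{dlc7}: apply the generators of $GL_4$ (the coordinate transpositions together with the single upper-triangular generator) to each basis element of ${\rm Ker}(\widetilde{Sq^0_*})_n$, re-expand the images in the admissible basis modulo the hit submodule, and solve the resulting linear system for the $GL_4$-invariants. The upper bound on $\dim\mathbb{Z}/2\otimes_{GL_4}{\rm Ann}_{\overline{\mathcal{A}}}[P_n^{\otimes 4}]^*$ then follows from the displayed inequality.

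The final step is to realize this upper bound through Singer's transfer. Via the Ch\ohorn n--H\`a homomorphism $\psi_4$ of \cite{C.H2}, I would transport each candidate coinvariant to a cycle in $\Lambda^{4,n}$ and identify its image class among the generators of ${\rm Ext}_{\mathcal{A}}^{4,4+n}(\mathbb{Z}/2,\mathbb{Z}/2)$ tabulated by Lin~\cite{Lin}: typically the family $h_0^2h_sh_{s+m}$ together with a short list of sporadic and decomposable classes. Matching counts forces the inequality above to be an equality, and the resulting linear independence in Ext confirms that $Tr_4^{\mathcal{A}}$ is a monomorphism in each such degree, which is Singer's conjecture in rank~4.

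The main obstacle is twofold. First, Sum's admissible basis for ${\rm Ker}(\widetilde{Sq^0_*})_n$ grows rapidly in size with $m$, and a uniform combinatorial description of this basis valid for all $m$ is not presently in the literature; without such a description the invariant calculation remains case-by-case in $m$ rather than a single uniform argument. Second, on the Ext side, the decomposition of ${\rm Ext}_{\mathcal{A}}^{4,4+n}(\mathbb{Z}/2,\mathbb{Z}/2)$ is fully explicit only for small $m$, so for larger $m$ one must either extract it from Bruner's machine computations~\cite{Bruner} or derive it by May spectral sequence arguments, and then match the chosen cycles in $\Lambda^{4,n}$ to the resulting Ext classes --- a step that, while algorithmic, demands careful and substantial bookkeeping, and is the part where new ideas seem most likely to be needed.
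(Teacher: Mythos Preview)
The statement you are attempting to prove is labelled \emph{Conjecture} in the paper and is \emph{not} proved there. The paper establishes only the two special cases $m=2$ (Subsection~\ref{s2.1}) and $m=4$ (Subsection~\ref{s2.2}); immediately after stating the conjecture, the author remarks that \emph{if} it holds then Singer's conjecture follows for all cohomological degrees $\leq 4$, but offers no argument for general $m$. So there is no ``paper's own proof'' to compare against.

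Your outline accurately reconstructs the method the paper uses for $m=2,4$: the Kameko reduction, Sum's admissible basis for the kernel, the explicit $GL_4$-invariant computation via the generators $\theta_1,\ldots,\theta_4$, and the identification of images in $\Lambda$ via $\psi_4$ against Lin's Ext tables. As a \emph{strategy} this is exactly right, and you correctly diagnose why it does not yet constitute a proof. The two obstacles you name are precisely the ones that keep the statement a conjecture: (i) Sum's basis for ${\rm Ker}(\widetilde{Sq^0_*})_n$ in \cite{Sum1,Sum2} is given degree-by-degree with no closed uniform parametrisation in $m$, so the invariant calculation cannot currently be run once and for all; and (ii) Lin's description of ${\rm Ext}_{\mathcal A}^{4,4+n}$ does give the generic family $h_0^2h_sh_{s+m}$, but matching the full list of sporadic indecomposables (the $d_i,e_i,f_i,p_i,\ldots$) that may appear for small $s$ to explicit $\overline{\mathcal A}$-annihilated preimages under $\psi_4$ is exactly the hand computation the paper carries out only for $m=2,4$. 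Until one of these two steps is made uniform in $m$, what you have written is a credible research plan rather than a proof.
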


So, by our previous results \cite{Phuc9, Phuc11} and the work of Sum \cite{Sum3}, we see that if this conjecture is true then the fourth algebraic transfer is a monomorphism for all degrees $n.$ This means that Singer's conjecture is true for cohomological degrees less than or equal to 4.

%\begin{acknow}
%This research was supported in part by the NAFOSTED of Viet Nam under grant no. 101.04-2017.05. 
%The author is grateful to the anonymous referees for careful reading of the manuscript, for the interesting and useful remarks which allow me to improve the text and clarify some of the results.
%The  author  would  like  to  gratefully  acknowledge  the  editors  and  reviewers  fortheir careful reading of the manuscript and their many insightful comments and suggestions 
%\end{acknow}

\section{Proofs of main results}

We first provide some definitions, which will be used in proofs of our main results.

\begin{dn}\label{dnmd}
Let us consider the polynomials $u$ and $v$ in $P^{\otimes h}_{n}.$ We say that $u\equiv v$ if and only if $(u+v)$ is $\mathcal A$-decomposable (or "hit"). %In other words, $(u+v)$ belongs to $\overline{\mathcal A}P^{\otimes h}_{n}.$
\end{dn}
The readers can see that the binary relation "$\equiv$" on $P^{\otimes h}_{n}$ is an equivalence relation.

\begin{dn}
For each $1\leq j\leq h,$ we define the $\mathbb Z/2$-linear map $$\theta_j: V_h\cong \langle t_1, \ldots, t_h \rangle\to V_h\cong \langle t_1, \ldots, t_h \rangle$$ by subsituting $\theta_j(t_j) = t_{j+1},$ $\theta_j(t_{j+1}) = t_{j},$ $\theta_j(t_i) = t_{i},$ for $i\neq j,\, j+1,$ $1\leq j < 4,$ and $\theta_h(t_1) = t_1+ t_2,$ $\theta_4(t_i) = t_{i},$ for $1 < i\leq h.$
\end{dn}

From this definition, the group $GL_h\cong GL(V_h)$ is generated by $\theta_j,$ for $1\leq j\leq h,$ and the symmetric group $S_h$ is generated by $\theta_j$ for $1\leq j < h.$ Further, $\theta_j$ induces a homomorphism of $\mathcal A$-algebras which is also denoted by $\theta_j: P^{\otimes h}\to P^{\otimes h}.$ This together with Definition \ref{dnmd} say that a class $[u]\in QP_{n}^{\otimes h}$ is an $S_h$-invariant (resp. $GL_h$-invariant) if and only if $\theta_j(u)\equiv u$ for $1\leq j < h$ (resp. $1\leq j\leq h$).

Now, in what follows: for any admissible monomials ${\rm adm}_1, \ldots, {\rm adm}_k$ in $P^{\otimes 4}_{n}$ and for a subgroup $\mathscr G$ of $GL_4,$ let us denote $\mathscr G({\rm adm}_1, \ldots, {\rm adm}_k)$ the $\mathbb Z/2\mathscr G$-submodule of ${\rm Ker}_{n}:= {\rm Ker}((\widetilde {Sq^0_*})_{n})$ generated by the set $\{[{\rm adm}_j]:\, 1\leq j\leq k\}.$ 

We have an isomorphism $${\rm Ker}_{n}\cong \underline{{\rm Ker}_{n}}\bigoplus \widehat{{\rm Ker}_{n}},$$ where 
$$ \begin{array}{ll}
\medskip
  \underline{{\rm Ker}_{n}}&:= \langle \{[t_1^{a_1}t_2^{a_2}t_3^{a_3}t_4^{a_4}]\in {\rm Ker}_{n}:\,a_1a_2a_3a_4 = 0\}\rangle,\\
  \widehat{{\rm Ker}_{n}}&:= \langle \{[t_1^{a_1}t_2^{a_2}t_3^{a_3}t_4^{a_4}]\in {\rm Ker}_{n}:\, a_1a_2a_3a_4\neq 0\}\rangle
\end{array}$$
are $\mathbb Z/2$-subspaces of ${\rm Ker}_{n}.$ 

\subsection{The degree \boldmath{$n_s = 5.2^{s}-2$}}

In order to prove Theorems \ref{dlc1}, \ref{dlc2}, \ref{dlc3} and \ref{dlc4}, we need to some related results.

{\bf The case \boldmath{$s = 1$}}. According to Sum \cite{Sum1, Sum2}, $\underline{{\rm Ker}_{n_1}}$ has a basis consisting of all the classes represented by the admissible monomials ${\rm adm}_{1,\, j}$ for all $1\leq j\leq 42,$ where  

\begin{center}
\begin{tabular}{lcrr}
${\rm adm}_{1,\,1}= t_3t_4^{7}$, & ${\rm adm}_{1,\,2}= t_3^{7}t_4$, & \multicolumn{1}{l}{${\rm adm}_{1,\,3}= t_2t_4^{7}$,} & \multicolumn{1}{l}{${\rm adm}_{1,\,4}= t_2t_3^{7}$,} \\
${\rm adm}_{1,\,5}= t_2^{7}t_4$, & ${\rm adm}_{1,\,6}= t_2^{7}t_3$, & \multicolumn{1}{l}{${\rm adm}_{1,\,7}= t_1t_4^{7}$,} & \multicolumn{1}{l}{${\rm adm}_{1,\,8}= t_1t_3^{7}$,} \\
${\rm adm}_{1,\,9}= t_1t_2^{7}$, & ${\rm adm}_{1,\,10}= t_1^{7}t_4$, & \multicolumn{1}{l}{${\rm adm}_{1,\,11}= t_1^{7}t_3$,} & \multicolumn{1}{l}{${\rm adm}_{1,\,12}= t_1^{7}t_2$,} \\
${\rm adm}_{1,\,13}= t_3^{3}t_4^{5}$, & ${\rm adm}_{1,\,14}= t_2^{3}t_4^{5}$, & \multicolumn{1}{l}{${\rm adm}_{1,\,15}= t_2^{3}t_3^{5}$,} & \multicolumn{1}{l}{${\rm adm}_{1,\,16}= t_1^{3}t_4^{5}$,} \\
${\rm adm}_{1,\,17}= t_1^{3}t_3^{5}$, & ${\rm adm}_{1,\,18}= t_1^{3}t_2^{5}$, & \multicolumn{1}{l}{${\rm adm}_{1,\,19}= t_2t_3t_4^{6}$,} & \multicolumn{1}{l}{${\rm adm}_{1,\,20}= t_2t_3^{6}t_4$,} \\
\end{tabular}%
\end{center}

\newpage
\begin{center}
\begin{tabular}{lcrr}
${\rm adm}_{1,\,21}= t_1t_3t_4^{6}$, & ${\rm adm}_{1,\,22}= t_1t_3^{6}t_4$, & \multicolumn{1}{l}{${\rm adm}_{1,\,23}= t_1t_2t_4^{6}$,} & \multicolumn{1}{l}{${\rm adm}_{1,\,24}= t_1t_2t_3^{6}$,} \\
${\rm adm}_{1,\,25}= t_1t_2^{6}t_4$, & ${\rm adm}_{1,\,26}= t_1t_2^{6}t_3$, & \multicolumn{1}{l}{${\rm adm}_{1,\,27}= t_2t_3^{2}t_4^{5}$,} & \multicolumn{1}{l}{${\rm adm}_{1,\,28}= t_1t_3^{2}t_4^{5}$,} \\
${\rm adm}_{1,\,29}= t_1t_2^{2}t_4^{5}$, & ${\rm adm}_{1,\,30}= t_1t_2^{2}t_3^{5}$, & \multicolumn{1}{l}{${\rm adm}_{1,\,31}= t_2t_3^{3}t_4^{4}$,} & \multicolumn{1}{l}{${\rm adm}_{1,\,32}= t_2^{3}t_3t_4^{4}$,} \\
${\rm adm}_{1,\,33}= t_2^{3}t_3^{4}t_4$, & ${\rm adm}_{1,\,34}= t_1t_3^{3}t_4^{4}$, & \multicolumn{1}{l}{${\rm adm}_{1,\,35}= t_1t_2^{3}t_4^{4}$,} & \multicolumn{1}{l}{${\rm adm}_{1,\,36}= t_1t_2^{3}t_3^{4}$,} \\
${\rm adm}_{1,\,37}= t_1^{3}t_3t_4^{4}$, & ${\rm adm}_{1,\,38}= t_1^{3}t_2t_4^{4}$, & \multicolumn{1}{l}{${\rm adm}_{1,\,39}= t_1^{3}t_2t_3^{4}$,} & \multicolumn{1}{l}{${\rm adm}_{1,\,40}= t_1^{3}t_3^{4}t_4$,} \\
${\rm adm}_{1,\,41}= t_1^{3}t_2^{4}t_4$, & ${\rm adm}_{1,\,42}= t_1^{3}t_2^{4}t_3$. &       &  
\end{tabular}%
\end{center}

From the admissible basis above,  by a simple computation, we see that 
$$ \begin{array}{ll}
\medskip
&S_4({\rm adm}_{1,\,1}) = \langle \{[{\rm adm}_{1,\,j}]:\, 1\leq j\leq 12\}  \rangle,\\
\medskip
&S_4({\rm adm}_{1,\,13}) = \langle \{[{\rm adm}_{1,\,j}]:\, 13\leq j\leq 18]\}  \rangle,\\
\medskip
&S_4({\rm adm}_{1,\,19},\ {\rm adm}_{1,\,27}) = \langle \{[{\rm adm}_{1,\,j}]:\, 19\leq j\leq 42\}  \rangle\\
\end{array}$$
are $S_4$-submodules of $\underline{{\rm Ker}_{n_1}}.$ Therefore, we have an isomorphism
$$ \underline{{\rm Ker}_{n_1}} \cong S_4({\rm adm}_{1,\,1})\bigoplus S_4({\rm adm}_{1,\,13})\bigoplus S_4({\rm adm}_{1,\,19},\ {\rm adm}_{1,\,27}).$$ 

\begin{lema}\label{bdc1-1}
The following statements are true:
\begin{itemize}

\item[i)] $[S_4({\rm adm}_{1,\,1})]^{S_4} = \langle [p_{1,\, 1}] \rangle$ with $p_{1,\, 1}:= \sum_{1\leq j\leq 12}{\rm adm}_{1,\,j}.$

\item[ii)] $[S_4({\rm adm}_{1,\,13})]^{S_4} = \langle [p_{1,\, 2}] \rangle$ with $p_{1,\, 2}:= \sum_{13\leq j\leq 18}{\rm adm}_{1,\,j}.$

\item[iii)] $[S_4({\rm adm}_{1,\,19},\ {\rm adm}_{1,\,27})]^{S_4} = \langle [p_{1,\, 3}] \rangle$ with $p_{1,\, 3}:= \sum_{\mathbb J\setminus \{27, \ldots, 31, 34, 35, 36\}}{\rm adm}_{1,\,j},$\\[1mm] where the set \mbox{$\mathbb J:=\{19, 20, \ldots, 42\}.$}
\end{itemize}
\end{lema}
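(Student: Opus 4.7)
The plan is to verify each statement by imposing the invariance condition $\theta_j(f) \equiv f$ for the generators $j = 1, 2, 3$ of the symmetric group $S_4$, and reading off the resulting linear system over $\mathbb{Z}/2$. Recall that by Definition of $\equiv$, this amounts to checking that $\theta_j(f) + f$ is a hit polynomial in $P^{\otimes 4}_{n_1}$, which in turn reduces to expanding $\theta_j(f) + f$ in the admissible monomial basis of $\underline{\rm Ker}_{n_1}$ and setting all coefficients to zero.

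For part (i), the 12 admissible monomials $\{\text{adm}_{1,j}: 1\leq j\leq 12\}$ constitute a single $S_4$-orbit of monomials of the form $t_i t_j^{7}$ with $i \neq j$; all are already admissible, and the transpositions $\theta_1, \theta_2, \theta_3$ permute them. Hence $S_4(\text{adm}_{1,1})$ is a cyclic $\mathbb{Z}/2 S_4$-module isomorphic to the permutation module on this orbit, whose invariant subspace is one-dimensional and generated by the orbit sum $p_{1,1}$. No hit reductions are required, since each transposition simply exchanges admissible monomials. A parallel argument in part (ii) handles the 6 admissible monomials $\{\text{adm}_{1,j}: 13\leq j\leq 18\}$ of type $t_i^{3} t_j^{5}$, yielding the invariant $p_{1,2}$.

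Part (iii) is the main challenge. The submodule $S_4(\text{adm}_{1,19},\,\text{adm}_{1,27})$ is spanned by the 24 admissible monomials indexed by $\mathbb{J} = \{19, \ldots, 42\}$, covering the three orbit types $t_i t_j t_k^{6}$, $t_i t_j^{2} t_k^{5}$ and the mixed family involving $t_i t_j^{3} t_k^{4}$ and its variants. Write a general candidate invariant as $f = \sum_{j \in \mathbb{J}} \gamma_j \,\text{adm}_{1,j}$ with $\gamma_j \in \mathbb{Z}/2$. For each transposition $\theta_r$ ($r = 1, 2, 3$), compute $\theta_r(f) + f$, reduce each resulting (possibly non-admissible) monomial to its admissible expansion using the hit relations in $P^{\otimes 4}_{8}$, and demand that all 24 admissible coefficients of the reduced expression vanish. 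This produces a linear system in the 24 unknowns $\gamma_j$, whose solution space must be computed.

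The hard part will be the bookkeeping in the reduction step: several of the monomials $\theta_r(\text{adm}_{1,j})$ arising from the mixed orbits indexed by $27$--$42$ are no longer admissible, and rewriting them requires invoking the explicit Adem-style hit relations in degree $8$. Using the catalogue of hit polynomials from Sum \cite{Sum1, Sum2}, one verifies that each of the coefficients $\gamma_{27}, \gamma_{28}, \gamma_{29}, \gamma_{30}, \gamma_{31}, \gamma_{34}, \gamma_{35}, \gamma_{36}$ is forced to be zero, while the remaining $16$ coefficients are forced to coincide. This yields a one-dimensional invariant subspace spanned by $p_{1,3} = \sum_{j \in \mathbb{J}\setminus \{27,\ldots,31,\,34,\,35,\,36\}} \text{adm}_{1,j}$. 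A final direct check that $\theta_r(p_{1,3}) \equiv p_{1,3}$ for $r = 1, 2, 3$ confirms that $p_{1,3}$ is indeed $S_4$-invariant, completing the proof.
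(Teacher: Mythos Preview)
Your approach is essentially the paper's: write a general element, impose $\theta_i(f)\equiv f$ for the three generating transpositions, reduce inadmissible monomials via explicit hit relations, and solve the resulting linear system over $\mathbb{Z}/2$; the paper carries out part (iii) in full by deriving each needed relation directly from the Cartan formula (e.g.\ $t_2t_3^4t_4^3 = Sq^2(t_2t_3^2t_4^3) + {\rm adm}_{1,27}$ modulo hit), rather than appealing to an external catalogue. One small imprecision in your sketch: for part (ii) the transpositions do \emph{not} literally permute admissible monomials---for instance $\theta_3(t_3^3t_4^5)=t_3^5t_4^3$ is inadmissible and needs the relation $t_i^5t_j^3\equiv t_i^3t_j^5$---but the induced action on classes is still a permutation, so your conclusion is unaffected.
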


\begin{proof}
We prove Part iii) in detail. The others can be proved by the similar computations. 

It is easy to see that the set $\{[{\rm adm}_{1,\,j}]:\, 19\leq j\leq 42\}$ is a basis of $S_4({\rm adm}_{1,\,19},\ {\rm adm}_{1,\,27}).$ Suppose that $[f]\in [S_4({\rm adm}_{1,\,19},\ {\rm adm}_{1,\,27})]^{S_4},$ then $f\equiv \sum_{19\leq j\leq 42}\gamma_j{\rm adm}_{1,\,j}$ in which $\gamma_j\in \mathbb Z/2.$ Acting the homomorphisms $\theta_i$ for $1\leq i\leq 3$ on both sides of this equality, we get 
$$ \begin{array}{ll}
\medskip
\theta_1(f)&\equiv\gamma_{21}{\rm adm}_{1,\,19} + \gamma_{22}{\rm adm}_{1,\,20} + \gamma_{19}{\rm adm}_{1,\,21} + \gamma_{20}{\rm adm}_{1,\,22} +\gamma_{23}{\rm adm}_{1,\,23}\\
\medskip
&+ \gamma_{24}{\rm adm}_{1,\,24} + \gamma_{28}{\rm adm}_{1,\,27} + \gamma_{27}{\rm adm}_{1,\,28} + \gamma_{34}{\rm adm}_{1,\,34} + \gamma_{37}{\rm adm}_{1,\,32}\\
\medskip
&+\gamma_{40}{\rm adm}_{1,\,33} + \gamma_{31}{\rm adm}_{1,\,34} + \gamma_{38}{\rm adm}_{1,\,35} + \gamma_{39}{\rm adm}_{1,\,36} + \gamma_{32}{\rm adm}_{1,\,37} \\
\medskip
&+ \gamma_{35}{\rm adm}_{1,\,38}+\gamma_{36}{\rm adm}_{1,\,39} + \gamma_{33}{\rm adm}_{1,\,40}+ \gamma_{25}t_1^{6}t_2t_4 + \gamma_{26}t_1^{6}t_2t_3\\
\medskip
& + \gamma_{29}t_1^{2}t_2t_4^{5} + \gamma_{30}t_1^{2}t_2t_3^{5} + \gamma_{41}t_1^{4}t_2^{3}t_4 + \gamma_{42}t_1^{4}t_2^{3}t_3,\\
\medskip
\theta_2(f)&\equiv   \gamma_{19}{\rm adm}_{1,\,19} + \gamma_{23}{\rm adm}_{1,\,21} + \gamma_{25}{\rm adm}_{1,\,22} + \gamma_{21}{\rm adm}_{1,\,23} +\gamma_{26}{\rm adm}_{1,\,24}\\
\medskip
&+ \gamma_{22}{\rm adm}_{1,\,25} + \gamma_{24}{\rm adm}_{1,\,26} + \gamma_{29}{\rm adm}_{1,\,28} + \gamma_{28}{\rm adm}_{1,\,29} + \gamma_{32}{\rm adm}_{1,\,31}\\
\medskip
&+\gamma_{31}{\rm adm}_{1,\,32} + \gamma_{35}{\rm adm}_{1,\,34} + \gamma_{34}{\rm adm}_{1,\,35} + \gamma_{38}{\rm adm}_{1,\,37} + \gamma_{37}{\rm adm}_{1,\,38} \\
\medskip
&+ \gamma_{42}{\rm adm}_{1,\,39}+\gamma_{41}{\rm adm}_{1,\,40} + \gamma_{40}{\rm adm}_{1,\,41}+ \gamma_{39}{\rm adm}_{1,\,42} + \gamma_{20}t_2^{6}t_3t_4\\
\medskip
& + \gamma_{27}t_2^{2}t_3t_4^{5} + \gamma_{30}t_1t_2^{5}t_3^{2} + \gamma_{33}t_2^{4}t_3^{3}t_4 + \gamma_{36}t_1t_2^{4}t_3^{3},\\
\medskip
\theta_3(f)&\equiv   \gamma_{20}{\rm adm}_{1,\,19} + \gamma_{19}{\rm adm}_{1,\,20} + \gamma_{22}{\rm adm}_{1,\,21} + \gamma_{21}{\rm adm}_{1,\,22} +\gamma_{24}{\rm adm}_{1,\,23}\\
\medskip
&+ \gamma_{23}{\rm adm}_{1,\,24} + \gamma_{26}{\rm adm}_{1,\,25} + \gamma_{25}{\rm adm}_{1,\,26} + \gamma_{30}{\rm adm}_{1,\,29} + \gamma_{29}{\rm adm}_{1,\,30}\\
\medskip
&+\gamma_{33}{\rm adm}_{1,\,32} + \gamma_{32}{\rm adm}_{1,\,33} + \gamma_{36}{\rm adm}_{1,\,35} + \gamma_{35}{\rm adm}_{1,\,36} + \gamma_{40}{\rm adm}_{1,\,37} \\
\medskip
&+ \gamma_{39}{\rm adm}_{1,\,38}+\gamma_{38}{\rm adm}_{1,\,39} + \gamma_{37}{\rm adm}_{1,\,40}+ \gamma_{42}{\rm adm}_{1,\,41} +  \gamma_{41}{\rm adm}_{1,\,42}\\
\medskip
& + \gamma_{27}t_2t_3^{5}t_4^{2} + \gamma_{28}t_1t_3^{5}t_4^{2} + \gamma_{31}t_2t_3^{4}t_4^{3} + \gamma_{34}t_1t_3^{4}t_4^{3}.
\end{array}$$ 
As well known, the (left) action of Steenrod algebra on $P^{\otimes 4}_{n_s}$ is determined by 
$$Sq^n(t_j) = \left\{\begin{array}{lll}
t_j &\mbox{if}& n = 0,\\
t_j^2 &\mbox{if}& n = 1,\\
0 &\mbox{if}& n > 1,
\end{array}\right.$$
and Cartan's formula $Sq^{n}(fg) = \sum_{k+\ell= n}Sq^{k}(f)Sq^{\ell}(g)$ for all $f,\, g\in P^{\otimes 4}_{n_s}.$ Using this action, we have
$$ \begin{array}{ll}
\medskip
t_2t_3^{4}t_4^{3}& = Sq^{2}(t_2t_3^{2}t_4^{3}) + {\rm adm}_{1,\,27} \mod (\overline{\mathcal A}P^{\otimes 4}_{n_1}),\\
\medskip
 t_2t_3^{5}t_4^{2} &= Sq^{2}(t_2t_3^{3}t_4^{2}) + {\rm adm}_{1,\,31}  \mod (\overline{\mathcal A}P^{\otimes 4}_{n_1}),\\
\medskip
t_2^{2}t_3t_4^{5}& = Sq^{1}(t_2t_3t_4^{5}) + {\rm adm}_{1,\,19}+{\rm adm}_{1,\,27} \mod (\overline{\mathcal A}P^{\otimes 4}_{n_1}),\\
\medskip
 t_2^{4}t_3^{3}t_4 &= Sq^{1}(t_2t_3^{5}t_4) +  Sq^{2}( t_2^{2}t_3^{3}t_4 +  t_2t_3^{3}t_4^{2}) + {\rm adm}_{1,\,20} +{\rm adm}_{1,\,31}  \mod (\overline{\mathcal A}P^{\otimes 4}_{n_1}),\\
\medskip
t_2^{6}t_3t_4& = Sq^{1}(t_2^{5}t_3t_4) + Sq^{2}(t_2^{3}t_3^{2}t_4 +  t_2^{3}t_3t_4^{2})  + {\rm adm}_{1,\,32}+{\rm adm}_{1,\,33} \mod (\overline{\mathcal A}P^{\otimes 4}_{n_1}),\\
\medskip
t_1t_3^{4}t_4^{3}& = Sq^{2}(t_1t_3^{2}t_4^{3}) + {\rm adm}_{1,\,28} \mod (\overline{\mathcal A}P^{\otimes 4}_{n_1}),\\
\medskip
t_1t_3^{5}t_4^{2}& = Sq^{2}(t_1t_3^{3}t_4^{2}) + {\rm adm}_{1,\,34} \mod (\overline{\mathcal A}P^{\otimes 4}_{n_1}),\\
\medskip
t_1t_2^{4}t_3^{3}& = Sq^{2}(t_1t_2^{2}t_3^{3}) + {\rm adm}_{1,\,30} \mod (\overline{\mathcal A}P^{\otimes 4}_{n_1}),\\
\medskip
t_1t_2^{5}t_3^{2}& = Sq^{2}(t_1t_2^{3}t_3^{2}) + {\rm adm}_{1,\,36} \mod (\overline{\mathcal A}P^{\otimes 4}_{n_1}),\\
\medskip
t_1^{2}t_2t_4^{5}& = Sq^{1}(t_1t_2t_4^{5}) + {\rm adm}_{1,\,23} + {\rm adm}_{1,\,29} \mod (\overline{\mathcal A}P^{\otimes 4}_{n_1}),\\
\medskip
t_1^{2}t_2t_3^{5}& = Sq^{1}(t_1t_2t_3^{5}) + {\rm adm}_{1,\,24} + {\rm adm}_{1,\,30} \mod (\overline{\mathcal A}P^{\otimes 4}_{n_1}),\\
\medskip
t_1^{4}t_2^{3}t_4& = Sq^{1}(t_1t_2^{5}t_4) + Sq^{2}(t_1^{2}t_2^{3}t_4 + t_1t_2^{3}t_4^{2})  + {\rm adm}_{1,\,25} + {\rm adm}_{1,\,35} \mod (\overline{\mathcal A}P^{\otimes 4}_{n_1}),\\
\medskip
t_1^{4}t_2^{3}t_3& = Sq^{1}(t_1t_2^{5}t_3) + Sq^{2}(t_1^{2}t_2^{3}t_3 + t_1t_2^{3}t_3^{2})  + {\rm adm}_{1,\,26} + {\rm adm}_{1,\,36} \mod (\overline{\mathcal A}P^{\otimes 4}_{n_1}),\\
\medskip
t_1^{6}t_2t_4& = Sq^{1}(t_1^{5}t_2t_4) + Sq^{2}(t_1^{3}t_2^{2}t_4 + t_1^{3}t_2t_4^{2})  + {\rm adm}_{1,\,38} + {\rm adm}_{1,\,41} \mod (\overline{\mathcal A}P^{\otimes 4}_{n_1}),\\
\medskip
t_1^{6}t_2t_3& = Sq^{1}(t_1^{5}t_2t_3) + Sq^{2}(t_1^{3}t_2^{2}t_3 + t_1^{3}t_2t_3^{2})  + {\rm adm}_{1,\,39} + {\rm adm}_{1,\,42} \mod (\overline{\mathcal A}P^{\otimes 4}_{n_1}).
\end{array}$$
Combining the above computations and the fact that $\theta_i(f) + f \equiv 0$ for $1\leq i\leq 3,$ we obtain
$ \gamma_j = 0$ for $j\in \mathbb J_1 = \{27, \ldots, 31, 34, 35, 36\}$ and $\gamma_j = \gamma_{19}$ for $j\not\in \mathbb J_1.$ This finishes the proof of  the lemma.
\end{proof}

\begin{lema}\label{bdc1-2}
There is a direct summand decomposition of the $S_4$-submodules:
$$ \widehat{{\rm Ker}_{n_1}} = S_4({\rm adm}_{1,\,43}) \bigoplus S_4( {\rm adm}_{1,\,46}),$$
where $$ \begin{array}{ll}
\medskip
S_4({\rm adm}_{1,\,43}) &= \langle [{\rm adm}_{1,\,43}], [{\rm adm}_{1,\,44}], [{\rm adm}_{1,\,45}]\rangle,\ \mbox{with }\\
\medskip
&\quad {\rm adm}_{1,\,43} =t_1t_2t_3^{2}t_4^{4}, \, {\rm adm}_{1,\,44}= t_1t_2^{2}t_3t_4^{4},\, {\rm adm}_{1,\,45}= t_1t_2^{2}t_3^{4}t_4,\\
\medskip
S_4({\rm adm}_{1,\,46}) &= \langle [{\rm adm}_{1,\,46}], [{\rm adm}_{1,\,47}], [{\rm adm}_{1,\,48}],  [{\rm adm}_{1,\,49}] \rangle \ \mbox{with}\\
&\quad {\rm adm}_{1,\,46} =t_1t_2^{2}t_3^{2}t_4^{3}, \, {\rm adm}_{1,\,47}= t_1t_2^{2}t_3^{3}t_4^{2},\, {\rm adm}_{1,\,48}= t_1t_2^{3}t_3^{2}t_4^{2},\, {\rm adm}_{1,\,48}= t_1^{3}t_2t_3^{2}t_4^{2}. 
\end{array}$$
Moreover, $[S_4({\rm adm}_{1,\,43})]^{S_4} = \langle [p_{1,\, 4}] \rangle$ with $p_{1,\, 4}:= \sum_{43\leq j\leq 45}{\rm adm}_{1,\,j}$ and $[S_4({\rm adm}_{1,\,46})]^{S_4} = 0.$
\end{lema}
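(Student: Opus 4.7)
The plan is to combine Sum's classification of admissible monomials in $P^{\otimes 4}_{n_1}$ with an explicit analysis of the action of $\theta_1, \theta_2, \theta_3$ on the subset of those monomials supported on all four variables. By Sum \cite{Sum1, Sum2}, precisely the seven monomials ${\rm adm}_{1,43}, \ldots, {\rm adm}_{1,49}$ are admissible of degree $n_1 = 8$ with all four exponents positive, so they form an additive basis of $\widehat{{\rm Ker}_{n_1}}$. The starting observation is the partition of this basis into the triple $\{{\rm adm}_{1,43}, {\rm adm}_{1,44}, {\rm adm}_{1,45}\}$, all of whose exponents have a unique entry equal to $4$, and the quadruple $\{{\rm adm}_{1,46}, \ldots, {\rm adm}_{1,49}\}$, whose exponents are permutations of $(1,2,2,3)$; these two sets are stable under the variable-permutation part of $GL_4$.

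To establish the $S_4$-module decomposition, I would compute each $\theta_i({\rm adm}_{1,j})$ for $i \in \{1,2,3\}$ and $43 \leq j \leq 49$ and reduce the resulting (often non-admissible) monomials back to the admissible basis modulo $\overline{\mathcal A}P^{\otimes 4}_{n_1}$ using Cartan's formula, in exactly the style of the reductions carried out in Lemma \ref{bdc1-1}. For instance, $\theta_3({\rm adm}_{1,43}) = t_1t_2t_3^{4}t_4^{2}$ has to be rewritten using $Sq^{2}(t_1t_2t_3^{2}t_4^{2})$ as an admissible representative, and similarly for the remaining images. The outcome of this bookkeeping will show that each of the two listed spans is closed under $\theta_1, \theta_2, \theta_3$, yielding the claimed internal direct sum.

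For the invariants, the standard procedure is: in the first block, write $f = \sum_{j=43}^{45} \gamma_j {\rm adm}_{1,j}$ with $\gamma_j \in \mathbb Z/2$ and impose $\theta_i(f) + f \equiv 0$ for $i = 1, 2, 3$; comparing coefficients in the admissible basis forces $\gamma_{43} = \gamma_{44} = \gamma_{45}$, identifying $[p_{1,4}]$ as the unique nonzero invariant. In the second block, write $g = \sum_{j=46}^{49} \delta_j {\rm adm}_{1,j}$, apply the same procedure, and observe that the constraints produced by $\theta_2$ and $\theta_3$ mix the four generators with enough asymmetry to force all $\delta_j = 0$, which will give $[S_4({\rm adm}_{1,46})]^{S_4} = 0$.

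The principal obstacle is the Steenrod-reduction bookkeeping: for every non-admissible image $\theta_i({\rm adm}_{1,j})$ one must select explicit preimages $h_\ell$ and a list of squares $Sq^{k_\ell}$ so that $\theta_i({\rm adm}_{1,j}) \equiv \sum_\ell Sq^{k_\ell}(h_\ell) + (\text{admissible remainder})$, then consolidate remainders across all $7 \times 3$ instances before extracting coefficient constraints. A useful consistency check is the dimension count $\dim[\widehat{{\rm Ker}_{n_1}}]^{S_4} = 1$ coming from the lemma, which must be compatible with the eventual triviality conclusion of Theorem \ref{dlc1} after accounting for $\theta_4$ and for the $\underline{{\rm Ker}_{n_1}}$ summand handled in Lemma \ref{bdc1-1}.
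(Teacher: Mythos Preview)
Your proposal is correct and follows essentially the same route as the paper: invoke Sum's basis $\{[{\rm adm}_{1,j}]:43\le j\le 49\}$ for $\widehat{{\rm Ker}_{n_1}}$, observe (or verify via the $\theta_i$-action) the splitting into the two $S_4$-stable spans, then write a generic element of each block, apply $\theta_1,\theta_2,\theta_3$, reduce non-admissible images modulo $\overline{\mathcal A}P^{\otimes 4}_{n_1}$, and read off the coefficient constraints. The paper records the outcomes of these reductions in condensed form (e.g.\ $\theta_2(g)+g\equiv(\gamma_{43}+\gamma_{44})({\rm adm}_{1,43}+{\rm adm}_{1,44})$, etc.), whereas you spell out the mechanism more explicitly, but the argument is the same.
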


\begin{proof}
It is known, from a result of Sum \cite{Sum1, Sum2}, that the set $\{[{\rm adm}_{1,\,j}]:\, 43\leq j\leq 49\}$ is an admissible monomial basis of $ \widehat{{\rm Ker}_{n_1}}.$ So, it is easy to see that $ \widehat{{\rm Ker}_{n_1}} \cong S_4({\rm adm}_{1,\,43}) \bigoplus S_4( {\rm adm}_{1,\,46}).$ 

Now if $[g]\in [S_4({\rm adm}_{1,\,43})]^{S_4}$ and $[h]\in [S_4({\rm adm}_{1,\,46})]^{S_4},$ then 
$$ g\equiv \sum_{43\leq j\leq 45}\gamma_j{\rm adm}_{1,\,j}, \ \ h\equiv \sum_{46\leq j\leq 49}\gamma_j{\rm adm}_{1,\,j},$$
where $\gamma_j\in \mathbb Z/2.$ For each $1\leq i\leq 3,$ direct calculating the equalities $\theta_i(g)$ and $\theta_i(h)$ in the terms ${\rm adm}_{1,\,j}$ for $43\leq j\leq 49$ mod ($\overline{\mathcal A}P^{\otimes 4}_{n_1}$), and using the relations $\theta_i(g) + g\equiv 0$ and $\theta_i(h) + h\equiv 0,$ we get
$$ \begin{array}{ll}
\medskip
\theta_1(g) + g &\equiv \gamma_{43}({\rm adm}_{1,\,44} + {\rm adm}_{1,\,45}) \equiv 0,\\
\medskip
\theta_2(g) + g &\equiv  (\gamma_{43}+\gamma_{44})({\rm adm}_{1,\,43} + {\rm adm}_{1,\,44}) \equiv 0,\\
\medskip
\theta_3(g) + g &\equiv (\gamma_{44}+\gamma_{45})({\rm adm}_{1,\,43} + {\rm adm}_{1,\,44})\equiv 0,\\
\medskip
\theta_1(h) + h &\equiv (\gamma_{46} + \gamma_{47}){\rm adm}_{1,\,43} + (\gamma_{48}+\gamma_{49})({\rm adm}_{1,\,48} + {\rm adm}_{1,\,49}) \equiv 0,\\
\medskip
\theta_2(h) + h &\equiv  \gamma_{49}({\rm adm}_{1,\,43} + {\rm adm}_{1,\,44}) + (\gamma_{47}+\gamma_{48})({\rm adm}_{1,\,47} + {\rm adm}_{1,\,48}) \equiv 0,\\
\medskip
\theta_3(h) + h &\equiv (\gamma_{46}+\gamma_{47})({\rm adm}_{1,\,46} + {\rm adm}_{1,\,47})\equiv 0.
 \end{array}$$ 
From the equalities above, we get $\gamma_{43} = \gamma_{44} = \gamma_{45}$ and $\gamma_j = 0$ for $46\leq j\leq 49.$ The lemma follows.
\end{proof}

{\bf The case \boldmath{$s \geq 2$}}. By Sum \cite{Sum1, Sum2}, $\underline{{\rm Ker}_{n_s}}$ has a basis consisting of all the classes represented by the following admissible monomials ${\rm adm}_{s,\, j}$:

For $s\geq 2,$

\begin{center}
\begin{tabular}{llr}
${\rm adm}_{s,\,1}= t_2t_3^{2^{s}-2}t_4^{2^{s+2}-1}$, & ${\rm adm}_{s,\,2}= t_2t_3^{2^{s+2}-1}t_4^{2^{s}-2}$, & \multicolumn{1}{l}{${\rm adm}_{s,\,3}= t_2^{2^{s+2}-1}t_3t_4^{2^{s}-2}$,} \\
${\rm adm}_{s,\,4}= t_1t_3^{2^{s}-2}t_4^{2^{s+2}-1}$, & ${\rm adm}_{s,\,5}= t_1t_3^{2^{s+2}-1}t_4^{2^{s}-2}$, & \multicolumn{1}{l}{${\rm adm}_{s,\,6}= t_1^{2^{s+2}-1}t_3t_4^{2^{s}-2}$,} \\
${\rm adm}_{s,\,7}= t_1t_2^{2^{s}-2}t_4^{2^{s+2}-1}$, & ${\rm adm}_{s,\,8}= t_1t_2^{2^{s+2}-1}t_4^{2^{s}-2}$, & \multicolumn{1}{l}{${\rm adm}_{s,\,9}= t_1^{2^{s+2}-1}t_2t_4^{2^{s}-2}$,} \\
${\rm adm}_{s,\,10}= t_1t_2^{2^{s}-2}t_3^{2^{s+2}-1}$, & ${\rm adm}_{s,\,11}= t_1t_2^{2^{s+2}-1}t_3^{2^{s}-2}$, & \multicolumn{1}{l}{${\rm adm}_{s,\,12}= t_1^{2^{s+2}-1}t_2t_3^{2^{s}-2}$,} \\
${\rm adm}_{s,\,13}= t_2t_3^{2^{s}-1}t_4^{2^{s+2}-2}$, & ${\rm adm}_{s,\,14}= t_2t_3^{2^{s+2}-2}t_4^{2^{s}-1}$, & \multicolumn{1}{l}{${\rm adm}_{s,\,15}= t_2^{2^{s}-1}t_3t_4^{2^{s+2}-2}$,} \\
${\rm adm}_{s,\,16}= t_1t_3^{2^{s}-1}t_4^{2^{s+2}-2}$, & ${\rm adm}_{s,\,17}= t_1t_3^{2^{s+2}-2}t_4^{2^{s}-1}$, & \multicolumn{1}{l}{${\rm adm}_{s,\,18}= t_1^{2^{s}-1}t_3t_4^{2^{s+2}-2}$,} \\
${\rm adm}_{s,\,19}= t_1t_2^{2^{s}-1}t_4^{2^{s+2}-2}$, & ${\rm adm}_{s,\,20}= t_1t_2^{2^{s+2}-2}t_4^{2^{s}-1}$, & \multicolumn{1}{l}{${\rm adm}_{s,\,21}= t_1^{2^{s}-1}t_2t_4^{2^{s+2}-2}$,} \\
${\rm adm}_{s,\,22}= t_1t_2^{2^{s}-1}t_3^{2^{s+2}-2}$, & ${\rm adm}_{s,\,23}= t_1t_2^{2^{s+2}-2}t_3^{2^{s}-1}$, & \multicolumn{1}{l}{${\rm adm}_{s,\,24}= t_1^{2^{s}-1}t_2t_3^{2^{s+2}-2}$,} \\
${\rm adm}_{s,\,25}= t_3^{2^{s}-1}t_4^{2^{s+2}-1}$, & ${\rm adm}_{s,\,26}= t_3^{2^{s+2}-1}t_4^{2^{s}-1}$, & \multicolumn{1}{l}{${\rm adm}_{s,\,27}= t_2^{2^{s}-1}t_4^{2^{s+2}-1}$,} \\
${\rm adm}_{s,\,28}= t_2^{2^{s}-1}t_3^{2^{s+2}-1}$, & ${\rm adm}_{s,\,29}= t_2^{2^{s+2}-1}t_4^{2^{s}-1}$, & \multicolumn{1}{l}{${\rm adm}_{s,\,30}= t_2^{2^{s+2}-1}t_3^{2^{s}-1}$,} \\
${\rm adm}_{s,\,31}= t_1^{2^{s}-1}t_4^{2^{s+2}-1}$, & ${\rm adm}_{s,\,32}= t_1^{2^{s}-1}t_3^{2^{s+2}-1}$, & \multicolumn{1}{l}{${\rm adm}_{s,\,33}= t_1^{2^{s+2}-1}t_4^{2^{s}-1}$,} \\
${\rm adm}_{s,\,34}= t_1^{2^{s+2}-1}t_3^{2^{s}-1}$, & ${\rm adm}_{s,\,35}= t_1^{2^{s}-1}t_2^{2^{s+2}-1}$, & \multicolumn{1}{l}{${\rm adm}_{s,\,36}= t_1^{2^{s+2}-1}t_2^{2^{s}-1}$,} \\
${\rm adm}_{s,\,37}= t_2t_3^{2^{s+1}-2}t_4^{2^{s+2}-2^{s}-1}$, & ${\rm adm}_{s,\,38}= t_1t_3^{2^{s+1}-2}t_4^{2^{s+2}-2^{s}-1}$, & \multicolumn{1}{l}{${\rm adm}_{s,\,39}= t_1t_2^{2^{s+1}-2}t_4^{2^{s+2}-2^{s}-1}$,} \\
${\rm adm}_{s,\,40}= t_1t_2^{2^{s+1}-2}t_3^{2^{s+2}-2^{s}-1}$, & ${\rm adm}_{s,\,41}= t_2t_3^{2^{s+1}-1}t_4^{2^{s+2}-2^{s}-2}$, & \multicolumn{1}{l}{${\rm adm}_{s,\,42}= t_2^{2^{s+1}-1}t_3t_4^{2^{s+2}-2^{s}-2}$,} \\
${\rm adm}_{s,\,43}= t_1t_3^{2^{s+1}-1}t_4^{2^{s+2}-2^{s}-2}$, & ${\rm adm}_{s,\,44}= t_1^{2^{s+1}-1}t_3t_4^{2^{s+2}-2^{s}-2}$, & \multicolumn{1}{l}{${\rm adm}_{s,\,45}= t_1t_2^{2^{s+1}-1}t_4^{2^{s+2}-2^{s}-2}$,} \\
${\rm adm}_{s,\,46}= t_1^{2^{s+1}-1}t_2t_4^{2^{s+2}-2^{s}-2}$, & ${\rm adm}_{s,\,47}= t_1t_2^{2^{s+1}-1}t_3^{2^{s+2}-2^{s}-2}$, & \multicolumn{1}{l}{${\rm adm}_{s,\,48}= t_1^{2^{s+1}-1}t_2t_3^{2^{s+2}-2^{s}-2}$,} \\
${\rm adm}_{s,\,49}= t_3^{2^{s+1}-1}t_4^{2^{s+2}-2^{s}-1}$, & ${\rm adm}_{s,\,50}= t_2^{2^{s+1}-1}t_4^{2^{s+2}-2^{s}-1}$, & \multicolumn{1}{l}{${\rm adm}_{s,\,51}= t_2^{2^{s+1}-1}t_3^{2^{s+2}-2^{s}-1}$,} \\
${\rm adm}_{s,\,52}= t_1^{2^{s+1}-1}t_4^{2^{s+2}-2^{s}-1}$, & ${\rm adm}_{s,\,53}= t_1^{2^{s+1}-1}t_3^{2^{s+2}-2^{s}-1}$, & \multicolumn{1}{l}{${\rm adm}_{s,\,54}= t_1^{2^{s+1}-1}t_2^{2^{s+2}-2^{s}-1}$,} \\
${\rm adm}_{s,\,55}= t_2^{3}t_3^{2^{s+2}-3}t_4^{2^{s}-2}$, & ${\rm adm}_{s,\,56}= t_1^{3}t_3^{2^{s+2}-3}t_4^{2^{s}-2}$, & \multicolumn{1}{l}{${\rm adm}_{s,\,57}= t_1^{3}t_2^{2^{s+2}-3}t_4^{2^{s}-2}$,} \\
${\rm adm}_{s,\,58}= t_1^{3}t_2^{2^{s+2}-3}t_3^{2^{s}-2}$, & ${\rm adm}_{s,\,59}= t_2^{3}t_3^{2^{s+1}-3}t_4^{2^{s+2}-2^{s}-2}$, & \multicolumn{1}{l}{${\rm adm}_{s,\,60}= t_1^{3}t_3^{2^{s+1}-3}t_4^{2^{s+2}-2^{s}-2}$,} \\
${\rm adm}_{s,\,61}= t_1^{3}t_2^{2^{s+1}-3}t_4^{2^{s+2}-2^{s}-2}$, & ${\rm adm}_{s,\,62}= t_1^{3}t_2^{2^{s+1}-3}t_3^{2^{s+2}-2^{s}-2}$. &  
\end{tabular}%
\end{center}

For $s = 2,$

\begin{center}
\begin{tabular}{llll}
${\rm adm}_{2,\,63}= t_2^{3}t_3^{3}t_4^{12}$, & ${\rm adm}_{2,\,64}= t_1^{3}t_3^{3}t_4^{12}$, & ${\rm adm}_{2,\,65}= t_1^{3}t_2^{3}t_4^{12}$, & ${\rm adm}_{2,\,66}= t_1^{3}t_2^{3}t_3^{12}$.
\end{tabular}%
\end{center}

For $s\geq 3,$

\begin{center}
\begin{tabular}{lrr}
${\rm adm}_{s,\,63}= t_2^{3}t_3^{2^{s}-3}t_4^{2^{s+2}-2}$, & \multicolumn{1}{l}{${\rm adm}_{s,\,64}= t_1^{3}t_3^{2^{s}-3}t_4^{2^{s+2}-2}$,} & \multicolumn{1}{l}{${\rm adm}_{s,\,65}= t_1^{3}t_2^{2^{s}-3}t_4^{2^{s+2}-2}$,} \\
${\rm adm}_{s,\,66}= t_1^{3}t_2^{2^{s}-3}t_3^{2^{s+2}-2}$. &       &  
\end{tabular}%
\end{center}

\newpage
\begin{lema}\label{bdc2-1}
For each $s\geq 2,$  we have $ \dim [\underline{{\rm Ker}_{n_s}}]^{S_4}\leq 6.$
\end{lema}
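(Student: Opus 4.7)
The plan is to imitate, at a larger scale, the computation that underlies Lemma \ref{bdc1-1}. First, I would write a general element of $[\underline{{\rm Ker}_{n_s}}]^{S_4}$ as a $\mathbb Z/2$-linear combination
$$f \equiv \sum_{1\leq j\leq 66}\gamma_{j}\,{\rm adm}_{s,j}\quad (\text{mod }\overline{\mathcal A}P^{\otimes 4}_{n_s}),$$
with the understanding that for $s=2$ the last four monomials are the ${\rm adm}_{2,63},\ldots,{\rm adm}_{2,66}$ and for $s\geq 3$ the four monomials ${\rm adm}_{s,63},\ldots,{\rm adm}_{s,66}$. The $S_4$-invariance condition $\theta_i(f)\equiv f$ for $i=1,2,3$ must then be turned into linear relations on the $\gamma_j$'s.

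The key bookkeeping step is to partition the 66 admissible monomials into $S_4$-stable blocks according to the \emph{exponent pattern}: three-variable blocks with multisets of exponents equal to $(0,1,2^s-2,2^{s+2}-1)$, $(0,1,2^s-1,2^{s+2}-2)$, $(0,1,2^{s+1}-2,2^{s+2}-2^s-1)$, $(0,1,2^{s+1}-1,2^{s+2}-2^s-2)$, $(0,3,2^{s+2}-3,2^s-2)$, $(0,3,2^{s+1}-3,2^{s+2}-2^s-2)$ and (for $s\geq 3$) $(0,3,2^s-3,2^{s+2}-2)$, together with two-variable blocks with exponent pairs $(2^s-1,2^{s+2}-1)$ and $(2^{s+1}-1,2^{s+2}-2^s-1)$. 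Each block spans a $\mathbb Z/2[S_4]$-submodule of $\underline{{\rm Ker}_{n_s}}$, so
$$[\underline{{\rm Ker}_{n_s}}]^{S_4} \;\cong\; \bigoplus_{\text{blocks }B}\bigl[B\bigr]^{S_4},$$
and it suffices to bound each summand separately.

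Within a given block, I would compute $\theta_i({\rm adm}_{s,j})$ and re-express the image in the admissible basis modulo $\overline{\mathcal A}P^{\otimes 4}_{n_s}$ using Cartan's formula together with the standard $Sq^{2^{k}}$-reductions of the shape $t_a t_b^{2^{k+1}}t_c^{2^{k}-1}\equiv Sq^{2^k}(t_a t_b^{2^k}t_c^{2^k-1})+\ldots$ that were already used in the proof of Lemma \ref{bdc1-1}. Equating $\theta_i(f)+f\equiv 0$ coefficient-by-coefficient on the admissible basis then yields a homogeneous linear system in the $\gamma_j$; the $S_4$-symmetry forces the $\gamma_j$ within each block to collapse to at most one free parameter, and in two or three blocks (for example the two-variable ``spike'' blocks whose monomials sit at the image of the Kameko map) the analogue of the vanishing part of Lemma \ref{bdc1-2} applies and gives zero-dimensional invariants. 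Counting those blocks whose invariant space is $1$-dimensional yields the announced bound $\dim[\underline{{\rm Ker}_{n_s}}]^{S_4}\leq 6$.

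The main obstacle I anticipate is bookkeeping rather than conceptual: the reductions of $\theta_i({\rm adm}_{s,j})$ to admissible form produce many Adem-type rewritings, and one has to be very careful that the parameters $2^s-1$, $2^s-2$, $2^{s+1}-1$, $2^{s+2}-2^s-1$, etc.\ remain in the admissible range for all $s\geq 2$ (with a slightly separate treatment of $s=2$ because ${\rm adm}_{2,63}$--${\rm adm}_{2,66}$ are shaped differently from their $s\geq 3$ counterparts). A secondary subtlety is that several blocks contain monomials whose orbits are of size strictly less than the naive $S_4$-orbit size because two exponents coincide, so the stabilizer computation must be checked pattern by pattern before asserting $1$-dimensionality of the invariants. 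Once these verifications are in place, summing the block contributions produces the upper bound $6$.
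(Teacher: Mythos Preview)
Your block decomposition by exponent multiset is where the proposal breaks down. It is true that the transpositions $\theta_1,\theta_2,\theta_3$ permute variables and so preserve the exponent \emph{multiset} of a monomial. But that does \emph{not} imply that the span of the admissible classes with a fixed exponent multiset is an $S_4$-submodule of $\underline{{\rm Ker}_{n_s}}$: when $\theta_i({\rm adm}_{s,j})$ is not itself admissible, its re-expression in the admissible basis modulo $\overline{\mathcal A}P^{\otimes 4}_{n_s}$ can (and does) involve admissible monomials with a \emph{different} exponent multiset. The paper's own decomposition makes this visible: the blocks $\{13\text{--}24\}$, $\{37\text{--}48\}$, $\{55\text{--}58\}$, $\{59\text{--}62\}$, $\{63\text{--}66\}$ carry several distinct exponent multisets, yet together they generate a single $\mathbb Z/2[S_4]$-submodule $S_4({\rm adm}_{s,13},{\rm adm}_{s,37},{\rm adm}_{s,59},{\rm adm}_{s,63})$. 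For $s\geq 3$ its invariant space is only $3$-dimensional, with basis $[p_{s,2}+p_{s,3}]$, $[p_{s,2}+p_{s,4}]$, $[p_{s,2}+p_{s,5}]$; none of the individual symmetrizations $[p_{s,i}]$ is invariant by itself. If your exponent-multiset blocks were genuinely $S_4$-stable, each symmetrization would be an invariant, forcing the dimension of this piece to be at least $4$ (in fact $6$, once you split $p_{s,3}$ and $p_{s,5}$ further), contradicting the actual count. So the isomorphism $[\underline{{\rm Ker}_{n_s}}]^{S_4}\cong\bigoplus_B[B]^{S_4}$ you wrote is false for this finer partition.

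A second smaller issue: your guess that the two-variable ``spike'' blocks have zero-dimensional invariants is the opposite of what happens. Those blocks are $S_4({\rm adm}_{s,25})$ and $S_4({\rm adm}_{s,49})$, and each contributes exactly one dimension of invariants. The $0$-dimensional phenomenon of Lemma~\ref{bdc1-2} has no analogue here. The correct partition is into \emph{four} $S_4$-submodules (not nine exponent-multiset blocks), with invariant dimensions $1,3,1,1$; that is how the bound $6$ arises.
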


\begin{proof}
By direct computations using the admissible basis of $\underline{{\rm Ker}_{n_s}}$ above, we notice that
$$ \begin{array}{ll}
\medskip
&S_4({\rm adm}_{s,\,1})  = \langle \{[{\rm adm}_{s,\,j}]:\, 1\leq j\leq 12\} \rangle,\\
&S_4({\rm adm}_{s,\,13},\ {\rm adm}_{s,\,37},\ {\rm adm}_{s,\,59},\ {\rm adm}_{s,\,63})  = \langle \{[{\rm adm}_{s,\,j}]:\, j\in \mathbb J\} \rangle,\ \mbox{where}\\
\medskip
&\quad \mathbb J = \{13, \ldots, 24, 37, \ldots, 48, 55, \ldots, 66\},\\
\medskip
&S_4({\rm adm}_{s,\,25})  = \langle \{[{\rm adm}_{s,\,j}]:\, 25\leq j\leq 36\} \rangle,\ \ S_4({\rm adm}_{s,\,49})  = \langle \{[{\rm adm}_{s,\,j}]:\, 49\leq j\leq 54\} \rangle,\\
\end{array}$$
are $S_4$-submodules of $\underline{{\rm Ker}_{n_s}}.$ Hence, we have an isomorphism
$$ \underline{{\rm Ker}_{n_s}} \cong S_4({\rm adm}_{s,\,1}) \bigoplus S_4({\rm adm}_{s,\,13}, {\rm adm}_{s,\,37}, {\rm adm}_{s,\,59}, {\rm adm}_{s,\,63})\bigoplus S_4({\rm adm}_{s,\,25})\bigoplus  S_4({\rm adm}_{s,\,49}).$$
Then, by the similar techniques as in the proof of Lemma \ref{bdc1-1}, it may be concluded that
\begin{itemize}
\item[i)] $[S_4({\rm adm}_{s,\,1})]^{S_4} = \langle [p_{s,\, 1}] \rangle$ with $p_{s,\, 1}:= \sum_{1\leq j\leq 12}{\rm adm}_{s,\,j}$ 

\item[ii)]  $\begin{array}{ll}
&[S_4({\rm adm}_{s,\,13}, {\rm adm}_{s,\,37}, {\rm adm}_{s,\,59}, {\rm adm}_{s,\,63})\bigoplus S_4({\rm adm}_{s,\,25})]^{S_4}\\
& = \left\{\begin{array}{ll}
\medskip
\langle [p_{2,\, 2} + p_{2,\, 3} + p_{2,\, 4} + p_{2,\, 5}], [p_{2,\, 3} + p_{2,\, 4} + p_{2,\, 6}], [p_{2,\, 2} + p_{2,\, 7}] \rangle&\mbox{if $s = 2$},\\
\langle [p_{s,\, 2} + p_{s,\, 3}],  [p_{s,\, 2} + p_{s,\, 4}], [p_{s,\, 2} + p_{s,\, 5}]  \rangle&\mbox{if $s\geq 3$},
\end{array}\right.
\end{array}$\\[1mm]
where 
$$ \begin{array}{ll}
\medskip
 p_{2,\, 2} &= {\rm adm}_{2,\,13} + {\rm adm}_{2,\,16}+{\rm adm}_{2,\,18}+{\rm adm}_{2,\,19},\\
\medskip
 p_{2,\, 3} &= {\rm adm}_{2,\,14} + {\rm adm}_{2,\,17}+{\rm adm}_{2,\,20}+{\rm adm}_{2,\,21},\\
\medskip
 p_{2,\, 4} &= {\rm adm}_{2,\,15} + {\rm adm}_{2,\,22}+{\rm adm}_{2,\,23}+{\rm adm}_{2,\,24},\\
\medskip
 p_{2,\, 5} &=  \sum_{37\leq j\leq 48}{\rm adm}_{2,\,j},\ \  p_{2,\, 6} =  \sum_{55\leq j\leq 58}{\rm adm}_{2,\,j} +  \sum_{63\leq j\leq 66}{\rm adm}_{2,\,j},\\
\medskip
 p_{2,\, 7} &=  \sum_{59\leq j\leq 62}{\rm adm}_{2,\,j},\ \ p_{s,\, 2} =  \sum_{13\leq j\leq 24}{\rm adm}_{s,\,j},\ \ p_{s,\, 3} =  \sum_{37\leq j\leq 48}{\rm adm}_{s,\,j},\\
\medskip
 p_{s,\, 4} &=  \sum_{55\leq j\leq 58}{\rm adm}_{s,\,j},\ \  p_{s,\, 5} =  \sum_{59\leq j\leq 66}{\rm adm}_{s,\,j}.
\end{array}$$

\item[iii)] $[S_4({\rm adm}_{s,\,25})]^{S_4} = \langle [p_{s,\, 6}] \rangle$ with $p_{s,\, 6}:= \sum_{25\leq j\leq 36}{\rm adm}_{s,\,j}.$ 

\item[iv)] $[S_4({\rm adm}_{s,\,49})]^{S_4} = \langle [p_{s,\, 7}] \rangle$ with $p_{s,\, 7}:= \sum_{49\leq j\leq 54}{\rm adm}_{s,\,j}.$ 

\end{itemize}
These imply that 
$$ \begin{array}{ll} 
\medskip
&\dim [S_4({\rm adm}_{s,\,j})]^{S_4} = 1\ \mbox{for}\ j = 1,\, 25,\, 49,\\
&\dim [S_4({\rm adm}_{s,\,13}, {\rm adm}_{s,\,37}, {\rm adm}_{s,\,59}, {\rm adm}_{s,\,63})]^{S_4} = 3.
\end{array}$$
Now, the lemma follows from the fact that 
$$ \begin{array}{ll}
\medskip
 \dim [\underline{{\rm Ker}_{n_s}}]^{S_4}&\leq \dim [S_4({\rm adm}_{s,\,1})]^{S_4} + \dim[S_4({\rm adm}_{s,\,13}, {\rm adm}_{s,\,37}, {\rm adm}_{s,\,59}, {\rm adm}_{s,\,63})]^{S_4} \\
&\quad + \dim [S_4({\rm adm}_{s,\,25})]^{S_4} + \dim \dim [S_4({\rm adm}_{s,\,49})]^{S_4}.
\end{array}$$
\end{proof}

From a result in Sum \cite{Sum1, Sum2}, $\widehat{{\rm Ker}_{n_s}}$ has a basis consisting of all the classes represented by the following admissible monomials ${\rm adm}_{s,\, j}$:

\newpage
For $s\geq 2,$

\begin{center}
\begin{tabular}{llr}
${\rm adm}_{s,\,67}= t_1t_2t_3^{2^{s}-2}t_4^{2^{s+2}-2}$, & ${\rm adm}_{s,\,68}= t_1t_2t_3^{2^{s+2}-2}t_4^{2^{s}-2}$, & \multicolumn{1}{l}{${\rm adm}_{s,\,69}= t_1t_2^{2^{s}-2}t_3t_4^{2^{s+2}-2}$,} \\
${\rm adm}_{s,\,70}= t_1t_2^{2^{s+2}-2}t_3t_4^{2^{s}-2}$, & ${\rm adm}_{s,\,71}= t_1t_2^{2}t_3^{2^{s+2}-3}t_4^{2^{s}-2}$, & \multicolumn{1}{l}{${\rm adm}_{s,\,72}= t_1t_2t_3^{2^{s+1}-2}t_4^{2^{s+2}-2^{s}-2}$,} \\
${\rm adm}_{s,\,73}= t_1t_2^{2^{s+1}-2}t_3t_4^{2^{s+2}-2^{s}-2}$, & ${\rm adm}_{s,\,74}= t_1t_2^{2}t_3^{2^{s}-1}t_4^{2^{s+2}-4}$, & \multicolumn{1}{l}{${\rm adm}_{s,\,75}= t_1t_2^{2}t_3^{2^{s+2}-4}t_4^{2^{s}-1}$,} \\
${\rm adm}_{s,\,76}= t_1t_2^{3}t_3^{2^{s}-2}t_4^{2^{s+2}-4}$, & ${\rm adm}_{s,\,77}= t_1t_2^{3}t_3^{2^{s+2}-4}t_4^{2^{s}-2}$, & \multicolumn{1}{l}{${\rm adm}_{s,\,78}= t_1^{3}t_2t_3^{2^{s}-2}t_4^{2^{s+2}-4}$,} \\
${\rm adm}_{s,\,79}= t_1^{3}t_2t_3^{2^{s+2}-4}t_4^{2^{s}-2}$, & ${\rm adm}_{s,\,80}= t_1t_2^{2}t_3^{2^{s+1}-4}t_4^{2^{s+2}-2^{s}-1}$, & \multicolumn{1}{l}{${\rm adm}_{s,\,81}= t_1t_2^{2}t_3^{2^{s+1}-3}t_4^{2^{s+2}-2^{s}-2}$,} \\
${\rm adm}_{s,\,82}= t_1t_2^{2}t_3^{2^{s+1}-1}t_4^{2^{s+2}-2^{s}-2}$, & ${\rm adm}_{s,\,83}= t_1t_2^{2^{s+1}-1}t_3^{2}t_4^{2^{s+2}-2^{s}-2}$, & \multicolumn{1}{l}{${\rm adm}_{s,\,84}= t_1^{2^{s+1}-1}t_2t_3^{2}t_4^{2^{s+2}-2^{s}-2}$,} \\
${\rm adm}_{s,\,85}= t_1t_2^{3}t_3^{2^{s+1}-4}t_4^{2^{s+2}-2^{s}-2}$, & ${\rm adm}_{s,\,86}= t_1^{3}t_2t_3^{2^{s+1}-4}t_4^{2^{s+2}-2^{s}-2}$, & \multicolumn{1}{l}{${\rm adm}_{s,\,87}= t_1t_2^{3}t_3^{2^{s+1}-2}t_4^{2^{s+2}-2^{s}-2}$,} \\
${\rm adm}_{s,\,88}= t_1^{3}t_2t_3^{2^{s+1}-2}t_4^{2^{s+2}-2^{s}-2}$, & ${\rm adm}_{s,\,89}= t_1^{3}t_2^{2^{s+1}-3}t_3^{2}t_4^{2^{s+2}-2^{s}-2}$. &  
\end{tabular}%
\end{center}

For $s = 2,$\ \ ${\rm adm}_{2,\,90}= t_1^{3}t_2^{3}t_3^{4}t_4^{8}$,\ \  ${\rm adm}_{2,\,91}= t_1^{3}t_2^{5}t_3^{8}t_4^{2}$.

For $s\geq 3,$

\begin{center}
\begin{tabular}{lll}
${\rm adm}_{s,\,90}= t_1t_2^{2}t_3^{2^{s}-4}t_4^{2^{s+2}-1}$, & ${\rm adm}_{s,\,91}= t_1t_2^{2}t_3^{2^{s+2}-1}t_4^{2^{s}-4}$, & ${\rm adm}_{s,\,92}= t_1t_2^{2^{s+2}-1}t_3^{2}t_4^{2^{s}-4}$, \\
${\rm adm}_{s,\,93}= t_1^{2^{s+2}-1}t_2t_3^{2}t_4^{2^{s}-4}$, & ${\rm adm}_{s,\,94}= t_1t_2^{2}t_3^{2^{s}-3}t_4^{2^{s+2}-2}$, & ${\rm adm}_{s,\,95}= t_1t_2^{3}t_3^{2^{s}-4}t_4^{2^{s+2}-2}$, \\
${\rm adm}_{s,\,96}= t_1t_2^{3}t_3^{2^{s+2}-2}t_4^{2^{s}-4}$, & ${\rm adm}_{s,\,97}= t_1^{3}t_2t_3^{2^{s}-4}t_4^{2^{s+2}-2}$, & ${\rm adm}_{s,\,98}= t_1^{3}t_2t_3^{2^{s+2}-2}t_4^{2^{s}-4}$, \\
${\rm adm}_{s,\,99}= t_1t_2^{2^{s}-1}t_3^{2}t_4^{2^{s+2}-4}$, & ${\rm adm}_{s,\,100}= t_1^{2^{s}-1}t_2t_3^{2}t_4^{2^{s+2}-4}$, & ${\rm adm}_{s,\,101}= t_1^{3}t_2^{2^{s+2}-3}t_3^{2}t_4^{2^{s}-4}$, \\
${\rm adm}_{s,\,102}= t_1^{3}t_2^{2^{s}-3}t_3^{2}t_4^{2^{s+2}-4}$, & ${\rm adm}_{s,\,103}= t_1^{3}t_2^{5}t_3^{2^{s+2}-6}t_4^{2^{s}-4}$, & ${\rm adm}_{s,\,104}= t_1^{3}t_2^{5}t_3^{2^{s+1}-6}t_4^{2^{s+2}-2^{s}-2}$.
\end{tabular}%
\end{center}

For $s = 3,$ \ \  ${\rm adm}_{3,\,105}= t_1^{3}t_2^{5}t_3^{6}t_4^{24}.$

For $s\geq 4,$ \ \ ${\rm adm}_{s,\,105}= t_1^{3}t_2^{5}t_3^{2^{s}-6}t_4^{2^{s+2}-4}.$

\begin{prop}\label{mdc1}
Let $s$ be a positive integer. Then, 
$$ [{\rm Ker}_{n_s}]^{GL_4}= \left\{\begin{array}{ll}
0&\mbox{if $s = 1$},\\
\langle [\widehat{p_{2,\, 1}}] \rangle &\mbox{if $s = 2$},\\
\langle [\sum_{1\leq j\leq 105}{\rm adm}_{s,\,j}] \rangle &\mbox{if $s \geq 3$},\\
\end{array}\right.$$
where 
$$ \begin{array}{ll}
\medskip
 \widehat{p_{2,\, 1}} &= {\rm adm}_{2,\,67} + {\rm adm}_{2,\,68} + {\rm adm}_{2,\,72} + {\rm adm}_{2,\,76} + {\rm adm}_{2,\,77}\\
&\quad + {\rm adm}_{2,\,78} + {\rm adm}_{2,\,79} + {\rm adm}_{2,\,89} + {\rm adm}_{2,\,91}.
\end{array}$$
\end{prop}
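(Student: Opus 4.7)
The plan is to reduce the computation of $[{\rm Ker}_{n_s}]^{GL_4}$ to a finite linear-algebra problem by exploiting the generation $GL_4 = \langle S_4, \theta_4\rangle$. Concretely, a class $[f]\in {\rm Ker}_{n_s}$ is a $GL_4$-invariant if and only if it is an $S_4$-invariant and in addition $\theta_4(f)\equiv f$. So I would first assemble an explicit basis of $[{\rm Ker}_{n_s}]^{S_4}$, then impose the single extra equation coming from $\theta_4$ and solve.

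For the $S_4$-invariant step, the summand $[\underline{{\rm Ker}_{n_s}}]^{S_4}$ is already described by Lemmas \ref{bdc1-1}, \ref{bdc1-2} (for $s=1$) and Lemma \ref{bdc2-1} (for $s\geq 2$), giving explicit generators $p_{s,k}$. I would apply the same method to the complementary summand $\widehat{{\rm Ker}_{n_s}}$: using Sum's admissible basis $\{{\rm adm}_{s,j}\}_{j\geq 67}$, partition it into the minimal $S_4$-submodules obtained by closing under $\theta_1,\theta_2,\theta_3$, write a general invariant $f = \sum \gamma_j {\rm adm}_{s,j}$ in each submodule, expand each $\theta_i(f) + f$, and reduce every inadmissible monomial that appears back to the admissible basis using the standard hit relations built from $Sq^1, Sq^2, Sq^4,\dots$ as in the proof of Lemma \ref{bdc1-1}. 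The resulting linear system in the $\gamma_j$ determines $[\widehat{{\rm Ker}_{n_s}}]^{S_4}$ and hence, together with the known $p_{s,k}$, a finite generating set for $[{\rm Ker}_{n_s}]^{S_4}$.

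For the $\theta_4$-invariant step, I would compute $\theta_4(q)$ for each generator $q$ of $[{\rm Ker}_{n_s}]^{S_4}$ by expanding $\theta_4(t_1^{a}) = (t_1+t_2)^{a} = \sum_{i}\binom{a}{i}t_1^{i}t_2^{a-i}$ term by term, Cartan-extending to monomials, and then reducing every resulting nonadmissible monomial in $P^{\otimes 4}_{n_s}$ to Sum's admissible basis modulo $\overline{\mathcal A}P^{\otimes 4}_{n_s}$. Writing $\theta_4(q) + q \equiv \sum_{j}\gamma^{q}_{j}{\rm adm}_{s,j}$ and then imposing $\theta_4\bigl(\sum_q c_q q\bigr) + \sum_q c_q q \equiv 0$ yields a linear system in the coefficients $c_q\in \mathbb Z/2$. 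Solving this system produces the claimed basis of $[{\rm Ker}_{n_s}]^{GL_4}$: trivial for $s=1$, the single class $[\widehat{p_{2,1}}]$ for $s=2$, and the single symmetric sum $\bigl[\sum_{j=1}^{105}{\rm adm}_{s,j}\bigr]$ for $s\geq 3$.

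The principal obstacle is the bookkeeping in the $\theta_4$-reduction, especially for $s\geq 3$: a single $\theta_4({\rm adm}_{s,j})$ can produce a long expansion in which many monomials are inadmissible, and each such monomial must be systematically pushed into Sum's admissible basis via an appropriate sequence of $Sq^{k}$-relations. A useful organizing principle is to group terms by their weight vector so that the reductions decouple across blocks, and to exploit symmetry: since the candidate invariants are $S_4$-symmetric and $\widehat{p_{2,1}}$, respectively $\sum_{j=1}^{105}{\rm adm}_{s,j}$, are the full symmetrized sums, the calculation can be restricted to one representative per $S_4$-orbit and then symmetrized. As consistency checks I would verify against the inequality \eqref{bdt}, Sum's dimension for $\mathbb Z/2\otimes_{GL_4}{\rm Ann}_{\overline{\mathcal A}}[P^{\otimes 4}_{(n_s-4)/2}]^{*}$, and the Ext data \eqref{kqL}, all of which predict precisely the dimensions $0, 1, 1$ stated for $s=1$, $s=2$, $s\geq 3$.
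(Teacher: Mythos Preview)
Your proposal is correct and follows essentially the same route as the paper: first assemble $[{\rm Ker}_{n_s}]^{S_4}$ from Lemmas \ref{bdc1-1}, \ref{bdc1-2}, \ref{bdc2-1} together with an explicit computation of $[\widehat{{\rm Ker}_{n_s}}]^{S_4}$ on Sum's admissible basis, then impose the single extra relation $\theta_4(f)\equiv f$ and solve the resulting linear system. The paper carries this out explicitly for $s=1,2$ (finding, for $s=2$, the three $S_4$-invariants $\widehat{p_{2,1}},\widehat{p_{2,2}},\widehat{p_{2,3}}$ in $\widehat{{\rm Ker}_{n_2}}$ before applying $\theta_4$) and then invokes ``similar techniques'' for $s\geq 3$, exactly as you outline.
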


\begin{proof}
Firstly, we consider the case $s = 1.$ Suppose that $[\rho]\in [{\rm Ker}_{n_1}]^{GL_4}.$ Then , since $S_4\subset GL_4,$ by Lemmas \ref{bdc1-1} and \ref{bdc1-2}, we have $\rho \equiv  \beta_1p_{1,\, 1} + \beta_2p_{1,\, 2} + \beta_3p_{1,\, 3} +\beta_4p_{1,\, 4},$
where $\beta_i\in \mathbb Z/2.$ Direct calculating $\theta_4(\rho)$ in the admissible terms ${\rm adm}_{1,\,j}$ for $1\leq j\leq 49$ and using the relation $\theta_4(\rho) + \rho \equiv 0,$ we obtain $ \theta_4(\rho) + \rho \equiv [\beta_1{\rm adm}_{1,\,3} + (\beta_1 + \beta_2){\rm adm}_{1,\,9} + \beta_3{\rm adm}_{1,\,19} + (\beta_3 + \beta_4){\rm adm}_{1,\,32}+\ \mbox{other terms}] \equiv 0.$ This equality implies $\beta_1 = \beta_2 = \beta_3 = \beta_4 = 0.$ Thus, $[{\rm Ker}_{n_1}]^{GL_4}$ is trivial.

Next, for $s = 2,$ we see that the set $\{[{\rm adm}_{2,\,j}]:\, 67\leq j\leq 91\}$ is a basis of $\widehat{{\rm Ker}_{n_2}}.$ Assume that $[f]\in [\widehat{{\rm Ker}_{n_2}}]^{S_4}.$ Then, $f\equiv \sum_{67\leq j\leq 91}\gamma_j{\rm adm}_{2,\,j}$ with $\gamma_j\in \mathbb Z/2.$ Using the relations $\theta_i(f) + f\equiv 0$ for $1\leq i\leq 3,$ it follows that
$$ \begin{array}{ll}
\medskip
\theta_1(f) + f &\equiv [(\gamma_{69} + \gamma_{73}){\rm adm}_{2,\,67} + \gamma_{71}{\rm adm}_{2,\,68}+\gamma_{81}{\rm adm}_{2,\,72} + (\gamma_{73} + \gamma_{76} + \gamma_{78}){\rm adm}_{2,\,76}\\
\medskip
&\quad +(\gamma_{70} + \gamma_{77} + \gamma_{79}){\rm adm}_{2,\,77} + (\gamma_{83} + \gamma_{84}){\rm adm}_{2,\,83} + (\gamma_{85} + \gamma_{86}){\rm adm}_{2,\,85} \\
\medskip
&\quad + (\gamma_{87} + \gamma_{88}){\rm adm}_{2,\,87} +  \gamma_{70}{\rm adm}_{2,\, 89} + (\gamma_{70} + \gamma_{73} + \gamma_{89} + \gamma_{91}){\rm adm}_{2,\,90} \\
\medskip
&\quad+\mbox{other terms}]\ \equiv 0,\\ 
\end{array}$$

\newpage
$$ \begin{array}{ll}
\medskip
\theta_2(f) + f &\equiv [(\gamma_{67} +\gamma_{69} + \gamma_{75} + \gamma_{80} + \gamma_{86} + \gamma_{87} + \gamma_{90} + \gamma_{91}){\rm adm}_{2,\,67} \\
\medskip
&\quad + (\gamma_{67} +\gamma_{69} + \gamma_{75} + \gamma_{80} + + \gamma_{87} + \gamma_{90} + \gamma_{91}){\rm adm}_{2,\,69} \\
\medskip
&\quad +  (\gamma_{68} +\gamma_{70} + \gamma_{91}){\rm adm}_{2,\,68} + (\gamma_{68} +\gamma_{70} + \gamma_{79}){\rm adm}_{2,\,70} \\
\medskip
&\quad + (\gamma_{71} +\gamma_{77} + \gamma_{91}){\rm adm}_{2,\,71} +  (\gamma_{72} +\gamma_{73} + \gamma_{87} + \gamma_{90} + \gamma_{91}){\rm adm}_{2,\,72}\\
\medskip
&\quad +  (\gamma_{72} +\gamma_{73} + \gamma_{86} + \gamma_{87} + \gamma_{90} + \gamma_{91}){\rm adm}_{2,\,73}\\
\medskip
&\quad +  (\gamma_{74} +\gamma_{76} + \gamma_{77} + \gamma_{85} + \gamma_{87} + \gamma_{90}){\rm adm}_{2,\,74}\\
\medskip
&\quad +  (\gamma_{71} +\gamma_{74} + \gamma_{76} + \gamma_{81} + \gamma_{86} + \gamma_{87}+ \gamma_{90} + \gamma_{91}){\rm adm}_{2,\,76}\\
\medskip
&\quad +  (\gamma_{71} +\gamma_{77} + \gamma_{79}){\rm adm}_{2,\,77} + \gamma_{86}{\rm adm}_{2,\, 78} + (\gamma_{79} + \gamma_{91}){\rm adm}_{2,\, 79}\\
\medskip
&\quad + (\gamma_{81} + \gamma_{85}){\rm adm}_{2,\, 81} + (\gamma_{82} + \gamma_{83}){\rm adm}_{2,\, 82} + (\gamma_{88} + \gamma_{89} + \gamma_{91}){\rm adm}_{2,\, 88}\\
\medskip
&\quad + (\gamma_{79} + \gamma_{88} + \gamma_{89}){\rm adm}_{2,\, 89} + (\gamma_{79} + \gamma_{91}){\rm adm}_{2,\, 91}\\
\medskip
&\quad + (\gamma_{79} + \gamma_{86} + \gamma_{91}){\rm adm}_{2,\, 90} + \ \mbox{other terms}]\ \equiv 0,\\
\medskip
\theta_3(f) + f &\equiv [(\gamma_{67} +\gamma_{68} + \gamma_{71} + \gamma_{73}){\rm adm}_{2,\,67} +(\gamma_{67} +\gamma_{68} + \gamma_{69}){\rm adm}_{2,\,68}   \\
\medskip
&\quad (\gamma_{69} +\gamma_{71} + \gamma_{73}){\rm adm}_{2,\,69} + (\gamma_{73} +\gamma_{81}){\rm adm}_{2,\,72} + (\gamma_{74} +\gamma_{75}){\rm adm}_{2,\,74}\\
\medskip
&\quad + (\gamma_{70} +\gamma_{73} +\gamma_{76} + \gamma_{77} + \gamma_{83}){\rm adm}_{2,\,76} + (\gamma_{78} + \gamma_{79} + \gamma_{84}){\rm adm}_{2,\, 78}\\
\medskip
&\quad  + (\gamma_{80} +\gamma_{82}){\rm adm}_{2,\,80} + (\gamma_{85} +\gamma_{87}){\rm adm}_{2,\,85} + (\gamma_{86} +\gamma_{88}){\rm adm}_{2,\,86}\\
\medskip
&\quad + (\gamma_{89} +\gamma_{91}){\rm adm}_{2,\,89}\ +\mbox{other terms}]\ \equiv 0.
\end{array}$$
From these computations, it may be concluded that $ [\widehat{{\rm Ker}_{n_2}}]^{S_4} = \langle [\widehat{p_{2,\, 1}}], [\widehat{p_{2,\, 2}}], [\widehat{p_{2,\, 3}}] \rangle,$ where $$ \begin{array}{ll}
\widehat{p_{2,\, 1}}&:= \sum_{\gamma_j\in \mathbb J_1}{\rm adm}_{2,\, j},\ \mbox{with\ $\mathbb J_1 = \{67, 68, 72, 76, 77, 78, 79, 89, 91\},$}\\[1mm]
\widehat{p_{2,\, 2}}&:= \sum_{\gamma_j\in \mathbb J_2}{\rm adm}_{2,\, j},\ \mbox{with\ $\mathbb J_2 = \{74, 75, 76, 78, 80, 82, 83, 84\}$},\\[1mm]
\widehat{p_{2,\, 3}}&:= \sum_{\gamma_j\in \mathbb J_3}{\rm adm}_{2,\, j},\ \mbox{with\ $\mathbb J_3 = \{72, 74, 75, 90\}$}.
\end{array}$$

Now, suppose that $[\rho]\in [{\rm Ker}_{n_2}]^{GL_4},$ then combining the proof of Lemma \ref{bdc2-1} and the fact that $S_4\subset GL_4,$ we have
$$ \begin{array}{ll}
\medskip
 \rho&\equiv \alpha_1p_{2,\, 1} + \alpha_2\sum_{2\leq i\leq 5}p_{2,\, i} + \alpha_3(p_{2,\, 3} + p_{2,\, 4} + p_{2,\, 6}) \\
&\quad + \alpha_4(p_{2,\, 2} + p_{2,\, 7})   +  \alpha_5p_{2,\, 6} + \alpha_6p_{2,\, 7} + \alpha_7\widehat{p_{2,\, 1}} + \alpha_8\widehat{p_{2,\, 2}} +\alpha_9\widehat{p_{2,\, 3}}  .
\end{array}$$
Direct calculating $\theta_4(\rho)$ in the admissible terms ${\rm adm}_{2,\,j}$ for $1\leq j\leq 91$ and using the relation $\theta_4(\rho) + \rho \equiv 0,$ we find that 
$$ \begin{array}{ll}
\medskip
\theta_4(\rho) + \rho &\equiv [\alpha_1{\rm adm}_{2,\, 1} + (\gamma_{1} + \gamma_{6}){\rm adm}_{2,\, 8} + (\gamma_{1} + \gamma_{2}){\rm adm}_{2,\, 27} + (\gamma_{2} + \gamma_{4}+ \gamma_{6}){\rm adm}_{2,\, 29}\\
\medskip
&\quad + (\gamma_{2} + \gamma_{3}){\rm adm}_{2,\, 33} + (\gamma_{4} + \gamma_{8}){\rm adm}_{2,\, 8} + \gamma_{5}{\rm adm}_{2,\, 59}\\
\medskip
&\quad + (\gamma_{4} + \gamma_{6} + \gamma_8 + \gamma_9){\rm adm}_{2,\, 14} + \ \mbox{other terms}]\ \equiv 0.
\end{array}$$
This equality shows that $\alpha_i = 0$ for $i\neq 7.$ Therefore, $[{\rm Ker}_{n_2}]^{GL_4} = \langle [\widehat{p_{2,\, 1}}]\rangle.$

Finally, by using Lemma \ref{bdc2-1} and the similar techniques as above, for each $s\geq 3,$ we get $[{\rm Ker}_{n_s}]^{GL_4} = \langle [\sum_{1\leq j\leq 105}{\rm adm}_{s,\,j}] \rangle.$ The proposition is proved.
\end{proof}

\begin{proof}[{\bf Proof of Theorem \ref{dlc1}}]
The theorem follows from the inequality \eqref{bdt}, Proposition \ref{mdc1} and the fact that $\mathbb Z/2 \otimes_{GL_4} {\rm Ann}_{\overline{\mathcal A}}[P_{\frac{n_1-4}{2}}^{\otimes 4}]^{*}$ is trivial (see Sum \cite{Sum3}).
\end{proof}

\begin{proof}[{\bf Proof of Theorem \ref{dlc2}}]
It is known, from a theorem of Sum \cite{Sum3} that $\mathbb Z/2 \otimes_{GL_4} {\rm Ann}_{\overline{\mathcal A}}[P_{\frac{n_2-4}{2}}^{\otimes 4}]^{*}$ is 1-dimensional and generated by $[x_1^{(0)}x_2^{(0)}x_3^{(0)}x_4^{(7)}].$ Combining this with the ineequality \eqref{bdt} and Proposition \ref{mdc1}, we deduce that $ \dim \mathbb Z/2 \otimes_{GL_4} {\rm Ann}_{\overline{\mathcal A}}[P_{n_2}^{\otimes 4}]^{*}\leq 2.$ On the other hand, consider the element
$$ \begin{array}{ll}
\zeta&=  (x_1^{(3)}x_2^{(5)}x_3^{(1)}x_4^{(9)}+
 x_1^{(3)}x_2^{(5)}x_3^{(2)}x_4^{(8)}+
 x_1^{(3)}x_2^{(6)}x_3^{(1)}x_4^{(8)}+
 x_1^{(3)}x_2^{(6)}x_3^{(2)}x_4^{(7)}+
\medskip
 x_1^{(3)}x_2^{(5)}x_3^{(4)}x_4^{(6)}\\
&\quad+
 x_1^{(3)}x_2^{(6)}x_3^{(3)}x_4^{(6)}+
 x_1^{(5)}x_2^{(6)}x_3^{(1)}x_4^{(6)}+
 x_1^{(3)}x_2^{(5)}x_3^{(5)}x_4^{(5)}+
 x_1^{(3)}x_2^{(6)}x_3^{(4)}x_4^{(5)}+
\medskip
 x_1^{(5)}x_2^{(6)}x_3^{(2)}x_4^{(5)}\\
&\quad+
 x_1^{(3)}x_2^{(9)}x_3^{(1)}x_4^{(5)}+
 x_1^{(5)}x_2^{(7)}x_3^{(1)}x_4^{(5)}+
 x_1^{(3)}x_2^{(9)}x_3^{(2)}x_4^{(4)}+
 x_1^{(5)}x_2^{(7)}x_3^{(2)}x_4^{(4)}+
\medskip
 x_1^{(3)}x_2^{(10)}x_3^{(1)}x_4^{(4)}\\
&\quad+
 x_1^{(6)}x_2^{(7)}x_3^{(1)}x_4^{(4)}+
 x_1^{(5)}x_2^{(6)}x_3^{(4)}x_4^{(3)}+
 x_1^{(6)}x_2^{(7)}x_3^{(2)}x_4^{(3)}+
 x_1^{(3)}x_2^{(10)}x_3^{(2)}x_4^{(3)}+
\medskip
 x_1^{(3)}x_2^{(11)}x_3^{(2)}x_4^{(2)}\\
&\quad+
 x_1^{(5)}x_2^{(9)}x_3^{(2)}x_4^{(2)}+
 x_1^{(6)}x_2^{(10)}x_3^{(1)}x_4^{(1)}+
 x_1^{(3)}x_2^{(5)}x_3^{(9)}x_4^{(1)}+
 x_1^{(3)}x_2^{(5)}x_3^{(8)}x_4^{(2)}+
\medskip
 x_1^{(3)}x_2^{(6)}x_3^{(8)}x_4^{(1)}\\
&\quad+
 x_1^{(3)}x_2^{(6)}x_3^{(7)}x_4^{(2)}+
 x_1^{(3)}x_2^{(5)}x_3^{(6)}x_4^{(4)}+
 x_1^{(3)}x_2^{(6)}x_3^{(6)}x_4^{(3)}+
 x_1^{(5)}x_2^{(6)}x_3^{(6)}x_4^{(1)}+
\medskip
 x_1^{(3)}x_2^{(5)}x_3^{(5)}x_4^{(5)}\\
&\quad+
 x_1^{(3)}x_2^{(6)}x_3^{(5)}x_4^{(4)}+
 x_1^{(5)}x_2^{(6)}x_3^{(5)}x_4^{(2)}+
 x_1^{(3)}x_2^{(9)}x_3^{(5)}x_4^{(1)}+
 x_1^{(5)}x_2^{(7)}x_3^{(5)}x_4^{(1)}+
\medskip
 x_1^{(3)}x_2^{(9)}x_3^{(4)}x_4^{(2)}\\
&\quad+
 x_1^{(5)}x_2^{(7)}x_3^{(4)}x_4^{(2)}+
 x_1^{(3)}x_2^{(10)}x_3^{(4)}x_4^{(1)}+
 x_1^{(6)}x_2^{(7)}x_3^{(4)}x_4^{(1)}+
 x_1^{(5)}x_2^{(6)}x_3^{(3)}x_4^{(4)}+
\medskip
 x_1^{(6)}x_2^{(7)}x_3^{(3)}x_4^{(2)}\\
&\quad+
 x_1^{(3)}x_2^{(10)}x_3^{(3)}x_4^{(2)}+
 x_1^{(3)}x_2^{(11)}x_3^{(2)}x_4^{(2)}+
 x_1^{(5)}x_2^{(9)}x_3^{(2)}x_4^{(2)}+
 x_1^{(6)}x_2^{(10)}x_3^{(1)}x_4^{(1)}+
\medskip
 x_1^{(3)}x_2^{(12)}x_3^{(1)}x_4^{(2)}\\
&\quad+
 x_1^{(7)}x_2^{(8)}x_3^{(1)}x_4^{(2)}+
 x_1^{(11)}x_2^{(4)}x_3^{(1)}x_4^{(2)}+
 x_1^{(13)}x_2^{(2)}x_3^{(1)}x_4^{(2)}+
 x_1^{(14)}x_2^{(1)}x_3^{(1)}x_4^{(2)}+
\medskip
 x_1^{(12)}x_2^{(3)}x_3^{(1)}x_4^{(2)}\\
&\quad+
 x_1^{(8)}x_2^{(7)}x_3^{(1)}x_4^{(2)}+
 x_1^{(4)}x_2^{(11)}x_3^{(1)}x_4^{(2)}+
 x_1^{(2)}x_2^{(13)}x_3^{(1)}x_4^{(2)}+
 x_1^{(1)}x_2^{(14)}x_3^{(1)}x_4^{(2)}+
\medskip
 x_1^{(6)}x_2^{(6)}x_3^{(3)}x_4^{(3)}\\
&\quad+
 x_1^{(5)}x_2^{(5)}x_3^{(5)}x_4^{(3)}+
 x_1^{(3)}x_2^{(3)}x_3^{(9)}x_4^{(3)}+
 x_1^{(5)}x_2^{(3)}x_3^{(7)}x_4^{(3)}+
 x_1^{(7)}x_2^{(7)}x_3^{(2)}x_4^{(2)}+
\medskip
 x_1^{(6)}x_2^{(9)}x_3^{(1)}x_4^{(2)}\\
&\quad+
 x_1^{(9)}x_2^{(6)}x_3^{(1)}x_4^{(2)}+
 x_1^{(10)}x_2^{(5)}x_3^{(1)}x_4^{(2)}+
 x_1^{(5)}x_2^{(10)}x_3^{(2)}x_4^{(1)}+
 x_1^{(13)}x_2^{(3)}x_3^{(1)}x_4^{(1)}+
\medskip
 x_1^{(5)}x_2^{(11)}x_3^{(1)}x_4^{(1)}\\
&\quad+
 x_1^{(9)}x_2^{(7)}x_3^{(1)}x_4^{(1)})\in [P_{n_2}^{\otimes 4}]^{*}.
\end{array}$$
Then, it is $\overline{\mathcal A}$-annahilated. Indeed, by the unstable condition, we need only to effects of the Steenrod squares $Sq^{2^{i}}$ for $i = 0, 1, 2, 3.$ It is easy to check that $(\zeta)Sq^{2^{i}} = 0$ for $i = 0, 1, 2.$ A direct computation shows that
$$ \begin{array}{ll}
(\zeta)Sq^{8} &=  x_1^{(3)}x_2^{(3)}x_3^{(1)}x_4^{(3)}+
 x_1^{(3)}x_2^{(3)}x_3^{(1)}x_4^{(3)}+
 x_1^{(3)}x_2^{(5)}x_3^{(1)}x_4^{(1)}+
 x_1^{(3)}x_2^{(5)}x_3^{(1)}x_4^{(1)}+
\medskip
 x_1^{(3)}x_2^{(3)}x_3^{(3)}x_4^{(1)}\\
&+
 x_1^{(3)}x_2^{(3)}x_3^{(3)}x_4^{(1)}+
 x_1^{(3)}x_2^{(5)}x_3^{(1)}x_4^{(1)}+
 x_1^{(3)}x_2^{(5)}x_3^{(1)}x_4^{(1)}+
 x_1^{(7)}x_2^{(1)}x_3^{(1)}x_4^{(1)}+
\medskip
 x_1^{(7)}x_2^{(1)}x_3^{(1)}x_4^{(1)}\\
&+
 x_1^{(1)}x_2^{(7)}x_3^{(1)}x_4^{(1)}+
 x_1^{(1)}x_2^{(7)}x_3^{(1)}x_4^{(1)}+
 x_1^{(3)}x_2^{(5)}x_3^{(1)}x_4^{(1)}+
 x_1^{(3)}x_2^{(5)}x_3^{(1)}x_4^{(1)}+
\medskip
 x_1^{(5)}x_2^{(3)}x_3^{(1)}x_4^{(1)}\\
&+
 x_1^{(5)}x_2^{(3)}x_3^{(1)}x_4^{(1)} = 0.
\end{array}$$
Moreover, it is easy to see that the element $x_1^{(1)}x_2^{(1)}x_3^{(1)}x_4^{(15)}\in [P_{n_2}^{\otimes 4}]^{*}$ is also $\overline{\mathcal A}$-annahilated. Then, using the representation in the algebra $\Lambda$ of the fourth cohomological transfer, we deduce that 
$$ \begin{array}{ll}
\medskip
& \psi_4(x_1^{(1)}x_2^{(1)}x_3^{(1)}x_4^{(15)}) = \lambda_1^{3}\lambda_{15},\\
 &\psi_4(\zeta) = \lambda_4\lambda_6\lambda_5\lambda_3 + \lambda_5\lambda_7\lambda^2_3 + \lambda_3^2\lambda_2\lambda_5\lambda_7 + \lambda_2\lambda_4\lambda_5\lambda_7 + \delta(\lambda_3\lambda_5\lambda_{11})
\end{array}$$ are the cycles in $\Lambda^{4, n_2}$ and 
$$ \begin{array}{ll}
\medskip
&Tr_4^{\mathcal A}([x_1^{(1)}x_2^{(1)}x_3^{(1)}x_4^{(15)}]) = [\psi_4(x_1^{(1)}x_2^{(1)}x_3^{(1)}x_4^{(15)})] =  h_1^{3}h_4\in  {\rm Ext}_{\mathcal A}^{4, 4+n_2}(\mathbb Z/2, \mathbb Z/2), \\
&Tr_4^{\mathcal A}([\zeta]) = [\psi_4(\zeta)] =  f_0\in  {\rm Ext}_{\mathcal A}^{4, 4+n_2}(\mathbb Z/2, \mathbb Z/2).
\end{array}$$
Combining the arguments above with the equality \eqref{kqL}, we claim that $\mathbb Z/2 \otimes_{GL_4} {\rm Ann}_{\overline{\mathcal A}}[P_{n_2}^{\otimes 4}]^{*}$ is 2-dimensional.

\medskip

Now, let $\rho\in P^{\otimes 4}_{n_2}$ such that $[\rho]\in [QP^{\otimes 4}_{n_2}]^{GL_4}.$ Since Kameko's homomorphism $$(\widetilde {Sq^0_*})_{n_2}: QP^{\otimes 4}_{n_2}  \longrightarrow QP^{\otimes 4}_{\frac{n_2-4}{2}}$$ is an epimorphism of $\mathbb Z/2GL_4$-modules,  $(\widetilde {Sq^0_*})_{n_2}([\rho])$ belongs to the invariant space $[QP^{\otimes 4}_{\frac{n_2-4}{2}}]^{GL_4}.$ By Sum \cite[Proposition 4.3.2]{Sum3}, $[QP^{\otimes 4}_{\frac{n_2-4}{2}}]^{GL_4} = \langle [\overline{p}_{4,\, 2}] \rangle,$ where $$\overline{p}_{4,\, 2} = \sum_{1\leq \ell\leq 3}\sum_{1\leq i_1\leq \ldots\leq i_{\ell}\leq 4}t_{i_1}t_{i_2}^{2}\ldots t^{2^{\ell-2}}_{i_{\ell-1}}t^{8-2^{\ell-1}}_{i_{\ell}} + t_1t_2^{2}t_3^{2}t_4^{2}.$$
So, $(\widetilde {Sq^0_*})_{n_2}([\rho]) = \gamma[\varphi(\overline{p}_{4,\, 2})]$ where $\gamma\in \mathbb Z/2$ and the $\mathbb Z/2$-linear map $\varphi: P^{\otimes 4}_{\frac{n_2-4}{2}}\to P^{\otimes 4}_{n_2}$ is determined by $\varphi(u) = t_1t_2t_3t_4u^{2}$ for any $u\in P^{\otimes 4}_{\frac{n_2-4}{2}}.$ Because $[\rho]\in [QP^{\otimes 4}_{n_2}]^{GL_4},$ we have
$ \rho \equiv \gamma\varphi(\overline{p}_{4,\, 2}) + \rho^{*},$ where $\rho^{*}\in P^{\otimes 4}_{n_2}$ such that $[\rho^{*}]\in {\rm Ker}_{n_2}.$ By a direct computation using Proposition \ref{mdc1} and the relations $\theta_i(\rho) + \rho \equiv 0$ for $1\leq i\leq 4$, we get $[QP^{\otimes 4}_{n_2}]^{GL_4} = \langle[\varphi(\overline{p}_{4,\, 2})], [\widehat{p_{2,\, 1}}]\rangle$. Furthermore, we see that 
$$ \begin{array}{ll}
\medskip
&\langle [\varphi(\overline{p}_{4,\, 2})], [\zeta]\rangle = 0, \ \ \langle [\widehat{p_{2,\, 1}})], [x_1^{(1)}x_2^{(1)}x_3^{(1)}x_4^{(15)}]\rangle = 0,\\
&\langle [\widehat{p_{2,\, 1}}], [\zeta]\rangle = 1,\ \ \ \ \ \ \langle [\varphi(\overline{p}_{4,\, 2})], [x_1^{(1)}x_2^{(1)}x_3^{(1)}x_4^{(15)}] \rangle = 1.
\end{array}$$ 
Therefore, $\mathbb Z/2 \otimes_{GL_4} {\rm Ann}_{\overline{\mathcal A}}[P_{n_2}^{\otimes 4}]^{*} = \langle [x_1^{(1)}x_2^{1}x_3^{(1)}x_4^{(15)}], [\zeta] \rangle.$ The theorem is proved.
\end{proof}

\begin{proof}[{\bf Proof of Theorem \ref{dlc3}}]
We consider the following elements in $[P_{n_3}^{\otimes 4}]^{*}$:\\[1mm]
$ \begin{array}{ll}
\medskip
&\overline{\zeta} = x_1^{(0)}x_2^{(0)}x_3^{(7)}x_4^{(31)},\\
&\zeta=x_1^{(11)}x_2^{(11)}x_3^{(11)}x_4^{(5)}+
 x_1^{(11)}x_2^{(11)}x_3^{(13)}x_4^{(3)}+
 x_1^{(7)}x_2^{(11)}x_3^{(17)}x_4^{(3)}+
\medskip
x_1^{(11)}x_2^{(7)}x_3^{(17)}x_4^{(3)}\\
&+ x_1^{(11)}x_2^{(15)}x_3^{(9)}x_4^{(3)}+
x_1^{(15)}x_2^{(11)}x_3^{(9)}x_4^{(3)}+
 x_1^{(7)}x_2^{(19)}x_3^{(9)}x_4^{(3)}+
 x_1^{(19)}x_2^{(7)}x_3^{(9)}x_4^{(3)}+
\medskip
x_1^{(7)}x_2^{(19)}x_3^{(7)}x_4^{(5)}\\
&+
x_1^{(19)}x_2^{(7)}x_3^{(7)}x_4^{(5)}+
x_1^{(11)}x_2^{(19)}x_3^{(5)}x_4^{(3)}+
x_1^{(19)}x_2^{(11)}x_3^{(5)}x_4^{(3)} +
x_1^{(11)}x_2^{(21)}x_3^{(3)}x_4^{(3)}+
\medskip
x_1^{(19)}x_2^{(13)}x_3^{(3)}x_4^{(3)}\\
&+
 x_1^{(7)}x_2^{(23)}x_3^{(5)}x_4^{(3)}+
 x_1^{(23)}x_2^{(7)}x_3^{(5)}x_4^{(3)} +
 x_1^{(11)}x_2^{(11)}x_3^{(7)}x_4^{(9)}+
 x_1^{(11)}x_2^{(7)}x_3^{(11)}x_4^{(9)}+
\medskip
 x_1^{(7)}x_2^{(11)}x_3^{(11)}x_4^{(9)}\\
&+
 x_1^{(7)}x_2^{(25)}x_3^{(3)}x_4^{(3)}+
  x_1^{(23)}x_2^{(9)}x_3^{(3)}x_4^{(3)}+
 x_1^{(15)}x_2^{(17)}x_3^{(3)}x_4^{(3)}+
 x_1^{(15)}x_2^{(15)}x_3^{(3)}x_4^{(5)}+
\medskip
x_1^{(27)}x_2^{(5)}x_3^{(3)}x_4^{(3)}\\
&+
 x_1^{(29)}x_2^{(3)}x_3^{(3)}x_4^{(3)}+
 x_1^{(13)}x_2^{(11)}x_3^{(7)}x_4^{(7)}+
 x_1^{(11)}x_2^{(7)}x_3^{(13)}x_4^{(7)}+
 x_1^{(7)}x_2^{(13)}x_3^{(11)}x_4^{(7)} +
\medskip
 x_1^{(13)}x_2^{(7)}x_3^{(7)}x_4^{(11)}\\
&+
 x_1^{(7)}x_2^{(7)}x_3^{(13)}x_4^{(11)}+
 x_1^{(7)}x_2^{(13)}x_3^{(7)}x_4^{(11)}+
  x_1^{(11)}x_2^{(7)}x_3^{(7)}x_4^{(13)}+
 x_1^{(7)}x_2^{(11)}x_3^{(7)}x_4^{(13)}+
\medskip
 x_1^{(7)}x_2^{(7)}x_3^{(11)}x_4^{(13)}\\
&+
 x_1^{(7)}x_2^{(7)}x_3^{(7)}x_4^{(17)}+
x_1^{(7)}x_2^{(7)}x_3^{(9)}x_4^{(15)} +
 x_1^{(7)}x_2^{(11)}x_3^{(5)}x_4^{(15)}+
 x_1^{(7)}x_2^{(13)}x_3^{(3)}x_4^{(15)}+
\medskip
 x_1^{(7)}x_2^{(7)}x_3^{(19)}x_4^{(5)}\\
&+
 x_1^{(7)}x_2^{(7)}x_3^{(21)}x_4^{(3)}+
x_1^{(11)}x_2^{(7)}x_3^{(15)}x_4^{(5)}+
 x_1^{(11)}x_2^{(15)}x_3^{(7)}x_4^{(5)}+
 x_1^{(15)}x_2^{(11)}x_3^{(7)}x_4^{(5)} + x_1^{(7)}x_2^{(13)}x_3^{(15)}x_4^{(3)}.
\end{array}$\\[2mm]
Then, we can immediately see that $\overline{\zeta}\in {\rm Ext}_{\mathcal A}^{0, n_3}(\mathbb Z/2, P^{\otimes 4}).$ Further, by direct calculations, we get $(\zeta)Sq^{2^{i}} = 0$ for $0\leq i\leq 2,$ and\\[2mm]
$\begin{array}{ll}
(\zeta)Sq^{8} &=  x_1^{(7)}x_2^{(7)}x_3^{(11)}x_4^{(5)}+
 x_1^{(7)}x_2^{(11)}x_3^{(7)}x_4^{(5)}+
 x_1^{(11)}x_2^{(7)}x_3^{(7)}x_4^{(5)}+
\medskip
 x_1^{(7)}x_2^{(7)}x_3^{(13)}x_4^{(3)}\\
&+
 x_1^{(7)}x_2^{(7)}x_3^{(13)}x_4^{(3)}+
 x_1^{(11)}x_2^{(7)}x_3^{(9)}x_4^{(3)}+
 x_1^{(7)}x_2^{(7)}x_3^{(13)}x_4^{(3)}+
\medskip
 x_1^{(7)}x_2^{(15)}x_3^{(5)}x_4^{(3)}\\
&+
 x_1^{(15)}x_2^{(7)}x_3^{(5)}x_4^{(3)}+
 x_1^{(7)}x_2^{(11)}x_3^{(9)}x_4^{(3)}+
 x_1^{(7)}x_2^{(15)}x_3^{(5)}x_4^{(3)}+
\medskip
 x_1^{(11)}x_2^{(7)}x_3^{(9)}x_4^{(3)}\\
&+
 x_1^{(15)}x_2^{(7)}x_3^{(5)}x_4^{(3)}+
 x_1^{(7)}x_2^{(11)}x_3^{(7)}x_4^{(5)}+
 x_1^{(11)}x_2^{(7)}x_3^{(7)}x_4^{(5)}+
\medskip
 x_1^{(11)}x_2^{(11)}x_3^{(5)}x_4^{(3)}\\
\end{array}$

\newpage
$\begin{array}{ll}
&+
 x_1^{(7)}x_2^{(15)}x_3^{(5)}x_4^{(3)}+
 x_1^{(11)}x_2^{(11)}x_3^{(5)}x_4^{(3)}+
 x_1^{(15)}x_2^{(7)}x_3^{(5)}x_4^{(3)}+
\medskip
 x_1^{(11)}x_2^{(13)}x_3^{(3)}x_4^{(3)}\\
&+
 x_1^{(11)}x_2^{(13)}x_3^{(3)}x_4^{(3)}+
 x_1^{(7)}x_2^{(15)}x_3^{(5)}x_4^{(3)}+
 x_1^{(15)}x_2^{(7)}x_3^{(5)}x_4^{(3)}+
\medskip
 x_1^{(7)}x_2^{(7)}x_3^{(7)}x_4^{(9)}\\
&+
 x_1^{(7)}x_2^{(11)}x_3^{(7)}x_4^{(5)}+
 x_1^{(11)}x_2^{(7)}x_3^{(7)}x_4^{(5)}+
 x_1^{(7)}x_2^{(7)}x_3^{(7)}x_4^{(9)}+
\medskip
 x_1^{(7)}x_2^{(7)}x_3^{(11)}x_4^{(5)}\\
&+
 x_1^{(11)}x_2^{(7)}x_3^{(7)}x_4^{(5)}+
 x_1^{(7)}x_2^{(7)}x_3^{(7)}x_4^{(9)}+
 x_1^{(7)}x_2^{(7)}x_3^{(11)}x_4^{(5)}+
\medskip
 x_1^{(7)}x_2^{(11)}x_3^{(7)}x_4^{(5)}\\
&+
 x_1^{(15)}x_2^{(9)}x_3^{(3)}x_4^{(3)}+
 x_1^{(15)}x_2^{(9)}x_3^{(3)}x_4^{(3)}+
\medskip
 x_1^{(7)}x_2^{(7)}x_3^{(7)}x_4^{(9)}\\
&+
 x_1^{(7)}x_2^{(7)}x_3^{(11)}x_4^{(5)}+
 x_1^{(7)}x_2^{(7)}x_3^{(13)}x_4^{(3)}+
 x_1^{(7)}x_2^{(11)}x_3^{(9)}x_4^{(3)} = 0.
\end{array}$\\[2mm]
Thus, by the unstable condition, we claim that $\zeta\in {\rm Ext}_{\mathcal A}^{0, n_3}(\mathbb Z/2, P^{\otimes 4}).$  Using the representation of the rank 4 algebraic transfer over the algebra $\Lambda$, we find that the following cycles 
$$ \begin{array}{ll}
\medskip
 \psi_4(\overline{\zeta}) &= \lambda_0^{2}\lambda_{7}\lambda_{31}\in \Lambda^{4, n_3}\\
\medskip
\psi_4(\zeta) &= (\lambda_7^{3}\lambda_{17} + (\lambda_7\lambda_{11}^{2} + \lambda_7^{2}\lambda_{15})\lambda_9 + \lambda_{15}\lambda_{11}\lambda_7\lambda_5 + \lambda_7^{2}\lambda_{11}\lambda_{13}\\
&\quad +\delta(\lambda_7\lambda_{11}\lambda_{21} + \lambda_7\lambda_{25}\lambda_7 + \lambda_9\lambda_{15}^{2} + \lambda_1\lambda_{23}\lambda_{15}))\in \Lambda^{4, n_3}
\end{array}$$
in $\Lambda$ are representative of the non-zero elements $h_0^{2}h_3h_{5}$ and $e_1\in {\rm Ext}_{\mathcal A}^{4, 4+n_3}(\mathbb Z/2, \mathbb Z/2)$ respectively. So, by the equality \eqref{kqL}, we get $\dim \mathbb Z/2 \otimes_{GL_4} {\rm Ann}_{\overline{\mathcal A}}[P_{n_3}^{\otimes 4}]^{*}\geq 2.$ On the other hand, we have shown in \cite{Phuc9} that $\dim \mathbb Z/2 \otimes_{GL_4} {\rm Ann}_{\overline{\mathcal A}}[P_{\frac{n_3-4}{2}}^{\otimes 4}]^{*} = 1.$ Combining these data with the inequality \eqref{bdt} and Proposition \ref{mdc1}, we obtain $\dim \mathbb Z/2 \otimes_{GL_4} {\rm Ann}_{\overline{\mathcal A}}[P_{n_3}^{\otimes 4}]^{*} = 2.$

\medskip

Now, if $[\widetilde{\rho}]\in [QP^{\otimes 4}_{n_3}]^{GL_4},$ then the invariant space $[QP^{\otimes 4}_{\frac{n_3-4}{2}}]^{GL_4}$ contains the image of $[\widetilde{\rho}]$ under Kameko's homomorphism $(\widetilde {Sq^0_*})_{n_3}.$ On the other hand, in \cite{Phuc9}, we showed that
$[QP^{\otimes 4}_{\frac{n_3-4}{2}}]^{GL_4} = \langle [\widetilde{\zeta}] \rangle$ with 
$$ \begin{array}{ll}
\widetilde{\zeta}&=  t_1t_2^{2}t_3^{7}t_4^{7}+
 t_1t_2^{7}t_3^{2}t_4^{7}+
 t_1t_2^{3}t_3t_4^{12}+
\medskip
 t_1t_2^{3}t_3^{12}t_4\\
&\quad+
 t_1^{3}t_2t_3t_4^{12}+
 t_1^{3}t_2t_3^{12}t_4+
 t_1^{3}t_2^{5}t_3t_4^{8}+
 t_1^{3}t_2^{5}t_3^{8}t_4.
\end{array}$$  
Hence, $(\widetilde {Sq^0_*})_{n_3}([\widetilde{\rho}]) = \beta[\overline{\varphi}(\widetilde{\zeta})]$ where $\beta\in \mathbb Z/2$ and the $\mathbb Z/2$-linear map $\overline{\varphi}: P^{\otimes 4}_{\frac{n_3-4}{2}}\to P^{\otimes 4}_{n_3}$ is a linear map determined by $\overline{\varphi}(v) = \prod_{1\leq i\leq 4}t_iv^{2}$ for all $v = \prod_{1\leq i\leq 4}t_i^{a_i}\in P^{\otimes 4}_{\frac{n_3-4}{2}}.$ Thus, we have $ \widetilde{\rho} \equiv  \beta\overline{\varphi}(\widetilde{\zeta})+ \overline{\rho},$ where $\overline{\rho}\in P^{\otimes 4}_{n_3}$ such that $[\overline{\rho}]\in {\rm Ker}_{n_3}.$ Using Proposition \ref{mdc1} and the relations $\theta_i(\widetilde{\rho})\equiv \widetilde{\rho},\, i = 1, 2, 3,4,$ we obtain $\widetilde{\rho}\equiv \beta\overline{\varphi}(\widetilde{\zeta})+ \beta'\sum_{1\leq j\leq 105}{\rm adm}_{3,\, j},\ \ (\beta'\in \mathbb Z/2).$ Combining this and the fact that
$$ \begin{array}{ll}
\medskip
&\langle [\overline{\varphi}(\widetilde{\zeta})], [\overline{\zeta}]\rangle = 0, \ \  \langle [\sum_{1\leq j\leq 105}{\rm adm}_{3,\, j}], [\zeta]\rangle = 0,\\
&\langle [\sum_{1\leq j\leq 105}{\rm adm}_{3,\, j}], [\overline{\zeta}]\rangle = 1,\ \  \langle [\overline{\varphi}(\widetilde{\zeta})], [\zeta] \rangle = 1,
\end{array}$$ 
we get $\mathbb Z/2 \otimes_{GL_4} {\rm Ann}_{\overline{\mathcal A}}[P_{n_3}^{\otimes 4}]^{*} = \langle [\overline{\zeta}], [\zeta] \rangle.$ The proof of the theorem is completed.
\end{proof}

\begin{proof}[{\bf Proof of Theorem \ref{dlc4}}]
From the inequality \eqref{bdt}, Proposition \ref{mdc1} and our result in \cite{Phuc9} that the coinvarian space $\mathbb Z/2 \otimes_{GL_4} {\rm Ann}_{\overline{\mathcal A}}[P_{\frac{n_s-4}{2}}^{\otimes 4}]^{*}$ is trivial for all $s\geq 4$, one gets an estimate that $\dim \mathbb Z/2 \otimes_{GL_4} {\rm Ann}_{\overline{\mathcal A}}[P_{n_3}^{\otimes 4}]^{*}\leq 1.$ On the other hand, it is easy to see that the element $\zeta_s = x_1^{(0)}x_2^{(0)}x_3^{(2^{s}-1)}x_4^{(2^{s+2}-1)}$ belongs to ${\rm Ext}_{\mathcal A}^{0, n_s}(\mathbb Z/2, P^{\otimes 4}).$ So, based on the equality \eqref{kqL} and a representation of the rank 4 transfer   over the lambda algebra, we claim that $h_0^{2}h_sh_{s+2} = Tr_4^{\mathcal A}([\zeta]) = [\psi_4(\zeta)] = [\lambda_0^{2}\lambda_{2^{s}-1}\lambda_{2^{s+2}-1}]\in {\rm Ext}_{\mathcal A}^{4, 4+n_s}(\mathbb Z/2, \mathbb Z/2)$ and $\dim \mathbb Z/2 \otimes_{GL_4} {\rm Ann}_{\overline{\mathcal A}}[P_{n_s}^{\otimes 4}]^{*}\geq 1.$ It should be noted that $\lambda_0^{2}\lambda_{2^{s}-1}\lambda_{2^{s+2}-1}\in \Lambda^{4, n_s}$ is a cycle in $\Lambda.$ Thus, $ \mathbb Z/2 \otimes_{GL_4} {\rm Ann}_{\overline{\mathcal A}}[P_{n_3}^{\otimes 4}]^{*}$ is 1-dimensional. Furthermore, because $\langle [\sum_{1\leq j\leq 105}{\rm adm}_{s,\, j}], [\zeta_s] \rangle = 1,$ for any $s > 3,$ by Proposition \ref{mdc1}, we conclude that the set $\{[\zeta_s]\}$ is a basis of $\mathbb Z/2 \otimes_{GL_4} {\rm Ann}_{\overline{\mathcal A}}[P_{n_s}^{\otimes 4}]^{*}.$ This completes the proof of the theorem.
\end{proof}

\subsection{The degree \boldmath{$n'_s = 17.2^{s}-2$}}

In this section, we prove Theorems \ref{dlc5}, \ref{dlc6} and \ref{dlc7}. Firstly, we need the following results. Let us denote ${\rm Ker}_{n'_s}$ the kernel of Kameko's homomorphism $ (\widetilde {Sq^0_*})_{n'_s}: QP^{\otimes 4}_{n'_s}  \longrightarrow QP^{\otimes 4}_{\frac{n'_s-4}{2}}.$ According to Sum \cite[Propositions 7.2.1, 7.4.1, Theorems 7.2.2, 7.4.2]{Sum1}), ${\rm Ker}_{n'_s}$ has a basis consisting all the classes represented by the admissible monomials $b_{4,\, 1,\, j},$ $c_{4,\, 1,\, j}$ ($s = 1$), and $b_{4,\, s,\, j}$ ($s\geq 2).$ Based on this result, we obtain the following.

\begin{prop}\label{mdc2}
Let $s$ be a positive integer. Then,
$$ [{\rm Ker}_{n'_s}]^{GL_4}  = \left\{\begin{array}{ll}
\langle [\sum_{1\leq j\leq 105}b_{4,\, s,\, j}] \rangle&\mbox{if $s = 3$},\\
0&\mbox{if $s\neq 3$}.
\end{array}\right.$$
\end{prop}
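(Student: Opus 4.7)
The strategy parallels the argument used to prove Proposition \ref{mdc1}. By the results of Sum \cite{Sum1, Sum2}, an explicit admissible monomial basis for ${\rm Ker}_{n'_s}$ is known: it consists of the classes $[b_{4,\,s,\,j}]$, together with the extra family $[c_{4,\,1,\,j}]$ when $s=1$. My first step is to split this basis according to whether the exponent vector has a zero entry, giving the usual direct-sum decomposition
\[
{\rm Ker}_{n'_s} \cong \underline{{\rm Ker}_{n'_s}} \bigoplus \widehat{{\rm Ker}_{n'_s}}
\]
as $\mathbb{Z}/2\,S_4$-modules, and then further subdivide each summand into the minimal $S_4$-orbits of admissible representatives. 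This decomposition has to be carried out separately in the four ranges $s=1$, $s=2$, $s=3$ and $s\geq 4$, because Sum's admissible basis has a slightly different shape in each range.

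The second step is to compute $[{\rm Ker}_{n'_s}]^{S_4}$ orbit-by-orbit. For each $S_4$-orbit summand I take an arbitrary $S_4$-invariant class $[f]$, expand $f$ over $\mathbb{Z}/2$ in the admissible basis with unknown coefficients, apply the three Coxeter generators $\theta_1,\theta_2,\theta_3$ of $S_4$, reduce the (non-admissible) monomials produced back to admissible form modulo $\overline{\mathcal A}P^{\otimes 4}_{n'_s}$ via the standard $Sq^{2^k}$ hit identities exactly as in Lemmas \ref{bdc1-1} and \ref{bdc1-2}, and then impose the relations $\theta_i(f)+f\equiv 0$. Solving the resulting $\mathbb{Z}/2$-linear system produces an explicit (small) basis of $[{\rm Ker}_{n'_s}]^{S_4}$ in each case. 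The computation on $\widehat{{\rm Ker}_{n'_s}}$ will mirror the one already carried out for $s=2$ in the proof of Proposition \ref{mdc1}, yielding a handful of explicit $S_4$-invariants $\widehat{p}_{s,k}$ obtained as partial sums of the admissible basis.

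The final and decisive step is to impose the remaining generator $\theta_4$ of $GL_4$. I form a general $S_4$-invariant $\rho$ as a $\mathbb{Z}/2$-linear combination of the $S_4$-invariants found above, compute $\theta_4(\rho)$, reduce it to admissible form, and enforce $\theta_4(\rho)+\rho\equiv 0$. For $s\in\{1,2\}$ and $s\geq 4$ the ensuing linear system is expected to force every coefficient to vanish, yielding $[{\rm Ker}_{n'_s}]^{GL_4}=0$; for $s=3$ the only surviving combination should be precisely $[\sum_{1\leq j\leq 105} b_{4,\,3,\,j}]$, as claimed. The main obstacle is not conceptual but the sheer volume of bookkeeping: the admissible basis of ${\rm Ker}_{n'_s}$ is substantially larger than in the degree $n_s$ case (Sum's list already has more than a hundred entries when $s=3$), so the $\theta_i$-reductions involve long chains of hit identities that must be tracked carefully in order to identify correctly which $S_4$-invariants persist under $\theta_4$, and in particular to verify that the full sum of admissibles is $GL_4$-invariant exactly when $s=3$ and in no other case.
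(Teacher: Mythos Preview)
Your proposal is correct and follows essentially the same route as the paper: the paper too refers the cases $s\geq 2$ back to the computation done in Proposition \ref{mdc1}, and for $s=1$ it carries out exactly the program you describe---decompose ${\rm Ker}_{n'_1}$ into $S_4$-orbit summands, solve for the $S_4$-invariants in each summand via $\theta_1,\theta_2,\theta_3$, and then eliminate everything with $\theta_4$. The only cosmetic difference is that for $s=1$ the paper organises the basis into four submodules $\mathbb M_1,\dots,\mathbb M_4$ rather than first splitting into $\underline{{\rm Ker}}$ and $\widehat{{\rm Ker}}$, but this amounts to the same decomposition.
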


\begin{proof}
The proof of the proposition for the cases $s \geq 2$ is similar to Proposition \ref{mdc1}. Now, we prove the case $s = 1$ in detail. As well known (see Sum \cite{Sum1}),  an admissible monomial basis for ${\rm Ker}_{n'_1}$ is the set $\{[{\rm adm}_j]:\, 1\leq j\leq 45\} = \{[b_{4,\, 1,\, j}]:\, 1\leq j\leq 38\}\bigcup \{[c_{4,\,1,\, j}]:\, 1\leq j\leq 7\},$ where

\begin{center}
\begin{tabular}{lrrr}
${\rm adm}_{1}= t_3t_4^{31}$, & \multicolumn{1}{l}{${\rm adm}_{2}= t_3^{31}t_4$,} & \multicolumn{1}{l}{${\rm adm}_{3}= t_2t_4^{31}$,} & \multicolumn{1}{l}{${\rm adm}_{4}= t_2t_3^{31}$,} \\
${\rm adm}_{5}= t_2^{31}t_4$, & \multicolumn{1}{l}{${\rm adm}_{6}= t_2^{31}t_3$,} & \multicolumn{1}{l}{${\rm adm}_{7}= t_1t_4^{31}$,} & \multicolumn{1}{l}{${\rm adm}_{8}= t_1t_3^{31}$,} \\
${\rm adm}_{9}= t_1t_2^{31}$, & \multicolumn{1}{l}{${\rm adm}_{10}= t_1^{31}t_4$,} & \multicolumn{1}{l}{${\rm adm}_{11}= t_1^{31}t_3$,} & \multicolumn{1}{l}{${\rm adm}_{12}= t_1^{31}t_2$,} \\
${\rm adm}_{13}= t_3^{3}t_4^{29}$, & \multicolumn{1}{l}{${\rm adm}_{14}= t_2^{3}t_4^{29}$,} & \multicolumn{1}{l}{${\rm adm}_{15}= t_2^{3}t_3^{29}$,} & \multicolumn{1}{l}{${\rm adm}_{16}= t_1^{3}t_4^{29}$,} \\
${\rm adm}_{17}= t_1^{3}t_3^{29}$, & \multicolumn{1}{l}{${\rm adm}_{18}= t_1^{3}t_2^{29}$,} & \multicolumn{1}{l}{${\rm adm}_{19}= t_2t_3t_4^{30}$,} & \multicolumn{1}{l}{${\rm adm}_{20}= t_2t_3^{30}t_4$,} \\
${\rm adm}_{21}= t_1t_3t_4^{30}$, & \multicolumn{1}{l}{${\rm adm}_{22}= t_1t_3^{30}t_4$,} & \multicolumn{1}{l}{${\rm adm}_{23}= t_1t_2t_4^{30}$,} & \multicolumn{1}{l}{${\rm adm}_{24}= t_1t_2t_3^{30}$,} \\
${\rm adm}_{25}= t_1t_2^{30}t_4$, & \multicolumn{1}{l}{${\rm adm}_{26}= t_1t_2^{30}t_3$,} & \multicolumn{1}{l}{${\rm adm}_{27}= t_2t_3^{2}t_4^{29}$,} & \multicolumn{1}{l}{${\rm adm}_{28}= t_1t_3^{2}t_4^{29}$,} \\
${\rm adm}_{29}= t_1t_2^{2}t_4^{29}$, & \multicolumn{1}{l}{${\rm adm}_{30}= t_1t_2^{2}t_3^{29}$,} & \multicolumn{1}{l}{${\rm adm}_{31}= t_2t_3^{3}t_4^{28}$,} & \multicolumn{1}{l}{${\rm adm}_{32}= t_2^{3}t_3t_4^{28}$,} \\
${\rm adm}_{33}= t_1t_3^{3}t_4^{28}$, & \multicolumn{1}{l}{${\rm adm}_{34}= t_1t_2^{3}t_4^{28}$,} & \multicolumn{1}{l}{${\rm adm}_{35}= t_1t_2^{3}t_3^{28}$,} & \multicolumn{1}{l}{${\rm adm}_{36}= t_1^{3}t_3t_4^{28}$,} \\
${\rm adm}_{37}= t_1^{3}t_2t_4^{28}$, & \multicolumn{1}{l}{${\rm adm}_{38}= t_1^{3}t_2t_3^{28}$,} & \multicolumn{1}{l}{${\rm adm}_{39}= t_1t_2t_3^{2}t_4^{28}$,} & \multicolumn{1}{l}{${\rm adm}_{40}= t_1t_2^{2}t_3t_4^{28}$,} \\
${\rm adm}_{41}= t_1t_2^{2}t_3^{28}t_4$, & \multicolumn{1}{l}{${\rm adm}_{42}= t_1t_2^{2}t_3^{4}t_4^{25}$,} & \multicolumn{1}{l}{${\rm adm}_{43}= t_1t_2^{2}t_3^{5}t_4^{24}$,} & \multicolumn{1}{l}{${\rm adm}_{44}= t_1t_2^{3}t_3^{4}t_4^{24}$,} \\
${\rm adm}_{45}= t_1^{3}t_2t_3^{4}t_4^{24}$. &       &       &  
\end{tabular}%
\end{center}

Then we have a direct summand decomposition of $S_4$-submodules: $ {\rm Ker}_{n'_1}  = \bigoplus_{1\leq i\leq 4}\mathbb M_i,$ 
where $$ \begin{array}{ll}
\medskip
&\mathbb M_1:=S_4({\rm adm}_{1}) = \langle \{[{\rm adm}_{j}]:\, 1\leq j\leq 12\} \rangle,\\
\medskip
&\mathbb M_2:=S_4({\rm adm}_{13}) = \langle \{[{\rm adm}_{j}]:\, 13\leq j\leq 18\} \rangle,\\
\medskip
&\mathbb M_3:=S_4({\rm adm}_{19}, {\rm adm}_{27}, {\rm adm}_{31}) =\langle \{[{\rm adm}_{j}]:\, 19\leq j\leq 38\} \rangle,\\
\medskip
&\mathbb M_4 := \langle \{[{\rm adm}_{j}]:\, 39\leq j\leq 45\} \rangle.
\end{array}$$ 
Denote by $\mathscr B_i$ the bases of the spaces $\mathbb M_i.$ Let $f_i\in P^{\otimes 4}_{n'_1}$ such that $[f_i]\in \mathbb M_i^{S_4},$ then $f_i\equiv \sum_{u\in \mathscr B_i}\gamma_uu$ with $\gamma_u\in \mathbb Z/2.$ For each $1\leq k\leq 3,$ we compute explicitly the equalities $\theta_k(f_i)$ in the terms $u\in \mathscr B_i$ modulo ($\overline{\mathcal A}P^{\otimes 4}_{n'_1}$). Then, from the relations $\theta_k(f_i) + f_i\equiv 0$, we find that
$$ \begin{array}{ll}
\medskip
&\mathbb M_1^{S_4}= \langle [q_1] \rangle \ \mbox{with\ $q_1 = \sum_{1\leq j\leq 12}{\rm adm}_{j}$},\ \ \mathbb M_2^{S_4}= \langle [q_2] \rangle \ \mbox{with\ $q_2 = \sum_{13\leq j\leq 18}{\rm adm}_{j}$},\\
\medskip
&\mathbb M_3^{S_4}= \langle [q_3] \rangle \ \mbox{with\ $q_3 = {\rm adm}_{20} + {\rm adm}_{22} +   \sum_{25\leq j\leq 38}{\rm adm}_{j}$},\\
\medskip
&\mathbb M_4^{S_4}= \langle [q_4 + q_5], [q_5 + q_6] \rangle \ \mbox{with\ $q_4 = \sum_{42\leq j\leq 45}{\rm adm}_{j}$, $q_5 = {\rm adm}_{39}$, $q_6 = {\rm adm}_{40} + {\rm adm}_{41}$}.
%[S_4({\rm adm}_{1,\, 13})]^{S_4}&= \langle [q_{2}] \rangle \ \mbox{with\ $q_2 = \sum_{13\leq j\leq 18}{\rm adm}_{1,\, j}$},\\
\end{array}$$
Now, suppose that $[h]\in  [{\rm Ker}_{n'_s}]^{GL_4},$ then because $S_4\subset GL_4,$ we have $$ h\equiv \xi_1q_1 + \xi_2q_2 + \xi_3q_3 + \xi_4(q_4 + q_5) + \xi_5(q_5 + q_6),$$
in which $\xi_i\in \mathbb Z/2.$ A direct computation shows that
$$ \begin{array}{ll}
\medskip
 \theta_4(h) + h &\equiv (\xi_1{\rm adm}_{3} + (\xi_1+\xi_3){\rm adm}_{5} + (\xi_1+\xi_2){\rm adm}_{9} + (\xi_3+\xi_4){\rm adm}_{27} \\
&\qquad + (\xi_3+\xi_5){\rm adm}_{20}+ \ \mbox{other terms})\ \equiv 0.
\end{array}$$
This equality implies that $\xi_i = 0$ for $1\leq i\leq 5.$ The proposition follows.
\end{proof}

\begin{rema}
From a result of Lin \cite{Lin}, we claim that
\begin{equation}\label{kqL2}
{\rm Ext}_{\mathcal A}^{4, 4+n'_s}(\mathbb Z/2, \mathbb Z/2) = \left\{\begin{array}{ll}
\langle d_1 \rangle  &\mbox{if $s = 1$},\\[1mm]
\langle h_1^{3}h_6 \rangle = \langle h_0^{2}h_2h_6 \rangle &\mbox{if $s = 2$},\\[1mm]
\langle h_0^{2}h_3h_7, h_1h_3h_6^{2} \rangle  &\mbox{if $s = 3$},\\[1mm]
\langle h_1h^{2}_{s-1}h_{s+4}, h_1h_sh_{s+3}^{2} \rangle  &\mbox{if $s\geq 4$},
\end{array}\right.
\end{equation}
where $h_1h^{2}_{s-1}h_{s+4} = 0$ for $s = 4$ and $h_1h^{2}_{s-1}h_{s+4}\neq h_1h_sh_{s+3}^{2}$ for $s\geq 5.$
\end{rema}

\begin{proof}[{\bf Proof of Theorem \ref{dlc5}}]
By Sum \cite{Sum3}, the coinvariant space $\mathbb Z/2 \otimes_{GL_4} {\rm Ann}_{\overline{\mathcal A}}[P_{\frac{n'_1-4}{2}}^{\otimes 4}]^{*}$ has dimension 1. Combining this with the inequality \eqref{bdt2} and Proposition \ref{mdc2}, we deduce an estimate that $\dim \mathbb Z/2 \otimes_{GL_4} {\rm Ann}_{\overline{\mathcal A}}[P_{n'_1}^{\otimes 4}]^{*} \leq 1.$ On the other hand, it is easy to verify that the element 
$$ \begin{array}{ll}
\overline{\zeta}&=  x_1^{(3)}x_2^{(13)}x_3^{(7)}x_4^{(9)}+
 x_1^{(3)}x_2^{(13)}x_3^{(11)}x_4^{(5)}+
 x_1^{(3)}x_2^{(13)}x_3^{(13)}x_4^{(3)}+
 x_1^{(5)}x_2^{(11)}x_3^{(7)}x_4^{(9)}+
\medskip
 x_1^{(5)}x_2^{(11)}x_3^{(11)}x_4^{(5)}\\
&+
 x_1^{(5)}x_2^{(11)}x_3^{(13)}x_4^{(3)}+
 x_1^{(5)}x_2^{(13)}x_3^{(7)}x_4^{(7)}+
 x_1^{(7)}x_2^{(3)}x_3^{(11)}x_4^{(11)}+
 x_1^{(7)}x_2^{(3)}x_3^{(13)}x_4^{(9)}+
\medskip
 x_1^{(7)}x_2^{(5)}x_3^{(11)}x_4^{(9)}\\
&+
 x_1^{(7)}x_2^{(5)}x_3^{(13)}x_4^{(7)}+
 x_1^{(7)}x_2^{(7)}x_3^{(7)}x_4^{(11)}+
 x_1^{(7)}x_2^{(7)}x_3^{(9)}x_4^{(9)}+
 x_1^{(7)}x_2^{(7)}x_3^{(13)}x_4^{(5)}+
\medskip
 x_1^{(7)}x_2^{(9)}x_3^{(7)}x_4^{(9)}\\
&+
 x_1^{(7)}x_2^{(11)}x_3^{(5)}x_4^{(9)}+
 x_1^{(7)}x_2^{(13)}x_3^{(3)}x_4^{(9)}+
 x_1^{(7)}x_2^{(13)}x_3^{(5)}x_4^{(7)}+
 x_1^{(9)}x_2^{(7)}x_3^{(7)}x_4^{(9)}+
\medskip
 x_1^{(9)}x_2^{(7)}x_3^{(11)}x_4^{(5)}\\
&+
 x_1^{(9)}x_2^{(7)}x_3^{(13)}x_4^{(3)}+
 x_1^{(11)}x_2^{(3)}x_3^{(7)}x_4^{(11)}+
 x_1^{(11)}x_2^{(3)}x_3^{(13)}x_4^{(5)}+
 x_1^{(11)}x_2^{(5)}x_3^{(11)}x_4^{(5)}+
\medskip
 x_1^{(11)}x_2^{(7)}x_3^{(3)}x_4^{(11)}\\
&+
 x_1^{(11)}x_2^{(7)}x_3^{(9)}x_4^{(5)}+
 x_1^{(11)}x_2^{(9)}x_3^{(7)}x_4^{(5)}+
 x_1^{(11)}x_2^{(11)}x_3^{(3)}x_4^{(7)}+
 x_1^{(11)}x_2^{(11)}x_3^{(5)}x_4^{(5)}+
\medskip
 x_1^{(11)}x_2^{(13)}x_3^{(3)}x_4^{(5)}\\
&+
 x_1^{(13)}x_2^{(3)}x_3^{(13)}x_4^{(3)}+
 x_1^{(13)}x_2^{(5)}x_3^{(11)}x_4^{(3)}+
 x_1^{(13)}x_2^{(7)}x_3^{(7)}x_4^{(5)}+
 x_1^{(13)}x_2^{(7)}x_3^{(7)}x_4^{(5)}+
\medskip
 x_1^{(13)}x_2^{(7)}x_3^{(9)}x_4^{(3)}\\
&+
 x_1^{(13)}x_2^{(9)}x_3^{(7)}x_4^{(3)}+
 x_1^{(13)}x_2^{(11)}x_3^{(5)}x_4^{(3)}+
 x_1^{(13)}x_2^{(13)}x_3^{(3)}x_4^{(3)}
\end{array}$$
belongs to ${\rm Ext}_{\mathcal A}^{0, n'_1}(\mathbb Z/2, P^{\otimes 4}).$ It should be note that we need only to consider effects of the Steenrod squares $Sq^{i}$ for $i = 1,2, 4, 8$ because of the unstable condition. So, using the representation of $Tr_4^{\mathcal A}$ over $\Lambda$, we find that $\psi_4(\overline{\zeta}) =  \lambda_7^2\lambda_5\lambda_{13} + \lambda_7^2\lambda_9^2 + \lambda_7\lambda_{11}\lambda_9\lambda_5 + \lambda_{15}\lambda_3\lambda_{11}\lambda_3 + \delta(\lambda_7^2\lambda_{19} + \lambda_7\lambda_{19}\lambda_7)$ is a cycle in $\Lambda$ and is a representative of the non-zero element $d_1\in {\rm Ext}_{\mathcal A}^{4, 4+n'_1}(\mathbb Z/2, \mathbb Z/2).$ Combining this with the equality \eqref{kqL2} and the fact that $\dim \mathbb Z/2 \otimes_{GL_4} {\rm Ann}_{\overline{\mathcal A}}[P_{n'_1}^{\otimes 4}]^{*} \leq 1$, we claim that the coinvariant space $ \dim \mathbb Z/2 \otimes_{GL_4} {\rm Ann}_{\overline{\mathcal A}}[P_{n'_1}^{\otimes 4}]^{*}$ is 1-dimensional.  Now, let $\sigma\in P^{\otimes 4}_{n'_1}$ such that $[\sigma]\in [QP^{\otimes 4}_{n'_1}]^{GL_4}.$ Consider the linear transformation
$$ \begin{array}{ll}
\widetilde{\varphi}: P^{\otimes 4}_{\frac{n'_1-4}{2}}&\longrightarrow P^{\otimes 4}_{n'_1},\\
\prod_{1\leq i\leq 4}t_i^{a_i}&\longmapsto \prod_{1\leq i\leq 4}t_i^{2a_i+1}.
\end{array}$$
 According to Sum \cite[Proposition 4.2.2]{Sum3}, the invariant space $[QP^{\otimes 4}_{\frac{n'_1-4}{2}}]^{GL_4}$ is generated by the class $[t_1t_2t_3^{6}t_4^{6} + t_1^{3}t_2^{3}t_3^{4}t_4^{4}].$ Then, since the Kameko squaring operation $(\widetilde {Sq^0_*})_{n'_1}$ is an epimorphism of $\mathbb Z/2GL_4$-modules, $(\widetilde {Sq^0_*})_{n'_1}([\sigma]) = \gamma[\widetilde{\varphi}(t_1t_2t_3^{6}t_4^{6} + t_1^{3}t_2^{3}t_3^{4}t_4^{4})]$ with $\gamma\in \mathbb Z/2.$ We therefore deduce that $\sigma \equiv \gamma\widetilde{\varphi}(t_1t_2t_3^{6}t_4^{6} + t_1^{3}t_2^{3}t_3^{4}t_4^{4}) + \sigma^{*},$ where $\sigma^{*}\in P^{\otimes 4}_{n'_1}$ such that $[\sigma^{*}]\in {\rm Ker}_{n'_1}.$ By a direct calculation using Proposition \ref{mdc2} and the relations $\theta_i(\sigma)  + \sigma \equiv 0$ for $1\leq i\leq 4,$ we get $$[QP^{\otimes 4}_{n'_1}]^{GL_4} = \langle [\widetilde{\varphi}(t_1t_2t_3^{6}t_4^{6} + t_1^{3}t_2^{3}t_3^{4}t_4^{4})]\rangle.$$ Now the theorem follows from the fact that $\langle [\widetilde{\varphi}(t_1t_2t_3^{6}t_4^{6} + t_1^{3}t_2^{3}t_3^{4}t_4^{4})], [\overline{\zeta}] \rangle = 1.$  
\end{proof}

\begin{proof}[{\bf Proof of Theorem \ref{dlc6}}]
The proof of this theorem is similar to the one in Theorem \ref{dlc2}. Note that the invariant space $[QP^{\otimes 4}_{n'_2}]^{GL_4}$ generated over $\mathbb Z/2$ by the class $$ \big[\varphi\big(\sum_{1\leq \ell\leq 3}\sum_{1\leq i_1\leq \ldots\leq i_{\ell}\leq 4}t_{i_1}t_{i_2}^{2}\ldots t^{2^{\ell-2}}_{i_{\ell-1}}t^{32-2^{\ell-1}}_{i_{\ell}} + t_1t_2^{2}t_3^{4}t_4^{24}\big)\big].$$
 \end{proof}

\begin{proof}[{\bf Proof of Theorem \ref{dlc7}}]
Firstly, we recall a result in our work \cite{Phuc9} which is used in the proof of the theorem.

\begin{thm}\label{dlct}
Let $s$ and $t$ be positive integers such that $t\geq 4.$ Then, 
$$ 
\dim \mathbb Z/2 \otimes_{GL_4} {\rm Ann}_{\overline{\mathcal A}}[P_{2^{s+t+1} + 2^{s+1}-3}^{\otimes 4}]^{*} = \left\{\begin{array}{ll}
1&\mbox{if $s = 1, 2$},\\
2&\mbox{if $s \geq 3$}.
\end{array}\right.$$
Moreover, $ \begin{array}{ll}
\medskip
&\mathbb Z/2 \otimes_{GL_4} {\rm Ann}_{\overline{\mathcal A}}[P_{2^{s+t+1} + 2^{s+1}-3}^{\otimes 4}]^{*}\\
&  = \left\{\begin{array}{ll}
\langle [x_1^{(0)}x_2^{(2^{s+1}-1)}x_3^{(2^{s+t}-1)}x_4^{(2^{s+t}-1)}] \rangle&\mbox{if $s = 1, 2$},\\[1mm]
\langle [x_1^{(0)}x_2^{(2^{s+1}-1)}x_3^{(2^{s+t}-1)}x_4^{(2^{s+t}-1)}] , [x_1^{(0)}x_2^{(2^{s}-1)}x_3^{(2^{s}-1)}x_4^{(2^{s+t+1}-1)}] \rangle&\mbox{if $s \geq 3$}.
\end{array}\right.
\end{array}$
\end{thm}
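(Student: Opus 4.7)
I will mimic the Kameko-splitting strategy used for Theorems \ref{dlc2}--\ref{dlc5}. First I establish the upper bound via the inequality \eqref{bdt2}, Proposition \ref{mdc2}, and the cited result Theorem \ref{dlct} from \cite{Phuc9}. Then I verify that the listed spike monomials lie in ${\rm Ann}_{\overline{\mathcal A}}[P^{\otimes 4}_{n'_s}]^*$. Finally, I check linear independence in the coinvariants via the non-degenerate pairing with $[QP^{\otimes 4}_{n'_s}]^{GL_4}$.

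\textbf{Step 1 (upper bound).} Observe that
\[
\frac{n'_s-4}{2}=17\cdot 2^{s-1}-3 = 2^{(s-2)+4+1}+2^{(s-2)+1}-3,
\]
which matches the pattern $2^{s'+t+1}+2^{s'+1}-3$ of Theorem \ref{dlct} with $s'=s-2$ and $t=4$ (the hypothesis $t\geq 4$ holds). Invoking Theorem \ref{dlct} gives $\dim \mathbb Z/2\otimes_{GL_4}{\rm Ann}_{\overline{\mathcal A}}[P^{\otimes 4}_{(n'_s-4)/2}]^*$ equal to $1$ for $s\in\{3,4\}$ and equal to $2$ for $s\geq 5$. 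Combining with Proposition \ref{mdc2}, which yields $\dim[{\rm Ker}_{n'_s}]^{GL_4}=1$ when $s=3$ and $0$ otherwise, the inequality \eqref{bdt2} produces the upper bounds $\leq 2,\leq 1,\leq 2$ for $s=3,\,s=4,\,s\geq 5$ respectively, matching the claimed dimensions.

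\textbf{Step 2 (candidates are $\overline{\mathcal A}$-annihilated).} Each proposed generator is a spike monomial of the form $x_1^{(2^{k_1}-1)}x_2^{(2^{k_2}-1)}x_3^{(2^{k_3}-1)}x_4^{(2^{k_4}-1)}$, with $k_i\geq 0$ (allowing $x_i^{(0)}$). By Lucas' theorem, $\binom{2^k-1-j}{j}\equiv 0\pmod 2$ for $1\leq j<2^{k-1}$: in any binary digit $j_\ell=1$ of $j$, the corresponding digit of $2^k-1-j$ is $0$. Hence $(x_i^{(2^{k_i}-1)})Sq^{\,r}=0$ for every $r\geq 1$, and Cartan's formula extends this annihilation to the product. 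By the unstable condition, one needs only check Steenrod squares up to a finite range, and this is automatic for spikes.

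\textbf{Step 3 (saturation via $[QP^{\otimes 4}_{n'_s}]^{GL_4}$).} Using the duality $\dim[QP^{\otimes 4}_m]^{GL_4}=\dim\mathbb Z/2\otimes_{GL_4}{\rm Ann}_{\overline{\mathcal A}}[P^{\otimes 4}_m]^*$ and the Kameko short exact sequence
\[
0\to{\rm Ker}_{n'_s}\to QP^{\otimes 4}_{n'_s}\to QP^{\otimes 4}_{(n'_s-4)/2}\to 0,
\]
which splits as $\mathbb Z/2[GL_4]$-modules, $[QP^{\otimes 4}_{n'_s}]^{GL_4}$ decomposes as the direct sum of $[{\rm Ker}_{n'_s}]^{GL_4}$ (known from Proposition \ref{mdc2}) and the image under the $\mathbb Z/2$-linear section $\widetilde\varphi:\prod t_i^{a_i}\mapsto\prod t_i^{2a_i+1}$ of generators of $[QP^{\otimes 4}_{(n'_s-4)/2}]^{GL_4}$ (known from \cite{Phuc9}). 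A routine verification shows that the $\widetilde\varphi$-image of each lower-degree invariant has exactly one of the proposed spike monomials in its support (for $s\geq 5$, the two generators in \cite{Phuc9} lift to invariants each carrying exactly one of the two spikes listed; for $s=4$, the unique lower generator lifts to the invariant carrying $x_1^{(1)}x_2^{(15)}x_3^{(127)}x_4^{(127)}$), while for $s=3$ the extra spike $x_1^{(0)}x_2^{(0)}x_3^{(7)}x_4^{(127)}$ pairs with the ${\rm Ker}_{n'_3}^{GL_4}$-invariant $[\sum_{1\leq j\leq 105}b_{4,3,j}]$. The resulting pairing matrix is the identity, hence non-degenerate, giving the claimed bases.

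\textbf{Main obstacle.} The difficulty is concentrated in the pairing check of Step~3: one must propagate the $\widetilde\varphi$-lifts through the admissible-monomial machinery of Sum \cite{Sum1,Sum2}, verifying which admissible monomials appear in each lifted $GL_4$-invariant and in particular that each spike is detected by a unique invariant. For $s=3$, a separate verification distinguishes the Kameko-kernel contribution from the Kameko-lifted one. For $s\geq 5$, the two spikes with exponent patterns $(1,2^{s-1}-1,2^{s-1}-1,2^{s+4}-1)$ and $(1,2^{s}-1,2^{s+3}-1,2^{s+3}-1)$ sit in disjoint support orbits under $GL_4$ (they come from the two distinct lower generators $x_1^{(0)}x_2^{(2^{s-2}-1)}x_3^{(2^{s-2}-1)}x_4^{(2^{s+3}-1)}$ and $x_1^{(0)}x_2^{(2^{s-1}-1)}x_3^{(2^{s+2}-1)}x_4^{(2^{s+2}-1)}$ of \cite{Phuc9}), which is the key combinatorial fact that forces the pairing to be non-singular and completes the lower bound.
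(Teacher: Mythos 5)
Your proposal does not prove the statement in question. Theorem \ref{dlct} concerns the (odd) generic degrees $2^{s+t+1}+2^{s+1}-3$ for an arbitrary $t\geq 4$, asserts the dimension pattern $1$ for $s=1,2$ and $2$ for $s\geq 3$, and names the generators $[x_1^{(0)}x_2^{(2^{s+1}-1)}x_3^{(2^{s+t}-1)}x_4^{(2^{s+t}-1)}]$ and $[x_1^{(0)}x_2^{(2^{s}-1)}x_3^{(2^{s}-1)}x_4^{(2^{s+t+1}-1)}]$. Your Steps 1--3 instead work entirely in the even degree $n'_s=17\cdot 2^{s}-2$: you compute $(n'_s-4)/2=2^{s+3}+2^{s-1}-3$, derive upper bounds following the pattern ``$1$ if $s=4$, $2$ otherwise,'' and verify the spikes $x_1^{(1)}x_2^{(7)}x_3^{(63)}x_4^{(63)}$, $x_1^{(1)}x_2^{(15)}x_3^{(127)}x_4^{(127)}$, $x_1^{(0)}x_2^{(0)}x_3^{(7)}x_4^{(127)}$ and the families with exponent patterns $(1,2^{s-1}-1,2^{s-1}-1,2^{s+4}-1)$, $(1,2^{s}-1,2^{s+3}-1,2^{s+3}-1)$. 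That is a sketch of Theorem \ref{dlc7}, not of Theorem \ref{dlct}: the degrees, the dimension pattern, and the generators all differ from the statement you were asked to prove. Worse, your Step 1 explicitly ``invokes Theorem \ref{dlct}'' to control the coinvariants in the halved degree, so read as a proof of Theorem \ref{dlct} the argument is circular, and read as a proof of Theorem \ref{dlc7} it is a proof of a different result.

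For comparison: the paper gives no proof of Theorem \ref{dlct} at all; it is quoted verbatim from the author's earlier work \cite{Phuc9} and used as an external input in the proof of Theorem \ref{dlc7}. A genuine blind proof would therefore have to carry out, for the degrees $2^{s+t+1}+2^{s+1}-3$ themselves, the same kind of analysis the paper performs in Section 3 for $5\cdot 2^{s}-2$ and $17\cdot 2^{s}-2$: take Sum's admissible monomial bases of $QP^{\otimes 4}$ (or of the relevant Kameko kernel) in those degrees, compute the $S_4$- and then $GL_4$-invariants of $QP^{\otimes 4}_{2^{s+t+1}+2^{s+1}-3}$ for the three ranges of $s$, and dualize, checking that exactly the two stated spike classes survive in the coinvariants (and that one of them dies for $s=1,2$). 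None of this appears in your write-up. Your Step 2 (Lucas-type annihilation of spike classes) is correct as far as it goes, but it is applied to the wrong monomials, and in any case membership in ${\rm Ann}_{\overline{\mathcal A}}[P^{\otimes 4}_{n}]^{*}$ is the easy half; the substantive content of Theorem \ref{dlct} is the invariant-theoretic computation fixing the exact dimension, which your proposal never engages with for the degrees in the statement.
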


Theorem \ref{dlct} implies that
$$ \dim \mathbb Z/2 \otimes_{GL_4} {\rm Ann}_{\overline{\mathcal A}}[P_{\frac{n'_s-4}{2}}^{\otimes 4}]^{*} =  \left\{\begin{array}{ll}
1&\mbox{if $s = 3, 4$}\\
2&\mbox{if $s\geq 5$}.
\end{array}\right.$$
Then, by the inequality \eqref{bdt2} and Proposition \ref{mdc2}, we get
\begin{equation}\label{bdt3}
 \dim \mathbb Z/2 \otimes_{GL_4} {\rm Ann}_{\overline{\mathcal A}}[P_{n'_s}^{\otimes 4}]^{*} \leq 
 \left\{\begin{array}{ll}
1&\mbox{if $s = 4$}\\
2&\mbox{if $s\neq 4$}.
\end{array}\right.
\end{equation}
On the other hand, it is not too difficult to check that the elements
$$ \begin{array}{ll}
\medskip
 \zeta_1 &= x_1^{(1)}x_2^{(7)}x_3^{(63)}x_4^{(63)}\in [P_{n'_3}^{\otimes 4}]^{*}, \ \ \zeta_2 =  x_1^{(0)}x_2^{(0)}x_3^{(7)}x_4^{(127)}\in [P_{n'_3}^{\otimes 4}]^{*},\\
\medskip
 \zeta_3 &= x_1^{(1)}x_2^{(15)}x_3^{(127)}x_4^{(127)}\in [P_{n'_4}^{\otimes 4}]^{*}\\
\medskip
\zeta_s &= x^{(1)}x_2^{(2^{s-1}-1)}x_3^{(2^{s-1}-1)}x_4^{(2^{s+4}-1)}\in [P_{n'_s}^{\otimes 4}]^{*}, \ \mbox{for $s\geq 5$}. \\
\medskip
 \zeta'_s &= x_1^{(1)}x_2^{(2^{s}-1)}x_3^{(2^{s+3}-1)}x_4^{(2^{s+3}-1}\in [P_{n'_s}^{\otimes 4}]^{*}\ \mbox{for $s\geq 5$}.
\end{array}$$
are $\overline{\mathcal A}$-annahilated. Then, using the representation in $\Lambda$ of the rank 4 algebraic transfer, we get
$$ \begin{array}{ll} 
\medskip
&[\psi_4(\zeta_1)] = [\lambda_1\lambda_7\lambda_{63}^{2}]= Tr_4^{\mathcal A}([\zeta_1])  = h_1h_3h_6^{2}\in {\rm Ext}_{\mathcal A}^{4, 4+n'_3}(\mathbb Z/2, \mathbb Z/2),\\
\medskip
&[\psi_4(\zeta_2)] = [\lambda_0\lambda_7\lambda_{127}^{2}] = Tr_4^{\mathcal A}([\zeta_2]) = h_0^{2}h_3h_7\in {\rm Ext}_{\mathcal A}^{4, 4+n'_3}(\mathbb Z/2, \mathbb Z/2),\\
\medskip
 &[\psi_4(\zeta_3)] = [\lambda_1\lambda_{15}\lambda_{127}^{2}] = Tr_4^{\mathcal A}([\zeta_3]) = h_1h_4h_7^{3}\in {\rm Ext}_{\mathcal A}^{4, 4+n'_4}(\mathbb Z/2, \mathbb Z/2),\\
\medskip
&[\psi_4(\zeta_s)] = [\lambda_1\lambda^{2}_{2^{s-1}-1}\lambda_{2^{s+4}-1}] = Tr_4^{\mathcal A}([\zeta_s]) = h_1h_{s-1}^{2}h_{s+4}\in {\rm Ext}_{\mathcal A}^{4, 4+n'_s}(\mathbb Z/2, \mathbb Z/2)\ \mbox{for $s\geq 5$}\\
\medskip
&[\psi_4(\zeta'_s)] = [\lambda_1\lambda_{2^{s}-1}\lambda_{2^{s+3}-1}^{2}]=  Tr_4^{\mathcal A}([\zeta'_s]) = h_1h_{s}h_{s+3}^{2}\in {\rm Ext}_{\mathcal A}^{4, 4+n'_s}(\mathbb Z/2, \mathbb Z/2)\ \mbox{for $s\geq 5$}.
\end{array}$$ 
Combining these computations with the equality \eqref{kqL2} and the inequality \eqref{bdt3} gives 
$$  \dim \mathbb Z/2 \otimes_{GL_4} {\rm Ann}_{\overline{\mathcal A}}[P_{n'_s}^{\otimes 4}]^{*} =
 \left\{\begin{array}{ll}
1&\mbox{if $s = 4$}\\
2&\mbox{if $s\neq 4$}.
\end{array}\right.$$
Furthermore, by using Proposition \ref{mdc2}, Theorem \ref{dlct} and the similar arguments as in the proof of Theorem \ref{dlc5}, we obtain
$$ \begin{array}{ll}
\medskip
& \mathbb Z/2 \otimes_{GL_4} {\rm Ann}_{\overline{\mathcal A}}[P_{n'_s}^{\otimes 4}]^{*} \\
&=\left\{\begin{array}{ll}
\langle [x_1^{(1)}x_2^{(7)}x_3^{(63)}x_4^{(63)}], [x_1^{(0)}x_2^{(0)}x_3^{(7)}x_4^{(127)}] \rangle &\mbox{if $s = 3$},\\[1mm]
\langle [x_1^{(1)}x_2^{(15)}x_3^{(127)}x_4^{(127)}] \rangle &\mbox{if $s = 4$},\\[1mm]
\langle [x_1^{(1)}x_2^{(2^{s-1}-1)}x_3^{(2^{s-1}-1)}x_4^{(2^{s+4}-1)}] , [x_1^{(1)}x_2^{(2^{s}-1)}x_3^{(2^{s+3}-1)}x_4^{(2^{s+3}-1)}]  \rangle &\mbox{if $s \geq 5$}.
\end{array}\right.
\end{array}$$
The theorem is proved.
\end{proof}

\section{Appendix}

Let us recall that the structure of the cohomology $ {\rm Ext}_{\mathcal A}^{*, *}(\mathbb Z/2, \mathbb Z/2)$ of the Steenrod ring is still a great mystery for all homological degrees $\geq 6.$ So far, Bruner's computer calculations \cite{Bruner} tell us only a  little about the groups ${\rm Ext}_{\mathcal A}^{h, *}(\mathbb Z/2, \mathbb Z/2)$ for $h\geq 6.$ The Singer algebraic transfer $Tr_h^{\mathcal A}$ serves as a reliable tool for depicting those Ext groups. This is concretized by the works of Singer \cite{Singer}, and Boardman \cite{Boardman} that $Tr_h^{\mathcal A}$ is an isomorphism for all homological degrees $h\leq 3$ and any internal degree $>0$. Singer \cite{Singer} predicted that the algebraic transfer is a monomorphism in every $h > 0,$ but it remains open for all $h \geq 5.$ Very little information is known this conjecture for the rank 5 case (see also the present author \cite{Phuc3, Phuc4, Phuc6, Phuc10}, Sum \cite{Sum4}). In particular, it is no information yet on the cases $h = 6, 7, 8.$ So, in this appendix, we would like to investigate the behavior of the Singer transfer of ranks $\geq 5$ in some internal degrees. More precisely, we first study $Tr_5^{\mathcal A}$ in degrees of the form $n_s:=k(2^{s} - 1) + r.2^{s}$ with $k = 4,\, 5.$ Next, based on the known results by Mothebe, Kaelo, and Ramatebele \cite{MKR} and direct calculations, we investigate the behavior of the sixth transfer, $Tr_6^{\mathcal A}$ in internal degrees $\leq 25.$ At the same time, using the results by Bruner \cite{Bruner}, we will show that $Tr_6^{\mathcal A}$ is not an epimorphism in degrees $n_s$ with $k = 6,\, r = 10,$ and $s\leq 2.$ More explicitly, $Tr_6^{\mathcal A}$ does not detect the non-zero elements $h_2^{2}g_1 = h_4Ph_2\in {\rm Ext}_{\mathcal A}^{6, 6+n_1}(\mathbb Z/2, \mathbb Z/2)$, and $D_2\in {\rm Ext}_{\mathcal A}^{6, 6+n_2}(\mathbb Z/2, \mathbb Z/2).$ Finally, we probe the behavior of $Tr_7^{\mathcal A},$ and $Tr_8^{\mathcal A}$ in internal degrees $\leq 15.$ As a consequence, Singer's conjecture for the ranks $h = 6, 7, 8$ is true in respective degrees.  Remarkably, based on the techniques in this paper, we showed that the non-zero elements $h_1Ph_1\in {\rm Ext}_{\mathcal A}^{6, 6+10}(\mathbb Z/2, \mathbb Z/2)$, $h_0Ph_2\in {\rm Ext}_{\mathcal A}^{6, 6+11}(\mathbb Z/2, \mathbb Z/2)$ are not in the image of $Tr_6^{\mathcal A},$ and that $h_0^{2}Ph_2\in {\rm Ext}_{\mathcal A}^{6, 6+11}(\mathbb Z/2, \mathbb Z/2)$ is not in the image of $Tr_7^{\mathcal A}.$ These events have also been proved by Ch\ohorn n, and H\`a \cite{C.H1, C.H2} using another method, but their approaches were not directly applicable in confirming or refuting the Singer conjecture in those cases. Therefore, our calculations are important and of non-trivial value. Additionally, in ranks $h = 6,\, 7,\, 8,$ we prove that the results of Moetele, and Mothebe \cite{MM} for the dimensions of the cohit $\mathbb Z/2$-module $QP_{13}^{\otimes h}$ are not true. Therefrom, as described above, the behavior of the Singer transfer in bidegree $(h, h+13)$ is obtained. Our results will play a crucial role in the study of the Singer  algebraic transfer in the certain generic internal degrees. The calculation techniques are similar to the proofs of the main results of this article and to our previous works \cite{Phuc4, Phuc6, Phuc7, Phuc10}. We emphasize that along this section, because the dimensions of the cohit spaces $QP^{\otimes h}$ in degrees $n$  given are too large, we do not list the bases of these spaces in detail; more precisely, they are the equivalence classes of admissible monomials of degrees $n$ in $\mathcal A$-modules $P^{\otimes h}$ for $5\leq h\leq 8.$  Moreover, the MAGMA computer algebra system \cite{Magma} has also been used for verifying the results. 

\medskip

For the reader's convenience, some related knowledge are mentioned here, since they will be needed later. We also refer the readers to the literatures \cite{Kameko}, \cite{Phuc4} for more details. First of all, let us recall that the set of all the admissible monomials of degree $n$ in $P^{\otimes h}$ is \textit{a minimal set of $\mathcal {A}$-generators for $P^{\otimes h}$ in degree $n$.} Therefore, the space of indecomposables $QP^{\otimes h}_{n}$ has a basis consisting of all the classes represent by the admissible monomials of degree $n$ in $P^{\otimes h}.$ We are now going to review the following well-known. For a natural number $k,$ writing $\alpha_j(k)$ and $\alpha(k)$ for the $j$-th coefficients and the number of $1$'s in dyadic expansion of $k$, respectively. Thence, $\alpha(k) = \sum_{j\geq 0}\alpha_j(k),$ and $k$ can be written in the form $\sum_{j\geq 0}\alpha_j(k)2^j,$ where $\alpha_j(k)$ belongs to $\{0, 1\},$ for all $j\geq 0.$ For a monomial $t = t_1^{a_1}t_2^{a_2}\ldots t_h^{a_h}$ in $P_{n}^{\otimes h},$ we consider a sequence associated with $t$ by $\omega(t) :=(\omega_1(t), \omega_2(t), \ldots, \omega_i(t), \ldots)$ where $\omega_i(t)=\sum_{1\leq j\leq h}\alpha_{i-1}(a_j)\leq h,$ for all $i\geq 1.$ This sequence is called the {\it weight vector} of $t.$ One defines $\deg(\omega(t)) = \sum_{j\geq 1}2^{j-1}\omega_j(t).$ We say that $a(t) = (a_1, \ldots, a_h)$ is the exponent vector of $t.$ The sets of all weight vectors and exponent vectors are given the left lexicographical order. Assume that $t = t_1^{a_1}t_2^{a_2}\ldots t_h^{a_h}$ and $t' = t_1^{a'_1}t_2^{a'_2}\ldots t_h^{a'_h}$ are the monomials in $P_{n}^{\otimes h}.$ We say that $t  < t'$ if and only if either $\omega(t) < \omega(t')$ or $\omega(t) = \omega(t'),$ $a(t) < a'(t').$ For a weight vector $\omega$ of degree $n,$ we denote two subspaces associated with $\omega$ by
$$ \begin{array}{ll}
\medskip
 P_n^{\otimes h}(\omega) &= \langle\{ t\in P^{\otimes h}_n|\, \deg(t) = \deg(\omega) = n,\  \omega(t)\leq \omega\}\rangle,\\
P_n^{\otimes h}(< \omega) &= \langle \{t\in P^{\otimes h}_n|\, \deg(t) = \deg(\omega) = n,\  \omega(t) < \omega\}\rangle.
\end{array}$$ 
Let $u$ and $v$ be two homogeneous polynomials in $P_n^{\otimes h}(\omega).$ We define the equivalence relation "$\equiv_{\omega}$" on $P_n^{\otimes h}$ as $u \equiv_{\omega} v$ if and only if $(u +v)$ belongs to $((\overline{\mathcal {A}}P_n^{\otimes h} \cap P_n^{\otimes h}(\omega) + P_n^{\otimes h}(< \omega)).$ By this, one has a quotient space $QP_n^{\otimes h}(\omega) := P_n^{\otimes h}(\omega)/ ((\overline{\mathcal {A}}P_n^{\otimes h} \cap P_n^{\otimes h}(\omega))+ P_n^{\otimes h}(< \omega)),$ and an isomorphism $QP_n^{\otimes h}\cong \bigoplus_{\deg(\omega) = n}QP_n^{\otimes h}(\omega).$ Due to Definition \ref{dnmd} (see section two), we say that a monomial $t\in P_{n}^{\otimes h}$ is {\it inadmissible} if there exist monomials $z_1, z_2,\ldots, z_k\in P_{n}^{\otimes h}$ such that $z_j < t$ for $1\leq j\leq k$ and $t \equiv  \sum_{1\leq j\leq k}z_j,$ from which $t$ is said to be {\it admissible} if it is not inadmissible. Our motivating tool is the following.\\[1mm]
$\bullet$ {\it Kameko's criterion {\rm \cite{Kameko}} on inadmissible monomials}: Let $t$ and $z$ be monomials in $P_{n}^{\otimes h}.$ For an integer $r >0,$ assume that there exists an index $i>r$ such that $\omega_i(t) = 0$, from which if $z$ is inadmissible, then $tz^{2^r}$ is, too. 

A monomial $z = \prod_{1\leq j\leq h}t_j^{\alpha_j}$ in $P^{\otimes h}_{n}$ is called a {\it spike} if every exponent $\alpha_j$ is of the form $2^{\beta_j} - 1.$ In particular, if the exponents $\beta_j$ can be arranged to satisfy $\beta_1 > \beta_2 > \ldots > \beta_{r-1}\geq \beta_r \geq 1,$ where only the last two smallest exponents can be equal, and $\beta_j = 0$ for $ r < j  \leq t,$ then $z$ is called a {\it minimal spike}. Another important tool to recall here is that of relations on hit polynomials.\\[1mm]
$\bullet$ {\it Singer's criterion {\rm \cite{Singer2}} on hit monomials}: Suppose that $t\in P^{\otimes h}_{n}$ and $\mu(n)\leq h.$ Then,  if $z$ is a minimal spike in $P^{\otimes h}_{n}$ such that $\omega(t) < \omega(z),$ then $t\equiv 0$ (or in other words, $t$ is hit).

\medskip

It will be useful to adopt the following notations:
$$ \begin{array}{ll}
\medskip
(QP_{n}^{\otimes h})^{0}:= \langle \{[\prod_{1\leq j\leq h}t_j^{\alpha_j}] \in QP_{n}^{\otimes h}|\, \deg(\prod_{1\leq j\leq h}t_j^{\alpha_j}) = n,\, \prod_{1\leq j\leq h}\alpha_j = 0\}\rangle,\\
\medskip
(QP_{n}^{\otimes h})^{>0}:= \langle \{[\prod_{1\leq j\leq h}t_j^{\alpha_j}]\in QP_{n}^{\otimes h}|\, \deg(\prod_{1\leq j\leq h}t_j^{\alpha_j}) = n,\, \prod_{1\leq j\leq h}\alpha_j \neq 0\}\rangle,\\
\medskip
(QP_{n}^{\otimes h})^{0}(\omega):= \langle \{[\prod_{1\leq j\leq h}t_j^{\alpha_j}]\in QP_{n}^{\otimes h}(\omega)|\, \deg(\prod_{1\leq j\leq h}t_j^{\alpha_j}) = n,\, \prod_{1\leq j\leq h}\alpha_j = 0,\, \omega(\prod_{1\leq j\leq h}t_j^{\alpha_j}) = \omega\}\rangle,\\
(QP_{n}^{\otimes h})^{>0}(\omega):= \langle \{[\prod_{1\leq j\leq h}t_j^{\alpha_j}]\in QP_{n}^{\otimes h}(\omega)|\, \deg(\prod_{1\leq j\leq h}t_j^{\alpha_j}) = n,\, \prod_{1\leq j\leq h}\alpha_j \neq 0,\, \omega(\prod_{1\leq j\leq h}t_j^{\alpha_j})=\omega\}\rangle,
\end{array}$$ 
where $\omega$ is a weight vector of degree $n.$ Then, we have the isomorphisms:
$$QP_n^{\otimes h} \cong (QP_n^{\otimes h})^0\,\bigoplus\, (QP_n^{\otimes h})^{>0},\ \mbox{ and }\ QP_n^{\otimes h}(\omega) \cong (QP_n^{\otimes h})^0(\omega)\,\bigoplus\, (QP_n^{\otimes h})^{>0}(\omega).$$

\medskip

Denote the set of all the admissible monomials of degree $n$ in $\mathcal A$-module $P^{\otimes h}$ by $\mathscr C_{n}^{\otimes h}.$ For a monomial $t\in P^{\otimes h}_n,$ we denote by $[t]$ the equivalence class of $t$ in $QP_n^{\otimes h}.$ If $\omega$ is a weight vector of degree $n$ and $\omega(t) = \omega,$ then denote by $[t]_\omega$ the equivalence class of $t$ in $QP_n^{\otimes h}(\omega).$ As usual, we write $|\mathscr C_{n}^{\otimes h}|$ for the cardinality of $\mathscr C_{n}^{\otimes h},$ and put $[\mathscr C_{n}^{\otimes h}] = \{[t]\, :\, t\in \mathscr C_{n}^{\otimes h}\}.$ For any admissible monomials ${\rm adm}_1, \ldots, {\rm adm}_k$ in $P^{\otimes h}_{n}(\omega)$ and for a subgroup $\mathscr G$ of $GL_h.$ Let us denote by $\mathscr G({\rm adm}_1, \ldots, {\rm adm}_k)$ the $\mathbb Z/2\mathscr G$-submodule of $QP_n^{\otimes h}(\omega)$ generated by $\{[{\rm adm}_j]_{\omega}:\, 1\leq j\leq k\}.$ It should be noted that if $\omega$ is a weight vector of a minimal spike, then $[{\rm adm}_j]_{\omega} = [{\rm adm}_j]$ for all $j.$

\subsection{The behavior of $Tr_5^{\mathcal A}$ in some internal degrees}

The goal of this section is to study the fifth Singer transfer in some internal degrees of the form $m_{r, s}:=5(2^{s} - 1) + r.2^{s}$ with $s = 0,$ and $r\in \{30,\, 33,\, 34,\, 36,\, 38,\, 39,\, 40,\, 42,\, 45, 46,\, 47,\, 48\}.$ A direct computation using previous results by the present author \cite{Phuc4, Phuc5, Phuc6}, Mothebe, and Uys \cite{Mothebe}, Sum \cite{Sum1, Sum2, Sum, Sum4}, Ly, and Tin \cite{LT}, Tin \cite{Tin, Tin3}, we find that
\begin{thm}\label{dlsc}
The dimension of $QP_{m_{r, 0}}^{\otimes 5}$ is determined by the following table:

\centerline{\begin{tabular}{c|ccccccccccccccccc}
$r$ & $30$ &$33$ & $34$ & $36$ & $38$ & $39$ & $40$ & $42$ & $45$ & $46$ & $47$ & $48$  \cr
\hline
\ $\dim QP_{m_{r, 0}}^{\otimes 5}$ & $840$ & $1322$ & $1554$ & $1189$ &$2015$ &$2130$ & $2047$ & $2520$ & $1731$ & $2349$ & $1894$ & $2374$  \cr
\end{tabular}}
\end{thm}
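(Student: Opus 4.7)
The plan is to establish each of the twelve dimensions in Theorem \ref{dlsc} by applying the weight-vector decomposition
\[
QP_{m_{r,0}}^{\otimes 5} \cong \bigoplus_{\deg(\omega) = m_{r,0}} QP_{m_{r,0}}^{\otimes 5}(\omega),
\]
and further splitting each summand as
\[
QP_{m_{r,0}}^{\otimes 5}(\omega) \cong (QP_{m_{r,0}}^{\otimes 5})^{0}(\omega) \bigoplus (QP_{m_{r,0}}^{\otimes 5})^{>0}(\omega).
\]
For each degree $n=m_{r,0}$ I would first identify the minimal spike $z_n\in P^{\otimes 5}_n$ and use Singer's criterion on hit monomials to restrict attention to those weight vectors $\omega$ of length at most five with $\omega\leq \omega(z_n)$; every monomial with a strictly larger weight vector is automatically hit. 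This shortens the admissible weight list to a small, tractable collection for each $r\in\{30,33,34,\ldots,48\}$.

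For the $(QP)^{0}(\omega)$ part (monomials with at least one zero exponent) I would exploit the fact that these classes inject into $QP_{n}^{\otimes 4}$ summands under the standard variable-omission homomorphisms, and then use the complete description of the rank-four hit problem due to Sum \cite{Sum1, Sum2} (together with our tabulations in \cite{Phuc4, Phuc5, Phuc6}) to enumerate admissible monomials exactly. For odd values $r\in\{33,39,45,47\}$ I would, in parallel, invoke Kameko's squaring map
\[
\widetilde{Sq^{0}_{*}}\colon QP_{m_{r,0}}^{\otimes 5} \longrightarrow QP_{(m_{r,0}-5)/2}^{\otimes 5},
\]
which is an epimorphism onto the smaller-degree cohit module, yielding the short exact sequence
\[
0 \to {\rm Ker}(\widetilde{Sq^{0}_{*}}) \to QP_{m_{r,0}}^{\otimes 5} \to QP_{(m_{r,0}-5)/2}^{\otimes 5} \to 0,
\]
and reducing the computation to a kernel calculation together with a known value in lower degree (recorded in \cite{Sum4, Tin, Tin3, Mothebe, LT} and our previous work \cite{Phuc4, Phuc5, Phuc6}). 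For even $r$ no such direct reduction is available, and the enumeration must be carried out in the full degree.

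The heart of the proof is then the $(QP)^{>0}(\omega)$ piece, where all five exponents are positive. For each relevant $\omega$ I would list the candidate monomials $t = \prod_{j=1}^{5} t_j^{a_j}$ of weight $\omega$ and systematically reject the inadmissible ones by two mechanisms: (i) Kameko's criterion, which eliminates any $t=t' z^{2^{r}}$ such that $z$ is a known inadmissible of smaller degree and some $\omega_i(t')=0$ for $i>r$; and (ii) explicit Cartan-formula hit relations, constructed using $Sq^{2^{i}}$ acting on monomials of degree $n-2^{i}$, to exhibit the remaining inadmissible candidates as sums of strictly smaller (in the left lexicographic order on $(\omega(t), a(t))$) admissibles, compatible with the equivalence relation $\equiv_{\omega}$. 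The surviving list is precisely the admissible basis and yields $\dim (QP_{m_{r,0}}^{\otimes 5})^{>0}(\omega)$.

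The principal obstacle is sheer combinatorial size: for $r = 42$ the theorem claims $\dim = 2520$ and for $r=48$ it claims $2374$, so the combined candidate lists in the $(QP)^{>0}$ pieces run into the thousands, and verifying each hit relation by hand is impractical. To manage this I would adopt the same bookkeeping strategy as in \cite{Phuc4, Phuc5, Phuc6}: treat one weight vector at a time, reduce each inadmissible candidate to a canonical sum of admissibles using Kameko's criterion wherever possible, and defer to direct Cartan-formula manipulations only for the residual ``exceptional'' monomials (typically those built from the minimal spike pattern). Finally, I would cross-verify all tabulated dimensions using the MAGMA computer algebra system \cite{Magma}, exactly as was done in the earlier sections of this Note, so that the twelve entries in the table can be asserted with confidence.
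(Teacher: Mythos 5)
Your proposal takes essentially the same route as the paper: for each $r$, decompose $QP_{m_{r,0}}^{\otimes 5}$ along weight vectors constrained by Singer's criterion via the minimal spike, split each piece into $(QP)^0(\omega)\oplus(QP)^{>0}(\omega)$, count $(QP)^0$ from lower-rank data (the paper uses $\dim(QP_n^{\otimes 5})^0=\sum_{k=1}^{4}\binom{5}{k}\dim(QP_n^{\otimes k})^{>0}$), handle $(QP)^{>0}$ by Kameko's inadmissibility criterion plus explicit Cartan-formula hit relations with linear independence tested via the variable-omission maps $\varphi_{(u,v)}$, exploit the Kameko squaring epimorphism when $m_{r,0}$ is odd, and verify by MAGMA. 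Two small imprecisions, neither fatal: your $(QP)^0$ count should refer to $(QP_n^{\otimes k})^{>0}$ for all $k\le 4$, not just an injection into $QP_n^{\otimes 4}$; and for even $r$ a reduction is still available, not through the Kameko map but through writing an admissible $t$ with $\omega_1(t)\in\{2,4\}$ as $t=t_{i_1}\cdots t_{i_{\omega_1}}\underline{t}^2$ with $\underline{t}$ admissible of roughly half the degree, which the paper uses for $r=30$ via the known basis in degree $14$ and $13$.
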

The theorem has also been proved by us \cite{Phuc5, Phuc6} for the cases $r = 39,$ and $47.$ Note that since the Kameko map $(\widetilde {Sq^0_*})_{m_{r, 0}}: QP^{\otimes 5}_{m_{r, 0}}  \longrightarrow QP^{\otimes 5}_{\frac{m_{r, 0}-5}{2}}$ is an epimorphism of $\mathbb Z/2[GL_5]$-modules, for $r = 33,\, 45,$ we have an isomorphism $QP^{\otimes 5}_{m_{r, 0}}\cong {\rm Ker}((\widetilde{Sq^0_*})_{m_{r, 0}})\bigoplus QP^{\otimes 5}_{\frac{m_{r, 0}-5}{2}}.$ On the other hand, since $\dim QP^{\otimes 5}_{\frac{m_{33, 0}-5}{2}} = 320,$ (see Ly, and Tin \cite{LT}), and $\dim QP^{\otimes 5}_{\frac{m_{45, 0}-5}{2}} = 641$ (see Tin \cite{Tin3}), we only need to determine ${\rm Ker}((\widetilde{Sq^0_*})_{m_{r, 0}}).$ Because the calculation methods are similar to our previous works \cite{Phuc4, Phuc5, Phuc6}, we will outline the proof of the theorem for the cases $r = 30$ and $33.$ Noting that the case $r = 30$ has also been informed by Tin \cite{Tin4} using a computer program of V.H. Viet.

We first consider the case $r = 30.$ We remark that if $t$ is an admissible monomial of degree $m_{30, 0}$ in $P^{\otimes 5}$, then $\omega(t)$ is one of the following sequences:
$$ \widetilde{\omega}_{(1)}:= (2,2,2,2),\ \widetilde{\omega}_{(2)}:=(2,2,4,1),\ \widetilde{\omega}_{(3)}:=(2,4,3,1),\ \widetilde{\omega}_{(4)}:=(4,3,3,1).$$
Indeed, observe that $t_1^{15}t_2^{15}\in P_{30}^{\otimes 5}$ is the minimal spike, and $\omega(t_1^{15}t_2^{15}) = (2,2,2,2).$ Since $[t]\neq 0,$ and  $\deg(t)$ is even, by Singer's criterion on hit monomials, either $\omega_1(t) = 2$ or $\omega_1(t) = 4.$ If $\omega_1(t) = 2,$ then $t = t_it_j\underline{t}^2$ with $\underline{t}$ a monomial of degree $14$ in $P^{\otimes 5},$ and $1\leq i < j \leq 5.$ Since $t$ is admissible, according to Kameko's criterion on inadmissible monomials, $\underline{t}$ is admissible. Following Ly, and Tin \cite{LT}, the weight vector $\omega(\underline{t})$ belongs to $\{(2,2,2),\, (2,4,1),\, (4,3,1)\},$ from which $\omega(t)$ belongs to $\{ \widetilde{\omega}_{(i)}:\, 1\leq i\leq 3\}.$ Now if  $\omega_1(t) = 4,$ then $t = t_it_jt_kt_l\underline{t}^2$ in which $\underline{t}$ is an admissible monomial of degree $13$ in $P^{\otimes 5},$ and $1\leq i < j < k <l \leq 5.$ In \cite{Phuc4}, we have shown that $\omega(\underline{t}) = (3,3,1),$ which implies that $\omega(t) = \widetilde{\omega}_{(4)}.$ Thus, $\omega(t)$ belongs to the set $\{ \widetilde{\omega}_{(i)}:\, 1\leq i\leq 4\},$ and one has an isomorphism 
$$ QP_{m_{30, 0}}^{\otimes 5} \cong \bigoplus_{1\leq i\leq 4}QP_{m_{30, 0}}^{\otimes 5}(\widetilde{\omega}_{(i)}).$$
By direct calculations using Kameko's criterion on inadmissible monomials and the results of Peterson \cite{Peterson}, Kameko \cite{Kameko}, Mothebe, and Uys \cite{Mothebe}, Sum \cite{Sum1, Sum2}, we find that 
$$ 
\dim QP_{m_{30, 0}}^{\otimes 5}(\widetilde{\omega}_{(i)})=\left\{\begin{array}{ll} 
154 &\mbox{if $i = 1$},\\
0&\mbox{if $i = 2$},\\
1&\mbox{if $i = 3$},\\
685&\mbox{if $i = 4$},
\end{array}\right.$$
where since 
$$ \begin{array}{ll}
\medskip
QP_{m_{30, 0}}^{\otimes 5}(\widetilde{\omega}_{(i)})&\cong (QP_{m_{30, 0}}^{\otimes 5})^{0}(\widetilde{\omega}_{(i)})\bigoplus (QP_{m_{30, 0}}^{\otimes 5})^{>0}(\widetilde{\omega}_{(i)}),\\
QP_{m_{30, 0}}^{\otimes 5}(\widetilde{\omega}_{(3)}) &= (QP_{m_{30, 0}}^{\otimes 5})^{>0}(\widetilde{\omega}_{(3)}),
\end{array}$$
we have that 
$$ 
\dim (QP_{m_{30, 0}}^{\otimes 5})^{0}(\widetilde{\omega}_{(i)})=\left\{\begin{array}{ll} 
115 &\mbox{if $i = 1$},\\
0&\mbox{if $i = 2$},\\
0 &\mbox{if $i = 3$},\\
175&\mbox{if $i = 4$},
\end{array}\right.$$
and 
$$ 
\dim (QP_{m_{30, 0}}^{\otimes 5})^{> 0}(\widetilde{\omega}_{(i)})=\left\{\begin{array}{ll} 
39 &\mbox{if $i = 1$},\\
0&\mbox{if $i = 2$},\\
1 &\mbox{if $i = 3$},\\
510&\mbox{if $i = 4$}.
\end{array}\right.$$
Therefore, we get $$\dim QP_{m_{30, 0}}^{\otimes 5} =\sum_{1\leq i\leq 4}\dim QP_{m_{30, 0}}^{\otimes 5}(\widetilde{\omega}_{(i)}) = 154 + 1 + 685 = 840.$$

Next, we will give the sketch of proof of the case $r  = 33.$ As mentioned above, we need to compute the kernel of $(\widetilde {Sq^0_*})_{m_{33, 0}}.$ We notice that if $t$ is an admissible monomial of degree $m_{33, 0}$ in $P^{\otimes 5}$ such that $[t]$ belongs to ${\rm Ker}((\widetilde {Sq^0_*})_{m_{33, 0}}),$ then $\omega_1(t) = 3.$ Indeed, we observe that $t_1^{31}t_2t_3\in P^{\otimes 5}$ is minimal spike of degree $m_{33, 0}$, and $\omega(t_1^{31}t_2t_3) = (3,1,1,1,1).$ Then, $t_1^{31}t_2t_3$ is admissible. Because $t$ is admissible, and $\deg(t) = m_{33, 0}$ is odd, by Singer's criterion on hit polynomials, either $\omega_1(t) = 1$ or $\omega_1(t) = 5.$ If $\omega_1(t) = 5,$ then $t$ is of the form $\prod_{1\leq j\leq 5}t_j\underline{t}^{2},$ in which $\underline{t}$ is a monomial of degree $14$ in $P^{\otimes 5}.$ Since $\prod_{1\leq j\leq 5}t_j\underline{t}^{2}$ is admissible, following Kameko's criterion on inadmissible monomials, one has that $\underline{t}$ is admissible, and so, $(\widetilde {Sq^0_*})_{m_{33, 0}}([\prod_{1\leq j\leq 5}t_j\underline{t}^{2}]) = [\underline{t}]\neq [0].$ This contradicts with the fact that $[t] = [\prod_{1\leq j\leq 5}t_j\underline{t}^{2}]$ belongs to the kernel of $(\widetilde {Sq^0_*})_{m_{33, 0}}.$ Thus, we must have  $\omega_1(t) = 3$ and therefore by the above arguments, $t$ can be represented in the form $t_it_jt_k\underline{t}^{2}$, whenever $1\leq i<j<k\leq 5,$ and $\underline{t}$ is an admissible monomial of degree $15$ in $P^{\otimes 5}.$ Therefrom, due to Sum \cite{Sum}, we conclude that the weight vector $\omega(t)$ of $t$ is one of the following sequences: 
$$ \omega_{(1)}:= (3,1,1,1,1),\ \omega_{(2)}:= (3,1,1,3),\ \omega_{(3)}:= (3,3,2,2),\ \omega_{(4)}:= (3,3,4,1).$$
This leads to ${\rm Ker}((\widetilde {Sq^0_*})_{m_{33, 0}})\cong \bigoplus_{1\leq i\leq 4}QP_{m_{33, 0}}^{\otimes 5}(\omega_{(i)}),$ Then, from a result in Sum \cite{Sum1, Sum2}, it is straightforward to check that
$$ 
\dim (QP_{m_{33, 0}}^{\otimes 5})^{0}(\omega_{(i)})=\left\{\begin{array}{ll} 
0&\mbox{if $i = 2,\, 4$},\\
155&\mbox{if $i = 1$},\\
395&\mbox{if $i = 3$}.
\end{array}\right.$$
By direct calculations, we obtain that 
$$ 
\dim (QP_{m_{33, 0}}^{\otimes 5})^{>0}(\omega_{(i)}) =\left\{\begin{array}{ll} 
0&\mbox{if $i = 2,\, 4$},\\
31&\mbox{if $i = 1$},\\
421&\mbox{if $i = 3$}.
\end{array}\right.$$
Indeed, it should be noted that we only need to consider the monomials of the form $t:= t_it_jt_k\underline{t}^{2} = \prod_{1\leq l\leq 5}t_l^{\alpha_l},$ with $\underline{t}\in P^{\otimes 5}_{15},\, 1\leq i< j<k\leq 5,$ $\alpha_1\in \{1, 3, 7, 15\},$ and $\prod_{2\leq l\leq 5}\alpha_l> 0.$\\
For $i = 2,$ one has that $ \omega(t) = \omega_{(2)},$ and that $t$ is one of the following monomials: $t_1^{3}t_2^{4}t_3^{8}t_4^{9}t_5^{9},$ $t_1^{3}t_2^{4}t_3^{9}t_4^{8}t_5^{9},$ $t_1^{3}t_2^{4}t_3^{9}t_4^{9}t_5^{8},$ $t_1^{3}t_2^{5}t_3^{8}t_4^{8}t_5^{9},$ $t_1^{3}t_2^{5}t_3^{8}t_4^{9}t_5^{8},$ $t_1^{3}t_2^{5}t_3^{9}t_4^{8}t_5^{8}.$ Obviously, these monomials are inadmissible.\\
 For $i = 4,$ $\omega(t) = \omega_{(4)},$ we see that $t$ is a permutation of one of the following monomials: $t_1^{3}t_2^{4}t_3^{4}t_4^{7}t_5^{15}$, $t_1^{3}t_2^{4}t_3^{5}t_4^{6}t_5^{15}$, $t_1^{3}t_2^{4}t_3^{5}t_4^{7}t_5^{14}$, $t_1^{3}t_2^{4}t_3^{6}t_4^{7}t_5^{13}$, $t_1^{3}t_2^{4}t_3^{7}t_4^{7}t_5^{12}$,  $t_1^{3}t_2^{5}t_3^{5}t_4^{6}t_5^{14}$, $t_1^{3}t_2^{5}t_3^{6}t_4^{6}t_5^{13}$, $t_1^{3}t_2^{5}t_3^{6}t_4^{7}t_5^{12}$, $t_1^{7}t_2^{2}t_3^{4}t_4^{5}t_5^{15}$,  $t_1^{7}t_2^{2}t_3^{4}t_4^{7}t_5^{13}$, $t_1^{7}t_2^{2}t_3^{5}t_4^{5}t_5^{14}$, $t_1^{7}t_2^{2}t_3^{5}t_4^{6}t_5^{13}$,
$t_1^{7}t_2^{2}t_3^{5}t_4^{7}t_5^{12}$,  $t_1^{7}t_2^{4}t_3^{4}t_4^{7}t_5^{11}$, $t_1^{7}t_2^{4}t_3^{5}t_4^{6}t_5^{11}$, $t_1^{7}t_2^{4}t_3^{5}t_4^{7}t_5^{10}$, $t_1^{7}t_2^{5}t_3^{5}t_4^{6}t_5^{10}$, $t_1^{15}t_2^{2}t_3^{5}t_4^{5}t_5^{6}$. 
So, it is not difficult to show that these monomials are inadmissible. For instance, using the Cartan formula, we find that $t_1^{3}t_2^{4}t_3^{4}t_4^{7}t_5^{15} = Sq^{1}(t_1^{3}t_2t_3^{2}t_4^{11}t_5^{15} + t_1^{3}t_2t_3^{2}t_4^{7}t_5^{19}) + Sq^{2}(t_1^{5}t_2^{2}t_3^{2}t_4^{7}t_5^{15}) + Sq^{4}(t_1^{3}t_2^{2}t_3^{2}t_4^{7}t_5^{15}) + \mbox{smaller monomials}.$\\
 For $i  =1,$ $\omega(t) = \omega_{(1)},$ a simple computation indicates that if $t\neq {\rm adm}_i,\, 1\leq i\leq 31,$ where 
%For simplicity, we prove this statement for the case $i = 1.$ The remaining cases can be obtained by similar ideas. By simple arguments, we deduce that $$(QP_{m_{33, 0}}^{\otimes 5})^{>0}(\omega_{(1)}) = \langle \{[{\rm adm}_i]_{\omega_{(1)}} = [{\rm adm}_i]|\, 1\leq i\leq 31\}\rangle,$$ where the admissible monomials ${\rm adm}_i$ are determined as follows:

\begin{center}
\begin{tabular}{lllr}
${\rm adm}_{1}=t_1t_2t_3t_4^{2}t_5^{28}$, & ${\rm adm}_{2}=t_1t_2t_3^{2}t_4t_5^{28}$, & ${\rm adm}_{3}=t_1t_2t_3^{2}t_4^{28}t_5$, & \multicolumn{1}{l}{${\rm adm}_{4}=t_1t_2^{2}t_3t_4t_5^{28}$,} \\
${\rm adm}_{5}=t_1t_2^{2}t_3t_4^{28}t_5$, & ${\rm adm}_{6}=t_1t_2^{2}t_3^{28}t_4t_5$, & ${\rm adm}_{7}=t_1t_2t_3^{2}t_4^{4}t_5^{25}$, & \multicolumn{1}{l}{${\rm adm}_{8}=t_1t_2^{2}t_3t_4^{4}t_5^{25}$,} \\
${\rm adm}_{9}=t_1t_2^{2}t_3^{4}t_4t_5^{25}$, & ${\rm adm}_{10}=t_1t_2^{2}t_3^{4}t_4^{25}t_5$, & ${\rm adm}_{11}=t_1t_2t_3^{2}t_4^{5}t_5^{24}$, & \multicolumn{1}{l}{${\rm adm}_{12}=t_1t_2^{2}t_3t_4^{5}t_5^{24}$,} \\
${\rm adm}_{13}=t_1t_2^{2}t_3^{5}t_4t_5^{24}$, & ${\rm adm}_{14}=t_1t_2^{2}t_3^{5}t_4^{24}t_5$, & ${\rm adm}_{15}=t_1t_2t_3^{3}t_4^{4}t_5^{24}$, & \multicolumn{1}{l}{${\rm adm}_{16}=t_1t_2^{3}t_3t_4^{4}t_5^{24}$,} \\
${\rm adm}_{17}=t_1^{3}t_2t_3t_4^{4}t_5^{24}$, & ${\rm adm}_{18}=t_1t_2^{3}t_3^{4}t_4t_5^{24}$, & ${\rm adm}_{19}=t_1^{3}t_2t_3^{4}t_4t_5^{24}$, & \multicolumn{1}{l}{${\rm adm}_{20}=t_1t_2^{3}t_3^{4}t_4^{24}t_5$,} \\
${\rm adm}_{21}=t_1^{3}t_2t_3^{4}t_4^{24}t_5$, & ${\rm adm}_{22}=t_1t_2^{2}t_3^{4}t_4^{9}t_5^{17}$, & ${\rm adm}_{23}=t_1t_2^{2}t_3^{5}t_4^{8}t_5^{17}$, & \multicolumn{1}{l}{${\rm adm}_{24}=t_1t_2^{2}t_3^{5}t_4^{9}t_5^{16}$,} \\
${\rm adm}_{25}=t_1t_2^{3}t_3^{4}t_4^{8}t_5^{17}$, & ${\rm adm}_{26}=t_1^{3}t_2t_3^{4}t_4^{8}t_5^{17}$, & ${\rm adm}_{27}=t_1t_2^{3}t_3^{4}t_4^{9}t_5^{16}$, & \multicolumn{1}{l}{${\rm adm}_{28}=t_1^{3}t_2t_3^{4}t_4^{9}t_5^{16}$,} \\
${\rm adm}_{29}=t_1t_2^{3}t_3^{5}t_4^{8}t_5^{16}$, & ${\rm adm}_{30}=t_1^{3}t_2t_3^{5}t_4^{8}t_5^{16}$, & ${\rm adm}_{31}=t_1^{3}t_2^{5}t_3t_4^{8}t_5^{16},$ &  
\end{tabular}%
\end{center}

then either $t\in \{t_1^{3}t_2^{5}t_3^{8}t_4t_5^{16}, t_1^{3}t_2^{5}t_3^{8}t_4^{16}t_5\},$ or $t$ is of the form $t'z^{16},$ where $t'$ is one of the following inadmissible monomials: $t_1^{3}t_2^{4}t_3t_4^{9},\, t_1^{3}t_2^{4}t_3^{9}t_4,\, t_1^{3}t_2^{4}t_3^{9}t_5,\, t_1^{3}t_2^{4}t_3t_4t_5^{8},\, t_1^{3}t_2^{4}t_3t_4^{8}t_5,\, t_1^{3}t_2^{4}t_3^{8}t_4t_5.$ Applying Cartan's formula, we see that
$$ \begin{array}{ll}
 t_1^{3}t_2^{5}t_3^{8}t_4t_5^{16} &= Sq^{4}(t_1^{3}t_2^{3}t_3^{8}t_4t_5^{14} + t_1^{3}t_2^{3}t_3^{4}t_4t_5^{18} + t_1^{3}t_2^{3}t_3^{2}t_4t_5^{20}+ t_1^{3}t_2^{3}t_3t_4^{2}t_5^{20}+ t_1^{3}t_2^{3}t_3t_4^{4}t_5^{18} + t_1^{3}t_2^{3}t_3t_4^{8}t_5^{14})\\
&\quad +Sq^{2}(t_1^{5}t_2^{3}t_3^{8}t_4t_5^{14}+ t_1^{5}t_2^{3}t_3^{4}t_4t_5^{18}+ t_1^{5}t_2^{3}t_3^{2}t_4t_5^{20}+ t_1^{5}t_2^{3}t_3t_4^{8}t_5^{14}+ t_1^{5}t_2^{3}t_3t_4^{4}t_5^{18}+ t_1^{5}t_2^{3}t_3t_4^{2}t_5^{20} \\
&\quad + t_1^{2}t_2^{3}t_3t_4t_5^{24}) + Sq^{1}(t_1^{3}t_2^{3}t_3t_4t_5^{24}) + t_1^{3}t_2^{5}t_3t_4^{8}t_5^{16} +  t_1^{3}t_2^{4}t_3t_4t_5^{24} + t_1^{2}t_2^{5}t_3t_4t_5^{24} + \sum X,
\end{array}$$
where $\omega(X) < \omega(t_1^{3}t_2^{5}t_3^{8}t_4t_5^{16}) = \omega_{(1)}.$ Since the monomials $t_1^{3}t_2^{5}t_3t_4^{8}t_5^{16},$ $t_1^{3}t_2^{4}t_3t_4t_5^{24},$ $t_1^{2}t_2^{5}t_3t_4t_5^{24},$ $X$ are less than $t_1^{3}t_2^{5}t_3^{8}t_4t_5^{16},$ the monomial $t_1^{3}t_2^{5}t_3^{8}t_4t_5^{16}$ is inadmissible, and so is $t_1^{3}t_2^{5}t_3^{8}t_4^{16}t_5$. Thus, it follows that the indecomposables $(QP_{m_{33, 0}}^{\otimes 5})^{>0}(\omega_{(1)}) = \langle \{[{\rm adm}_i]_{\omega_{(1)}} = [{\rm adm}_i]|\, 1\leq i\leq 31\}\rangle.$ We shall show that the set $\{[{\rm adm}_i]|\, 1\leq i\leq 31\}$ is linearly dependent in $(QP_{m_{33, 0}}^{\otimes 5})^{>0}(\omega_{(1)}).$ For any $1\leq u < v\leq 4,$ we consider the $\mathcal A$-homomorphism $\varphi_{(u, v)}: P^{\otimes 5}\to P^{\otimes 4},$ which is given by 
$$ \varphi_{(u, v)}(t_j) = \left\{ \begin{array}{ll}
{t_j}&\text{if }\;1\leq j \leq u-1, \\
t_{v-1}& \text{if}\; j = u,\\
t_{j-1}&\text{if}\; u+1 \leq j \leq 5.
\end{array} \right.$$
Using a result in \cite{Phuc4}, we have $\varphi_{(u, v)}({\rm adm}_i)\in P_{m_{33, 0}}^{\otimes 4}(\omega_{(1)}),$ for each $1\leq i\leq 31.$  Applying this, we assume that there is a linear relation $S = \sum_{1\leq i\leq 31}\gamma_i{\rm adm}_i\equiv  0,$ in which $\gamma_i\in \mathbb Z/2$ for every $i.$ Based upon the calculations by Sum \cite{Sum1, Sum2}, we determine explicitly $\varphi_{(u, v)}(S)$ in terms of admissible monomials in $P_{m_{33, 0}}^{\otimes 4}$ modulo ($\overline{\mathcal A}P^{\otimes 4}_{m_{33, 0}}$). Then, a direct computation using the relations $\varphi_{(u, v)}(S)\equiv 0,$ for $1\leq u\leq 3,\, 2\leq v\leq 4,$ one gets $\gamma_i = 0,$ for all $i.$\\
Finally, by similar calculations, we also obtain the result for the case $i = 3$ in the above statement. Thus, we could claim that
$$ 
\dim (QP_{m_{33, 0}}^{\otimes 5})(\omega_{(i)}) = \dim (QP_{m_{33, 0}}^{\otimes 5})^{0}(\omega_{(i)}) + \dim (QP_{m_{33, 0}}^{\otimes 5})^{>0}(\omega_{(i)}) =\left\{\begin{array}{ll} 
0&\mbox{if $i = 2,\, 4$},\\
186&\mbox{if $i = 1$},\\
816&\mbox{if $i = 3$},
\end{array}\right.$$
and so $$\dim {\rm Ker}((\widetilde {Sq^0_*})_{m_{33, 0}})=\sum_{1\leq i\leq 4}\dim QP_{m_{33, 0}}^{\otimes 5}(\omega_{(i)})=
186 + 816 = 1002.$$
Because $QP_{m_{33, 0}}^{\otimes 5}\cong {\rm Ker}((\widetilde {Sq^0_*})_{m_{33, 0}})\bigoplus QP_{14}^{\otimes 5},$ and $\dim QP_{14}^{\otimes 5} = 320,$ it may be concluded that $QP_{m_{33, 0}}^{\otimes 5}$ is $1322$-dimensional. 

\medskip

Now, to determine the behavior of $Tr_5^{\mathcal A}$ in degrees $m_{r, 0},$ we wish to compute the coinvariants $\mathbb Z/2 \otimes_{GL_5} {\rm Ann}_{\overline{\mathcal A}}[P_{m_{r, 0}}^{\otimes 5}]^{*}.$ It may be outlined for the degrees $m_{30, 0},$ and $m_{33, 0}$ as follows: From the above results, we have $$QP_{m_{30, 0}}^{\otimes 5} \cong [QP_{m_{30, 0}}^{\otimes 5}(\widetilde{\omega}_{(1)})\bigoplus QP_{m_{30, 0}}^{\otimes 5}(\widetilde{\omega}_{(3)})]\bigoplus QP_{m_{30, 0}}^{\otimes 5}(\widetilde{\omega}_{(4)}),$$ where $\dim (QP_{m_{30, 0}}^{\otimes 5}(\widetilde{\omega}_{(1)})\bigoplus QP_{m_{30, 0}}^{\otimes 5}(\widetilde{\omega}_{(3)})) = 155,$ and $\dim QP_{m_{30, 0}}^{\otimes 5}(\widetilde{\omega}_{(4)}) = 685.$ So direct computations show that $$\dim [QP_{m_{30, 0}}^{\otimes 5}(\widetilde{\omega}_{(1)})\bigoplus QP_{m_{30, 0}}^{\otimes 5}(\widetilde{\omega}_{(3)})]^{GL_5} = 1,\ \mbox{ and }\ \dim [QP_{m_{30, 0}}^{\otimes 5}(\widetilde{\omega}_{(4)})]^{GL_5} = 0.$$
So $\dim \mathbb Z/2 \otimes_{GL_5} {\rm Ann}_{\overline{\mathcal A}}[P_{m_{30, 0}}^{\otimes 5}]^{*} = \dim [QP^{\otimes 5}_{m_{30,0}}]^{GL_5} = 1.$ Moreover, based on this dimension, we find that $\mathbb Z/2 \otimes_{GL_5} {\rm Ann}_{\overline{\mathcal A}}[P_{m_{30, 0}}^{\otimes 5}]^{*}  = \langle [\zeta_{30}] \rangle.$ Next, let us note again that 
$$ \begin{array}{ll}
\medskip
 QP^{\otimes 5}_{m_{33, 0}}&\cong {\rm Ker}((\widetilde{Sq^0_*})_{m_{33, 0}})\bigoplus QP^{\otimes 5}_{14},\\
{\rm Ker}((\widetilde{Sq^0_*})_{m_{33, 0}})&\cong QP^{\otimes 5}_{m_{33, 0}}(\omega_{(1)})\bigoplus QP^{\otimes 5}_{m_{33, 0}}(\omega_{(3)}).
\end{array}$$
In \cite{Phuc10}, we have shown that $[QP^{\otimes 5}_{14}]^{GL_5} = \langle [\zeta(t_1, \ldots, t_5)] \rangle$, where
$$ \begin{array}{ll}
\medskip
\zeta(t_1, \ldots, t_5) &= W_{51} + W_{53} + W_{55} + W_{56} + W_{57}+W_{111} + W_{112} + W_{113} + W_{114} + W_{115}\\
\medskip
&\quad+ W_{116} + W_{117} + W_{118} + W_{119} + W_{115} + W_{121} + W_{122} + W_{124} + W_{125}\\
&\quad + W_{127} + W_{128} + W_{129} + W_{130}.
\end{array}$$
The admissible monomials $W_i$ are given in the same paper \cite[Subsection 3.1]{Phuc10}. Noting that the variables $x_j,\,1\leq j\leq 5$ in the monomials $W_i$ are replaced by $t_j.$ Further, by a simple computation, we see that $$[QP^{\otimes 5}_{m_{33, 0}}(\omega_{(1)})]^{GL_5} = 0,\ \mbox{ and }\ [QP^{\otimes 5}_{m_{33, 0}}(\omega_{(3)})]^{GL_5} = 0,$$ which implies that $[{\rm Ker}((\widetilde{Sq^0_*})_{m_{33, 0}})]^{GL_5} = 0,$ and therefore $$[QP^{\otimes 5}_{m_{33, 0}}]^{GL_5} = \langle [\prod_{1\leq j\leq 5}t_j\zeta(t_1,\ldots, t_5)^{2}]\rangle.$$ Thus, since $\mathbb Z/2 \otimes_{GL_5} {\rm Ann}_{\overline{\mathcal A}}[P_{m_{33, 0}}^{\otimes 5}]^{*}\cong [QP^{\otimes 5}_{m_{33, 0}}]^{GL_5},$ the coinvariant $\mathbb Z/2 \otimes_{GL_5} {\rm Ann}_{\overline{\mathcal A}}[P_{m_{r, 0}}^{\otimes 5}]^{*}$, which has dimension $1$, is generated by the element $([\prod_{1\leq j\leq 5}t_j\zeta(t_1, \ldots, t_5)^{2}])^{*}=[\zeta_{33}].$ Here the $\overline{\mathcal A}$-annahilated elements $\zeta_{30},$ and $\zeta_{33}$ are determined explicitly as in the technical claim below.

\begin{thm}\label{dlbb5}
The dimension of  the coinvariant $ \mathbb Z/2 \otimes_{GL_5} {\rm Ann}_{\overline{\mathcal A}}[P_{m_{r, 0}}^{\otimes 5}]^{*}$ is given by

\centerline{
\scalebox{0.9}{
\begin{tabular}{c|ccccccccccccccc}
$r$ & $30$ &$33$ & $34$ & $36$ & $38$ & $39$ & $40$ & $42$ & $45$ & $46$ & $47$ & $48$\cr
\hline
\ $\dim \mathbb Z/2 \otimes_{GL_5} {\rm Ann}_{\overline{\mathcal A}}[P_{m_{r, 0}}^{\otimes 5}]^{*} = \dim [QP_{m_{r, 0}}^{\otimes 5}]^{GL_5}$ & $1$ & $1$ & $0$ & $0$ &$1$ &$1$ & $2$ & $0$ & $2$ & $0$ & $1$ & $1$ \cr
\end{tabular}}
}

\medskip

Furthermore, the generators of $\mathbb Z/2 \otimes_{GL_5} {\rm Ann}_{\overline{\mathcal A}}[P_{m_{r, 0}}^{\otimes 5}]^{*}$ for $r\in \{30,\, 33,\, 38,\, 39,\, 40,\, 45,\, 47,\, 48\}$ are determined as follows:

\centerline{
\scalebox{0.9}{
\begin{tabular}{c|cccccccc}
$r$ & $30$ &$33$ & $38$ & $39$ & $40$ & $45$ & $47$ & $48$\cr
\hline
\ $\mathbb Z/2 \otimes_{GL_5} {\rm Ann}_{\overline{\mathcal A}}[P_{m_{r, 0}}^{\otimes 5}]^{*}=$ & $\langle [\zeta_{30, 0}] \rangle$ & $\langle [\zeta_{33, 0}] \rangle$ & $\langle [\zeta_{38, 0}] \rangle$ & $\langle [\zeta_{39, 0}] \rangle$ & $\langle [\zeta_{40, 0}],\, [\overline{\zeta}_{40, 0}] \rangle$ & $\langle [\zeta_{45, 0}],\, [\overline{\zeta}_{45, 0}] \rangle$ & $\langle [\zeta_{47, 0}] \rangle$ & $\langle [\zeta_{48, 0}] \rangle$  \cr
\end{tabular}}
}
%$$ \begin{array}{ll}
%\mathbb Z/2 \otimes_{GL_5} {\rm Ann}_{\overline{\mathcal A}}[P_{m_{r, 0}}^{\otimes 5}]^{*}
%&=\left\{\begin{array}{ll}
%\langle \zeta_{30, 0} \rangle &\mbox{if $r = 30$},\\[1mm]
%\langle \zeta_{33, 0} \rangle &\mbox{if $r = 33$},\\[1mm]
%\langle \zeta_{38, 0} \rangle &\mbox{if $r = 38$},\\[1mm]
%\langle \zeta_{39, 0} \rangle &\mbox{if $r = 39$},\\[1mm]
%\langle \zeta_{40, 0}, \overline{\zeta}_{40, 0}  \rangle &\mbox{if $r = 40$},\\[1mm]
%\langle \zeta_{45, 0}, \overline{\zeta}_{45, 0}  \rangle &\mbox{if $r = 45$},\\[1mm]
%\langle \zeta_{47, 0} \rangle &\mbox{if $r = 47$},\\[1mm]
%\langle \zeta_{48, 0} \rangle &\mbox{if $r = 47$},\\[1mm]
%\end{array}\right.
%\end{array}$$
\medskip

where 
$$ \begin{array}{ll}
\medskip
\zeta_{30, 0} &=  x_1^{(0)}x_2^{(0)}x_3^{(0)}x_4^{(15)}x_5^{(15)},\\
\zeta_{33, 0} &=  x_1^{(1)}x_2^{(3)}x_3^{(13)}x_4^{(7)}x_5^{(9)}+
 x_1^{(1)}x_2^{(3)}x_3^{(13)}x_4^{(11)}x_5^{(5)}+
 x_1^{(1)}x_2^{(3)}x_3^{(13)}x_4^{(13)}x_5^{(3)}+
\medskip
 x_1^{(1)}x_2^{(5)}x_3^{(11)}x_4^{(7)}x_5^{(9)}\\
&+ x_1^{(1)}x_2^{(5)}x_3^{(11)}x_4^{(11)}x_5^{(5)}+
 x_1^{(1)}x_2^{(5)}x_3^{(11)}x_4^{(13)}x_5^{(3)}+
 x_1^{(1)}x_2^{(5)}x_3^{(13)}x_4^{(7)}x_5^{(7)}+
\medskip
 x_1^{(1)}x_2^{(7)}x_3^{(3)}x_4^{(11)}x_5^{(11)}\\
&+ x_1^{(1)}x_2^{(7)}x_3^{(3)}x_4^{(13)}x_5^{(9)}+
 x_1^{(1)}x_2^{(7)}x_3^{(5)}x_4^{(11)}x_5^{(9)}+
 x_1^{(1)}x_2^{(7)}x_3^{(5)}x_4^{(13)}x_5^{(7)}+
\medskip
 x_1^{(1)}x_2^{(7)}x_3^{(7)}x_4^{(7)}x_5^{(11)}\\
& x_1^{(1)}x_2^{(7)}x_3^{(7)}x_4^{(9)}x_5^{(9)}+
 x_1^{(1)}x_2^{(7)}x_3^{(7)}x_4^{(13)}x_5^{(5)}+
 x_1^{(1)}x_2^{(7)}x_3^{(9)}x_4^{(7)}x_5^{(9)}+
\medskip
 x_1^{(1)}x_2^{(7)}x_3^{(11)}x_4^{(5)}x_5^{(9)}\\
&+ x_1^{(1)}x_2^{(7)}x_3^{(13)}x_4^{(3)}x_5^{(9)}+
 x_1^{(1)}x_2^{(7)}x_3^{(13)}x_4^{(5)}x_5^{(7)}+
 x_1^{(1)}x_2^{(9)}x_3^{(7)}x_4^{(7)}x_5^{(9)}+
\medskip
 x_1^{(1)}x_2^{(9)}x_3^{(7)}x_4^{(11)}x_5^{(5)}\\
&+
 x_1^{(1)}x_2^{(9)}x_3^{(7)}x_4^{(13)}x_5^{(3)}+
 x_1^{(1)}x_2^{(11)}x_3^{(3)}x_4^{(7)}x_5^{(11)}+
 x_1^{(1)}x_2^{(11)}x_3^{(3)}x_4^{(13)}x_5^{(5)}+
\medskip
 x_1^{(1)}x_2^{(11)}x_3^{(5)}x_4^{(11)}x_5^{(5)}\\
&+x_1^{(1)}x_2^{(11)}x_3^{(7)}x_4^{(3)}x_5^{(11)}+
 x_1^{(1)}x_2^{(11)}x_3^{(7)}x_4^{(9)}x_5^{(5)}+
 x_1^{(1)}x_2^{(11)}x_3^{(9)}x_4^{(7)}x_5^{(5)}+
\medskip
 x_1^{(1)}x_2^{(11)}x_3^{(11)}x_4^{(3)}x_5^{(7)}\\
 &+x_1^{(1)}x_2^{(11)}x_3^{(11)}x_4^{(5)}x_5^{(5)}+
 x_1^{(1)}x_2^{(11)}x_3^{(13)}x_4^{(3)}x_5^{(5)}+
 x_1^{(1)}x_2^{(13)}x_3^{(3)}x_4^{(13)}x_5^{(3)}+
\medskip
 x_1^{(1)}x_2^{(13)}x_3^{(5)}x_4^{(11)}x_5^{(3)}\\
&+ x_1^{(1)}x_2^{(13)}x_3^{(7)}x_4^{(7)}x_5^{(5)}+
 x_1^{(1)}x_2^{(13)}x_3^{(7)}x_4^{(7)}x_5^{(5)}+
 x_1^{(1)}x_2^{(13)}x_3^{(7)}x_4^{(9)}x_5^{(3)}+
\medskip
 x_1^{(1)}x_2^{(13)}x_3^{(9)}x_4^{(7)}x_5^{(3)}\\
 &+x_1^{(1)}x_2^{(13)}x_3^{(11)}x_4^{(5)}x_5^{(3)}+
\medskip
 x_1^{(1)}x_2^{(13)}x_3^{(13)}x_4^{(3)}x_5^{(3)},\\
\medskip
\zeta_{38, 0} &= x_1^{(0)}x_2^{(0)}x_3^{(0)}x_4^{(7)}x_5^{(31)},\\
\zeta_{39, 0} &=  x_1^{(1)}x_2^{(11)}x_3^{(11)}x_4^{(11)}x_5^{(5)}+
  x_1^{(1)}x_2^{(11)}x_3^{(11)}x_4^{(13)}x_5^{(3)}+
 x_1^{(1)} x_2^{(7)}x_3^{(11)}x_4^{(17)}x_5^{(3)}+
\medskip
 x_1^{(1)}x_2^{(11)}x_3^{(7)}x_4^{(17)}x_5^{(3)} \\
&  + x_1^{(1)}x_2^{(7)}x_3^{(13)}x_4^{(15)}x_5^{(3)}+
x_1^{(1)}x_2^{(11)}x_3^{(15)}x_4^{(9)}x_5^{(3)}+
 x_1^{(1)}x_2^{(15)}x_3^{(11)}x_4^{(9)}x_5^{(3)}+
\medskip
  x_1^{(1)}x_2^{(7)}x_3^{(19)}x_4^{(9)}x_5^{(3)}\\
 &+ x_1^{(1)}x_2^{(19)}x_3^{(7)}x_4^{(9)}x_5^{(3)}+
 x_1^{(1)}x_2^{(7)}x_3^{(19)}x_4^{(7)}x_5^{(5)}+
 x_1^{(1)}x_2^{(19)}x_3^{(7)}x_4^{(7)}x_5^{(5)}+
\medskip
x_1^{(1)}x_2^{(11)}x_3^{(19)}x_4^{(5)}x_5^{(3)}\\
&+ x_1^{(1)}x_2^{(19)}x_3^{(11)}x_4^{(5)}x_5^{(3)} +
 x_1^{(1)}x_2^{(11)}x_3^{(21)}x_4^{(3)}x_5^{(3)}+
 x_1^{(1)}x_2^{(19)}x_3^{(13)}x_4^{(3)}x_5^{(3)}+
\medskip
x_1^{(1)}x_2^{(7)}x_3^{(23)}x_4^{(5)}x_5^{(3)}\\
&+  x_1^{(1)}x_2^{(23)}x_3^{(7)}x_4^{(5)}x_5^{(3)} +
  x_1^{(1)}x_2^{(11)}x_3^{(11)}x_4^{(7)}x_5^{(9)}+
  x_1^{(1)}x_2^{(11)}x_3^{(7)}x_4^{(11)}x_5^{(9)}+
\medskip
x_1^{(1)}x_2^{(7)}x_3^{(11)}x_4^{(11)}x_5^{(9)}\\
&+  x_1^{(1)}x_2^{(7)}x_3^{(25)}x_4^{(3)}x_5^{(3)}+
   x_1^{(1)}x_2^{(23)}x_3^{(9)}x_4^{(3)}x_5^{(3)}+
  x_1^{(1)}x_2^{(15)}x_3^{(17)}x_4^{(3)}x_5^{(3)}+
\medskip
x_1^{(1)} x_2^{(15)}x_3^{(15)}x_4^{(3)}x_5^{(5)}\\
 &+x_1^{(1)}x_2^{(27)}x_3^{(5)}x_4^{(3)}x_5^{(3)}+
  x_1^{(1)}x_2^{(29)}x_3^{(3)}x_4^{(3)}x_5^{(3)}+
  x_1^{(1)}x_2^{(13)}x_3^{(11)}x_4^{(7)}x_5^{(7)}+
\medskip
 x_1^{(1)}x_2^{(11)}x_3^{(7)}x_4^{(13)}x_5^{(7)}\\
  &+x_1^{(1)}x_2^{(7)}x_3^{(13)}x_4^{(11)}x_5^{(7)} +
  x_1^{(1)}x_2^{(13)}x_3^{(7)}x_4^{(7)}x_5^{(11)}+
x_1^{(1)}x_2^{(7)}x_3^{(7)}x_4^{(13)}x_5^{(11)}+
\medskip
  x_1^{(1)}x_2^{(7)}x_3^{(13)}x_4^{(7)}x_5^{(11)}\\
\end{array}$$

\newpage
$$ \begin{array}{ll}
 &+  x_1^{(1)}x_2^{(11)}x_3^{(7)}x_4^{(7)}x_5^{(13)}+
  x_1^{(1)}x_2^{(7)}x_3^{(11)}x_4^{(7)}x_5^{(13)}+
x_1^{(1)}x_2^{(7)}x_3^{(7)}x_4^{(11)}x_5^{(13)}+
\medskip
  x_1^{(1)}x_2^{(7)}x_3^{(7)}x_4^{(7)}x_5^{(17)}\\
&+ x_1^{(1)}x_2^{(7)}x_3^{(7)}x_4^{(9)}x_5^{(15)} +
  x_1^{(1)}x_2^{(7)}x_3^{(11)}x_4^{(5)}x_5^{(15)}+
x_1^{(1)}x_2^{(7)}x_3^{(13)}x_4^{(3)}x_5^{(15)}+
\medskip
  x_1^{(1)}x_2^{(7)}x_3^{(7)}x_4^{(19)}x_5^{(5)}\\
 &+ x_1^{(1)}x_2^{(7)}x_3^{(7)}x_4^{(21)}x_5^{(3)}+
 x_1^{(1)}x_2^{(11)}x_3^{(7)}x_4^{(15)}x_5^{(5)}+
 x_1^{(1)}x_2^{(11)}x_3^{(15)}x_4^{(7)}x_5^{(5)}+
\medskip
  x_1^{(1)}x_2^{(15)}x_3^{(11)}x_4^{(7)}x_5^{(5)},\\
\zeta_{40, 0} &=   x_1^{(0)}x_2^{(7)}x_3^{(11)}x_4^{(3)}x_5^{(19)}+
 x_1^{(0)}x_2^{(7)}x_3^{(11)}x_4^{(5)}x_5^{(17)}+
 x_1^{(0)}x_2^{(7)}x_3^{(13)}x_4^{(3)}x_5^{(17)}+
\medskip
 x_1^{(0)}x_2^{(7)}x_3^{(13)}x_4^{(5)}x_5^{(15)}\\
&\quad +
 x_1^{(0)}x_2^{(7)}x_3^{(11)}x_4^{(9)}x_5^{(13)}+
 x_1^{(0)}x_2^{(7)}x_3^{(13)}x_4^{(7)}x_5^{(13)}+
 x_1^{(0)}x_2^{(11)}x_3^{(13)}x_4^{(3)}x_5^{(13)}+
\medskip
 x_1^{(0)}x_2^{(7)}x_3^{(11)}x_4^{(11)}x_5^{(11)}\\
&\quad +
 x_1^{(0)}x_2^{(7)}x_3^{(13)}x_4^{(9)}x_5^{(11)}+
x_1^{(0)}x_2^{(11)}x_3^{(13)}x_4^{(5)}x_5^{(11)}+
 x_1^{(0)}x_2^{(7)}x_3^{(19)}x_4^{(3)}x_5^{(11)}+
\medskip
 x_1^{(0)}x_2^{(11)}x_3^{(15)}x_4^{(3)}x_5^{(11)}\\
&\quad +
 x_1^{(0)}x_2^{(7)}x_3^{(19)}x_4^{(5)}x_5^{(9)}+
 x_1^{(0)}x_2^{(11)}x_3^{(15)}x_4^{(5)}x_5^{(9)}+
 x_1^{(0)}x_2^{(7)}x_3^{(21)}x_4^{(3)}x_5^{(9)}+
\medskip
 x_1^{(0)}x_2^{(13)}x_3^{(15)}x_4^{(3)}x_5^{(9)}\\
&\quad +
 x_1^{(0)}x_2^{(11)}x_3^{(13)}x_4^{(9)}x_5^{(7)}+
 x_1^{(0)}x_2^{(13)}x_3^{(15)}x_4^{(5)}x_5^{(7)}+
 x_1^{(0)}x_2^{(7)}x_3^{(21)}x_4^{(5)}x_5^{(7)}+
\medskip
 x_1^{(0)}x_2^{(7)}x_3^{(23)}x_4^{(5)}x_5^{(5)}\\
&\quad +
 x_1^{(0)}x_2^{(11)}x_3^{(19)}x_4^{(5)}x_5^{(5)}+
 x_1^{(0)}x_2^{(13)}x_3^{(21)}x_4^{(3)}x_5^{(3)}+
 x_1^{(0)}x_2^{(7)}x_3^{(11)}x_4^{(19)}x_5^{(3)}+
\medskip
 x_1^{(0)}x_2^{(7)}x_3^{(11)}x_4^{(17)}x_5^{(5)}\\
&\quad +
 x_1^{(0)}x_2^{(7)}x_3^{(13)}x_4^{(17)}x_5^{(3)}+
 x_1^{(0)}x_2^{(7)}x_3^{(13)}x_4^{(15)}x_5^{(5)}+
 x_1^{(0)}x_2^{(7)}x_3^{(11)}x_4^{(13)}x_5^{(9)}+
\medskip
 x_1^{(0)}x_2^{(7)}x_3^{(13)}x_4^{(13)}x_5^{(7)}\\
&\quad +
 x_1^{(0)}x_2^{(11)}x_3^{(13)}x_4^{(13)}x_5^{(3)}+
 x_1^{(0)}x_2^{(7)}x_3^{(11)}x_4^{(11)}x_5^{(11)}+
 x_1^{(0)}x_2^{(7)}x_3^{(13)}x_4^{(11)}x_5^{(9)}+
\medskip
 x_1^{(0)}x_2^{(11)}x_3^{(13)}x_4^{(11)}x_5^{(5)}\\
&\quad +
 x_1^{(0)}x_2^{(7)}x_3^{(19)}x_4^{(11)}x_5^{(3)}+
 x_1^{(0)}x_2^{(11)}x_3^{(15)}x_4^{(11)}x_5^{(3)}+
 x_1^{(0)}x_2^{(7)}x_3^{(19)}x_4^{(9)}x_5^{(5)}+
\medskip
 x_1^{(0)}x_2^{(11)}x_3^{(15)}x_4^{(9)}x_5^{(5)}\\
&\quad +
 x_1^{(0)}x_2^{(7)}x_3^{(21)}x_4^{(9)}x_5^{(3)}+
 x_1^{(0)}x_2^{(13)}x_3^{(15)}x_4^{(9)}x_5^{(3)}+
 x_1^{(0)}x_2^{(11)}x_3^{(13)}x_4^{(7)}x_5^{(9)}+
\medskip
 x_1^{(0)}x_2^{(13)}x_3^{(15)}x_4^{(7)}x_5^{(5)}\\
&\quad +
 x_1^{(0)}x_2^{(7)}x_3^{(21)}x_4^{(7)}x_5^{(5)}+
 x_1^{(0)}x_2^{(7)}x_3^{(23)}x_4^{(5)}x_5^{(5)}+
 x_1^{(0)}x_2^{(11)}x_3^{(19)}x_4^{(5)}x_5^{(5)}+
\medskip
 x_1^{(0)}x_2^{(13)}x_3^{(21)}x_4^{(3)}x_5^{(3)}\\
&\quad+
 x_1^{(0)}x_2^{(7)}x_3^{(25)}x_4^{(3)}x_5^{(5)}+
 x_1^{(0)}x_2^{(15)}x_3^{(17)}x_4^{(3)}x_5^{(5)}+
 x_1^{(0)}x_2^{(23)}x_3^{(9)}x_4^{(3)}x_5^{(5)}+
\medskip
 x_1^{(0)}x_2^{(27)}x_3^{(5)}x_4^{(3)}x_5^{(5)}\\
&\quad +
 x_1^{(0)}x_2^{(29)}x_3^{(3)}x_4^{(3)}x_5^{(5)}+
 x_1^{(0)}x_2^{(25)}x_3^{(7)}x_4^{(3)}x_5^{(5)}+
 x_1^{(0)}x_2^{(17)}x_3^{(15)}x_4^{(3)}x_5^{(5)}+
\medskip
 x_1^{(0)}x_2^{(9)}x_3^{(23)}x_4^{(3)}x_5^{(5)}\\
&\quad +
 x_1^{(0)}x_2^{(5)}x_3^{(27)}x_4^{(3)}x_5^{(5)}+
 x_1^{(0)}x_2^{(3)}x_3^{(29)}x_4^{(3)}x_5^{(5)}+
 x_1^{(0)}x_2^{(13)}x_3^{(13)}x_4^{(7)}x_5^{(7)}+
\medskip
 x_1^{(0)}x_2^{(11)}x_3^{(11)}x_4^{(11)}x_5^{(7)}\\
&\quad +
 x_1^{(0)}x_2^{(7)}x_3^{(7)}x_4^{(19)}x_5^{(7)}+
 x_1^{(0)}x_2^{(11)}x_3^{(7)}x_4^{(15)}x_5^{(7)}+
 x_1^{(0)}x_2^{(15)}x_3^{(15)}x_4^{(5)}x_5^{(5)}+
\medskip
 x_1^{(0)}x_2^{(13)}x_3^{(19)}x_4^{(3)}x_5^{(5)}\\
&\quad +
 x_1^{(0)}x_2^{(19)}x_3^{(13)}x_4^{(3)}x_5^{(5)}+
 x_1^{(0)}x_2^{(21)}x_3^{(11)}x_4^{(3)}x_5^{(5)}+
 x_1^{(0)}x_2^{(11)}x_3^{(21)}x_4^{(5)}x_5^{(3)}+
\medskip
 x_1^{(0)}x_2^{(27)}x_3^{(7)}x_4^{(3)}x_5^{(3)}\\
&\quad +
 x_1^{(0)}x_2^{(11)}x_3^{(23)}x_4^{(3)}x_5^{(3)}+
\medskip
 x_1^{(0)}x_2^{(19)}x_3^{(15)}x_4^{(3)}x_5^{(3)},\\
\medskip
\overline{\zeta}_{40, 0} &=  x_1^{(1)}x_2^{(31)}x_3^{(3)}x_4^{(3)}x_5^{(2)} + x_1^{(1)}x_2^{(31)}x_3^{(3)}x_4^{(4)}x_5^{(1)} + x_1^{(1)}x_2^{(31)}x_3^{(5)}x_4^{(2)}x_5^{(1)} + x_1^{(1)}x_2^{(31)}x_3^{(6)}x_4^{(1)}x_5^{(1)},\\
\zeta_{45, 0} &= x_1^{(31)}x_2^{(6)}x_3^{(5)}x_4^{(2)}x_5^{(1)}+
x_1^{(31)}x_2^{(6)}x_3^{(4)}x_4^{(3)}x_5^{(1)}+
x_1^{(31)}x_2^{(6)}x_3^{(3)}x_4^{(4)}x_5^{(1)}+
\medskip
x_1^{(31)}x_2^{(6)}x_3^{(2)}x_4^{(5)}x_5^{(1)}\\
&\quad+
x_1^{(31)}x_2^{(6)}x_3^{(1)}x_4^{(6)}x_5^{(1)}+
x_1^{(31)}x_2^{(5)}x_3^{(6)}x_4^{(1)}x_5^{(2)}+
x_1^{(31)}x_2^{(5)}x_3^{(5)}x_4^{(2)}x_5^{(2)}+
\medskip
x_1^{(31)}x_2^{(5)}x_3^{(4)}x_4^{(3)}x_5^{(2)}\\
&\quad+
x_1^{(31)}x_2^{(5)}x_3^{(3)}x_4^{(4)}x_5^{(2)}+
x_1^{(31)}x_2^{(5)}x_3^{(2)}x_4^{(5)}x_5^{(2)}+
x_1^{(31)}x_2^{(5)}x_3^{(1)}x_4^{(6)}x_5^{(2)}+
\medskip
x_1^{(31)}x_2^{(5)}x_3^{(5)}x_4^{(1)}x_5^{(3)}\\
&\quad+
x_1^{(31)}x_2^{(3)}x_3^{(6)}x_4^{(2)}x_5^{(3)}+
x_1^{(31)}x_2^{(6)}x_3^{(2)}x_4^{(3)}x_5^{(3)}+
x_1^{(31)}x_2^{(6)}x_3^{(1)}x_4^{(4)}x_5^{(3)}+
\medskip
x_1^{(31)}x_2^{(5)}x_3^{(2)}x_4^{(4)}x_5^{(3)}\\
&\quad+
x_1^{(31)}x_2^{(3)}x_3^{(4)}x_4^{(4)}x_5^{(3)}+
x_1^{(31)}x_2^{(3)}x_3^{(2)}x_4^{(6)}x_5^{(3)}+
x_1^{(31)}x_2^{(3)}x_3^{(6)}x_4^{(1)}x_5^{(4)}+
\medskip
x_1^{(31)}x_2^{(3)}x_3^{(5)}x_4^{(2)}x_5^{(4)}\\
&\quad+
x_1^{(31)}x_2^{(3)}x_3^{(4)}x_4^{(3)}x_5^{(4)}+
x_1^{(31)}x_2^{(3)}x_3^{(3)}x_4^{(4)}x_5^{(4)}+
x_1^{(31)}x_2^{(3)}x_3^{(2)}x_4^{(5)}x_5^{(4)}+
\medskip
x_1^{(31)}x_2^{(3)}x_3^{(1)}x_4^{(6)}x_5^{(4)}\\
&\quad+
x_1^{(31)}x_2^{(5)}x_3^{(3)}x_4^{(1)}x_5^{(5)}+
x_1^{(31)}x_2^{(6)}x_3^{(1)}x_4^{(2)}x_5^{(5)}+
x_1^{(31)}x_2^{(5)}x_3^{(2)}x_4^{(2)}x_5^{(5)}+
\medskip
x_1^{(31)}x_2^{(3)}x_3^{(4)}x_4^{(2)}x_5^{(5)}\\
&\quad+
x_1^{(31)}x_2^{(5)}x_3^{(1)}x_4^{(3)}x_5^{(5)}+
x_1^{(31)}x_2^{(3)}x_3^{(3)}x_4^{(3)}x_5^{(5)}+
x_1^{(31)}x_2^{(3)}x_3^{(1)}x_4^{(5)}x_5^{(5)}+
\medskip
x_1^{(31)}x_2^{(6)}x_3^{(1)}x_4^{(1)}x_5^{(6)}\\
&\quad+
x_1^{(31)}x_2^{(5)}x_3^{(2)}x_4^{(1)}x_5^{(6)}+
x_1^{(31)}x_2^{(3)}x_3^{(4)}x_4^{(1)}x_5^{(6)}+
x_1^{(31)}x_2^{(3)}x_3^{(3)}x_4^{(2)}x_5^{(6)}+
\medskip
x_1^{(31)}x_2^{(6)}x_3^{(6)}x_4^{(1)}x_5^{(1)},\\
\end{array}$$

\newpage
$$ \begin{array}{ll}
\overline{\zeta}_{45, 0} &= x_1^{(7)}x_2^{(11)}x_3^{(11)}x_4^{(11)}x_5^{(5)}+
  x_1^{(7)}x_2^{(11)}x_3^{(11)}x_4^{(13)}x_5^{(3)}+
 x_1^{(7)} x_2^{(7)}x_3^{(11)}x_4^{(17)}x_5^{(3)}+
\medskip
 x_1^{(7)}x_2^{(11)}x_3^{(7)}x_4^{(17)}x_5^{(3)} \\
&  + x_1^{(7)}x_2^{(7)}x_3^{(13)}x_4^{(15)}x_5^{(3)}+
x_1^{(7)}x_2^{(11)}x_3^{(15)}x_4^{(9)}x_5^{(3)}+
 x_1^{(7)}x_2^{(15)}x_3^{(11)}x_4^{(9)}x_5^{(3)}+
\medskip
  x_1^{(7)}x_2^{(7)}x_3^{(19)}x_4^{(9)}x_5^{(3)}\\
 &+ x_1^{(7)}x_2^{(19)}x_3^{(7)}x_4^{(9)}x_5^{(3)}+
 x_1^{(7)}x_2^{(7)}x_3^{(19)}x_4^{(7)}x_5^{(5)}+
 x_1^{(7)}x_2^{(19)}x_3^{(7)}x_4^{(7)}x_5^{(5)}+
\medskip
x_1^{(7)}x_2^{(11)}x_3^{(19)}x_4^{(5)}x_5^{(3)}\\
&+ x_1^{(7)}x_2^{(19)}x_3^{(11)}x_4^{(5)}x_5^{(3)} +
 x_1^{(7)}x_2^{(11)}x_3^{(21)}x_4^{(3)}x_5^{(3)}+
 x_1^{(7)}x_2^{(19)}x_3^{(13)}x_4^{(3)}x_5^{(3)}+
\medskip
x_1^{(7)}x_2^{(7)}x_3^{(23)}x_4^{(5)}x_5^{(3)}\\
&+  x_1^{(7)}x_2^{(23)}x_3^{(7)}x_4^{(5)}x_5^{(3)} +
  x_1^{(7)}x_2^{(11)}x_3^{(11)}x_4^{(7)}x_5^{(9)}+
  x_1^{(7)}x_2^{(11)}x_3^{(7)}x_4^{(11)}x_5^{(9)}+
\medskip
x_1^{(7)}x_2^{(7)}x_3^{(11)}x_4^{(11)}x_5^{(9)}\\
&+  x_1^{(7)}x_2^{(7)}x_3^{(25)}x_4^{(3)}x_5^{(3)}+
   x_1^{(7)}x_2^{(23)}x_3^{(9)}x_4^{(3)}x_5^{(3)}+
  x_1^{(7)}x_2^{(15)}x_3^{(17)}x_4^{(3)}x_5^{(3)}+
\medskip
x_1^{(7)} x_2^{(15)}x_3^{(15)}x_4^{(3)}x_5^{(5)}\\
 &+x_1^{(7)}x_2^{(27)}x_3^{(5)}x_4^{(3)}x_5^{(3)}+
  x_1^{(7)}x_2^{(29)}x_3^{(3)}x_4^{(3)}x_5^{(3)}+
  x_1^{(7)}x_2^{(13)}x_3^{(11)}x_4^{(7)}x_5^{(7)}+
\medskip
 x_1^{(7)}x_2^{(11)}x_3^{(7)}x_4^{(13)}x_5^{(7)}\\
  &+x_1^{(7)}x_2^{(7)}x_3^{(13)}x_4^{(11)}x_5^{(7)} +
  x_1^{(7)}x_2^{(13)}x_3^{(7)}x_4^{(7)}x_5^{(11)}+
x_1^{(7)}x_2^{(7)}x_3^{(7)}x_4^{(13)}x_5^{(11)}+
\medskip
  x_1^{(7)}x_2^{(7)}x_3^{(13)}x_4^{(7)}x_5^{(11)}\\
 &+  x_1^{(7)}x_2^{(11)}x_3^{(7)}x_4^{(7)}x_5^{(13)}+
  x_1^{(7)}x_2^{(7)}x_3^{(11)}x_4^{(7)}x_5^{(13)}+
x_1^{(7)}x_2^{(7)}x_3^{(7)}x_4^{(11)}x_5^{(13)}+
\medskip
  x_1^{(7)}x_2^{(7)}x_3^{(7)}x_4^{(7)}x_5^{(17)}\\
&+ x_1^{(7)}x_2^{(7)}x_3^{(7)}x_4^{(9)}x_5^{(15)} +
  x_1^{(7)}x_2^{(7)}x_3^{(11)}x_4^{(5)}x_5^{(15)}+
x_1^{(7)}x_2^{(7)}x_3^{(13)}x_4^{(3)}x_5^{(15)}+
\medskip
  x_1^{(7)}x_2^{(7)}x_3^{(7)}x_4^{(19)}x_5^{(5)}\\
 &+ x_1^{(7)}x_2^{(7)}x_3^{(7)}x_4^{(21)}x_5^{(3)}+
 x_1^{(7)}x_2^{(11)}x_3^{(7)}x_4^{(15)}x_5^{(5)}+
 x_1^{(7)}x_2^{(11)}x_3^{(15)}x_4^{(7)}x_5^{(5)}+
\medskip
  x_1^{(7)}x_2^{(15)}x_3^{(11)}x_4^{(7)}x_5^{(5)},\\
\zeta_{47, 0} &=x_1^{(7)}x_2^{(7)}x_3^{(11)}x_4^{(3)}x_5^{(19)}+
 x_1^{(7)}x_2^{(7)}x_3^{(11)}x_4^{(5)}x_5^{(17)}+
 x_1^{(7)}x_2^{(7)}x_3^{(13)}x_4^{(3)}x_5^{(17)}+
\medskip
 x_1^{(7)}x_2^{(7)}x_3^{(13)}x_4^{(5)}x_5^{(15)}\\
&\quad +
 x_1^{(7)}x_2^{(7)}x_3^{(11)}x_4^{(9)}x_5^{(13)}+
 x_1^{(7)}x_2^{(7)}x_3^{(13)}x_4^{(7)}x_5^{(13)}+
 x_1^{(7)}x_2^{(11)}x_3^{(13)}x_4^{(3)}x_5^{(13)}+
\medskip
 x_1^{(7)}x_2^{(7)}x_3^{(11)}x_4^{(11)}x_5^{(11)}\\
&\quad +
 x_1^{(7)}x_2^{(7)}x_3^{(13)}x_4^{(9)}x_5^{(11)}+
x_1^{(7)}x_2^{(11)}x_3^{(13)}x_4^{(5)}x_5^{(11)}+
 x_1^{(7)}x_2^{(7)}x_3^{(19)}x_4^{(3)}x_5^{(11)}+
\medskip
 x_1^{(7)}x_2^{(11)}x_3^{(15)}x_4^{(3)}x_5^{(11)}\\
&\quad +
 x_1^{(7)}x_2^{(7)}x_3^{(19)}x_4^{(5)}x_5^{(9)}+
 x_1^{(7)}x_2^{(11)}x_3^{(15)}x_4^{(5)}x_5^{(9)}+
 x_1^{(7)}x_2^{(7)}x_3^{(21)}x_4^{(3)}x_5^{(9)}+
\medskip
 x_1^{(7)}x_2^{(13)}x_3^{(15)}x_4^{(3)}x_5^{(9)}\\
&\quad +
 x_1^{(7)}x_2^{(11)}x_3^{(13)}x_4^{(9)}x_5^{(7)}+
 x_1^{(7)}x_2^{(13)}x_3^{(15)}x_4^{(5)}x_5^{(7)}+
 x_1^{(7)}x_2^{(7)}x_3^{(21)}x_4^{(5)}x_5^{(7)}+
\medskip
 x_1^{(7)}x_2^{(7)}x_3^{(23)}x_4^{(5)}x_5^{(5)}\\
&\quad +
 x_1^{(7)}x_2^{(11)}x_3^{(19)}x_4^{(5)}x_5^{(5)}+
 x_1^{(7)}x_2^{(13)}x_3^{(21)}x_4^{(3)}x_5^{(3)}+
 x_1^{(7)}x_2^{(7)}x_3^{(11)}x_4^{(19)}x_5^{(3)}+
\medskip
 x_1^{(7)}x_2^{(7)}x_3^{(11)}x_4^{(17)}x_5^{(5)}\\
&\quad +
 x_1^{(7)}x_2^{(7)}x_3^{(13)}x_4^{(17)}x_5^{(3)}+
 x_1^{(7)}x_2^{(7)}x_3^{(13)}x_4^{(15)}x_5^{(5)}+
 x_1^{(7)}x_2^{(7)}x_3^{(11)}x_4^{(13)}x_5^{(9)}+
\medskip
 x_1^{(7)}x_2^{(7)}x_3^{(13)}x_4^{(13)}x_5^{(7)}\\
&\quad +
 x_1^{(7)}x_2^{(11)}x_3^{(13)}x_4^{(13)}x_5^{(3)}+
 x_1^{(7)}x_2^{(7)}x_3^{(11)}x_4^{(11)}x_5^{(11)}+
 x_1^{(7)}x_2^{(7)}x_3^{(13)}x_4^{(11)}x_5^{(9)}+
\medskip
 x_1^{(7)}x_2^{(11)}x_3^{(13)}x_4^{(11)}x_5^{(5)}\\
&\quad +
 x_1^{(7)}x_2^{(7)}x_3^{(19)}x_4^{(11)}x_5^{(3)}+
 x_1^{(7)}x_2^{(11)}x_3^{(15)}x_4^{(11)}x_5^{(3)}+
 x_1^{(7)}x_2^{(7)}x_3^{(19)}x_4^{(9)}x_5^{(5)}+
\medskip
 x_1^{(7)}x_2^{(11)}x_3^{(15)}x_4^{(9)}x_5^{(5)}\\
&\quad +
 x_1^{(7)}x_2^{(7)}x_3^{(21)}x_4^{(9)}x_5^{(3)}+
 x_1^{(7)}x_2^{(13)}x_3^{(15)}x_4^{(9)}x_5^{(3)}+
 x_1^{(7)}x_2^{(11)}x_3^{(13)}x_4^{(7)}x_5^{(9)}+
\medskip
 x_1^{(7)}x_2^{(13)}x_3^{(15)}x_4^{(7)}x_5^{(5)}\\
&\quad +
 x_1^{(7)}x_2^{(7)}x_3^{(21)}x_4^{(7)}x_5^{(5)}+
 x_1^{(7)}x_2^{(7)}x_3^{(23)}x_4^{(5)}x_5^{(5)}+
 x_1^{(7)}x_2^{(11)}x_3^{(19)}x_4^{(5)}x_5^{(5)}+
\medskip
 x_1^{(7)}x_2^{(13)}x_3^{(21)}x_4^{(3)}x_5^{(3)}\\
&\quad+
 x_1^{(7)}x_2^{(7)}x_3^{(25)}x_4^{(3)}x_5^{(5)}+
 x_1^{(7)}x_2^{(15)}x_3^{(17)}x_4^{(3)}x_5^{(5)}+
 x_1^{(7)}x_2^{(23)}x_3^{(9)}x_4^{(3)}x_5^{(5)}+
\medskip
 x_1^{(7)}x_2^{(27)}x_3^{(5)}x_4^{(3)}x_5^{(5)}\\
&\quad +
 x_1^{(7)}x_2^{(29)}x_3^{(3)}x_4^{(3)}x_5^{(5)}+
 x_1^{(7)}x_2^{(25)}x_3^{(7)}x_4^{(3)}x_5^{(5)}+
 x_1^{(7)}x_2^{(17)}x_3^{(15)}x_4^{(3)}x_5^{(5)}+
\medskip
 x_1^{(7)}x_2^{(9)}x_3^{(23)}x_4^{(3)}x_5^{(5)}\\
&\quad +
 x_1^{(7)}x_2^{(5)}x_3^{(27)}x_4^{(3)}x_5^{(5)}+
 x_1^{(7)}x_2^{(3)}x_3^{(29)}x_4^{(3)}x_5^{(5)}+
 x_1^{(7)}x_2^{(13)}x_3^{(13)}x_4^{(7)}x_5^{(7)}+
\medskip
 x_1^{(7)}x_2^{(11)}x_3^{(11)}x_4^{(11)}x_5^{(7)}\\
&\quad +
 x_1^{(7)}x_2^{(7)}x_3^{(7)}x_4^{(19)}x_5^{(7)}+
 x_1^{(7)}x_2^{(11)}x_3^{(7)}x_4^{(15)}x_5^{(7)}+
 x_1^{(7)}x_2^{(15)}x_3^{(15)}x_4^{(5)}x_5^{(5)}+
\medskip
 x_1^{(7)}x_2^{(13)}x_3^{(19)}x_4^{(3)}x_5^{(5)}\\
&\quad +
 x_1^{(7)}x_2^{(19)}x_3^{(13)}x_4^{(3)}x_5^{(5)}+
 x_1^{(7)}x_2^{(21)}x_3^{(11)}x_4^{(3)}x_5^{(5)}+
 x_1^{(7)}x_2^{(11)}x_3^{(21)}x_4^{(5)}x_5^{(3)}+
\medskip
 x_1^{(7)}x_2^{(27)}x_3^{(7)}x_4^{(3)}x_5^{(3)}\\
&\quad +
 x_1^{(7)}x_2^{(11)}x_3^{(23)}x_4^{(3)}x_5^{(3)}+
\medskip
 x_1^{(7)}x_2^{(19)}x_3^{(15)}x_4^{(3)}x_5^{(3)},\\
\end{array}$$

\newpage
$$ \begin{array}{ll}
\zeta_{48, 0} &= x_1^{(31)}x_2^{(5)}x_3^{(5)}x_4^{(5)}x_5^{(2)}+
 x_1^{(31)}x_2^{(5)}x_3^{(5)}x_4^{(6)}x_5^{(1)}+
 x_1^{(31)}x_2^{(3)}x_3^{(5)}x_4^{(8)}x_5^{(1)}+
\medskip
 x_1^{(31)}x_2^{(5)}x_3^{(3)}x_4^{(8)}x_5^{(1)}\\
&\quad +
 x_1^{(31)}x_2^{(3)}x_3^{(6)}x_4^{(7)}x_5^{(1)}+
 x_1^{(31)}x_2^{(5)}x_3^{(7)}x_4^{(4)}x_5^{(1)}+
x_1^{(31)}x_2^{(7)}x_3^{(5)}x_4^{(4)}x_5^{(1)} + 
\medskip
 x_1^{(31)}x_2^{(3)}x_3^{(9)}x_4^{(4)}x_5^{(1)}\\
&\quad +
 x_1^{(31)}x_2^{(9)}x_3^{(3)}x_4^{(4)}x_5^{(1)}+
 x_1^{(31)}x_2^{(3)}x_3^{(9)}x_4^{(3)}x_5^{(2)}+
 x_1^{(31)}x_2^{(9)}x_3^{(3)}x_4^{(3)}x_5^{(2)}+
\medskip
 x_1^{(31)}x_2^{(5)}x_3^{(9)}x_4^{(2)}x_5^{(1)}\\
&\quad +
 x_1^{(31)}x_2^{(9)}x_3^{(5)}x_4^{(2)}x_5^{(1)} +
 x_1^{(31)}x_2^{(5)}x_3^{(10)}x_4^{(1)}x_5^{(1)}+
 x_1^{(31)}x_2^{(9)}x_3^{(6)}x_4^{(1)}x_5^{(1)}+
\medskip
 x_1^{(31)}x_2^{(3)}x_3^{(11)}x_4^{(2)}x_5^{(1)}\\
&\quad +
 x_1^{(31)}x_2^{(11)}x_3^{(3)}x_4^{(2)}x_5^{(1)} +
 x_1^{(31)}x_2^{(5)}x_3^{(5)}x_4^{(3)}x_5^{(4)}+
 x_1^{(31)}x_2^{(5)}x_3^{(3)}x_4^{(5)}x_5^{(4)}+
\medskip
 x_1^{(31)}x_2^{(3)}x_3^{(5)}x_4^{(5)}x_5^{(4)}\\
&\quad +
 x_1^{(31)}x_2^{(3)}x_3^{(12)}x_4^{(1)}x_5^{(1)}+
 x_1^{(31)}x_2^{(11)}x_3^{(4)}x_4^{(1)}x_5^{(1)}+
 x_1^{(31)}x_2^{(7)}x_3^{(8)}x_4^{(1)}x_5^{(1)}+
\medskip
 x_1^{(31)}x_2^{(7)}x_3^{(7)}x_4^{(1)}x_5^{(2)}\\
&\quad +  
x_1^{(31)}x_2^{(13)}x_3^{(2)}x_4^{(1)}x_5^{(1)} +
 x_1^{(31)}x_2^{(14)}x_3^{(1)}x_4^{(1)}x_5^{(1)}+
 x_1^{(31)}x_2^{(6)}x_3^{(5)}x_4^{(3)}x_5^{(3)}+
\medskip
 x_1^{(31)}x_2^{(5)}x_3^{(3)}x_4^{(6)}x_5^{(3)}\\
&\quad +
 x_1^{(31)}x_2^{(3)}x_3^{(6)}x_4^{(5)}x_5^{(3)} + 
x_1^{(31)}x_2^{(6)}x_3^{(3)}x_4^{(3)}x_5^{(5)}+
 x_1^{(31)}x_2^{(3)}x_3^{(3)}x_4^{(6)}x_5^{(5)}+
\medskip
 x_1^{(31)}x_2^{(3)}x_3^{(6)}x_4^{(3)}x_5^{(5)}\\
&\quad +
x_1^{(31)}x_2^{(5)}x_3^{(3)}x_4^{(3)}x_5^{(6)}+
 x_1^{(31)}x_2^{(3)}x_3^{(5)}x_4^{(3)}x_5^{(6)}+
 x_1^{(31)}x_2^{(3)}x_3^{(3)}x_4^{(5)}x_5^{(6)}+
\medskip
 x_1^{(31)}x_2^{(3)}x_3^{(3)}x_4^{(3)}x_5^{(8)}\\
&\quad + 
x_1^{(31)}x_2^{(3)}x_3^{(3)}x_4^{(4)}x_5^{(7)} +
 x_1^{(31)}x_2^{(3)}x_3^{(5)}x_4^{(2)}x_5^{(7)}+
 x_1^{(31)}x_2^{(3)}x_3^{(6)}x_4^{(1)}x_5^{(7)}+
\medskip
 x_1^{(31)}x_2^{(3)}x_3^{(3)}x_4^{(9)}x_5^{(2)}\\
&\quad +
 x_1^{(31)}x_2^{(3)}x_3^{(3)}x_4^{(10)}x_5^{(1)} + 
x_1^{(31)}x_2^{(5)}x_3^{(3)}x_4^{(7)}x_5^{(2)}+
 x_1^{(31)}x_2^{(5)}x_3^{(7)}x_4^{(3)}x_5^{(2)}+
 x_1^{(31)}x_2^{(7)}x_3^{(5)}x_4^{(3)}x_5^{(2)}.
\end{array}$$
\end{thm}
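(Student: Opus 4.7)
The plan is to exploit the duality isomorphism $\mathbb Z/2 \otimes_{GL_5} {\rm Ann}_{\overline{\mathcal A}}[P_{m_{r, 0}}^{\otimes 5}]^{*} \cong [QP_{m_{r, 0}}^{\otimes 5}]^{GL_5}$, and then compute the right-hand side by the weight-vector decomposition technique already established for rank $4$ in the previous sections. For each $r$ in the list, the admissible basis of $QP_{m_{r, 0}}^{\otimes 5}$ given by Theorem \ref{dlsc} (drawn from \cite{Phuc4, Phuc5, Phuc6, Mothebe, Sum1, Sum2, Sum, Sum4, LT, Tin, Tin3}) splits as $QP_{m_{r, 0}}^{\otimes 5} \cong \bigoplus_\omega QP_{m_{r, 0}}^{\otimes 5}(\omega)$, and each summand further splits as $(QP_{m_{r, 0}}^{\otimes 5})^{0}(\omega)\bigoplus (QP_{m_{r, 0}}^{\otimes 5})^{>0}(\omega)$. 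I would determine the admissible monomials and the weight vectors that can occur (using Singer's and Kameko's criteria, exactly as in the argument for $r=30$ sketched after Theorem \ref{dlsc}), obtaining a manageable list of $\omega$'s for each $r$.

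Next, for each such $(\omega,\#)$-summand I would compute $[QP_{m_{r, 0}}^{\otimes 5}(\omega)]^{S_5}$ by the Lemma \ref{bdc1-1}/Lemma \ref{bdc1-2} style of argument: partition the admissible basis into $S_5$-orbit closures $\mathscr G({\rm adm}_{i_1}, \ldots, {\rm adm}_{i_k})$, write a prospective invariant $f \equiv \sum \gamma_j {\rm adm}_j$, apply the four transpositions $\theta_1,\theta_2,\theta_3,\theta_4$, expand each $\theta_i(f)+f$ modulo $\overline{\mathcal A}P^{\otimes 5}_{m_{r, 0}}$ into the admissible basis (using the straightening relations obtained from Cartan's formula), and solve the resulting homogeneous linear system for the $\gamma_j$'s. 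Having obtained $[QP_{m_{r,0}}^{\otimes 5}]^{S_5}$ as the direct sum of these subinvariants, I would then impose the extra relation $\theta_5(f)+f\equiv 0$ to cut down to $[QP_{m_{r,0}}^{\otimes 5}]^{GL_5}$. This yields the dimensions in the first table of the statement.

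For the explicit generators listed, the candidates $\zeta_{r,0}$ and $\overline{\zeta}_{r,0}$ are written in dual coordinates $x_i^{(a)}$; I would verify that each is $\overline{\mathcal A}$-annihilated by checking $(\zeta_{r,0})Sq^{2^i}=0$ for $0\le i\le \lfloor\log_2 m_{r,0}\rfloor$, which by the unstable condition is all that is needed. Then non-triviality as coinvariant classes is established by evaluating the natural pairing $\langle [\varphi(\cdot)],[\zeta_{r,0}]\rangle$ against the $GL_5$-invariant generators of $[QP_{m_{r,0}}^{\otimes 5}]^{GL_5}$ constructed in the previous step, where $\varphi$ denotes the appropriate iterate of the Kameko squaring lift $u \mapsto t_1t_2t_3t_4t_5\, u^{2}$; a non-zero Kronecker value certifies that the dual class is non-zero, exactly as in the proofs of Theorems \ref{dlc2} and \ref{dlc3}. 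When the Kameko map is an isomorphism on the relevant invariant piece (cases such as $r=33$ and $r=45$ where $\mu(m_{r,0})=5$ is not achieved), I would use the already known $[QP^{\otimes 5}_{(m_{r,0}-5)/2}]^{GL_5}$ (from \cite{Phuc10, LT, Tin3}) via the splitting $QP^{\otimes 5}_{m_{r, 0}}\cong {\rm Ker}((\widetilde{Sq^0_*})_{m_{r, 0}})\bigoplus QP^{\otimes 5}_{(m_{r,0}-5)/2}$, reducing the computation to $[{\rm Ker}((\widetilde{Sq^0_*})_{m_{r, 0}})]^{GL_5}$.

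The main obstacle is the sheer size of the admissible bases (up to $2520$ classes for $r=42$): the $S_5$-invariant linear system has thousands of variables and equations. This makes hand computation of the straightening relations $\theta_i({\rm adm}_j)\equiv (\text{admissibles}) \pmod{\overline{\mathcal A}P^{\otimes 5}_{m_{r,0}}}$ infeasible to carry out completely in print; the proof will therefore rely on a MAGMA \cite{Magma} implementation of Cartan-formula straightening, as flagged at the end of the Appendix preamble. A secondary subtlety is identifying exactly which weight vectors can support admissible monomials for each $r$; this requires a case-by-case application of Kameko's and Singer's criteria together with the rank-$4$ and rank-$5$ classification of admissibles in lower degrees, but it is mechanical once the bookkeeping of degrees $(m_{r,0}-h)/2^k$ (for $h=1,3,5$ and small $k$) is set up.
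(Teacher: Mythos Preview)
Your proposal is correct and follows essentially the same approach as the paper: the paper's own proof consists of the single line ``The proof of the theorem is similar to the calculations of the cases $r = 30$ and $33$,'' with those two cases outlined in the preceding paragraphs exactly as you describe---weight-vector decomposition of $QP_{m_{r,0}}^{\otimes 5}$, computation of $GL_5$-invariants on each summand (via the $\theta_i$'s), use of the Kameko splitting $QP^{\otimes 5}_{m_{r,0}}\cong {\rm Ker}((\widetilde{Sq^0_*})_{m_{r,0}})\bigoplus QP^{\otimes 5}_{(m_{r,0}-5)/2}$ for $r=33,45$, and identification of the coinvariant generators by duality with the explicit invariant elements. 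Your acknowledgment that the large admissible bases force reliance on MAGMA matches the paper's stated practice.
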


The proof of the theorem is similar to the calculations of the cases $r = 30$ and $33$. Basing the representation of the fifth transfer over $\Lambda,$ it may be concluded that
$$ \begin{array}{ll}
\medskip
Tr_5^{\mathcal A}([\zeta_{30, 0}]) &=[\psi_5(\zeta_{30, 0})] = [\lambda_0^{3}\lambda_{15}^{2}] = h_0^{3}h_4^{2}\in {\rm Ext}_{\mathcal A}^{5, 5+m_{30,0}}(\mathbb Z/2, \mathbb Z/2),\\
Tr_5^{\mathcal A}([\zeta_{33, 0}]) &=[\psi_5(\zeta_{33, 0})] = [\lambda_1\lambda_7^2\lambda_5\lambda_{13} + \lambda_1\lambda_7^2\lambda_9^2 + \lambda_1\lambda_7\lambda_{11}\lambda_9\lambda_5 + \lambda_1\lambda_{15}\lambda_3\lambda_{11}\lambda_3\\
&  + \delta(\lambda_1\lambda_7^2\lambda_{19} + \lambda_1\lambda_7\lambda_{19}\lambda_7)]\\
\medskip
&=h_1d_1\in {\rm Ext}_{\mathcal A}^{5, 5+m_{33,0}}(\mathbb Z/2, \mathbb Z/2),\\
\medskip
Tr_5^{\mathcal A}([\zeta_{38, 0}]) &=[\psi_5(\zeta_{38, 0})] = [\lambda_0^{3}\lambda_{7}\lambda_{31}] = h_0^{3}h_3h_5\in {\rm Ext}_{\mathcal A}^{5, 5+m_{38,0}}(\mathbb Z/2, \mathbb Z/2),\\
Tr_5^{\mathcal A}([\zeta_{39, 0}]) &=[\psi_5(\zeta_{39, 0})] = [\lambda_1\lambda_7^{3}\lambda_{17} + \lambda_1(\lambda_7\lambda_{11}^{2} + \lambda_7^{2}\lambda_{15})\lambda_9 + \lambda_1\lambda_{15}\lambda_{11}\lambda_7\lambda_5 + \lambda_1\lambda_7^{2}\lambda_{11}\lambda_{13}\\
&+\delta(\lambda_1\lambda_7\lambda_{11}\lambda_{21} + \lambda_1\lambda_7\lambda_{25}\lambda_7 + \lambda_1\lambda_9\lambda_{15}^{2} + \lambda^{2}_1\lambda_{23}\lambda_{15}))]\\
\medskip
&=h_1e_1\in {\rm Ext}_{\mathcal A}^{5, 5+m_{39,0}}(\mathbb Z/2, \mathbb Z/2),\\
Tr_5^{\mathcal A}([\zeta_{40, 0}]) &=[\psi_5(\zeta_{40, 0})] = [\lambda_0\lambda_9\lambda_{13}\lambda_{11}\lambda_7 + \lambda_0\lambda_{11}\lambda_{15}\lambda^2_7 + \lambda_0\lambda_7^2\lambda_5\lambda_{11}\lambda_{15} \\
&+ \lambda_0\lambda_5\lambda_9\lambda_{11}\lambda_{15} +\delta(\lambda_0\lambda_7\lambda_{11}\lambda_{23})]\\
\medskip
&= h_0f_1\in {\rm Ext}_{\mathcal A}^{5, 5+m_{40,0}}(\mathbb Z/2, \mathbb Z/2),\\
\medskip
Tr_5^{\mathcal A}([\overline{\zeta}_{40, 0}]) &=[\psi_5(\overline{\zeta}_{40, 0})] = [\lambda_1\lambda_{31}\lambda_3^{2}\lambda_2] = h_1h_5c_0\in {\rm Ext}_{\mathcal A}^{5, 5+m_{40,0}}(\mathbb Z/2, \mathbb Z/2),\\
Tr_5^{\mathcal A}([\zeta_{45, 0}]) &=[\psi_5(\zeta_{45, 0})] = [\lambda_{31}\lambda_3^2\lambda_2\lambda_6 + \lambda_{31}\lambda_3^2\lambda_4^2 + \lambda_{31}\lambda_3\lambda_5\lambda_4\lambda_2 + \lambda_{31}\lambda_7\lambda_1\lambda_5\lambda_1\\
& + \delta(\lambda_{31}\lambda_3^2\lambda_9 + \lambda_{31}\lambda_3\lambda_9\lambda_3)]\\
\medskip
&= h_5d_0\in {\rm Ext}_{\mathcal A}^{5, 5+m_{45,0}}(\mathbb Z/2, \mathbb Z/2),\\
Tr_5^{\mathcal A}([\overline{\zeta}_{45, 0}]) &=[\psi_5(\overline{\zeta}_{45, 0})] = [\lambda_7\lambda_7^{3}\lambda_{17} + \lambda_7(\lambda_7\lambda_{11}^{2} + \lambda_7^{2}\lambda_{15})\lambda_9 + \lambda_7\lambda_{15}\lambda_{11}\lambda_7\lambda_5 + \lambda_7^{3}\lambda_{11}\lambda_{13}\\
&+\delta(\lambda_7^{2}\lambda_{11}\lambda_{21} + \lambda_7^{2}\lambda_{25}\lambda_7 + \lambda_7\lambda_9\lambda_{15}^{2} + \lambda_7\lambda_1\lambda_{23}\lambda_{15}))]\\
\medskip
&=h_3e_1\in {\rm Ext}_{\mathcal A}^{5, 5+m_{45,0}}(\mathbb Z/2, \mathbb Z/2),\\
Tr_5^{\mathcal A}([\zeta_{47, 0}]) &=[\psi_5(\zeta_{47, 0})] = [\lambda_7\lambda_9\lambda_{13}\lambda_{11}\lambda_7 + \lambda_7\lambda_{11}\lambda_{15}\lambda^2_7 + \lambda_7^3\lambda_5\lambda_{11}\lambda_{15} \\
&+ \lambda_7\lambda_5\lambda_9\lambda_{11}\lambda_{15} +\delta(\lambda_7^{2}\lambda_{11}\lambda_{23})]\\
\medskip
&= h_3f_1\in {\rm Ext}_{\mathcal A}^{5, 5+m_{47,0}}(\mathbb Z/2, \mathbb Z/2),\\
Tr_5^{\mathcal A}([\zeta_{48, 0}]) &=[\psi_5(\zeta_{48, 0})] = [\lambda_{31}\lambda_3^{3}\lambda_8 + \lambda_{31}(\lambda_3\lambda_5^{2} + \lambda_{31}\lambda_3^{2}\lambda_7)\lambda_4 + \lambda_{31}\lambda_7\lambda_5\lambda_3\lambda_2 + \lambda_{31}\lambda_3^{2}\lambda_5\lambda_6\\
 &+ \delta(\lambda_{31}\lambda_3\lambda_5\lambda_{10} + \lambda_{31}\lambda_3\lambda_{12}\lambda_3 + \lambda_{31}\lambda_4\lambda_7^{2} + \lambda_{31}\lambda_0\lambda_{11}\lambda_7)]\\
&=h_5e_0\in {\rm Ext}_{\mathcal A}^{5, 5+m_{48,0}}(\mathbb Z/2, \mathbb Z/2),\\
\end{array}$$

\newpage
According to Lin \cite{Lin}, one has that

\centerline{
\scalebox{0.8}{
\begin{tabular}{c|ccccccccccccccccc}
$r$ & $30$  &$33$ & $34$ & $36$ & $38$ & $39$ & $40$ & $42$ & $45$ & $46$ & $47$ & $48$  \cr
\hline
\ ${\rm Ext}_{\mathcal A}^{5, 5+m_{r,0}}(\mathbb Z/2, \mathbb Z/2)=$ & $\langle h_0^{3}h_4^{2} \rangle$ & $\langle h_0p_0 \rangle$ & $0$ & $0$ &$\langle h_0^{3}h_3h_5 \rangle$ &$\langle h_3d_1 \rangle $ & $\langle h_3p_0, h_1h_5c_0 \rangle$ & $0$ & $\langle h_5d_0, h_1g_2 \rangle$ & $0$ & $\langle h_2g_2 \rangle $ & $\langle  h_5e_0\rangle$  \cr
\end{tabular}}
}
\medskip

where $h_0p_0 = h_1d_1,$ $h_3d_1 = h_1e_1,$ $h_3p_0 = h_0f_1,$ $h_1g_2 = h_3e_1,$ and $h_2g_2 = h_3f_1.$ This data together with the above calculations show that the non-zero elements $h_0^{3}h_4^{2},$ $h_0p_0,$ $h_0^{3}h_3h_5,$ $h_3d_1,$ $h_3p_0,$ $h_1h_5c_0,$ $h_5d_0,$ $h_1g_2,$ $h_2g_2,$ and $h_5e_0$ are detected by $Tr_5^{\mathcal A}.$ Therefore, the reader can see that
\begin{corl}\label{hqkqpl}
Singer's fifth transfer $$ Tr_5^{\mathcal A}: \mathbb Z/2 \otimes_{GL_5} {\rm Ann}_{\overline{\mathcal A}}[P_{m_{r, 0}}^{\otimes 5}]^{*}\to {\rm Ext}_{\mathcal A}^{5, 5+m_{r, 0}}(\mathbb Z/2, \mathbb Z/2)$$
is an isomorphism for $r\in \{30,\, 33,\, 34,\, 36,\, 38,\, 39,\, 40,\, 42,\, 45, 46,\, 47,\, 48\}.$
\end{corl}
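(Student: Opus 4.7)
My plan is to deduce Corollary \ref{hqkqpl} by comparing, for each $r$ in the list, the dimension of the source $\mathbb Z/2 \otimes_{GL_5} {\rm Ann}_{\overline{\mathcal A}}[P_{m_{r,0}}^{\otimes 5}]^{*}$ furnished by Theorem \ref{dlbb5} with the dimension of the target ${\rm Ext}_{\mathcal A}^{5, 5+m_{r, 0}}(\mathbb Z/2, \mathbb Z/2)$ recorded from Lin \cite{Lin}, and then exhibiting enough independent classes in the image of $Tr_5^{\mathcal A}$ via the lambda-algebra representation $\psi_5$.

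First I would handle the easy cases $r \in \{34, 36, 42, 46\}$: here Lin's table gives ${\rm Ext}_{\mathcal A}^{5, 5+m_{r, 0}}(\mathbb Z/2, \mathbb Z/2) = 0$, and Theorem \ref{dlbb5} says the coinvariant space also vanishes, so the transfer is the zero isomorphism. Next, for each remaining $r$, I would read off from Theorem \ref{dlbb5} an explicit basis of the coinvariants consisting of the classes $[\zeta_{r,0}]$ (and, when the dimension is two, also $[\overline{\zeta}_{r,0}]$), apply the Ch\ohorn n--H\`a representation $\psi_5$ listed just before the statement of the corollary, and verify that the resulting element of $\Lambda^{5, m_{r,0}}$ is the standard cycle representing the corresponding non-zero class ($h_0^3h_4^2$, $h_0p_0$, $h_0^3h_3h_5$, $h_3d_1$, $h_3p_0$, $h_1h_5c_0$, $h_5d_0$, $h_1g_2$, $h_2g_2$, or $h_5e_0$) in ${\rm Ext}_{\mathcal A}^{5, 5+m_{r,0}}(\mathbb Z/2, \mathbb Z/2)$. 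In the rank-two cases $r = 40$ and $r = 45$, the key extra check is that the two listed classes are sent to linearly independent elements of Ext; since one class maps to $h_3p_0$ (resp.\ $h_5d_0$) while the other maps to $h_1h_5c_0$ (resp.\ $h_1g_2 = h_3e_1$), which are distinct non-zero elements by Lin's tables, linear independence follows.

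Once this is in place, in each case $Tr_5^{\mathcal A}$ is a surjective linear map between two finite-dimensional $\mathbb Z/2$-vector spaces of equal dimension, hence an isomorphism. The expected main obstacle is not the bookkeeping between source and target dimensions---that is settled by Theorem \ref{dlbb5} and Lin---but rather confirming that the stated lambda-algebra images $\psi_5(\zeta_{r,0})$ really differ from the standard admissible cycles only by a $\delta$-boundary in $\Lambda$; this requires applying the Adem relations \eqref{qh} and the differential \eqref{vp} to rewrite each $\psi_5(\zeta_{r,0})$ in admissible form and to check that the boundary corrections (for example, $\delta(\lambda_1\lambda_7^{2}\lambda_{19} + \lambda_1\lambda_7\lambda_{19}\lambda_7)$ for $r = 33$) kill the inadmissible terms. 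This is a routine but lengthy calculation in $\Lambda$, and it is already carried out in the displayed formulas preceding the corollary, so the proof of \ref{hqkqpl} itself reduces to citing those identities together with Theorem \ref{dlbb5} and Lin's description of ${\rm Ext}_{\mathcal A}^{5, 5+m_{r, 0}}(\mathbb Z/2, \mathbb Z/2)$.
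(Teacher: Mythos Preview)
Your proposal is correct and follows essentially the same approach as the paper: compare the dimensions from Theorem \ref{dlbb5} with Lin's Ext tables, use the displayed $\psi_5$-computations to show the listed generators map to the named non-zero Ext classes, and conclude that a surjective map between equal-dimensional $\mathbb Z/2$-spaces is an isomorphism. The paper's argument is exactly this dimension-plus-detection comparison, stated in one sentence just before the corollary.
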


Next, we probe the behavior of the fifth cohomological transfer in degrees $n_{r, s}:= 4(2^{s} - 1) + r.2^{s}.$  Following Walker, and Wood \cite{Walker-Wood}, Tin \cite{Tin}, one gets
$$ \begin{array}{ll}
\dim QP_{n_{r, s}}^{\otimes 5}
&=\left\{\begin{array}{ll}
641 &\mbox{if $r = 8$, $s = 1$},\\[1mm]
%1426 &\mbox{if $k = 4$, $r = 8$, $s = 2$},\\[1mm]
%1706 &\mbox{if $k = 4$, $r = 8$, $s = 3$},\\[1mm]
%1705 &\mbox{if $k = 4$, $r = 8$, $s \geq 4$},\\[1mm]
961 &\mbox{if $r = 10$, $s = 1$},\\[1mm]
%1905 &\mbox{if $k = 4$, $r = 10$, $s = 2$},\\[1mm]
2171 &\mbox{if $r = 10$,  $s = 3$},\\[1mm]
2170 &\mbox{if $r = 10$, $s \geq 4$},\\[1mm]
1024 &\mbox{if $r = 11$, $s =1$},\\[1mm]
1984 &\mbox{if $r = 11$, $s \geq 2$}.
\end{array}\right.
\end{array}$$
The case $r = 11$, $s =1$ was proved by Walker, and Wood \cite{Walker-Wood}. The remaining cases were indicated by Tin \cite{Tin}, but the details have not appeared at the time of the writing. The applications of these results give the following theorem.
\begin{thm}\label{dlkqpl}
The coinvariants $\mathbb Z/2 \otimes_{GL_5} {\rm Ann}_{\overline{\mathcal A}}[P_{n_{r, s}}^{\otimes 5}]^{*} $ are given by
$$ \begin{array}{ll}
\mathbb Z/2 \otimes_{GL_5} {\rm Ann}_{\overline{\mathcal A}}[P_{n_{r, s}}^{\otimes 5}]^{*}
&=\left\{\begin{array}{ll}
\langle [\zeta_{8, 1}] \rangle &\mbox{if $r = 8$, $s = 1$},\\[1mm]
\langle [\zeta_{10, 1}] \rangle &\mbox{if $r = 10$, $s = 1$},\\[1mm]
0 &\mbox{if $r = 10$, $s \geq 3$},\\[1mm]
0 &\mbox{if $r = 11$, $s \geq 1$}.
\end{array}\right.
\end{array}$$
where the generators $ \zeta_{8, 1},$ and $ \zeta_{10, 1}$ are determined as follows:
$$  \begin{array}{ll}
 \zeta_{8, 1} &= x_1^{(3)}x_2^{(5)}x_3^{(5)}x_4^{(5)}x_5^{(2)}+
 x_1^{(3)}x_2^{(5)}x_3^{(5)}x_4^{(6)}x_5^{(1)}+
 x_1^{(3)}x_2^{(3)}x_3^{(5)}x_4^{(8)}x_5^{(1)}+
\medskip
 x_1^{(3)}x_2^{(5)}x_3^{(3)}x_4^{(8)}x_5^{(1)}\\
&\quad +
 x_1^{(3)}x_2^{(3)}x_3^{(6)}x_4^{(7)}x_5^{(1)}+
 x_1^{(3)}x_2^{(5)}x_3^{(7)}x_4^{(4)}x_5^{(1)}+
x_1^{(3)}x_2^{(7)}x_3^{(5)}x_4^{(4)}x_5^{(1)} + 
\medskip
 x_1^{(3)}x_2^{(3)}x_3^{(9)}x_4^{(4)}x_5^{(1)}\\
&\quad +
 x_1^{(3)}x_2^{(9)}x_3^{(3)}x_4^{(4)}x_5^{(1)}+
 x_1^{(3)}x_2^{(3)}x_3^{(9)}x_4^{(3)}x_5^{(2)}+
 x_1^{(3)}x_2^{(9)}x_3^{(3)}x_4^{(3)}x_5^{(2)}+
\medskip
 x_1^{(3)}x_2^{(5)}x_3^{(9)}x_4^{(2)}x_5^{(1)}\\
&\quad +
 x_1^{(3)}x_2^{(9)}x_3^{(5)}x_4^{(2)}x_5^{(1)} +
 x_1^{(3)}x_2^{(5)}x_3^{(10)}x_4^{(1)}x_5^{(1)}+
 x_1^{(3)}x_2^{(9)}x_3^{(6)}x_4^{(1)}x_5^{(1)}+
\medskip
 x_1^{(3)}x_2^{(3)}x_3^{(11)}x_4^{(2)}x_5^{(1)}\\
&\quad +
 x_1^{(3)}x_2^{(11)}x_3^{(3)}x_4^{(2)}x_5^{(1)} +
 x_1^{(3)}x_2^{(5)}x_3^{(5)}x_4^{(3)}x_5^{(4)}+
 x_1^{(3)}x_2^{(5)}x_3^{(3)}x_4^{(5)}x_5^{(4)}+
\medskip
 x_1^{(3)}x_2^{(3)}x_3^{(5)}x_4^{(5)}x_5^{(4)}\\
&\quad +
 x_1^{(3)}x_2^{(3)}x_3^{(12)}x_4^{(1)}x_5^{(1)}+
 x_1^{(3)}x_2^{(11)}x_3^{(4)}x_4^{(1)}x_5^{(1)}+
 x_1^{(3)}x_2^{(7)}x_3^{(8)}x_4^{(1)}x_5^{(1)}+
\medskip
 x_1^{(3)}x_2^{(7)}x_3^{(7)}x_4^{(1)}x_5^{(2)}\\
&\quad +  
x_1^{(3)}x_2^{(13)}x_3^{(2)}x_4^{(1)}x_5^{(1)} +
 x_1^{(3)}x_2^{(14)}x_3^{(1)}x_4^{(1)}x_5^{(1)}+
 x_1^{(3)}x_2^{(6)}x_3^{(5)}x_4^{(3)}x_5^{(3)}+
\medskip
 x_1^{(3)}x_2^{(5)}x_3^{(3)}x_4^{(6)}x_5^{(3)}\\
&\quad +
 x_1^{(3)}x_2^{(3)}x_3^{(6)}x_4^{(5)}x_5^{(3)} + 
x_1^{(3)}x_2^{(6)}x_3^{(3)}x_4^{(3)}x_5^{(5)}+
 x_1^{(3)}x_2^{(3)}x_3^{(3)}x_4^{(6)}x_5^{(5)}+
\medskip
 x_1^{(3)}x_2^{(3)}x_3^{(6)}x_4^{(3)}x_5^{(5)}\\
&\quad +
x_1^{(3)}x_2^{(5)}x_3^{(3)}x_4^{(3)}x_5^{(6)}+
 x_1^{(3)}x_2^{(3)}x_3^{(5)}x_4^{(3)}x_5^{(6)}+
 x_1^{(3)}x_2^{(3)}x_3^{(3)}x_4^{(5)}x_5^{(6)}+
\medskip
 x_1^{(3)}x_2^{(3)}x_3^{(3)}x_4^{(3)}x_5^{(8)}\\
&\quad + 
x_1^{(3)}x_2^{(3)}x_3^{(3)}x_4^{(4)}x_5^{(7)} +
 x_1^{(3)}x_2^{(3)}x_3^{(5)}x_4^{(2)}x_5^{(7)}+
 x_1^{(3)}x_2^{(3)}x_3^{(6)}x_4^{(1)}x_5^{(7)}+
\medskip
 x_1^{(3)}x_2^{(3)}x_3^{(3)}x_4^{(9)}x_5^{(2)}\\
&\quad +
 x_1^{(3)}x_2^{(3)}x_3^{(3)}x_4^{(10)}x_5^{(1)} + 
x_1^{(3)}x_2^{(5)}x_3^{(3)}x_4^{(7)}x_5^{(2)}+
 x_1^{(3)}x_2^{(5)}x_3^{(7)}x_4^{(3)}x_5^{(2)}+
\medskip
 x_1^{(3)}x_2^{(7)}x_3^{(5)}x_4^{(3)}x_5^{(2)},\\
\zeta_{10, 1} &= x_1^{(1)}x_2^{(15)}x_3^{(3)}x_4^{(3)}x_5^{(2)}+
 x_1^{(1)}x_2^{(15)}x_3^{(3)}x_4^{(4)}x_5^{(1)}+
 x_1^{(1)}x_2^{(15)}x_3^{(5)}x_4^{(2)}x_5^{(1)}+
 x_1^{(1)}x_2^{(15)}x_3^{(6)}x_4^{(1)}x_5^{(1)}.
\end{array}$$
\end{thm}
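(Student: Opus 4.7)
The plan is to exploit the standard duality $\mathbb Z/2 \otimes_{GL_5} {\rm Ann}_{\overline{\mathcal A}}[P_{n_{r,s}}^{\otimes 5}]^{*} \cong [QP_{n_{r,s}}^{\otimes 5}]^{GL_5}$, so that the problem reduces to computing $GL_5$-invariants on the cohit module. Note that $n_{r,s} = (r+4)2^{s} - 4$ is always even for the values of $r,s$ under consideration, so $n_{r,s} - 5$ is odd and the Kameko squaring operation $(\widetilde{Sq^0_*})_{n_{r,s}}$ is forced to be zero on $QP_{n_{r,s}}^{\otimes 5}$. Consequently, no Kameko-style recursion is available and the invariants must be determined directly from an admissible monomial basis.

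First I would fix admissible bases of $QP_{n_{r,s}}^{\otimes 5}$ using the dimension tally $641,\,961,\,2171,\,2170,\,1984$ supplied by Walker--Wood \cite{Walker-Wood} and Tin \cite{Tin}. To organize the calculation I would decompose
$$QP_{n_{r,s}}^{\otimes 5} \;\cong\; (QP_{n_{r,s}}^{\otimes 5})^{0}\,\bigoplus\,(QP_{n_{r,s}}^{\otimes 5})^{>0},$$
refine each summand by weight vector $\omega$, and further split each $QP_{n_{r,s}}^{\otimes 5}(\omega)$ into $\mathbb Z/2 S_5$-submodules generated by orbit representatives of admissible monomials, exactly as was done for $\underline{{\rm Ker}}$ and $\widehat{{\rm Ker}}$ in Section~3.

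Next, on each orbit summand I would write a generic invariant $f \equiv \sum_{u} \gamma_u\,u$ and apply the transpositions $\theta_1,\theta_2,\theta_3,\theta_4$, re-expanding each $\theta_j(u)$ in terms of admissible monomials modulo $\overline{\mathcal A}P_{n_{r,s}}^{\otimes 5}$ via Cartan's formula together with the admissible-monomial straightening procedure. The relations $\theta_j(f)+f \equiv 0$ give a linear system whose solution space is $[S_5\text{-orbit}]^{S_5}$; summing gives $[QP_{n_{r,s}}^{\otimes 5}]^{S_5}$. A second pass using $\theta_5$ then cuts this down to $[QP_{n_{r,s}}^{\otimes 5}]^{GL_5}$. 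For $(r,s)=(8,1)$ and $(10,1)$ the output should be one-dimensional, and I would then verify by a direct computation that $\zeta_{8,1}$ and $\zeta_{10,1}$ (respectively) are $\overline{\mathcal A}$-annihilated -- by the unstable condition it suffices to check the action of $Sq^{2^i}$ for $0\leq i\leq 3$ -- and that the dual pairing with the $GL_5$-invariant generator is non-zero. For $r=10,\, s\geq 3$ and $r=11,\, s\geq 1$ the system should force $\gamma_u = 0$ throughout, yielding the vanishing claim.

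The principal obstacle is purely the scale of the computation: orbit-by-orbit the linear systems arising from $\theta_j(f)+f\equiv 0$ involve hundreds to thousands of admissible monomials, and each $\theta_j(u)$ must be re-straightened via explicit Cartan/Adem manipulations, exactly as illustrated in Lemmas~\ref{bdc1-1}--\ref{bdc2-1}. I would therefore rely on MAGMA \cite{Magma} to carry out and cross-check the linear algebra. An additional delicate point is the jump from $\dim QP_{n_{10,3}}^{\otimes 5} = 2171$ to $\dim QP_{n_{10,s}}^{\otimes 5} = 2170$ for $s\geq 4$, which reflects the disappearance of a single admissible class at the boundary; the vanishing of the $GL_5$-invariants must be argued uniformly across this jump, which requires tracking the stable and unstable components of the weight-vector decomposition separately.
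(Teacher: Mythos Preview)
Your proposal is correct and follows essentially the same approach as the paper: reduce to $[QP_{n_{r,s}}^{\otimes 5}]^{GL_5}$ by duality, decompose the cohit module by weight vector (the paper sketches this for $(r,s)=(8,1)$, obtaining $QP_{20}^{\otimes 5}\cong QP_{20}^{\otimes 5}(4,2,1,1)\oplus QP_{20}^{\otimes 5}(4,2,3)\oplus QP_{20}^{\otimes 5}(4,4,2)$), and compute the $GL_5$-invariants piece by piece via the $S_5\subset GL_5$ filtration and the maps $\theta_i$, with computer verification. Your explicit remark that the Kameko map $(\widetilde{Sq^0_*})_{n_{r,s}}$ vanishes here because $n_{r,s}$ is even while $h=5$ is odd is a useful clarification of why the paper proceeds directly rather than recursively.
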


The sketch of proof of the case $r = 8,\, s  =1$ will be provided. The proofs of other cases use similar idea. We first claim that $$ QP_{n_{8, 1}}^{\otimes 5}\cong QP_{n_{8, 1}}^{\otimes 5}(4,2,1,1)\bigoplus QP_{n_{8, 1}}^{\otimes 5}(4,2,3)\bigoplus QP_{n_{8, 1}}^{\otimes 5}(4,4,2).$$
Indeed, suppose that $t$ is an admissible monomial of degree $n_{8, 1}$ in $\mathcal A$-module $P^{\otimes 5}.$ Noting that the monomial $t_1^{15}t_2^3t_3t_4\in P_{n_{8, 1}}^{\otimes 5}$ is the minimal spike whose its weight vector is $(4,2,1,1).$ By this and $\deg(t)$ is even, one gets $\omega_1(t) = 4.$ This means that $t = t_it_jt_kt_l\underline{t}^2,$ in which $1\leq i<j<k<l\leq 5,$ and $\underline{t}$ belongs to $\mathcal A$-module $P^{\otimes 5}$ in degree $8.$ Obviously, basing Kameko's criterion on inadmissible monomials, $\underline{t}$ is an admissible monomial. So, using a result in Tin \cite{Tin}, we notice that the weight vector of $t$ belongs to the set $\{(4,2,1,1),\, (4,2,3),\, (4,4,2)\}.$ This follows the above assert. Next, by direct calculations using a result in Sum \cite{Sum1, Sum2} and Kameko's criterion on inadmissible monomials, we may state the following:
$$  \begin{array}{ll}
\medskip
& \dim (QP_{n_{8, 1}}^{\otimes 5})^{0}(4,2,1,1) = 225,\ \ \dim (QP_{n_{8, 1}}^{\otimes 5})^{0}(4,2,3) = 20,\ \ \dim (QP_{n_{8, 1}}^{\otimes 5})^{0}(4,4,2) = 30,\\
& \dim (QP_{n_{8, 1}}^{\otimes 5})^{> 0}(4,2,1,1) = 225,\ \ \dim (QP_{n_{8, 1}}^{\otimes 5})^{> 0}(4,2,3) = 50,\ \ \dim (QP_{n_{8, 1}}^{\otimes 5})^{> 0}(4,4,2) = 91.
\end{array}$$
If $\omega$ is a weight vector of degree $n_{8, 1},$ then $QP_{n_{8, 1}}^{\otimes 5}(\omega) \cong (QP_{n_{8, 1}}^{\otimes 5})^{0}(\omega) \bigoplus (QP_{n_{8, 1}}^{\otimes 5})^{> 0}(\omega).$ So, from these data, we get:
$$ \dim QP_{n_{8, 1}}^{\otimes 5}(4,2,1,1)  = 450,\ \ \dim QP_{n_{8, 1}}^{\otimes 5}(4,2,3) = 70,\ \ \dim QP_{n_{8, 1}}^{\otimes 5}(4,4,2) = 121.$$  
Then, it is not difficult to verify that $$\dim [(QP_{n_{8, 1}}^{\otimes 5}(4,2,1,1)\bigoplus QP_{n_{8, 1}}^{\otimes 5}(4,4,2)]^{GL_5} = 1,\ \mbox{ and }\ \dim [QP_{n_{8, 1}}^{\otimes 5}(4,2,3)]^{GL_5} = 0,$$
from which $\mathbb Z/2 \otimes_{GL_5} {\rm Ann}_{\overline{\mathcal A}}[P_{n_{8, 1}}^{\otimes 5}]^{*}\cong [QP_{n_{8, 1}}^{\otimes 5}]^{GL_5}$ has dimension 1.  

Now, using the $E_1$-level of $Tr_5^{\mathcal A},$ one concludes that the elements
$$ \begin{array}{ll}
\psi_5(\zeta_{8, 1} ) &= \lambda_3^{4}\lambda_8 + (\lambda_3^{2}\lambda_5^{2} + \lambda_3^{3}\lambda_7)\lambda_4 + \lambda_3\lambda_7\lambda_5\lambda_3\lambda_2 + \lambda_3^{3}\lambda_5\lambda_6) \\
&\quad+ \delta(\lambda_3^{2}\lambda_5\lambda_{10} + \lambda_3^{2}\lambda_{12}\lambda_3 + \lambda_3\lambda_4\lambda_7^{2} + \lambda_3\lambda_0\lambda_{11}\lambda_7),\\
\psi_5(\zeta_{10, 1}) &= \lambda_1\lambda_{15}\lambda_3^{2}\lambda_2
\end{array}$$
are cycles in $\Lambda,$ and are representative of the elements $h_2e_0 = h_0g_1\in {\rm Ext}_{\mathcal {A}}^{5, 5+n_{8, 1}}(\mathbb Z/2, \mathbb Z/2)$ and $h_1h_4c_0 = h_3e_0\in {\rm Ext}_{\mathcal {A}}^{5, 5+n_{10, 1}}(\mathbb Z/2, \mathbb Z/2),$ respectively. On the other side, following Lin \cite{Lin}, the elements $h_2e_0,$ and $h_1h_4c_0$ are non-zero in the fifth cohomology groups of Steenrod algebra. At the same time, ${\rm Ext}_{\mathcal A}^{5, 5+n_{10, s}}(\mathbb Z/2, \mathbb Z/2) =\langle h_0h_s^{2}h_{s+2}h_{s+3} \rangle = 0,$ for all $s\geq 3,$ and ${\rm Ext}_{\mathcal A}^{5, 5+n_{11, s}}(\mathbb Z/2, \mathbb Z/2) = \langle h_0h_sh_{s+1}h_{s+2}h_{s+3} \rangle = 0,$ for all $s\geq 1,$
%The facts that $h_2,\, h_3$ and $e_0$ are detected by $Tr_1^{\mathscr A}$ and $Tr_4^{\mathscr A},$ respectively (see Singer \cite{Singer}, and H\`a \cite{Ha}) and that the "total" transfer $\{Tr_h^{\mathcal A}\}_{h\geq 0}$ is a homomorphism of algebras. So, the non-zero elements $h_2e_0=h_0g_1$ and $h_3e_0=h_1h_4c_0$ are in the image of $Tr_5^{\mathscr A}.$ Note that the entire family of elements $\{g_{i+1}:\, i\geq 0\}$ is not in the image of the cohomological transfer (see Bruner, H\`a, and H\uhorn ng \cite{B.H.H}), thus refuting a question of Minami \cite{Minami} concerning the so-called new doomsday conjecture.  
So, the following is immediate.

\begin{corl}\label{hqkqpl}
The transfer homomorphism $$ Tr_5^{\mathcal A}: \mathbb Z/2 \otimes_{GL_5} {\rm Ann}_{\overline{\mathcal A}}[P_{n_{r, s}}^{\otimes 5}]^{*}\to {\rm Ext}_{\mathcal A}^{5, 5+n_{r, s}}(\mathbb Z/2, \mathbb Z/2)$$
is an isomorphism, where $r,$ and $s$ given as in Theorem \ref{dlkqpl}.
\end{corl}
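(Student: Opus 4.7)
The plan is to treat the four cases of Theorem \ref{dlkqpl} separately. For the vanishing cases $(r = 10,\ s \geq 3)$ and $(r = 11,\ s \geq 1)$, Theorem \ref{dlkqpl} yields $\mathbb Z/2 \otimes_{GL_5} {\rm Ann}_{\overline{\mathcal A}}[P_{n_{r, s}}^{\otimes 5}]^{*} = 0$, while Lin's computations \cite{Lin} give ${\rm Ext}_{\mathcal A}^{5, 5+n_{10, s}}(\mathbb Z/2, \mathbb Z/2) = \langle h_0h_s^{2}h_{s+2}h_{s+3} \rangle = 0$ for $s \geq 3$ and ${\rm Ext}_{\mathcal A}^{5, 5+n_{11, s}}(\mathbb Z/2, \mathbb Z/2) = \langle h_0h_sh_{s+1}h_{s+2}h_{s+3} \rangle = 0$ for $s \geq 1$. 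In these cases $Tr_5^{\mathcal A}$ is trivially an isomorphism between zero spaces.

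For the two nontrivial cases $(r, s) = (8, 1)$ and $(10, 1)$, by Theorem \ref{dlkqpl} the source is one-dimensional with explicit generator $[\zeta_{r, s}]$, while by Lin \cite{Lin} the target is also one-dimensional, spanned respectively by $h_2e_0 = h_0g_1$ and $h_1h_4c_0 = h_3e_0$. The strategy is first to verify that $\zeta_{r, s}$ actually lies in ${\rm Ann}_{\overline{\mathcal A}}[P_{n_{r, s}}^{\otimes 5}]^{*}$ by checking $(\zeta_{r, s})Sq^{2^{i}} = 0$ only for $0 \leq i \leq 3$ (all higher Steenrod squares vanish automatically by the unstable condition since $n_{r, s} < 16$), and then to apply the Ch\ohorn n--H\`a lambda-algebra representation $\psi_5$ recursively via $\psi_5(x_1^{(j_1)}\cdots x_5^{(j_5)}) = \sum_{k \geq j_5}\psi_{4}((x_1^{(j_1)}\cdots x_4^{(j_4)})Sq^{k-j_5})\lambda_k$ to compute an explicit cycle representative in $\Lambda^{5, n_{r, s}}$ for $Tr_5^{\mathcal A}([\zeta_{r, s}])$. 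The final step is to recognise that representative, modulo an explicit $\delta$-boundary in $\Lambda$, as the standard lambda-cocycle for $h_2e_0$ (respectively $h_1h_4c_0$); once this identification is made, dimension count on both sides forces $Tr_5^{\mathcal A}$ to be an isomorphism.

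The main obstacle is the bookkeeping in the $\psi_5$-computation for $\zeta_{8, 1}$, which expands to $45$ admissible monomials of high degree, and the subsequent hunt for the correct boundary $\delta(\cdot) \in \Lambda^{5, n_{8, 1}}$ that exhibits $\psi_5(\zeta_{8, 1})$ as cohomologous to the classical representative of $h_2e_0 = h_0g_1$. The case $\zeta_{10, 1}$ is far lighter since it reduces to the monomial expression $\lambda_1\lambda_{15}\lambda_3^{2}\lambda_2$, which is manifestly a cycle and is one of the standard representatives of $h_1h_4c_0$ after Adem reduction. In both instances the technique is a direct analogue of the verifications carried out for $Tr_4^{\mathcal A}$ in the proofs of Theorems \ref{dlc2}, \ref{dlc3}, \ref{dlc5}, and for the degree-$m_{r, 0}$ cases in Theorem \ref{dlbb5}. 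Once the two identifications $Tr_5^{\mathcal A}([\zeta_{8, 1}]) = h_2e_0$ and $Tr_5^{\mathcal A}([\zeta_{10, 1}]) = h_1h_4c_0$ are established, the corollary follows immediately by combining with the vanishing cases.
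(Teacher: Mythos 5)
Your proposal follows the paper's own argument essentially line by line: split into the vanishing cases where both source and target are already zero by Theorem \ref{dlkqpl} and Lin's computations, and for $(r,s)=(8,1)$ and $(10,1)$ use the Ch\ohorn n--H\`a map $\psi_5$ to produce lambda-algebra cycles identified (after subtracting the same explicit $\delta$-boundary in the $\zeta_{8,1}$ case) with the representatives of $h_2e_0 = h_0g_1$ and $h_1h_4c_0 = h_3e_0$, then conclude by the one-dimensionality of source and target. One small clerical slip: you justify restricting to $Sq^{2^i}$ with $0\leq i\leq 3$ by ``$n_{r,s}<16$'', but in fact $n_{8,1}=20$ and $n_{10,1}=24$; the correct reason, which you also invoke, is the unstable condition $(x^{(a)})Sq^k=0$ unless $2k<a$, which forces $Sq^{2^i}$ to vanish on degree $n$ once $2^{i+1}\geq n$, so $i\leq 3$ still suffices.
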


%Moreover, in \cite{Hung}, H\uhorn ng conjectured that \textit{the fifth cohomological transfer detects every element in the $Sq^{0}$-families initiated by the classes $h_0g_2,\, h_0g_3,\, D_1(0)$ of stems $44,\, 92,\, 52,$ respectively}. If this prediction is true, then due to Theorem \ref{dlkqpl}, the fifth transfer, which considered as in Corollary \ref{hqkqpl}, is also an isomorphism for $r = 8,\, s\in \{2, 3\},$ and $r = 10,\, s = 2.$

\subsection{The behavior of $Tr_6^{\mathcal A}$ in some internal degrees}\label{pl2}

Firstly, we survey the behavior of the sixth algebraic transfer in internal degrees $n\leq 25.$ For $n\leq 5,$ it is easy to see that $QP_{n}^{\otimes 6} = (QP_{n}^{\otimes 6})^{0}.$ From a formula in \cite{MKR}, we have
$$ \dim QP_{n}^{\otimes 6}=\dim (QP_{n}^{\otimes 6})^0 = \sum_{1\leq  k\leq 5}\binom{6}{k}\dim (QP_{n}^{\otimes\, k})^{>0}.$$
On the other hand, by Peterson \cite{Peterson}, Kameko \cite{Kameko}, Sum \cite{Sum2, Sum}, for each $1\leq k\leq 5,$ we have the following:\\
For $n = 1,$ then $\dim (QP_{1}^{\otimes\, k})^{>0} = 1$ if $k = 1,$ and $\dim (QP_{1}^{\otimes\, k})^{>0} = 0$ otherwise.\\
For $n = 2,$ then $\dim (QP_{2}^{\otimes\, k})^{>0} = 1$ if $k = 2,$ and $\dim (QP_{2}^{\otimes\, k})^{>0} = 0$ otherwise.\\
For $n = 3,$ then $\dim (QP_{3}^{\otimes\, k})^{>0} = 1$ if $k \leq 3$ and $\dim (QP_{3}^{\otimes\, k})^{>0} = 0$ otherwise.\\
For $n = 4,$ then 
$$ \dim (QP_{4}^{\otimes\, k})^{>0} =  \left\{\begin{array}{ll}
0&\mbox{if $k = 1, 5$},\\
2&\mbox{if $k = 2, 3$},\\
1&\mbox{if $k = 4$}.
\end{array}\right.$$
For $n = 5,$ then 
$$ \dim (QP_{5}^{\otimes\, k})^{>0} =  \left\{\begin{array}{ll}
0&\mbox{if $k = 1, 2$},\\
3&\mbox{if $k = 3, 4$},\\
1&\mbox{if $k = 5$}.
\end{array}\right.$$
Therefore, we obtain

\centerline{\begin{tabular}{c|ccccc}
$n$  &$1$ & $2$ & $3$ & $4$ & $5$ \cr
\hline
\ $\dim QP_{n}^{\otimes 6}=\dim (QP_{n}^{\otimes 6})^0 = \sum_{1\leq  k\leq 5}\binom{6}{k}\dim (QP_{n}^{\otimes\, k})^{>0}$ & $6$ & $15$ & $41$ &$85$ &$111$ \cr
\end{tabular}}

\medskip

and the sets
$$ \begin{array}{ll}
\medskip
{\rm [}\mathscr C_{1}^{\otimes 6}{\rm ]}&= \big\{[t_j]:\, 1\leq j\leq 6\big\},\ \ {\rm [}\mathscr C_{2}^{\otimes 6}{\rm ]}= \big\{[t_it_j]:\, 1\leq i, j\leq 6,\, i\neq j\big\},\\
\medskip
{\rm [}\mathscr C_{3}^{\otimes 6}{\rm ]}&= \big\{[t_i^{3}],\, [t_jt_kt_l],\, [t_rt^{2}_s]:\, 1\leq i, j, k, l, r, s\leq 6,\, i\neq k, j\neq l, k\neq l,  r< s\big\},\\
{\rm [}\mathscr C_{4}^{\otimes 6}{\rm ]}&= \big\{[t_it_j^{3}],\, [t_kt_lt_mt_p],\, [t_qt_rt^{2}_s]:\, 1\leq i, j, k, l, m, p, q, r, s\leq 6,\, i\neq j\\
\medskip
&\hspace{1cm} k\neq l, k\neq m, k\neq p, l\neq m, l\neq p, m\neq p, q < s,\, q\neq r, r\neq s\big\},\\
{\rm [}\mathscr C_{5}^{\otimes 6}{\rm ]}&= \big\{[t_it_jt_k^{3}],\, [t_lt_mt_rt_s^{2}]:\, 1\leq i, j, k, l, m, r, s\leq 6,\, i\neq j, i\neq k, j\neq k,\, l< s,\\
&\hspace{4cm} l\neq m, l\neq r,  m\neq r, m\neq s, r\neq s\big\}
\end{array}$$
are the monomial bases of $QP_{1}^{\otimes 6},$ $QP_{2}^{\otimes 6},$ $QP_{3}^{\otimes 6},$ $QP_{4}^{\otimes 6},$ and $QP_{5}^{\otimes 6}$  respectively. Basing these results, a direct computation shows that $[QP_{n}^{\otimes 6}]^{GL_6} = 0.$ Indeed, for simplicity, we would like to describe the invariant $[QP_{2}^{\otimes 6}]^{GL_6}$ in detail. The others can be proved by the similar computations. Notice that  $QP_{2}^{\otimes 6} = \langle [\mathscr C_{2}^{\otimes 6}]\rangle = \langle \{[{\rm adm}_j]|\, 1\leq j\leq 15\} \rangle,$ where the admissible monomials ${\rm adm}_j$ are determined as follows:

\begin{center}
\begin{tabular}{lllll}
${\rm adm}_{1}=t_5t_6$, & ${\rm adm}_{2}=t_4t_6$, & ${\rm adm}_{3}=t_4t_5$, & ${\rm adm}_{4}=t_3t_6$, & ${\rm adm}_{5}=t_3t_5$, \\
${\rm adm}_{6}=t_3t_4$, & ${\rm adm}_{7}=t_2t_6$, & ${\rm adm}_{8}=t_2t_5$, & ${\rm adm}_{9}=t_2t_4$, & ${\rm adm}_{10}=t_2t_3$, \\
${\rm adm}_{11}=t_1t_6$, & ${\rm adm}_{12}=t_1t_5$, & ${\rm adm}_{13}=t_1t_4$, & ${\rm adm}_{14}=t_1t_3$, & ${\rm adm}_{15}=t_1t_2$.
\end{tabular}
\end{center}
Then, $[QP_{2}^{\otimes 6}]^{S_6} = \langle [\sum_{1\leq j\leq 15}{\rm adm}_j]\rangle.$ This can be determined as follows: Suppose that $[f]\in [QP_{2}^{\otimes 6}]^{S_6},$ then $f\equiv \sum_{1\leq j\leq 15}\gamma_j{\rm adm}_j,$ where $\gamma_j\in \mathbb Z/2$ for every $j.$ Using the homomorphisms $\theta_i: P^{\otimes 6}\longrightarrow P^{\otimes 6}$ and the relations $\theta_i(f) + f\equiv 0$ for $1\leq i\leq 5,$ one gets:
$$ \begin{array}{ll}
\theta_1(f) + f &\equiv (\gamma_7 + \gamma_{11})({\rm adm}_{7}+{\rm adm}_{11}) + (\gamma_8 + \gamma_{12})({\rm adm}_{8}+{\rm adm}_{12})\\
\medskip
&\quad +(\gamma_9 + \gamma_{13})({\rm adm}_{9}+{\rm adm}_{13}) + (\gamma_{10} + \gamma_{14})({\rm adm}_{10}+{\rm adm}_{14}) \equiv 0,\\
\theta_2(f) + f &\equiv (\gamma_4 + \gamma_{7})({\rm adm}_{4}+{\rm adm}_{7}) + (\gamma_5 + \gamma_{8})({\rm adm}_{5}+{\rm adm}_{8})\\
\medskip
&\quad +(\gamma_6 + \gamma_{9})({\rm adm}_{6}+{\rm adm}_{9}) + (\gamma_{14} + \gamma_{15})({\rm adm}_{14}+{\rm adm}_{15}) \equiv 0,\\
\theta_3(f) + f &\equiv (\gamma_2 + \gamma_{4})({\rm adm}_{2}+{\rm adm}_{4}) + (\gamma_3 + \gamma_{5})({\rm adm}_{3}+{\rm adm}_{5})\\
\medskip
&\quad +(\gamma_9 + \gamma_{10})({\rm adm}_{9}+{\rm adm}_{10}) + (\gamma_{13} + \gamma_{14})({\rm adm}_{13}+{\rm adm}_{14}) \equiv 0,\\
\theta_4(f) + f &\equiv (\gamma_1 + \gamma_{2})({\rm adm}_{1}+{\rm adm}_{2}) + (\gamma_5 + \gamma_{6})({\rm adm}_{5}+{\rm adm}_{6})\\
\medskip
&\quad +(\gamma_8 + \gamma_{9})({\rm adm}_{8}+{\rm adm}_{9}) + (\gamma_{12} + \gamma_{13})({\rm adm}_{12}+{\rm adm}_{13}) \equiv 0,\\
\theta_5(f) + f &\equiv (\gamma_2 + \gamma_{3})({\rm adm}_{2}+{\rm adm}_{3}) + (\gamma_4 + \gamma_{5})({\rm adm}_{4}+{\rm adm}_{5})\\
&\quad +(\gamma_7 + \gamma_{8})({\rm adm}_{7}+{\rm adm}_{8}) + (\gamma_{11} + \gamma_{12})({\rm adm}_{11}+{\rm adm}_{12}) \equiv 0.
\end{array}$$
These equalities show that $\gamma_1 = \gamma_2 = \cdots = \gamma_{15}.$ Now, since $S_6\subset GL_6,$ if $[g]\in [QP_{2}^{\otimes 6}]^{GL_6},$ then $g\equiv \beta\sum_{1\leq j\leq 15}{\rm adm}_j.$ By the relation $\theta_6(g)\equiv g,$ one gets $\beta({\rm adm}_7 + {\rm adm}_8 + {\rm adm}_9 + {\rm adm}_{10}) \equiv 0,$ which implies that $\beta = 0.$ Therefore $[QP_{2}^{\otimes 6}]^{GL_6}$ is zero.

\medskip

For $6\leq n\leq 12,$ Mothebe, Kaelo, and Ramatebele showed in \cite{MKR} that

\centerline{\begin{tabular}{c|ccccccc}
$n$  &$6$ & $7$ & $8$ & $9$ & $10$ & $11$ & $12$\cr
\hline
\ $\dim QP_{n}^{\otimes 6}$ & $190$ & $301$ & $489$ &$630$ &$945$ & $1205$ & $1001$ \cr
\end{tabular}}

\medskip

The monomial bases of $QP_{n}^{\otimes 6}$ are also given in the same paper \cite{MKR}. Using these results, we find that the spaces of  $GL_6$-invarians $[QP_{n}^{\otimes 6}]^{GL_6}$ are also trivial, for $6\leq n\leq 12.$ We shall prove the cases $n = 8,10, 11, 12$ in detail, and leave the rest to the reader. We should note that since the Kameko homomorphism $(\widetilde {Sq^0_*})_{n}: QP^{\otimes 6}_{n} \longrightarrow QP^{\otimes 6}_{\frac{n-6}{2}}$ is an epimorphism for $n\in \{8, 10, 12\},$ we have an estimate $$\dim [QP_{n}^{\otimes 6}]^{GL_6}\leq \dim [{\rm Ker}((\widetilde {Sq^0_*})_{n})]^{GL_6} + \dim [QP^{\otimes 6}_{\frac{n-6}{2}}]^{GL_6}.$$
As shown above, $\dim [QP^{\otimes 6}_{\frac{n-6}{2}}]^{GL_6} = 0$ for $n = 8,\, 10,\, 12.$ So, we need only to determine $[{\rm Ker}((\widetilde {Sq^0_*})_{n})]^{GL_6}.$ To this goal, we use the following homomorphisms: For any $1\leq l\leq 6,\, l\in \mathbb Z,$ we consider the maps $\mathsf{q}_{l}: P^{\otimes 5}\longrightarrow P^{\otimes 6}$ of algebras, which are determined by 
$$ \mathsf{q}_{l}(t_j) = \left\{ \begin{array}{ll}
{t_j}&\text{if }\;1\leq j \leq l-1, \\
t_{j+1}& \text{if}\; l\leq j \leq 5.
\end{array} \right.$$
The \textit{up Kameko map} $\varphi: P_{\frac{n-6}{2}}^{\otimes 6}\longrightarrow P_{n}^{\otimes 6}$ is an injective linear map defined on monomials by $\varphi(t) = t_1t_2t_3t_4t_5t_6t^2.$ Then, from a result of Mothebe, and Uys \cite{Mothebe}, we deduce that for each $0\leq d\leq 3,\, d\in\mathbb Z,$ if $t\in \mathscr {C}^{\otimes 5}_{n+1-2^{d}},$ then $t_l^{2^{d}-1}\mathsf{q}_{l}(t)\in \mathscr {C}^{\otimes 6}_{n}$ for any $1\leq l\leq 6.$ We put
$$ \mathscr C(d, n):= \big\{t_l^{2^{d}-1}\mathsf{q}_{l}(t)|\, t\in \mathscr {C}^{\otimes 5}_{n+1-2^{d}},\, 1\leq l\leq 6\big\},\ 0\leq d\leq 3.$$

Now, for $n = 8,$ we see that ${\rm Ker}((\widetilde {Sq^0_*})_{8})$ has dimension $483$, since $\dim QP^{\otimes 6}_1 = 6,$ and $\dim QP^{\otimes 6}_8 = 489.$ On the other hand, we have $|\mathscr {C}^{\otimes 5}_{9-2^{0}}| = 174$ (see \cite{Tin}), $|\mathscr {C}^{\otimes 5}_{9-2^{1}}| = 110$ (see \cite{Phuc3})$|\mathscr {C}^{\otimes 5}_{9-2^{2}}| = 46$ (see \cite{Sum}), and $|\mathscr {C}^{\otimes 5}_{9-2^{3}}| = 5$ (see \cite{Peterson}). These results indicate that the set
$$ B(8):=\big(\bigcup_{0\leq d\leq 3}\mathscr C(d, 8)\big)\setminus \varphi(\mathscr C_{1}^{\otimes 6}),$$
where $|\mathscr C_{1}^{\otimes 6}| = 6,$ has all 930 admissible monomials. Furthermore, for any $z\in B(8),$ we have $[z]\neq [0],$ and $(\widetilde {Sq^0_*})_{8}([z]) = [0].$ Therefore, a basis of ${\rm Ker}((\widetilde {Sq^0_*})_{8})$ is a set consisting of all the equivalence classes of the admissible monomials in $B(8).$ Furthermore, $${\rm Ker}((\widetilde {Sq^0_*})_{8})\cong QP_8^{\otimes 6}(2, 1, 1) \bigoplus QP_8^{\otimes 6}(2, 3)\bigoplus QP_8^{\otimes 6}(4, 2).$$ Indeed, assume that $[t]\in {\rm Ker}((\widetilde {Sq^0_*})_{8}),$ and $t\in P_8^{\otimes 6}$ is admissible. We observe that $\deg(t)$ is even, and that $t_1^{7}t_2$ is the minimal spike monomial in $P^{\otimes 6}_8,$ and $\omega(t_1^{7}t_2) = (2,1,1);$ so by Singer's criterion on hit polynomials, either $\omega_1(t) = 2$ or $\omega_1(t) = 4$ or $\omega_1(t) = 6.$ Since $[t]\neq [0],$ and $t$ is admissible, by Kameko's criterion on inadmissible monomials, $\omega_1(t)\neq 6.$ This means that either $t = t_it_j\underline{t}^{2},\, 1\leq i < j\leq 6,\, \underline{t}\in P^{\otimes 6}_3,$ or $t = t_it_jt_kt_l\underline{t}^{2},\, 1\leq i < j<k<l\leq 6,\, \underline{t}\in P^{\otimes 6}_2.$ Since $t$ is admissible, $\underline{t}$ is, too. As computed above, $\dim QP_{2}^{\otimes 6} = 15,$ and $\dim QP_{3}^{\otimes 6} = 41.$ Moreover, it is easy to see that $\omega(\underline{t}) = (2, 0)$ with $\underline{t}\in P^{\otimes 6}_2,$ and that $\omega(\underline{t}) \in \{(1, 1),\, (3, 0)\}$ with $\underline{t}\in P^{\otimes 6}_3.$ Therefore, the weight vector $\omega(t)\in \{(2,1,1),\, (2, 3),\, (4, 2)\},$ which leads to the above statement. Since $\dim {\rm Ker}((\widetilde {Sq^0_*})_{8}) = 483,$ we obtain that $$ \dim QP_8^{\otimes 6}(2, 1, 1) = 210,\ \dim QP_8^{\otimes 6}(2, 3) = 84, \ \dim QP_8^{\otimes 6}(4, 2) = 189.$$
Note that $QP_8^{\otimes 6}(2, 1, 1) = (QP_8^{\otimes 6})^{0}(2, 1, 1),$ and $QP_8^{\otimes 6}(2, 3) = (QP_8^{\otimes 6})^{0}(2, 3 ).$ By this, it is not difficult to verify that the invariants $[QP_8^{\otimes 6}(2, 1, 1)]^{GL_6},$ $[QP_8^{\otimes 6}(2, 3)]^{GL_6},$ and $[QP_8^{\otimes 6}(4, 2)]^{GL_6}$ are zero. Indeed, the calculations of the space $[QP_8^{\otimes 6}(2, 3)]^{GL_6}$ will be presented. The others can be found by using similar ideas. We have that $QP_8^{\otimes 6}(\omega) = \langle \{[{\rm adm}_j]_{\omega}:\, 1\leq j\leq 84\} \rangle,$ where $\omega:= (2,3),$ and the admissible monomials ${\rm adm}_j$ are given as follows:

\begin{center}
\begin{tabular}{llll}
${\rm adm}_{1}=t_3t_4^{2}t_5^{2}t_6^{3}$, & ${\rm adm}_{2}=t_3t_4^{2}t_5^{3}t_6^{2}$, & ${\rm adm}_{3}=t_3t_4^{3}t_5^{2}t_6^{2}$, & ${\rm adm}_{4}=t_3^{3}t_4t_5^{2}t_6^{2}$, \\
${\rm adm}_{5}=t_2t_4^{2}t_5^{2}t_6^{3}$, & ${\rm adm}_{6}=t_2t_4^{2}t_5^{3}t_6^{2}$, & ${\rm adm}_{7}=t_2t_4^{3}t_5^{2}t_6^{2}$, & ${\rm adm}_{8}=t_2t_3^{2}t_5^{2}t_6^{3}$, \\
${\rm adm}_{9}=t_2t_3^{2}t_5^{3}t_6^{2}$, & ${\rm adm}_{10}=t_2t_3^{2}t_4^{2}t_6^{3}$, & ${\rm adm}_{11}=t_2t_3^{2}t_4^{2}t_5^{3}$, & ${\rm adm}_{12}=t_2t_3^{2}t_4^{3}t_6^{2}$, \\
${\rm adm}_{13}=t_2t_3^{2}t_4^{3}t_5^{2}$, & ${\rm adm}_{14}=t_2t_3^{3}t_5^{2}t_6^{2}$, & ${\rm adm}_{15}=t_2t_3^{3}t_4^{2}t_6^{2}$, & ${\rm adm}_{16}=t_2t_3^{3}t_4^{2}t_5^{2}$, \\
${\rm adm}_{17}=t_2^{3}t_4t_5^{2}t_6^{2}$, & ${\rm adm}_{18}=t_2^{3}t_3t_5^{2}t_6^{2}$, & ${\rm adm}_{19}=t_2^{3}t_3t_4^{2}t_6^{2}$, & ${\rm adm}_{20}=t_2^{3}t_3t_4^{2}t_5^{2}$, \\
${\rm adm}_{21}=t_1t_4^{2}t_5^{2}t_6^{3}$, & ${\rm adm}_{22}=t_1t_4^{2}t_5^{3}t_6^{2}$, & ${\rm adm}_{23}=t_1t_4^{3}t_5^{2}t_6^{2}$, & ${\rm adm}_{24}=t_1t_3^{2}t_5^{2}t_6^{3}$, \\
${\rm adm}_{25}=t_1t_3^{2}t_5^{3}t_6^{2}$, & ${\rm adm}_{26}=t_1t_3^{2}t_4^{2}t_6^{3}$, & ${\rm adm}_{27}=t_1t_3^{2}t_4^{2}t_5^{3}$, & ${\rm adm}_{28}=t_1t_3^{2}t_4^{3}t_6^{2}$, \\
${\rm adm}_{29}=t_1t_3^{2}t_4^{3}t_5^{2}$, & ${\rm adm}_{30}=t_1t_3^{3}t_5^{2}t_6^{2}$, & ${\rm adm}_{31}=t_1t_3^{3}t_4^{2}t_6^{2}$, & ${\rm adm}_{32}=t_1t_3^{3}t_4^{2}t_5^{2}$, \\
${\rm adm}_{33}=t_1t_2^{2}t_5^{2}t_6^{3}$, & ${\rm adm}_{34}=t_1t_2^{2}t_5^{3}t_6^{2}$, & ${\rm adm}_{35}=t_1t_2^{2}t_4^{2}t_6^{3}$, & ${\rm adm}_{36}=t_1t_2^{2}t_4^{2}t_5^{3}$, \\
${\rm adm}_{45}=t_1t_2^{3}t_5^{2}t_6^{2}$, & ${\rm adm}_{46}=t_1t_2^{3}t_4^{2}t_6^{2}$, & ${\rm adm}_{47}=t_1t_2^{3}t_4^{2}t_5^{2}$, & ${\rm adm}_{48}=t_1t_2^{3}t_3^{2}t_6^{2}$, \\
${\rm adm}_{49}=t_1t_2^{3}t_3^{2}t_5^{2}$, & ${\rm adm}_{50}=t_1t_2^{3}t_3^{2}t_4^{2}$, & ${\rm adm}_{51}=t_1^{3}t_4t_5^{2}t_6^{2}$, & ${\rm adm}_{52}=t_1^{3}t_3t_5^{2}t_6^{2}$, \\
${\rm adm}_{53}=t_1^{3}t_3t_4^{2}t_6^{2}$, & ${\rm adm}_{54}=t_1^{3}t_3t_4^{2}t_5^{2}$, & ${\rm adm}_{55}=t_1^{3}t_2t_5^{2}t_6^{2}$, & ${\rm adm}_{56}=t_1^{3}t_2t_4^{2}t_6^{2}$, \\
${\rm adm}_{57}=t_1^{3}t_2t_4^{2}t_5^{2}$, & ${\rm adm}_{58}=t_1^{3}t_2t_3^{2}t_6^{2}$, & ${\rm adm}_{59}=t_1^{3}t_2t_3^{2}t_5^{2}$, & ${\rm adm}_{60}=t_1^{3}t_2t_3^{2}t_4^{2}$, \\
${\rm adm}_{61}=t_2t_3t_4^{2}t_5^{2}t_6^{2}$, & ${\rm adm}_{62}=t_2t_3^{2}t_4t_5^{2}t_6^{2}$, & ${\rm adm}_{63}=t_2t_3^{2}t_4^{2}t_5t_6^{2}$, & ${\rm adm}_{64}=t_2t_3^{2}t_4^{2}t_5^{2}t_6$, \\
${\rm adm}_{65}=t_1t_3t_4^{2}t_5^{2}t_6^{2}$, & ${\rm adm}_{66}=t_1t_3^{2}t_4t_5^{2}t_6^{2}$, & ${\rm adm}_{67}=t_1t_3^{2}t_4^{2}t_5t_6^{2}$, & ${\rm adm}_{68}=t_1t_3^{2}t_4^{2}t_5^{2}t_6$, \\
${\rm adm}_{69}=t_1t_2t_4^{2}t_5^{2}t_6^{2}$, & ${\rm adm}_{70}=t_1t_2t_3^{2}t_5^{2}t_6^{2}$, & ${\rm adm}_{71}=t_1t_2t_3^{2}t_4^{2}t_6^{2}$, & ${\rm adm}_{72}=t_1t_2t_3^{2}t_4^{2}t_5^{2}$, \\
${\rm adm}_{73}=t_1t_2^{2}t_4t_5^{2}t_6^{2}$, & ${\rm adm}_{74}=t_1t_2^{2}t_4^{2}t_5t_6^{2}$, & ${\rm adm}_{75}=t_1t_2^{2}t_4^{2}t_5^{2}t_6$, & ${\rm adm}_{76}=t_1t_2^{2}t_3t_5^{2}t_6^{2}$, \\
${\rm adm}_{77}=t_1t_2^{2}t_3t_4^{2}t_6^{2}$, & ${\rm adm}_{78}=t_1t_2^{2}t_3t_4^{2}t_5^{2}$, & ${\rm adm}_{79}=t_1t_2^{2}t_3^{2}t_5t_6^{2}$, & ${\rm adm}_{80}=t_1t_2^{2}t_3^{2}t_5^{2}t_6$, \\
${\rm adm}_{81}=t_1t_2^{2}t_3^{2}t_4t_6^{2}$, & ${\rm adm}_{82}=t_1t_2^{2}t_3^{2}t_4t_5^{2}$, & ${\rm adm}_{83}=t_1t_2^{2}t_3^{2}t_4^{2}t_6$, & ${\rm adm}_{84}=t_1t_2^{2}t_3^{2}t_4^{2}t_5$.
\end{tabular}%
\end{center}
  
Then, we have a direct summand decomposition of $S_6$-modules:
$$ QP_8^{\otimes 6}(\omega)  = S_6({\rm adm}_{1}) \bigoplus S_6({\rm adm}_{61}),$$
where $S_6({\rm adm}_{1})  = \langle \{[{\rm adm}_{j}]_{\omega}:\, 1\leq j\leq 60\} \rangle,$ and $S_6({\rm adm}_{61})  = \langle \{[{\rm adm}_{j}]_{\omega}:\, 61\leq j\leq 84\} \rangle.$ Suppose that $[f]_{\omega}\in [S_6({\rm adm}_{1})]^{S_6}$ and $[g]_{\omega}\in [S_6({\rm adm}_{61})]^{S_6}.$ One has $$f\equiv_{\omega}\sum_{1\leq j\leq 60} \gamma_j{\rm adm}_{j},\ \mbox{ and }\ g\equiv_{\omega}\sum_{61\leq j\leq 84} \beta_j{\rm adm}_{j},\ \ \gamma_j,\, \beta_j\in \mathbb Z/2,\ \mbox{for all $j$}.$$
Using the relations $(\theta_i(f) + f)\equiv_{\omega} 0,$ and $(\theta_i(g) + g)\equiv_{\omega} 0$ for all $i,\, 1\leq i\leq 5,$ one gets $\gamma_1 = \gamma_j$ for any $j,\, 2\leq j\leq 60,$ and $\beta_{j} = 0$ for all $j,\, 61\leq j\leq 84.$ This means that $$[QP_8^{\otimes 6}(\omega)]^{S_6} =\langle [\sum_{1\leq j\leq 60}{\rm adm}_{j}]_{\omega}\rangle.$$
Now, if $[h]_{\omega}\in [QP_8^{\otimes 6}(\omega]^{GL_6},$ then $h\equiv_{\omega} \sum_{1\leq j\leq 60}\zeta{\rm adm}_{j},$ where $\zeta\in\mathbb Z/2.$ Applying the relation $(\theta_6(h) + h)\equiv_{\omega} 0,$ one gets $\zeta{\rm adm}_5 + \mbox{other terms} \equiv_{\omega} 0,$ which implies that $\zeta = 0.$

Thus, $[QP_{8}^{\otimes 6}]^{GL_6}$ is zero, since $\dim [QP_{8}^{\otimes 6}]^{GL_6}\leq \dim [{\rm Ker}((\widetilde {Sq^0_*})_{8})]^{GL_6}.$ 

\medskip

Next, we consider the cases $n = 10.$  Since $\dim QP_{2}^{\otimes 6} = 15,$ and $\dim QP_{10}^{\otimes 6} = 945,$ one has that $\dim {\rm Ker}((\widetilde {Sq^0_*})_{10}) = 945- 15 = 930.$ A basis of ${\rm Ker}((\widetilde {Sq^0_*})_{10}) $ is determined as follows: We have $|\mathscr {C}^{\otimes 5}_{11-2^{0}}| = 280$ (see \cite{Tin}), $|\mathscr {C}^{\otimes 5}_{11-2^{1}}| = 191$ (see \cite{Tin}), $|\mathscr {C}^{\otimes 5}_{11-2^{2}}| = 110$ (see \cite{Phuc3}), and $|\mathscr {C}^{\otimes 5}_{11-2^{3}}| = 25$ (see \cite{Tin}). These results show that the set
$$ B(10):=\big(\bigcup_{0\leq d\leq 3}\mathscr C(d, 10)\big)\setminus \varphi(\mathscr C_{2}^{\otimes 6}),$$
where $|\mathscr C_{2}^{\otimes 6}| = 15,$ has all 930 admissible monomials. Moreover, for each $u\in B(10),$ one sees that $[u]\neq [0],$ and $(\widetilde {Sq^0_*})_{10}([u]) = [0].$ So, a basis of ${\rm Ker}((\widetilde {Sq^0_*})_{10})$ is a set consisting of all the equivalence classes of the admissible monomials in $B(10).$ Moreover, by similar arguments as in the case $n=8,$ we have an isomorphism
$${\rm Ker}((\widetilde {Sq^0_*})_{10})\cong QP_{10}^{\otimes 6}(2, 2, 1) \bigoplus QP_{10}^{\otimes 6}(2, 4)\bigoplus QP_{10}^{\otimes 6}(4, 1,1)\bigoplus QP_{10}^{\otimes 6}(4, 3),$$
where 
$$ \begin{array}{ll}
\medskip
&\dim QP_{10}^{\otimes 6}(2, 2, 1) = \dim (QP_{10}^{\otimes 6})^{0}(2, 2, 1) = 400,\ \dim QP_{10}^{\otimes 6}(2, 4) = 34,\\
& \dim QP_{10}^{\otimes 6}(4, 1, 1) = 280,\ \dim QP_{10}^{\otimes 6}(4, 3) = 216.
\end{array}$$
Using these results, a direct computation shows that the invariants $$[QP_{10}^{\otimes 6}(2, 2, 1)]^{GL_5},\ [QP_{10}^{\otimes 6}(2, 4)]^{GL_5},\ [QP_{10}^{\otimes 6}(4, 1, 1)]^{GL_5},\ [QP_{10}^{\otimes 6}(4, 3)]^{GL_5}$$ are zero. For instance, let us consider the space $[QP_{10}^{\otimes 6}(\omega)]^{GL_6}$ with $\omega:= (2,4).$ We have seen that the cohit space $QP_{10}^{\otimes 6}(\omega)$ has a basis consisting of all the classes represented by the following admissible monomials:

\begin{center}
\begin{tabular}{llrr}
${\rm adm}_{1}=t_2t_3^{2}t_4^{2}t_5^{2}t_6^{3}$, & ${\rm adm}_{2}=t_2t_3^{2}t_4^{2}t_5^{3}t_6^{2}$, & \multicolumn{1}{l}{${\rm adm}_{3}=t_2t_3^{2}t_4^{3}t_5^{2}t_6^{2}$,} & \multicolumn{1}{l}{${\rm adm}_{4}=t_2t_3^{3}t_4^{2}t_5^{2}t_6^{2}$,} \\
${\rm adm}_{5}=t_2^{3}t_3t_4^{2}t_5^{2}t_6^{2}$, & ${\rm adm}_{6}=t_1t_3^{2}t_4^{2}t_5^{2}t_6^{3}$, & \multicolumn{1}{l}{${\rm adm}_{7}=t_1t_3^{2}t_4^{2}t_5^{3}t_6^{2}$,} & \multicolumn{1}{l}{${\rm adm}_{8}=t_1t_3^{2}t_4^{3}t_5^{2}t_6^{2}$,} \\
${\rm adm}_{9}=t_1t_3^{3}t_4^{2}t_5^{2}t_6^{2}$, & ${\rm adm}_{10}=t_1t_2^{2}t_4^{2}t_5^{2}t_6^{3}$, & \multicolumn{1}{l}{${\rm adm}_{11}=t_1t_2^{2}t_4^{2}t_5^{3}t_6^{2}$,} & \multicolumn{1}{l}{${\rm adm}_{12}=t_1t_2^{2}t_4^{3}t_5^{2}t_6^{2}$,} \\
${\rm adm}_{13}=t_1t_2^{2}t_3^{2}t_5^{2}t_6^{3}$, & ${\rm adm}_{14}=t_1t_2^{2}t_3^{2}t_5^{3}t_6^{2}$, & \multicolumn{1}{l}{${\rm adm}_{15}=t_1t_2^{2}t_3^{2}t_4^{2}t_6^{3}$,} & \multicolumn{1}{l}{${\rm adm}_{16}=t_1t_2^{2}t_3^{2}t_4^{2}t_5^{3}$,} \\
${\rm adm}_{17}=t_1t_2^{2}t_3^{2}t_4^{3}t_6^{2}$, & ${\rm adm}_{18}=t_1t_2^{2}t_3^{2}t_4^{3}t_5^{2}$, & \multicolumn{1}{l}{${\rm adm}_{19}=t_1t_2^{2}t_3^{3}t_5^{2}t_6^{2}$,} & \multicolumn{1}{l}{${\rm adm}_{20}=t_1t_2^{2}t_3^{3}t_4^{2}t_6^{2}$,} \\
${\rm adm}_{21}=t_1t_2^{2}t_3^{3}t_4^{2}t_5^{2}$, & ${\rm adm}_{22}=t_1t_2^{3}t_4^{2}t_5^{2}t_6^{2}$, & \multicolumn{1}{l}{${\rm adm}_{23}=t_1t_2^{3}t_3^{2}t_5^{2}t_6^{2}$,} & \multicolumn{1}{l}{${\rm adm}_{24}=t_1t_2^{3}t_3^{2}t_4^{2}t_6^{2}$,} \\
${\rm adm}_{25}=t_1t_2^{3}t_3^{2}t_4^{2}t_5^{2}$, & ${\rm adm}_{26}=t_1^{3}t_3t_4^{2}t_5^{2}t_6^{2}$, & \multicolumn{1}{l}{${\rm adm}_{27}=t_1^{3}t_2t_4^{2}t_5^{2}t_6^{2}$,} & \multicolumn{1}{l}{${\rm adm}_{28}=t_1^{3}t_2t_3^{2}t_5^{2}t_6^{2}$,} \\
${\rm adm}_{29}=t_1^{3}t_2t_3^{2}t_4^{2}t_6^{2}$, & ${\rm adm}_{30}=t_1^{3}t_2t_3^{2}t_4^{2}t_5^{2}$, & \multicolumn{1}{l}{${\rm adm}_{31}=t_1t_2t_3^{2}t_4^{2}t_5^{2}t_6^{2}$,} & \multicolumn{1}{l}{${\rm adm}_{32}=t_1t_2^{2}t_3t_4^{2}t_5^{2}t_6^{2}$,} \\
${\rm adm}_{33}=t_1t_2^{2}t_3^{2}t_4t_5^{2}t_6^{2}$, & ${\rm adm}_{34}=t_1t_2^{2}t_3^{2}t_4^{2}t_5t_6^{2}$. &       &  
\end{tabular}%
\end{center}

Note that $QP_{10}^{\otimes 6}(\omega)\cong (QP_{10}^{\otimes 6})^{0}(\omega)\bigoplus (QP_{10}^{\otimes 6})^{>0}(\omega),$ where $$(QP_{10}^{\otimes 6})^{0}(\omega) = \langle \{[{\rm adm}_j]_{\omega}:\, 1\leq j\leq 30\} \rangle,\ \ (QP_{10}^{\otimes 6})^{> 0}(\omega) = \langle \{[{\rm adm}_j]_{\omega}:\, 31\leq j\leq 34\} \rangle.$$
Suppose that $[f]_{\omega}\in [(QP_{10}^{\otimes 6})^{0}(\omega)]^{S_6},$ and $[g]_{\omega}\in [(QP_{10}^{\otimes 6})^{> 0}(\omega)]^{S_6}.$ One has $f\equiv_{\omega} \sum_{1\leq j\leq 30}\gamma_j{\rm adm}_j,$ and $g\equiv_{\omega} \sum_{31\leq j\leq 34}\beta_j{\rm adm}_j,$ where $\gamma_j,\, \beta_j\in \mathbb Z/2,$ for every $j.$ From the relations $\theta_i(f)\equiv_{\omega} f,$ and $\theta_i(g)\equiv_{\omega} g,$ for all $i,\, 1\leq i\leq 5,$ we deduce that $\gamma_1 = \gamma_2 = \cdots = \gamma_{30},$ and $\beta_j =0,\, j = 31, \ldots, 34.$ So, $$[QP_{10}^{\otimes 6}(\omega)]^{S_6} = \langle [\sum_{1\leq j\leq 30}{\rm adm}_j]_{\omega} \rangle.$$ Now, assume that $[h]_{\omega}\in [QP_{10}^{\otimes 6}(\omega]^{GL_6},$ then $h\equiv_{\omega} \sum_{1\leq j\leq 30}\zeta{\rm adm}_{j}$ with $\zeta\in\mathbb Z/2.$ Using the relation $(\theta_6(h) + h)\equiv_{\omega} 0,$ it can be easily seen that $$\zeta\big(\sum_{1\leq j\leq 5}{\rm adm}_j + \sum_{22\leq j\leq 25}{\rm adm}_j + \sum_{32\leq j\leq 34}{\rm adm}_j\big) \equiv_{\omega} 0,$$ which follows $\zeta = 0.$ This leads to $[QP_{10}^{\otimes 6}(\omega]^{GL_6} = 0.$ 

The above data together with the fact that $[QP_{2}^{\otimes 6}]^{GL_6} = 0$ yield that $[QP_{10}^{\otimes 6}]^{GL_6} = 0.$

\medskip

We now consider the case $n = 11.$ Suppose that $\underline{t}$ is an admissible monomial in $P^{\otimes 6}_{11}.$  Noting that $\deg(\underline{t})$ is odd, and that $t_1^{7}t_2^{3}t_3\in P_{11}^{\otimes 6}$ is the minimal spike with $\omega(t_1^{7}t_2^{3}t_3) = (3,2,1),$ so either $\omega_1(\underline{t}) = 3$ or $\omega_1(\underline{t}) = 5.$ This means that either $\underline{t} = t_it_jt_kz^{2},\, 1\leq i<j<k\leq 6,\, z\in P_{4}^{\otimes 6}$ or $\underline{t} = t_it_jt_kt_lt_mz^{2},\, 1\leq i<j<k<l<m\leq 6,\, z\in P_{3}^{\otimes 6}.$ Since $\underline{t}$ is admissible, $z$ is, too. Furthermore, as computed above, because $$\dim QP^{\otimes 6}_{3} = \dim (QP^{\otimes 6}_{3})^{0} = 41,\ \ \dim QP^{\otimes 6}_{4} = \dim (QP^{\otimes 6}_{4})^{0} = 85,$$ it is easy to check that 
$$ \begin{array}{ll}
\medskip
QP^{\otimes 6}_{3}&\cong (QP^{\otimes 6}_{3})^{0}(1,1)\bigoplus (QP^{\otimes 6}_{3})^{0}(3,0),\\
QP^{\otimes 6}_{4}&\cong (QP^{\otimes 6}_{4})^{0}(2,1)\bigoplus (QP^{\otimes 6}_{4})^{0}(4,0)
\end{array}$$
 where 
$$ \begin{array}{ll}
\medskip
\dim (QP^{\otimes 6}_{3})^{0}(1,1) = 21,\ \ \dim (QP^{\otimes 6}_{3})^{0}(3,0) = 20,\\
\dim (QP^{\otimes 6}_{4})^{0}(2,1) = 70,\ \ \dim (QP^{\otimes 6}_{4})^{0}(4,0) = 15.
\end{array}$$ 
By these, we deduce that the weight vector $\omega(\underline{t})$ belongs to the set $$\{\widetilde{\omega}_{(1)}:=(3,2,1),\, \widetilde{\omega}_{(2)}:=(3,4),\, \widetilde{\omega}_{(3)}:=(5,1,1),\, \widetilde{\omega}_{(4)}:=(5,3)\},$$ and therefore we have an isomorphism
$$ 
 QP_{11}^{\otimes 6}\cong \big(\bigoplus_{1\leq i\leq 4}(QP_{11}^{\otimes 6})^{0}(\widetilde{\omega}_{(i)})\big) \bigoplus \big(\bigoplus_{1\leq i\leq 4}(QP_{11}^{\otimes 6})^{> 0}(\widetilde{\omega}_{(i)})\big).$$
Mothebe, Kaelo, and Ramatebele \cite{MKR} showed that $\dim  QP_{11}^{\otimes 6} = 1205.$ Moreover, we notice that since $|\mathscr C^{\otimes 5}_{12-2^{0}}| = 315,$ $|\mathscr C^{\otimes 5}_{12-2^{1}}| = 280,$ $|\mathscr C^{\otimes 5}_{12-2^{2}}| = 174,$ and $|\mathscr C^{\otimes 5}_{12-2^{3}}| = 45$ (see \cite{Tin}), a direct computation shows that the set $\bigcup_{0\leq d\leq 3}\mathscr C(d, 11)$ has all $1205$ admissible monomials. So, a basis of $QP_{11}^{\otimes 6}$ is a set consisting of all the equivalence classes of the admissible monomials in $\bigcup_{0\leq d\leq 3}\mathscr C(d, 11).$  Therefrom, the following are easily obtained:
$$ \begin{array}{ll}
\medskip
&\dim (QP_{11}^{\otimes 6})^{0}(\widetilde{\omega}_{(1)}) = 880,\ \ \dim (QP_{11}^{\otimes 6})^{0}(\widetilde{\omega}_{(2)}) = 60,\\
\medskip
& \dim (QP_{11}^{\otimes 6})^{0}(\widetilde{\omega}_{(3)}) = 90,\ \ \dim (QP_{11}^{\otimes 6})^{0}(\widetilde{\omega}_{(4)}) = 60,\\
\medskip
&\dim (QP_{11}^{\otimes 6})^{> 0}(\widetilde{\omega}_{(1)}) = 16,\ \ \dim (QP_{11}^{\otimes 6})^{> 0}(\widetilde{\omega}_{(2)}) = 24,\\
& \dim (QP_{11}^{\otimes 6})^{>0}(\widetilde{\omega}_{(3)}) = 30,\ \ \dim (QP_{11}^{\otimes 6})^{>0}(\widetilde{\omega}_{(4)}) = 45.
\end{array}$$ 
Using these results, we state that the invariants $[QP_{11}^{\otimes 6}(\widetilde{\omega}_{(i)})]^{GL_6}$ are zero for all $i,\, 1\leq i\leq 4.$ We shall describe the space $[QP_{11}^{\otimes 6}(\widetilde{\omega}_{(2)})]^{GL_6}.$ The remaining cases can be easily found by similar calculations. One knows that $QP_{11}^{\otimes 6}(\widetilde{\omega}_{(2)})\cong (QP_{11}^{\otimes 6})^{0}(\widetilde{\omega}_{(2)})\bigoplus (QP_{11}^{\otimes 6})^{> 0}(\widetilde{\omega}_{(2)}).$ The $\mathbb Z/2$-subspace $(QP_{11}^{\otimes 6})^{0}(\widetilde{\omega}_{(2)})$ has a basis consisting of all the classes represented by the following 60 admissible monomials:

\begin{center}
\begin{tabular}{llll}
${\rm adm}_{897}=t_2t_3^{2}t_4^{2}t_5^{3}t_6^{3}$, & ${\rm adm}_{898}=t_2t_3^{2}t_4^{3}t_5^{2}t_6^{3}$, & ${\rm adm}_{899}=t_2t_3^{2}t_4^{3}t_5^{3}t_6^{2}$, & ${\rm adm}_{900}=t_2t_3^{3}t_4^{2}t_5^{2}t_6^{3}$, \\
${\rm adm}_{901}=t_2t_3^{3}t_4^{2}t_5^{3}t_6^{2}$, & ${\rm adm}_{902}=t_2t_3^{3}t_4^{3}t_5^{2}t_6^{2}$, & ${\rm adm}_{903}=t_2^{3}t_3t_4^{2}t_5^{2}t_6^{3}$, & ${\rm adm}_{904}=t_2^{3}t_3t_4^{2}t_5^{3}t_6^{2}$, \\
${\rm adm}_{905}=t_2^{3}t_3t_4^{3}t_5^{2}t_6^{2}$, & ${\rm adm}_{906}=t_2^{3}t_3^{3}t_4t_5^{2}t_6^{2}$, & ${\rm adm}_{907}=t_1t_3^{2}t_4^{2}t_5^{3}t_6^{3}$, & ${\rm adm}_{908}=t_1t_3^{2}t_4^{3}t_5^{2}t_6^{3}$, \\
${\rm adm}_{909}=t_1t_3^{2}t_4^{3}t_5^{3}t_6^{2}$, & ${\rm adm}_{910}=t_1t_3^{3}t_4^{2}t_5^{2}t_6^{3}$, & ${\rm adm}_{911}=t_1t_3^{3}t_4^{2}t_5^{3}t_6^{2}$, & ${\rm adm}_{912}=t_1t_3^{3}t_4^{3}t_5^{2}t_6^{2}$, \\
${\rm adm}_{913}=t_1t_2^{2}t_4^{2}t_5^{3}t_6^{3}$, & ${\rm adm}_{914}=t_1t_2^{2}t_4^{3}t_5^{2}t_6^{3}$, & ${\rm adm}_{915}=t_1t_2^{2}t_4^{3}t_5^{3}t_6^{2}$, & ${\rm adm}_{916}=t_1t_2^{2}t_3^{2}t_5^{3}t_6^{3}$, \\
${\rm adm}_{917}=t_1t_2^{2}t_3^{2}t_4^{3}t_6^{3}$, & ${\rm adm}_{918}=t_1t_2^{2}t_3^{2}t_4^{3}t_5^{3}$, & ${\rm adm}_{919}=t_1t_2^{2}t_3^{3}t_5^{2}t_6^{3}$, & ${\rm adm}_{920}=t_1t_2^{2}t_3^{3}t_5^{3}t_6^{2}$, \\
${\rm adm}_{921}=t_1t_2^{2}t_3^{3}t_4^{2}t_6^{3}$, & ${\rm adm}_{922}=t_1t_2^{2}t_3^{3}t_4^{2}t_5^{3}$, & ${\rm adm}_{923}=t_1t_2^{2}t_3^{3}t_4^{3}t_6^{2}$, & ${\rm adm}_{924}=t_1t_2^{2}t_3^{3}t_4^{3}t_5^{2}$, \\
${\rm adm}_{925}=t_1t_2^{3}t_4^{2}t_5^{2}t_6^{3}$, & ${\rm adm}_{926}=t_1t_2^{3}t_4^{2}t_5^{3}t_6^{2}$, & ${\rm adm}_{927}=t_1t_2^{3}t_4^{3}t_5^{2}t_6^{2}$, & ${\rm adm}_{928}=t_1t_2^{3}t_3^{2}t_5^{2}t_6^{3}$, \\
${\rm adm}_{929}=t_1t_2^{3}t_3^{2}t_5^{3}t_6^{2}$, & ${\rm adm}_{930}=t_1t_2^{3}t_3^{2}t_4^{2}t_6^{3}$, & ${\rm adm}_{931}=t_1t_2^{3}t_3^{2}t_4^{2}t_5^{3}$, & ${\rm adm}_{932}=t_1t_2^{3}t_3^{2}t_4^{3}t_6^{2}$, \\
${\rm adm}_{933}=t_1t_2^{3}t_3^{2}t_4^{3}t_5^{2}$, & ${\rm adm}_{934}=t_1t_2^{3}t_3^{3}t_5^{2}t_6^{2}$, & ${\rm adm}_{935}=t_1t_2^{3}t_3^{3}t_4^{2}t_6^{2}$, & ${\rm adm}_{936}=t_1t_2^{3}t_3^{3}t_4^{2}t_5^{2}$, \\
${\rm adm}_{937}=t_1^{3}t_3t_4^{2}t_5^{2}t_6^{3}$, & ${\rm adm}_{938}=t_1^{3}t_3t_4^{2}t_5^{3}t_6^{2}$, & ${\rm adm}_{939}=t_1^{3}t_3t_4^{3}t_5^{2}t_6^{2}$, & ${\rm adm}_{940}=t_1^{3}t_3^{3}t_4t_5^{2}t_6^{2}$, \\
${\rm adm}_{941}=t_1^{3}t_2t_4^{2}t_5^{2}t_6^{3}$, & ${\rm adm}_{942}=t_1^{3}t_2t_4^{2}t_5^{3}t_6^{2}$, & ${\rm adm}_{943}=t_1^{3}t_2t_4^{3}t_5^{2}t_6^{2}$, & ${\rm adm}_{944}=t_1^{3}t_2t_3^{2}t_5^{2}t_6^{3}$, \\
${\rm adm}_{945}=t_1^{3}t_2t_3^{2}t_5^{3}t_6^{2}$, & ${\rm adm}_{946}=t_1^{3}t_2t_3^{2}t_4^{2}t_6^{3}$, & ${\rm adm}_{947}=t_1^{3}t_2t_3^{2}t_4^{2}t_5^{3}$, & ${\rm adm}_{948}=t_1^{3}t_2t_3^{2}t_4^{3}t_6^{2}$, \\
${\rm adm}_{949}=t_1^{3}t_2t_3^{2}t_4^{3}t_5^{2}$, & ${\rm adm}_{950}=t_1^{3}t_2t_3^{3}t_5^{2}t_6^{2}$, & ${\rm adm}_{951}=t_1^{3}t_2t_3^{3}t_4^{2}t_6^{2}$, & ${\rm adm}_{952}=t_1^{3}t_2t_3^{3}t_4^{2}t_5^{2}$, \\
${\rm adm}_{953}=t_1^{3}t_2^{3}t_4t_5^{2}t_6^{2}$, & ${\rm adm}_{954}=t_1^{3}t_2^{3}t_3t_5^{2}t_6^{2}$, & ${\rm adm}_{955}=t_1^{3}t_2^{3}t_3t_4^{2}t_6^{2}$, & ${\rm adm}_{956}=t_1^{3}t_2^{3}t_3t_4^{2}t_5^{2}$.
\end{tabular}%
\end{center}

Suppose that $[f]_{\widetilde{\omega}_{(2)}}\in [(QP_{11}^{\otimes 6})^{0}(\widetilde{\omega}_{(2)})]^{S_6},$ then $f\equiv_{\widetilde{\omega}_{(2)}}\sum_{897\leq j\leq 956}\gamma_j{\rm adm}_j,$ in which $\gamma_j\in \mathbb Z/2$ for all $j.$ By a simple using the relations $\theta_i(f) + f \equiv_{\widetilde{\omega}_{(2)}} 0,$ for $1\leq i\leq 5,$  we get $\gamma_{897} = \gamma_j$ for any $j,\, 898\leq j\leq 956,$ and so $$[(QP_{11}^{\otimes 6})^{0}(\widetilde{\omega}_{(2)})]^{S_6} = \langle [\sum_{897\leq j\leq 956}{\rm adm}_j] \rangle.$$
Next, the $\mathbb Z/2$-subspace $(QP_{11}^{\otimes 6})^{> 0}(\widetilde{\omega}_{(2)})$ has a basis consisting of all the classes represented by the following 24 admissible monomials:

\begin{center}
\begin{tabular}{llll}
${\rm adm}_{957}=t_1t_2t_3^{3}t_4^{2}t_5^{2}t_6^{2}$, & ${\rm adm}_{958}=t_1t_2t_3^{2}t_4^{3}t_5^{2}t_6^{2}$, & ${\rm adm}_{959}=t_1t_2t_3^{2}t_4^{2}t_5^{3}t_6^{2}$, & ${\rm adm}_{960}=t_1t_2t_3^{2}t_4^{2}t_5^{2}t_6^{3}$, \\
${\rm adm}_{961}=t_1t_2^{3}t_3t_4^{2}t_5^{2}t_6^{2}$, & ${\rm adm}_{962}=t_1^{3}t_2t_3t_4^{2}t_5^{2}t_6^{2}$, & ${\rm adm}_{963}=t_1^{3}t_2t_3^{2}t_4t_5^{2}t_6^{2}$, & ${\rm adm}_{964}=t_1^{3}t_2t_3^{2}t_4^{2}t_5t_6^{2}$, \\
${\rm adm}_{965}=t_1^{3}t_2t_3^{2}t_4^{2}t_5^{2}t_6$, & ${\rm adm}_{966}=t_1t_2^{3}t_3^{2}t_4t_5^{2}t_6^{2}$, & ${\rm adm}_{967}=t_1t_2^{3}t_3^{2}t_4^{2}t_5t_6^{2}$, & ${\rm adm}_{968}=t_1t_2^{3}t_3^{2}t_4^{2}t_5^{2}t_6$, \\
${\rm adm}_{969}=t_1t_2^{2}t_3t_4^{3}t_5^{2}t_6^{2}$, & ${\rm adm}_{970}=t_1t_2^{2}t_3t_4^{2}t_5^{3}t_6^{2}$, & ${\rm adm}_{971}=t_1t_2^{2}t_3t_4^{2}t_5^{2}t_6^{3}$, & ${\rm adm}_{972}=t_1t_2^{2}t_3^{3}t_4t_5^{2}t_6^{2}$, \\
${\rm adm}_{973}=t_1t_2^{2}t_3^{3}t_4^{2}t_5t_6^{2}$, & ${\rm adm}_{974}=t_1t_2^{2}t_3^{3}t_4^{2}t_5^{2}t_6$, & ${\rm adm}_{975}=t_1t_2^{2}t_3^{2}t_4t_5^{3}t_6^{2}$, & ${\rm adm}_{976}=t_1t_2^{2}t_3^{2}t_4t_5^{2}t_6^{3}$, \\
${\rm adm}_{977}=t_1t_2^{2}t_3^{2}t_4^{3}t_5t_6^{2}$, & ${\rm adm}_{978}=t_1t_2^{2}t_3^{2}t_4^{3}t_5^{2}t_6$, & ${\rm adm}_{979}=t_1t_2^{2}t_3^{2}t_4^{2}t_5^{3}t_6$, & ${\rm adm}_{980}=t_1t_2^{2}t_3^{2}t_4^{2}t_5t_6^{3}$.
\end{tabular}%
\end{center}

Then, if $[g]_{\widetilde{\omega}_{(2)}}\in [(QP_{11}^{\otimes 6})^{> 0}(\widetilde{\omega}_{(2)})]^{S_6},$ then $g\equiv_{\widetilde{\omega}_{(2)}}\sum_{957\leq j\leq 980}\gamma_j{\rm adm}_j,$ where $\gamma_j\in \mathbb Z/2$ for every $j.$ By the relations $\theta_i(g) + g \equiv_{\widetilde{\omega}_{(2)}} 0,$ for any $i,\, 1\leq i\leq 5,$  one gets $\gamma_j = 0$ for any $j,$ which implies that $$[(QP_{11}^{\otimes 6})^{> 0}(\widetilde{\omega}_{(2)})]^{S_6} = 0.$$
From the above calculations and the fact that $S_6\subset GL_6$, if $[h]_{\widetilde{\omega}_{(2)}}\in [QP_{11}^{\otimes 6}(\widetilde{\omega}_{(2)})]^{GL_6},$ then $$h\equiv_{\widetilde{\omega}_{(2)}}\beta(\sum_{897\leq j\leq 956}{\rm adm}_j)\ \mbox{ with $\beta\in \mathbb Z/2.$}$$ Then, since $\theta_6(h) + h\equiv_{\widetilde{\omega}_{(2)}} 0,$ one gets $\beta{\rm adm}_{897} + \mbox{other terms} \equiv_{\widetilde{\omega}_{(2)}} 0,$ which implies that $\beta = 0.$ Therefore $[QP_{11}^{\otimes 6}(\widetilde{\omega}_{(2)})]^{GL_6} = 0.$

\medskip

Finally, we consider the case $n = 12.$ Because $\dim QP_{3}^{\otimes 6} = 41,$ and $\dim QP_{12}^{\otimes 6} = 1001,$ we see that $\dim {\rm Ker}((\widetilde {Sq^0_*})_{12}) = 960.$ A basis of ${\rm Ker}((\widetilde {Sq^0_*})_{12})$ is determined in Tin \cite{Tin4}. However, his calculations seem very verbose and to have no sense, since ${\rm Ker}((\widetilde {Sq^0_*})_{12})\subset QP_{12}^{\otimes 6},$ and the dimension of $QP_{12}^{\otimes 6}$ has been computed by Mothebe, Kaelo, and Ramatebele \cite{MKR}. Here, similarly to the cases $n = 8$ and $10,$ determining the basis of ${\rm Ker}((\widetilde {Sq^0_*})_{12})$ can be done simply as follows: Since $|\mathscr {C}^{\otimes 5}_{13-2^{0}}| = 190$ (see \cite{MKR}), $|\mathscr {C}^{\otimes 5}_{13-2^{1}}| = 315$ (see \cite{Tin}), $|\mathscr {C}^{\otimes 5}_{13-2^{2}}| = 191$ (see \cite{Tin}), and $|\mathscr {C}^{\otimes 5}_{13-2^{3}}| = 46$ (see \cite{Sum}), the set
$$ B(12):=\big(\bigcup_{0\leq d\leq 3}\mathscr C(d, 12)\big)\setminus \varphi(\mathscr C_{3}^{\otimes 6}),$$
where $|\mathscr C_{3}^{\otimes 6}| = 41,$ has all 960 admissible monomials. Further, we observe that $(\widetilde {Sq^0_*})_{12}([v]) = [0]$ for every $v\in B(12).$ (Note that since $v$ is admissible, $[v]\neq [0]$.) Therefore, the basis of ${\rm Ker}((\widetilde {Sq^0_*})_{12})$ is a set consisting of all the equivalence classes of the admissible monomials in $B(12).$ Therefrom, by similar calculations as in the cases $n = 8,$ and $10$, it can be easily found that $[{\rm Ker}((\widetilde {Sq^0_*})_{12})]^{GL_6}=0$. So, since $[QP_{3}^{\otimes 6}]^{GL_6} =0$, $[QP_{12}^{\otimes 6}]^{GL_6}=0.$ 

\medskip

Thus, we obtain that for each $1\leq n\leq 12,$ the invariants $[QP_{n}^{\otimes 6}]^{GL_6}$ are zero. On the other side,  according to Bruner \cite{Bruner}, for $1\leq n\leq 12,$ we see that the elements $h_1Ph_1$ and $h_0Ph_2$ are non-zero in ${\rm Ext}_{\mathcal A}^{6, 6+10}(\mathbb Z/2, \mathbb Z/2),$ and ${\rm Ext}_{\mathcal A}^{6, 6+11}(\mathbb Z/2, \mathbb Z/2),$ respectively and that ${\rm Ext}_{\mathcal A}^{6, 6+n}(\mathbb Z/2, \mathbb Z/2)  = 0$ for $n\not\in \{10, 11\}.$ These data imply that the sixth algebraic transfer does not detect the non-zero elements $h_1Ph_1$ and $h_0Ph_2.$ This has also been proved by by Ch\ohorn n, and H\`a \cite{C.H1, C.H2} using another method. Therefore, the theorem below is obtained.

\begin{thm}\label{dlc1}
Let $n$ be a positive integer with $n\leq 12.$ Then, the Singer transfer
$$Tr_6^{\mathcal A}: ([QP_{n}^{\otimes 6}]^{GL_6})^{*}\longrightarrow {\rm Ext}_{\mathcal A}^{6, 6+n}(\mathbb Z/2, \mathbb Z/2)$$
is a monomorphism if $n \in \{10, 11\},$ and is a trivial isomorphism otherwise.
\end{thm}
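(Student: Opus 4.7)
The plan is to reduce the theorem to the single vanishing statement $[QP_n^{\otimes 6}]^{GL_6}=0$ for every $1\leq n\leq 12$, and then to read off the conclusion from Bruner's tabulation \cite{Bruner} of ${\rm Ext}_{\mathcal A}^{6,6+n}(\mathbb Z/2,\mathbb Z/2)$: the target is one-dimensional (generated by $h_1Ph_1$ or $h_0Ph_2$) exactly when $n\in\{10,11\}$ and vanishes otherwise. Because $\mathbb Z/2\otimes_{GL_6}{\rm Ann}_{\overline{\mathcal A}}[P_n^{\otimes 6}]^{*}\cong ([QP_n^{\otimes 6}]^{GL_6})^{*}$, once the source of $Tr_6^{\mathcal A}$ vanishes, the map is automatically a monomorphism for $n\in\{10,11\}$ and a trivial isomorphism for all other $n$ in the range.

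To establish $[QP_n^{\otimes 6}]^{GL_6}=0$ I would proceed case-by-case, following the template already illustrated at length earlier in the paper. For $1\leq n\leq 5$, explicit admissible monomial bases are available via the Peterson/Kameko/Sum decomposition $\dim QP_n^{\otimes 6}=\sum_{1\leq k\leq 5}\binom{6}{k}\dim(QP_n^{\otimes k})^{>0}$; here I would compute $[QP_n^{\otimes 6}]^{S_6}$ by parametrising an arbitrary invariant $f\equiv\sum \gamma_j{\rm adm}_j$ and imposing the five relations $\theta_i(f)+f\equiv 0$ with $1\leq i\leq 5$, then intersecting with $\ker(\theta_6-1)$ to obtain the $GL_6$-invariants. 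For $6\leq n\leq 12$ I would use the Mothebe--Kaelo--Ramatebele bases together with the Kameko splitting. For even $n\in\{8,10,12\}$ the Kameko map is an epimorphism, yielding
$$\dim[QP_n^{\otimes 6}]^{GL_6}\leq\dim[{\rm Ker}((\widetilde{Sq^0_*})_n)]^{GL_6}+\dim[QP_{(n-6)/2}^{\otimes 6}]^{GL_6},$$
where the second term vanishes by the smaller-$n$ analysis, so only the kernel needs treatment; the kernel itself is recovered explicitly as $\bigl(\bigcup_{0\leq d\leq 3}\mathscr C(d,n)\bigr)\setminus\varphi(\mathscr C_{(n-6)/2}^{\otimes 6})$ via the Mothebe--Uys construction.

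For each relevant $n$, the kernel is then decomposed into weight-vector summands $QP_n^{\otimes 6}(\omega)$ using Singer's criterion on hit monomials together with Kameko's criterion on inadmissible monomials to enumerate the admissible $\omega$: for instance $\omega\in\{(3,2,1),(3,4),(5,1,1),(5,3)\}$ for $n=11$ and $\omega\in\{(2,1,1),(2,3),(4,2)\}$ for $n=8$. Within each summand the same procedure applies: solve the five linear systems coming from $\theta_i(f)+f\equiv_\omega 0$ to obtain the $S_6$-invariants, and verify that the additional relation $\theta_6(f)+f\equiv_\omega 0$ forces the surviving coefficient to be zero. The cases $n=8,10,11,12$ are essentially worked out in the material preceding the theorem statement, and the residual $n\in\{6,7,9\}$ would be handled by entirely parallel calculations on the Mothebe--Kaelo--Ramatebele bases.

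The hard part is logistical rather than conceptual: several weight-vector summands have dimensions in the hundreds (for instance $\dim(QP_{11}^{\otimes 6})^{0}(3,2,1)=880$ and $\dim QP_8^{\otimes 6}(4,2)=189$), so tracking the action of $\theta_1,\ldots,\theta_6$ on the admissible basis demands substantial bookkeeping, and it is at this stage that the \textsc{Magma} verification cited in the appendix becomes indispensable for cross-checking the elementary but voluminous linear algebra. Once the vanishing is in place, however, the comparison with Bruner's table is immediate and the dichotomy \emph{monomorphism if $n\in\{10,11\}$, trivial isomorphism otherwise} falls out with no further input.
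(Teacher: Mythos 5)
Your proposal follows essentially the same approach as the paper: reduce to proving $[QP_n^{\otimes 6}]^{GL_6}=0$ for all $1\leq n\leq 12$ via weight-vector decompositions, Kameko's splitting for even $n$, and case-by-case linear algebra on the $\theta_i$-relations over the Peterson/Kameko/Sum and Mothebe--Kaelo--Ramatebele admissible bases, then compare with Bruner's tabulation of ${\rm Ext}_{\mathcal A}^{6,6+*}$. The reduction, the decomposition, and the final dichotomy are all as in the paper; the only difference is presentational, in which cases you choose to detail.
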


For $n = 13,$ the following remarks are important and useful.

\begin{rema}\label{nxM}
Moetele, and Mothebe \cite{MM} have shown that $QP_{13}^{\otimes 6}$ is $1303$-dimensional. However, this claim is incorrect. We refute this result by proving that $QP_{13}^{\otimes 6}$ has dimension $1294.$  Indeed, firstly, we see that if $t$ is an admissible monomial in $P_{13}^{\otimes 6},$ then by Tin \cite{Tin4}, the weight vector of $t$ is one of the following sequences: $$\overline{\omega}_{(1)}:= (3,3,1),\ \overline{\omega}_{(2)}:= (3,5),\ \overline{\omega}_{(3)}:= (5,2,1),\  \overline{\omega}_{(4)}:= (5,4).$$ So, we have an isomorphism 
$$ 
 QP_{13}^{\otimes 6}\cong \big(\bigoplus_{1\leq i\leq 4}(QP_{13}^{\otimes 6})^{0}(\overline{\omega}_{(i)})\big) \bigoplus \big(\bigoplus_{1\leq i\leq 4}(QP_{13}^{\otimes 6})^{> 0}(\overline{\omega}_{(i)})\big).$$
%Indeed, it is straightforward to see that $t_1^{7}t_2^{3}t_3^{3}\in P_{13}^{\otimes 6}$ is the minimal spike, and $\omega(t_1^{7}t_2^{3}t_3^{3}) = (3,3,1).$ Suppose that $t$ is an admissible monomial in $P_{13}^{\otimes 6.}$ Since $\deg(t)$ is odd, and $\omega_1(t_1^{7}t_2^{3}t_3^{3}) = 3,$ by Singer's criterion on hit monomials, either $\omega_1(t) = 3$ or $\omega_1(t) = 5.$ If $\omega_1(t) = 3,$ then $t = t_it_jt_k\underline{t}^2$ with $\underline{t}$ a monomial of degree $5$ in $P^{\otimes 6},$ and $1\leq i < j <k \leq 6.$ Since $t$ is admissible, according to Kameko's criterion on inadmissible monomials, $\underline{t}$ is admissible. We have shown above that $\dim QP_{5}^{\otimes 6}=\dim (QP_{5}^{\otimes 6})^0 = 111,$ from which either $\omega(\underline{t}) = (3,1)$ or $\omega(\underline{t}) = (5,0).$ This leads to $\omega(t)\in \{\overline{\omega}_{(1)},\, \overline{\omega}_{(2)}\}.$ Similarly, if $\omega_1(t) = 5,$ then $t$ is of the form $t_it_jt_kt_lt_my^{2},$ in which $1\leq i<j<k<l<m\leq 6,$ and $y$ is an admissible monomial in $P_{4}^{\otimes 6}.$ Since $\dim QP_{4}^{\otimes 6} = 85,$ and $\omega(y)\in \{(2, 1),\, (4, 0)\},$ the weight vector $\omega(t)\in \{\overline{\omega}_{(3)},\, \overline{\omega}_{(4)}\}.$ Thus, $\omega(t)\in \{\overline{\omega}_{(i)}|\, 1\leq i\leq 4\},$ and so $QP_{13}^{\otimes 6}\cong \bigoplus_{1\leq i\leq 4}QP_{13}^{\otimes 6}(\overline{\omega}_{(i)}).$  
 Using the formula $\dim (QP_{13}^{\otimes 6})^{0} = \sum_{1\leq  k\leq 5}\binom{6}{k}\dim (QP_{13}^{\otimes\, k})^{>0},$ and the previous results by Peterson \cite{Peterson}, Kameko \cite{Kameko}, the present writer \cite{Phuc4}, Sum \cite{Sum1, Sum2}, one gets $$\dim (QP_{13}^{\otimes 6})^{0} = 3\times \binom{6}{3} + 23\times \binom{6}{4} + 105\times \binom{6}{5} =  1035,$$
and so, it follows that
$$ \dim (QP_{13}^{\otimes 6})^{0}(\overline{\omega}_{(i)})
=\left\{\begin{array}{ll}
765&\mbox{if $i = 1$},\\
0&\mbox{if $i = 2$},\\
240&\mbox{if $i = 3$},\\
30&\mbox{if $i = 4$}.
\end{array}\right.$$
Next, we consider the spaces $(QP_{13}^{\otimes 6})^{> 0}(\overline{\omega}_{(i)}).$ The calculations of Moetele, and Mothebe \cite{MM} indicated that
$$ \dim (QP_{13}^{\otimes 6})^{> 0}(\overline{\omega}_{(i)})
=\left\{\begin{array}{ll}
69 &\mbox{if $i = 1$},\\
15&\mbox{if $i = 2$},\\
144&\mbox{if $i = 3$},\\
40&\mbox{if $i = 4$}.
\end{array}\right.$$
However, the statement for the case $i = 1$ is not true. Indeed, Moetele, and Mothebe \cite{MM} claimed that the following 9 monomials, which have the same weight vector as $\overline{\omega}_{(1)},$ are admissible: 
$$ \begin{array}{ll}
\medskip
 &t_1t_2t_3^{2}t_4^{2}t_5^{6}t_6,\ \ t_1t_2t_3^{2}t_4^{6}t_5^{2}t_6,\ \ t_1t_2t_3^{6}t_4^{2}t_5^{2}t_6,\\
\medskip
& t_1t_2^{2}t_3t_4^{2}t_5^{6}t_6,\ \ t_1t_2^{2}t_3t_4^{6}t_5^{2}t_6,\ \ t_1t_2^{6}t_3t_4^{2}t_5^{2}t_6,\\
&t_1t_2t_3^{2}t_4^{2}t_5^{2}t_6^{5},\ \ t_1t_2^{2}t_3t_4^{2}t_5^{2}t_6^{5},\ \ t_1t_2^{2}t_3^{5}t_4^{2}t_5^{2}t_6.
\end{array}$$
However, by applying the Cartan formula, we get
$$ \begin{array}{ll}
t_1t_2^{2}t_3^{5}t_4^{2}t_5^{2}t_6 =& Sq^{1}(t_1^{2}t_2^{4}t_3^{3}t_4t_5t_6) + Sq^{2}(t_1t_2^{2}t_3^{3}t_4^{2}t_5^{2}t_6 + t_1t_2^{2}t_3^{3}t_4^{2}t_5t_6^{2} + t_1t_2^{2}t_3^{3}t_4t_5^{2}t_6^{2} + t_1t_2^{2}t_3^{5}t_4t_5t_6) \\
&+ Sq^{4}(t_1t_2^{2}t_3^{3}t_4t_5t_6) + X+ \sum Y,
\end{array}$$
where $\omega(Y)<\omega(t_1t_2^{2}t_3^{5}t_4^{2}t_5^{2}t_6) = \overline{\omega}_{(1)},$ and $X$ is the sum of the following 8 monomials: $ t_1t_2^{2}t_3^{3}t_4t_5^{2}t_6^{4},$ $t_1t_2^{2}t_3^{3}t_4t_5^{4}t_6^{2},$ $t_1t_2^{2}t_3^{3}t_4^{2}t_5t_6^{4},$ $t_1t_2^{2}t_3^{3}t_4^{2}t_5^{4}t_6,$ $t_1t_2^{2}t_3^{3}t_4^{4}t_5t_6^{2},$ $t_1t_2^{2}t_3^{3}t_4^{4}t_5^{2}t_6,$ $t_1t_2^{2}t_3^{5}t_4t_5^{2}t_6^{2},$ and $t_1t_2^{2}t_3^{5}t_4^{2}t_5t_6^{2}.$ All of these monomials are less than $t_1t_2^{2}t_3^{5}t_4^{2}t_5^{2}t_6.$ Thus, $ t_1t_2^{2}t_3^{5}t_4^{2}t_5^{2}t_6 \equiv_{\overline{\omega}_{(1)}} X,$ and so $t_1t_2^{2}t_3^{5}t_4^{2}t_5^{2}t_6 $ is an inadmissible monomial.  Similarly, we obtain:
$$ \begin{array}{ll}
\medskip
t_1t_2t_3^{2}t_4^{2}t_5^{6}t_6&\equiv_{\overline{\omega}_{(1)}}  t_1t_2t_3t_4^{2}t_5^{6}t_6^{2}+ t_1t_2t_3^{2}t_4t_5^{6}t_6^{2}+t_1t_2t_3^{2}t_4^{2}t_5^{5}t_6^{2},\\
\medskip
 t_1t_2t_3^{2}t_4^{6}t_5^{2}t_6&\equiv_{\overline{\omega}_{(1)}} t_1t_2t_3t_4^{6}t_5^{2}t_6^{2}+t_1t_2t_3^{2}t_4^{5}t_5^{2}t_6^{2}+t_1t_2t_3^{2}t_4^{6}t_5t_6^{2},\\
\medskip
t_1t_2^{2}t_3t_4^{6}t_5^{2}t_6&\equiv_{\overline{\omega}_{(1)}}t_1t_2t_3t_4^{6}t_5^{2}t_6^{2}+ t_1t_2^{2}t_3t_4^{5}t_5^{2}t_6^{2}+t_1t_2^{2}t_3t_4^{6}t_5t_6^{2},\\
\medskip
 t_1t_2^{2}t_3t_4^{2}t_5^{6}t_6&\equiv_{\overline{\omega}_{(1)}} t_1t_2t_3t_4^{2}t_5^{6}t_6^{2}+ t_1t_2^{2}t_3t_4t_5^{6}t_6^{2}+t_1t_2^{2}t_3t_4^{2}t_5^{5}t_6^{2},\\
\medskip
t_1t_2t_3^{2}t_4^{2}t_5^{2}t_6^{5}&\equiv_{\overline{\omega}_{(1)}}t_1t_2t_3t_4^{2}t_5^{2}t_6^{6}+  t_1t_2t_3^{2}t_4t_5^{2}t_6^{6}+t_1t_2t_3^{2}t_4^{2}t_5t_6^{6},\\
\medskip
 t_1t_2^{2}t_3t_4^{2}t_5^{2}t_6^{5}&\equiv_{\overline{\omega}_{(1)}} t_1t_2t_3t_4^{2}t_5^{2}t_6^{6}+ t_1t_2^{2}t_3t_4t_5^{2}t_6^{6}+t_1t_2^{2}t_3t_4^{2}t_5t_6^{6},\\
t_1t_2t_3^{6}t_4^{2}t_5^{2}t_6&\equiv_{\overline{\omega}_{(1)}} t_1t_2t_3^{3}t_4^{2}t_5^{2}t_6^{4} + t_1t_2t_3^{3}t_4^{2}t_5^{4}t_6^{2}+ t_1t_2t_3^{3}t_4^{4}t_5^{2}t_6^{2}\\
\medskip
&\quad\quad\quad +t_1t_2t_3^{6}t_4t_5^{2}t_6^{2} + t_1t_2t_3^{6}t_4^{2}t_5t_6^{2},\\
t_1t_2^{6}t_3t_4^{2}t_5^{2}t_6&\equiv_{\overline{\omega}_{(1)}} t_1t_2^{3}t_3t_4^{2}t_5^{2}t_6^{4} + t_1t_2^{3}t_3t_4^{2}t_5^{4}t_6^{2}+ t_1t_2^{3}t_3t_4^{4}t_5^{2}t_6^{2} \\
&\quad\quad\quad +t_1t_2^{6}t_3t_4t_5^{2}t_6^{2} + t_1t_2^{6}t_3t_4^{2}t_5t_6^{2},
\end{array}$$
and so, the monomials $t_1t_2t_3^{2}t_4^{2}t_5^{6}t_6,$ $t_1t_2t_3^{2}t_4^{6}t_5^{2}t_6,$ $t_1t_2t_3^{6}t_4^{2}t_5^{2}t_6,$ $t_1t_2^{2}t_3t_4^{2}t_5^{6}t_6,$ $t_1t_2^{2}t_3t_4^{6}t_5^{2}t_6,$ $t_1t_2^{6}t_3t_4^{2}t_5^{2}t_6,$ $t_1t_2t_3^{2}t_4^{2}t_5^{2}t_6^{5},$ and $t_1t_2^{2}t_3t_4^{2}t_5^{2}t_6^{5}$ are inadmissible.  

\medskip

Thus, $(QP_{13}^{\otimes 6})^{> 0}(\overline{\omega}_{(1)}) = \langle \{[{\rm adm}_j]_{\overline{\omega}_{(1)}} = [{\rm adm}_j]:\,1\leq j\leq 60\}\rangle,$ where the admissible monomials ${\rm adm}_j$ are determined as follows:

\begin{center}
\begin{tabular}{llll}
${\rm adm}_{1}=t_1t_2t_3t_4^{2}t_5^{2}t_6^{6}$, & ${\rm adm}_{2}=t_1t_2t_3t_4^{2}t_5^{6}t_6^{2}$, & ${\rm adm}_{3}=t_1t_2t_3t_4^{6}t_5^{2}t_6^{2}$, & ${\rm adm}_{4}=t_1t_2t_3^{2}t_4t_5^{2}t_6^{6}$, \\
${\rm adm}_{5}=t_1t_2t_3^{2}t_4t_5^{6}t_6^{2}$, & ${\rm adm}_{6}=t_1t_2t_3^{2}t_4^{2}t_5t_6^{6}$, & ${\rm adm}_{7}=t_1t_2t_3^{2}t_4^{2}t_5^{3}t_6^{4}$, & ${\rm adm}_{8}=t_1t_2t_3^{2}t_4^{2}t_5^{4}t_6^{3}$, \\
${\rm adm}_{9}=t_1t_2t_3^{2}t_4^{2}t_5^{5}t_6^{2}$, & ${\rm adm}_{10}=t_1t_2t_3^{2}t_4^{3}t_5^{2}t_6^{4}$, & ${\rm adm}_{11}=t_1t_2t_3^{2}t_4^{3}t_5^{4}t_6^{2}$, & ${\rm adm}_{12}=t_1t_2t_3^{2}t_4^{4}t_5^{2}t_6^{3}$, \\
${\rm adm}_{13}=t_1t_2t_3^{2}t_4^{4}t_5^{3}t_6^{2}$, & ${\rm adm}_{14}=t_1t_2t_3^{2}t_4^{5}t_5^{2}t_6^{2}$, & ${\rm adm}_{15}=t_1t_2t_3^{2}t_4^{6}t_5t_6^{2}$, & ${\rm adm}_{16}=t_1t_2t_3^{3}t_4^{2}t_5^{2}t_6^{4}$, \\
${\rm adm}_{17}=t_1t_2t_3^{3}t_4^{2}t_5^{4}t_6^{2}$, & ${\rm adm}_{18}=t_1t_2t_3^{3}t_4^{4}t_5^{2}t_6^{2}$, & ${\rm adm}_{19}=t_1t_2t_3^{6}t_4t_5^{2}t_6^{2}$, & ${\rm adm}_{20}=t_1t_2t_3^{6}t_4^{2}t_5t_6^{2}$, \\
${\rm adm}_{21}=t_1t_2^{2}t_3t_4t_5^{2}t_6^{6}$, & ${\rm adm}_{22}=t_1t_2^{2}t_3t_4t_5^{6}t_6^{2}$, & ${\rm adm}_{23}=t_1t_2^{2}t_3t_4^{2}t_5t_6^{6}$, & ${\rm adm}_{24}=t_1t_2^{2}t_3t_4^{2}t_5^{3}t_6^{4}$, \\
${\rm adm}_{25}=t_1t_2^{2}t_3t_4^{2}t_5^{4}t_6^{3}$, & ${\rm adm}_{26}=t_1t_2^{2}t_3t_4^{2}t_5^{5}t_6^{2}$, & ${\rm adm}_{27}=t_1t_2^{2}t_3t_4^{3}t_5^{2}t_6^{4}$, & ${\rm adm}_{28}=t_1t_2^{2}t_3t_4^{3}t_5^{4}t_6^{2}$, \\
${\rm adm}_{29}=t_1t_2^{2}t_3t_4^{4}t_5^{2}t_6^{3}$, & ${\rm adm}_{30}=t_1t_2^{2}t_3t_4^{4}t_5^{3}t_6^{2}$, & ${\rm adm}_{31}=t_1t_2^{2}t_3t_4^{5}t_5^{2}t_6^{2}$, & ${\rm adm}_{32}=t_1t_2^{2}t_3t_4^{6}t_5t_6^{2}$, \\
${\rm adm}_{33}=t_1t_2^{2}t_3^{3}t_4t_5^{2}t_6^{4}$, & ${\rm adm}_{34}=t_1t_2^{2}t_3^{3}t_4t_5^{4}t_6^{2}$, & ${\rm adm}_{35}=t_1t_2^{2}t_3^{3}t_4^{4}t_5t_6^{2}$, & ${\rm adm}_{36}=t_1t_2^{2}t_3^{4}t_4t_5^{2}t_6^{3}$, \\
${\rm adm}_{37}=t_1t_2^{2}t_3^{4}t_4t_5^{3}t_6^{2}$, & ${\rm adm}_{38}=t_1t_2^{2}t_3^{4}t_4^{3}t_5t_6^{2}$, & ${\rm adm}_{39}=t_1t_2^{2}t_3^{5}t_4t_5^{2}t_6^{2}$, & ${\rm adm}_{40}=t_1t_2^{2}t_3^{5}t_4^{2}t_5t_6^{2}$, \\
${\rm adm}_{41}=t_1t_2^{3}t_3t_4^{2}t_5^{2}t_6^{4}$, & ${\rm adm}_{42}=t_1t_2^{3}t_3t_4^{2}t_5^{4}t_6^{2}$, & ${\rm adm}_{43}=t_1t_2^{3}t_3t_4^{4}t_5^{2}t_6^{2}$, & ${\rm adm}_{44}=t_1t_2^{3}t_3^{2}t_4t_5^{2}t_6^{4}$, \\
${\rm adm}_{45}=t_1t_2^{3}t_3^{2}t_4t_5^{4}t_6^{2}$, & ${\rm adm}_{46}=t_1t_2^{3}t_3^{2}t_4^{4}t_5t_6^{2}$, & ${\rm adm}_{47}=t_1t_2^{3}t_3^{4}t_4t_5^{2}t_6^{2}$, & ${\rm adm}_{48}=t_1t_2^{3}t_3^{4}t_4^{2}t_5t_6^{2}$, \\
${\rm adm}_{49}=t_1t_2^{6}t_3t_4t_5^{2}t_6^{2}$, & ${\rm adm}_{50}=t_1t_2^{6}t_3t_4^{2}t_5t_6^{2}$, & ${\rm adm}_{51}=t_1^{3}t_2t_3t_4^{2}t_5^{2}t_6^{4}$, & ${\rm adm}_{52}=t_1^{3}t_2t_3t_4^{2}t_5^{4}t_6^{2}$, \\
${\rm adm}_{53}=t_1^{3}t_2t_3t_4^{4}t_5^{2}t_6^{2}$, & ${\rm adm}_{54}=t_1^{3}t_2t_3^{2}t_4t_5^{2}t_6^{4}$, & ${\rm adm}_{55}=t_1^{3}t_2t_3^{2}t_4t_5^{4}t_6^{2}$, & ${\rm adm}_{56}=t_1^{3}t_2t_3^{2}t_4^{4}t_5t_6^{2}$, \\
${\rm adm}_{57}=t_1^{3}t_2t_3^{4}t_4t_5^{2}t_6^{2}$, & ${\rm adm}_{58}=t_1^{3}t_2t_3^{4}t_4^{2}t_5t_6^{2}$, & ${\rm adm}_{59}=t_1^{3}t_2^{4}t_3t_4t_5^{2}t_6^{2}$, & ${\rm adm}_{60}=t_1^{3}t_2^{4}t_3t_4^{2}t_5t_6^{2}$.
\end{tabular}%
\end{center}
We prove $\dim (QP_{13}^{\otimes 6})^{>0}(\overline{\omega}_{(1)}) = 60$ by showing that the set $\{[{\rm adm}_j]_{\overline{\omega}_{(1)}} = [{\rm adm}_j]:\,1\leq j\leq 60\}$ is linearly dependent in $(QP_{13}^{\otimes 6})^{> 0}(\overline{\omega}_{(1)}).$ To make this, for any $1\leq u < v\leq 6,$ we use the $\mathcal A$-homomorphism 
$$ \begin{array}{ll}
 \varphi_{(u, v)}: P^{\otimes 6}&\longrightarrow P^{\otimes 5},\\
\ \ \ \ \ \ \ \ \ \ t_j &\longmapsto \left\{ \begin{array}{ll}
{t_j}&\text{if }\;1\leq j \leq u-1, \\
t_{v-1}& \text{if}\; j = u,\\
t_{j-1}&\text{if}\; u+1 \leq j \leq 6.
\end{array} \right.
\end{array}$$
Applying a result in \cite{Phuc4}, we deduce that $\varphi_{(u, v)}({\rm adm}_j)\in P_{13}^{\otimes 5}(\overline{\omega}_{(1)}),$ for each $1\leq j\leq 60.$ In \cite{Phuc4}, we have shown that $(QP_{13}^{\otimes 5})^{>0}(\overline{\omega}_{(1)})$ is an $\mathbb Z/2$-vector space with a basis consisting of all the classes represented by the following monomials:

\begin{center}
\begin{tabular}{llll}
${\rm Adm}_{1}=t_1t_2t_3^{2}t_4^{2}t_5^{7}$, & ${\rm Adm}_{2}=t_1t_2t_3^{2}t_4^{3}t_5^{6}$, & ${\rm Adm}_{3}=t_1t_2t_3^{2}t_4^{6}t_5^{3}$, & ${\rm Adm}_{4}=t_1t_2t_3^{2}t_4^{7}t_5^{2}$, \\
${\rm Adm}_{5}=t_1t_2t_3^{3}t_4^{2}t_5^{6}$, & ${\rm Adm}_{6}=t_1t_2t_3^{3}t_4^{6}t_5^{2}$, & ${\rm Adm}_{7}=t_1t_2t_3^{6}t_4^{2}t_5^{3}$, & ${\rm Adm}_{8}=t_1t_2t_3^{6}t_4^{3}t_5^{2}$, \\
${\rm Adm}_{9}=t_1t_2t_3^{7}t_4^{2}t_5^{2}$, & ${\rm Adm}_{10}=t_1t_2^{2}t_3t_4^{2}t_5^{7}$, & ${\rm Adm}_{11}=t_1t_2^{2}t_3t_4^{3}t_5^{6}$, & ${\rm Adm}_{12}=t_1t_2^{2}t_3t_4^{6}t_5^{3}$, \\
${\rm Adm}_{13}=t_1t_2^{2}t_3t_4^{7}t_5^{2}$, & ${\rm Adm}_{14}=t_1t_2^{2}t_3^{3}t_4t_5^{6}$, & ${\rm Adm}_{15}=t_1t_2^{2}t_3^{3}t_4^{3}t_5^{4}$, & ${\rm Adm}_{16}=t_1t_2^{2}t_3^{3}t_4^{4}t_5^{3}$, \\
${\rm Adm}_{17}=t_1t_2^{2}t_3^{3}t_4^{5}t_5^{2}$, & ${\rm Adm}_{18}=t_1t_2^{2}t_3^{4}t_4^{3}t_5^{3}$, & ${\rm Adm}_{19}=t_1t_2^{2}t_3^{5}t_4^{2}t_5^{3}$, & ${\rm Adm}_{20}=t_1t_2^{2}t_3^{5}t_4^{3}t_5^{2}$, \\
${\rm Adm}_{21}=t_1t_2^{2}t_3^{7}t_4t_5^{2}$, & ${\rm Adm}_{22}=t_1t_2^{3}t_3t_4^{2}t_5^{6}$, & ${\rm Adm}_{23}=t_1t_2^{3}t_3t_4^{6}t_5^{2}$, & ${\rm Adm}_{24}=t_1t_2^{3}t_3^{2}t_4t_5^{6}$, \\
${\rm Adm}_{25}=t_1t_2^{3}t_3^{2}t_4^{3}t_5^{4}$, & ${\rm Adm}_{26}=t_1t_2^{3}t_3^{2}t_4^{4}t_5^{3}$, & ${\rm Adm}_{27}=t_1t_2^{3}t_3^{2}t_4^{5}t_5^{2}$, & ${\rm Adm}_{28}=t_1t_2^{3}t_3^{3}t_4^{2}t_5^{4}$, \\
${\rm Adm}_{29}=t_1t_2^{3}t_3^{3}t_4^{4}t_5^{2}$, & ${\rm Adm}_{30}=t_1t_2^{3}t_3^{4}t_4^{2}t_5^{3}$, & ${\rm Adm}_{31}=t_1t_2^{3}t_3^{4}t_4^{3}t_5^{2}$, & ${\rm Adm}_{32}=t_1t_2^{3}t_3^{5}t_4^{2}t_5^{2}$, \\
${\rm Adm}_{33}=t_1t_2^{3}t_3^{6}t_4t_5^{2}$, & ${\rm Adm}_{34}=t_1t_2^{6}t_3t_4^{2}t_5^{3}$, & ${\rm Adm}_{35}=t_1t_2^{6}t_3t_4^{3}t_5^{2}$, & ${\rm Adm}_{36}=t_1t_2^{6}t_3^{3}t_4t_5^{2}$, \\
${\rm Adm}_{37}=t_1t_2^{7}t_3t_4^{2}t_5^{2}$, & ${\rm Adm}_{38}=t_1t_2^{7}t_3^{2}t_4t_5^{2}$, & ${\rm Adm}_{39}=t_1^{3}t_2t_3t_4^{2}t_5^{6}$, & ${\rm Adm}_{40}=t_1^{3}t_2t_3t_4^{6}t_5^{2}$, \\
${\rm Adm}_{41}=t_1^{3}t_2t_3^{2}t_4t_5^{6}$, & ${\rm Adm}_{42}=t_1^{3}t_2t_3^{2}t_4^{3}t_5^{4}$, & ${\rm Adm}_{43}=t_1^{3}t_2t_3^{2}t_4^{4}t_5^{3}$, & ${\rm Adm}_{44}=t_1^{3}t_2t_3^{2}t_4^{5}t_5^{2}$, \\
${\rm Adm}_{45}=t_1^{3}t_2t_3^{3}t_4^{2}t_5^{4}$, & ${\rm Adm}_{46}=t_1^{3}t_2t_3^{3}t_4^{4}t_5^{2}$, & ${\rm Adm}_{47}=t_1^{3}t_2t_3^{4}t_4^{2}t_5^{3}$, & ${\rm Adm}_{48}=t_1^{3}t_2t_3^{4}t_4^{3}t_5^{2}$, \\
${\rm Adm}_{49}=t_1^{3}t_2t_3^{5}t_4^{2}t_5^{2}$, & ${\rm Adm}_{50}=t_1^{3}t_2t_3^{6}t_4t_5^{2}$, & ${\rm Adm}_{51}=t_1^{3}t_2^{3}t_3t_4^{2}t_5^{4}$, & ${\rm Adm}_{52}=t_1^{3}t_2^{3}t_3t_4^{4}t_5^{2}$, \\
${\rm Adm}_{53}=t_1^{3}t_2^{3}t_3^{4}t_4t_5^{2}$, & ${\rm Adm}_{54}=t_1^{3}t_2^{4}t_3t_4^{2}t_5^{3}$, & ${\rm Adm}_{55}=t_1^{3}t_2^{4}t_3t_4^{3}t_5^{2}$, & ${\rm Adm}_{56}=t_1^{3}t_2^{4}t_3^{3}t_4t_5^{2}$, \\
${\rm Adm}_{57}=t_1^{3}t_2^{5}t_3t_4^{2}t_5^{2}$, & ${\rm Adm}_{58}=t_1^{3}t_2^{5}t_3^{2}t_4t_5^{2}$, & ${\rm Adm}_{59}=t_1^{7}t_2t_3t_4^{2}t_5^{2}$, & ${\rm Adm}_{60}=t_1^{7}t_2t_3^{2}t_4t_5^{2}$.
\end{tabular}%
 \end{center}

Suppose that $\sum_{1\leq j\leq 60}\gamma_j{\rm adm}_j\equiv 0.$ We compute explicitly $\varphi_{(u, v)}(\sum_{1\leq j\leq 60}\gamma_j{\rm adm}_j)$ in terms of admissible monomials ${\rm Adm}_j$ modulo ($\overline{\mathcal A}P^{\otimes 5}_{13}$). Then, a simple computation using the relations $\varphi_{(u, v)}(\sum_{1\leq j\leq 60}\gamma_j{\rm adm}_j)\equiv 0$ gives $\gamma_j = 0,\ j = 1, 2, \ldots, 60.$ This finishes the above statement.

\medskip

Finally, since $QP_{13}^{\otimes 6}\cong \big(\bigoplus_{1\leq i\leq 4}(QP_{13}^{\otimes 6})^{0}(\overline{\omega}_{(i)})\big) \bigoplus \big(\bigoplus_{1\leq i\leq 4}(QP_{13}^{\otimes 6})^{> 0}(\overline{\omega}_{(i)})\big)$, one gets 
$$ \begin{array}{ll}
\dim QP_{13}^{\otimes 6} &= \sum_{1\leq i\leq 4}\big(\dim (QP_{13}^{\otimes 6})^{0}(\overline{\omega}_{(i)}) + \dim (QP_{13}^{\otimes 6})^{>0}(\overline{\omega}_{(i)})\big)\\
&  = 765 + 60 + 15 +  240 + 144 + 30 + 40 = 1294.
\end{array}$$
\end{rema}

To determine the behavior of the sixth transfer in degree $13,$ we need to compute the domain of this transfer in respective degree. A simple computation shows that the invariants $[QP_{13}^{\otimes 6}(\overline{\omega}_{(i)})]^{GL_6} = 0$ for all $i,\,1\leq i\leq 4.$ For instance, we will describe the space $[QP_{13}^{\otimes 6}(\overline{\omega}_{(2)})]^{GL_6}$ in detail. The other spaces can be obtained by similar calculations. Note that $QP_{13}^{\otimes 6}(\overline{\omega}_{(2)}) = (QP_{13}^{\otimes 6})^{>0}(\overline{\omega}_{(2)})$ and that by Moetele, and Mothebe \cite{MM}, $QP_{13}^{\otimes 6}(\overline{\omega}_{(2)})$ has a basis consisting of all the classes represented by the following 15 admissible monomials:

\begin{center}
\begin{tabular}{lllr}
${\rm adm}_{61}=t_1t_2^{2}t_3^{2}t_4^{2}t_5^{3}t_6^{3}$, & ${\rm adm}_{62}=t_1t_2^{2}t_3^{2}t_4^{3}t_5^{2}t_6^{3}$, & ${\rm adm}_{63}=t_1t_2^{2}t_3^{2}t_4^{3}t_5^{3}t_6^{2}$, & \multicolumn{1}{l}{${\rm adm}_{64}=t_1t_2^{2}t_3^{3}t_4^{2}t_5^{2}t_6^{3}$,} \\
${\rm adm}_{65}=t_1t_2^{2}t_3^{3}t_4^{2}t_5^{3}t_6^{2}$, & ${\rm adm}_{66}=t_1t_2^{2}t_3^{3}t_4^{3}t_5^{2}t_6^{2}$, & ${\rm adm}_{67}=t_1t_2^{3}t_3^{2}t_4^{2}t_5^{2}t_6^{3}$, & \multicolumn{1}{l}{${\rm adm}_{68}=t_1t_2^{3}t_3^{2}t_4^{2}t_5^{3}t_6^{2}$,} \\
${\rm adm}_{69}=t_1t_2^{3}t_3^{2}t_4^{3}t_5^{2}t_6^{2}$, & ${\rm adm}_{70}=t_1t_2^{3}t_3^{3}t_4^{2}t_5^{2}t_6^{2}$, & ${\rm adm}_{71}=t_1^{3}t_2t_3^{2}t_4^{2}t_5^{2}t_6^{3}$, & \multicolumn{1}{l}{${\rm adm}_{72}=t_1^{3}t_2t_3^{2}t_4^{2}t_5^{3}t_6^{2}$,} \\
${\rm adm}_{73}=t_1^{3}t_2t_3^{2}t_4^{3}t_5^{2}t_6^{2}$, & ${\rm adm}_{74}=t_1^{3}t_2t_3^{3}t_4^{2}t_5^{2}t_6^{2}$, & ${\rm adm}_{75}=t_1^{3}t_2^{3}t_3t_4^{2}t_5^{2}t_6^{2}$. &  
\end{tabular}
\end{center}

Suppose that $[f]_{\overline{\omega}_{(2)}}\in [QP_{13}^{\otimes 6}(\overline{\omega}_{(2)})]^{S_6},$ then $f\equiv_{\overline{\omega}_{(2)}}\sum_{61\leq j\leq 75}\gamma_j{\rm adm}_j,$ in which $\gamma_j$ belongs to $\mathbb Z/2$ for every $j.$ Using the homomorphisms $\theta_i: P^{\otimes 6}\to P^{\otimes 6}$ and the relations $\theta_i(f)+ f\equiv_{\overline{\omega}_{(2)}} 0,$ for all $i,\, 1\leq i\leq 5,$ we get $\gamma_{61} = \gamma_{62} = \cdots = \gamma_{75}.$ This means that $[QP_{13}^{\otimes 6}(\overline{\omega}_{(2)})]^{S_6} = \langle [\sum_{61\leq i\leq 75}{\rm adm}_j]_{\overline{\omega}_{(2)}} \rangle.$ Now,  since $S_6\subset GL_6,$ if $[g]_{\overline{\omega}_{(2)}}\in [QP_{13}^{\otimes 6}(\overline{\omega}_{(2)})]^{GL_6},$ then $g\equiv_{\overline{\omega}_{(2)}}\sum_{61\leq j\leq 75} \beta{\rm adm}_j,$ in which $\beta\in \mathbb Z/2.$ By the relation $\theta_6(g) + g\equiv_{\overline{\omega}_{(2)}} 0,$ we get $\beta(\sum_{68\leq j\leq 70}{\rm adm}_j)  \equiv_{\overline{\omega}_{(2)}} 0,$ which implies that $\beta = 0.$ This indicates that $[QP_{13}^{\otimes 6}(\overline{\omega}_{(2)})]^{GL_6}$ is trivial. 

Thus, as well known, $$\dim \mathbb Z/2 \otimes_{GL_6} {\rm Ann}_{\overline{\mathcal A}}[P_{13}^{\otimes 6}]^{*} = \dim [QP_{13}^{\otimes 6}]^{GL_6}\leq \sum_{1\leq i\leq 4}\dim [QP_{13}^{\otimes 6}(\overline{\omega}_{(i)})]^{GL_6},$$
and therefore $\mathbb Z/2 \otimes_{GL_6} {\rm Ann}_{\overline{\mathcal A}}[P_{13}^{\otimes 6}]^{*}$ is zero.

Now, using similar idea and the previous results of Peterson \cite{Peterson}, Kameko \cite{Kameko}, Moetele, and Mothebe \cite{MM}, Mothebe, Kaelo, and Ramatebele \cite{MKR}, Sum \cite{Sum1, Sum2, Sum, Sum4}, Tin \cite{Tin, Tin3}, the present writer \cite{Phuc5, Phuc6}, we have the technical theorem below.

\begin{thm}\label{dlsc6}
Let $n$ be a positive integer with $14\leq n\leq 25.$ Then, the dimension of $QP_{n}^{\otimes 6}$ is determined as follows:\\[2mm]
\centerline{\begin{tabular}{c|ccccccccccccccc}
$n$ &$14$ & $15$ & $16$ & $17$ & $18$ & $19$ & $20$ &$21$ & $22$ & $23$ & $24$ & $25$ \cr
\hline
\ $\dim QP_{n}^{\otimes 6}$ & $1660$ & $2184$ & $2451$ &$3135$ &$3941$ & $4998$ & $4622$&$5774$ & $6760$ & $8444$ & $8127$  & $9920$  \cr
\end{tabular}}
\end{thm}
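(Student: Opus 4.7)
\medskip

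\noindent\textbf{Proof proposal for Theorem \ref{dlsc6}.} The plan is to handle each degree $n$ with $14 \leq n \leq 25$ by the same two-step scheme that was already executed in Subsection \ref{pl2} for the small cases $n \leq 13$: first compute $(QP_n^{\otimes 6})^0$ via the reduction formula, and then determine $(QP_n^{\otimes 6})^{>0}$ directly by weight-vector analysis. Concretely, for every $n$ in the stated range I would first list, using Kameko's criterion on inadmissible monomials together with Singer's criterion on hit polynomials applied to the minimal spike of degree $n$ in $P^{\otimes 6}$, the finite set $\Omega(n)$ of weight vectors $\omega$ that can possibly occur as $\omega(t)$ for an admissible $t \in P^{\otimes 6}_n$. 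This gives the decomposition
\[
QP_n^{\otimes 6} \;\cong\; \bigoplus_{\omega \in \Omega(n)}\Bigl((QP_n^{\otimes 6})^0(\omega) \,\bigoplus\, (QP_n^{\otimes 6})^{>0}(\omega)\Bigr).
\]

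For the ``$0$'' part, I would invoke the reduction
\[
\dim (QP_n^{\otimes 6})^0 \;=\; \sum_{1 \leq k \leq 5}\binom{6}{k}\,\dim (QP_n^{\otimes k})^{>0},
\]
feeding in the known values of $\dim (QP_n^{\otimes k})^{>0}$ for $k \leq 5$ drawn from the cited literature (Peterson, Kameko, Sum, Walker--Wood, Tin, Ly--Tin, and our own earlier papers). For the ``$>0$'' part, I would adapt the construction of the set $B(n)$ that was used in the cases $n=8, 10, 12$: namely, start from $\bigcup_{0 \leq d \leq 3}\mathscr{C}(d,n)$, subtract the image $\varphi(\mathscr{C}^{\otimes 6}_{(n-6)/2})$ when $n$ is even, and verify that the remaining monomials are admissible and biject with an admissible basis. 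When $n$ is odd, admissibility of $t = t_{i_1}\cdots t_{i_k} z^2$ reduces (again by Kameko's criterion) to admissibility of $z \in P^{\otimes 6}_{(n-k)/2}$, which is controlled by the lower-rank data and by already-tabulated sixth-power bases from Mothebe--Kaelo--Ramatebele \cite{MKR} and Moetele--Mothebe \cite{MM}, after the corrections recorded in Remark \ref{nxM}.

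The main obstacle will be the verification step: for many of the listed degrees (especially $n = 23, 25$, where the claimed dimensions already exceed $8000$), one cannot simply trust the rank-6 basis tables of \cite{MM}, since Remark \ref{nxM} shows that those tables contain inadmissible monomials (e.g.\ $t_1t_2^{2}t_3^{5}t_4^{2}t_5^{2}t_6$ and its $S_6$-conjugates in degree $13$). I therefore expect to have to re-examine every weight-vector stratum $(QP_n^{\otimes 6})^{>0}(\omega)$ and eliminate spurious monomials by exhibiting an explicit reduction $t \equiv_\omega \sum X$ with $X < t$, typically produced by Cartan-formula identities $Sq^{2^r}(u)$ with the same weight vector as $t$. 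Linear independence of the surviving set in each $(QP_n^{\otimes 6})^{>0}(\omega)$ will be checked by the now-standard device of the $\mathcal{A}$-homomorphisms $\varphi_{(u,v)}: P^{\otimes 6} \to P^{\otimes 5}$ defined in Remark \ref{nxM}: assuming a linear relation $\sum \gamma_j \mathrm{adm}_j \equiv 0$, pushing it through each $\varphi_{(u,v)}$ and reducing to the rank-5 admissible basis of $P^{\otimes 5}_n(\omega)$ forces all coefficients $\gamma_j$ to vanish. Finally, the MAGMA computer algebra system \cite{Magma}, already flagged in the Appendix as an auxiliary verification tool, would be used to cross-check each of the twelve totals in the table. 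Summing $\dim (QP_n^{\otimes 6})^0$ and $\sum_\omega \dim (QP_n^{\otimes 6})^{>0}(\omega)$ over $\omega \in \Omega(n)$ then yields the stated values.
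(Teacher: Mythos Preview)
Your proposal is correct and matches the paper's approach exactly: the paper does not give a detailed proof of this theorem but simply states that it follows ``using similar idea and the previous results'' of Peterson, Kameko, Moetele--Mothebe, Mothebe--Kaelo--Ramatebele, Sum, Tin, and the author's own earlier work, i.e., precisely the weight-vector decomposition, the $(QP_n^{\otimes 6})^0$ reduction formula, Kameko's and Singer's criteria, the $\varphi_{(u,v)}$ independence checks, and MAGMA verification that were illustrated in detail for $n\leq 13$. You have faithfully spelled out what ``similar idea'' means here.
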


The application of Theorem \ref{dlsc6} yields the next technical result.
\begin{thm}
The domains of the sixth transfer in degrees $n,\, 14\leq n\leq 25$ are given by\\[2mm]
\centerline{
\scalebox{0.9}{
\begin{tabular}{c|ccccccccccccc}
$n$  &$14$ & $15$ & $16$ & $17$ & $18$ & $19$ & $20$ &$21$ & $22$ & $23$ & $24$ & $25$ \cr
\hline
\ $\mathbb Z/2 \otimes_{GL_6} {\rm Ann}_{\overline{\mathcal A}}[P_{n}^{\otimes 6}]^{*}=$  & $\langle [\zeta_{14}] \rangle$ &  $\langle [\zeta_{15}] \rangle$ &  $\langle [\zeta_{16}] \rangle$ &  $\langle [\zeta_{17}] \rangle$ &$0$ & $0$ & $ \langle [\zeta_{20}] \rangle$&$0$ & $0$ & $ \langle [\zeta_{23}] \rangle$ & $0$  & $0$  \cr
\end{tabular}}}\\[4mm]
where the generators $\zeta_{14},\, \zeta_{15},\, \zeta_{16},\, \zeta_{17},\, \zeta_{20},\, \zeta_{23}$ are determined as follows:
$$ \begin{array}{ll}
\zeta_{14}  &=x_1^{(0)}x_2^{(0)}x_3^{(6)}x_4^{(5)}x_5^{(2)}x_6^{(1)}+
x_1^{(0)}x_2^{(0)}x_3^{(6)}x_4^{(4)}x_5^{(3)}x_6^{(1)}+
x_1^{(0)}x_2^{(0)}x_3^{(6)}x_4^{(3)}x_5^{(4)}x_6^{(1)}+
\medskip
x_1^{(0)}x_2^{(0)}x_3^{(6)}x_4^{(2)}x_5^{(5)}x_6^{(1)}\\
&+
x_1^{(0)}x_2^{(0)}x_3^{(6)}x_4^{(1)}x_5^{(6)}x_6^{(1)}+
x_1^{(0)}x_2^{(0)}x_3^{(5)}x_4^{(6)}x_5^{(1)}x_6^{(2)}+
x_1^{(0)}x_2^{(0)}x_3^{(5)}x_4^{(5)}x_5^{(2)}x_6^{(2)}+
\medskip
x_1^{(0)}x_2^{(0)}x_3^{(5)}x_4^{(4)}x_5^{(3)}x_6^{(2)}\\
&+
x_1^{(0)}x_2^{(0)}x_3^{(5)}x_4^{(3)}x_5^{(4)}x_6^{(2)}+
x_1^{(0)}x_2^{(0)}x_3^{(5)}x_4^{(2)}x_5^{(5)}x_6^{(2)}+
x_1^{(0)}x_2^{(0)}x_3^{(5)}x_4^{(1)}x_5^{(6)}x_6^{(2)}+
\medskip
x_1^{(0)}x_2^{(0)}x_3^{(5)}x_4^{(5)}x_5^{(1)}x_6^{(3)}\\
&+
x_1^{(0)}x_2^{(0)}x_3^{(3)}x_4^{(6)}x_5^{(2)}x_6^{(3)}+
x_1^{(0)}x_2^{(0)}x_3^{(6)}x_4^{(2)}x_5^{(3)}x_6^{(3)}+
x_1^{(0)}x_2^{(0)}x_3^{(6)}x_4^{(1)}x_5^{(4)}x_6^{(3)}+
\medskip
x_1^{(0)}x_2^{(0)}x_3^{(5)}x_4^{(2)}x_5^{(4)}x_6^{(3)}\\
&+
x_1^{(0)}x_2^{(0)}x_3^{(3)}x_4^{(4)}x_5^{(4)}x_6^{(3)}+
x_1^{(0)}x_2^{(0)}x_3^{(3)}x_4^{(2)}x_5^{(6)}x_6^{(3)}+
x_1^{(0)}x_2^{(0)}x_3^{(3)}x_4^{(6)}x_5^{(1)}x_6^{(4)}+
\medskip
x_1^{(0)}x_2^{(0)}x_3^{(3)}x_4^{(5)}x_5^{(2)}x_6^{(4)}\\
\end{array}$$

\newpage
$$ \begin{array}{ll}
&+
x_1^{(0)}x_2^{(0)}x_3^{(3)}x_4^{(4)}x_5^{(3)}x_6^{(4)}+
x_1^{(0)}x_2^{(0)}x_3^{(3)}x_4^{(3)}x_5^{(4)}x_6^{(4)}+
x_1^{(0)}x_2^{(0)}x_3^{(3)}x_4^{(2)}x_5^{(5)}x_6^{(4)}+
\medskip
x_1^{(0)}x_2^{(0)}x_3^{(3)}x_4^{(1)}x_5^{(6)}x_6^{(4)}\\
&+
x_1^{(0)}x_2^{(0)}x_3^{(5)}x_4^{(3)}x_5^{(1)}x_6^{(5)}+
x_1^{(0)}x_2^{(0)}x_3^{(6)}x_4^{(1)}x_5^{(2)}x_6^{(5)}+
x_1^{(0)}x_2^{(0)}x_3^{(5)}x_4^{(2)}x_5^{(2)}x_6^{(5)}+
\medskip
x_1^{(0)}x_2^{(0)}x_3^{(3)}x_4^{(4)}x_5^{(2)}x_6^{(5)}\\
&+
x_1^{(0)}x_2^{(0)}x_3^{(5)}x_4^{(1)}x_5^{(3)}x_6^{(5)}+
x_1^{(0)}x_2^{(0)}x_3^{(3)}x_4^{(3)}x_5^{(3)}x_6^{(5)}+
x_1^{(0)}x_2^{(0)}x_3^{(3)}x_4^{(1)}x_5^{(5)}x_6^{(5)}+
\medskip
x_1^{(0)}x_2^{(0)}x_3^{(6)}x_4^{(1)}x_5^{(1)}x_6^{(6)}\\
&+
x_1^{(0)}x_2^{(0)}x_3^{(5)}x_4^{(2)}x_5^{(1)}x_6^{(6)}+
x_1^{(0)}x_2^{(0)}x_3^{(3)}x_4^{(4)}x_5^{(1)}x_6^{(6)}+
x_1^{(0)}x_2^{(0)}x_3^{(3)}x_4^{(3)}x_5^{(2)}x_6^{(6)}+
\medskip
x_1^{(0)}x_2^{(0)}x_3^{(6)}x_4^{(6)}x_5^{(1)}x_6^{(1)},\\
\medskip
\zeta_{15} &= x_1^{(0)}x_2^{(0)}x_3^{(0)}x_4^{(0)}x_5^{(0)}x_6^{(15)},\\
\zeta_{16} &=x_1^{(1)}x_2^{(1)}x_3^{(6)}x_4^{(5)}x_5^{(2)}x_6^{(1)}+
x_1^{(1)}x_2^{(1)}x_3^{(6)}x_4^{(4)}x_5^{(3)}x_6^{(1)}+
x_1^{(1)}x_2^{(1)}x_3^{(6)}x_4^{(3)}x_5^{(4)}x_6^{(1)}+
\medskip
x_1^{(1)}x_2^{(1)}x_3^{(6)}x_4^{(2)}x_5^{(5)}x_6^{(1)}\\
&+
x_1^{(1)}x_2^{(1)}x_3^{(6)}x_4^{(1)}x_5^{(6)}x_6^{(1)}+
x_1^{(1)}x_2^{(1)}x_3^{(5)}x_4^{(6)}x_5^{(1)}x_6^{(2)}+
x_1^{(1)}x_2^{(1)}x_3^{(5)}x_4^{(5)}x_5^{(2)}x_6^{(2)}+
\medskip
x_1^{(1)}x_2^{(1)}x_3^{(5)}x_4^{(4)}x_5^{(3)}x_6^{(2)}\\
&+
x_1^{(1)}x_2^{(1)}x_3^{(5)}x_4^{(3)}x_5^{(4)}x_6^{(2)}+
x_1^{(1)}x_2^{(1)}x_3^{(5)}x_4^{(2)}x_5^{(5)}x_6^{(2)}+
x_1^{(1)}x_2^{(1)}x_3^{(5)}x_4^{(1)}x_5^{(6)}x_6^{(2)}+
\medskip
x_1^{(1)}x_2^{(1)}x_3^{(5)}x_4^{(5)}x_5^{(1)}x_6^{(3)}\\
&+
x_1^{(1)}x_2^{(1)}x_3^{(3)}x_4^{(6)}x_5^{(2)}x_6^{(3)}+
x_1^{(1)}x_2^{(1)}x_3^{(6)}x_4^{(2)}x_5^{(3)}x_6^{(3)}+
x_1^{(1)}x_2^{(1)}x_3^{(6)}x_4^{(1)}x_5^{(4)}x_6^{(3)}+
\medskip
x_1^{(1)}x_2^{(1)}x_3^{(5)}x_4^{(2)}x_5^{(4)}x_6^{(3)}\\
&+
x_1^{(1)}x_2^{(1)}x_3^{(3)}x_4^{(4)}x_5^{(4)}x_6^{(3)}+
x_1^{(1)}x_2^{(1)}x_3^{(3)}x_4^{(2)}x_5^{(6)}x_6^{(3)}+
x_1^{(1)}x_2^{(1)}x_3^{(3)}x_4^{(6)}x_5^{(1)}x_6^{(4)}+
\medskip
x_1^{(1)}x_2^{(1)}x_3^{(3)}x_4^{(5)}x_5^{(2)}x_6^{(4)}\\
&+
x_1^{(1)}x_2^{(1)}x_3^{(3)}x_4^{(4)}x_5^{(3)}x_6^{(4)}+
x_1^{(1)}x_2^{(1)}x_3^{(3)}x_4^{(3)}x_5^{(4)}x_6^{(4)}+
x_1^{(1)}x_2^{(1)}x_3^{(3)}x_4^{(2)}x_5^{(5)}x_6^{(4)}+
\medskip
x_1^{(1)}x_2^{(1)}x_3^{(3)}x_4^{(1)}x_5^{(6)}x_6^{(4)}\\
&+
x_1^{(1)}x_2^{(1)}x_3^{(5)}x_4^{(3)}x_5^{(1)}x_6^{(5)}+
x_1^{(1)}x_2^{(1)}x_3^{(6)}x_4^{(1)}x_5^{(2)}x_6^{(5)}+
x_1^{(1)}x_2^{(1)}x_3^{(5)}x_4^{(2)}x_5^{(2)}x_6^{(5)}+
\medskip
x_1^{(1)}x_2^{(1)}x_3^{(3)}x_4^{(4)}x_5^{(2)}x_6^{(5)}\\
&+
x_1^{(1)}x_2^{(1)}x_3^{(5)}x_4^{(1)}x_5^{(3)}x_6^{(5)}+
x_1^{(1)}x_2^{(1)}x_3^{(3)}x_4^{(3)}x_5^{(3)}x_6^{(5)}+
x_1^{(1)}x_2^{(1)}x_3^{(3)}x_4^{(1)}x_5^{(5)}x_6^{(5)}+
\medskip
x_1^{(1)}x_2^{(1)}x_3^{(6)}x_4^{(1)}x_5^{(1)}x_6^{(6)}\\
&+
x_1^{(1)}x_2^{(1)}x_3^{(5)}x_4^{(2)}x_5^{(1)}x_6^{(6)}+
x_1^{(1)}x_2^{(1)}x_3^{(3)}x_4^{(4)}x_5^{(1)}x_6^{(6)}+
x_1^{(1)}x_2^{(1)}x_3^{(3)}x_4^{(3)}x_5^{(2)}x_6^{(6)}+
\medskip
x_1^{(1)}x_2^{(1)}x_3^{(6)}x_4^{(6)}x_5^{(1)}x_6^{(1)},\\
\zeta_{17} &= x_1^{(0)}x_2^{(0)}x_3^{(5)}x_4^{(5)}x_5^{(5)}x_6^{(2)}+
 x_1^{(0)}x_2^{(0)}x_3^{(5)}x_4^{(5)}x_5^{(6)}x_6^{(1)}+
 x_1^{(0)}x_2^{(0)}x_3^{(3)}x_4^{(5)}x_5^{(8)}x_6^{(1)}+
\medskip
 x_1^{(0)}x_2^{(0)}x_3^{(5)}x_4^{(3)}x_5^{(8)}x_6^{(1)}\\
&+
 x_1^{(0)}x_2^{(0)}x_3^{(3)}x_4^{(6)}x_5^{(7)}x_6^{(1)}+
 x_1^{(0)}x_2^{(0)}x_3^{(5)}x_4^{(7)}x_5^{(4)}x_6^{(1)}+
x_1^{(0)}x_2^{(0)}x_3^{(7)}x_4^{(5)}x_5^{(4)}x_6^{(1)} + 
\medskip
 x_1^{(0)}x_2^{(0)}x_3^{(3)}x_4^{(9)}x_5^{(4)}x_6^{(1)}\\
&+
 x_1^{(0)}x_2^{(0)}x_3^{(9)}x_4^{(3)}x_5^{(4)}x_6^{(1)}+
 x_1^{(0)}x_2^{(0)}x_3^{(3)}x_4^{(9)}x_5^{(3)}x_6^{(2)}+
 x_1^{(0)}x_2^{(0)}x_3^{(9)}x_4^{(3)}x_5^{(3)}x_6^{(2)}+
\medskip
x_1^{(0)}x_2^{(0)}x_3^{(5)}x_4^{(9)}x_5^{(2)}x_6^{(1)}\\
&+
 x_1^{(0)}x_2^{(0)}x_3^{(9)}x_4^{(5)}x_5^{(2)}x_6^{(1)} +
x_1^{(0)}x_2^{(0)}x_3^{(5)}x_4^{(10)}x_5^{(1)}x_6^{(1)}+
 x_1^{(0)}x_2^{(0)}x_3^{(9)}x_4^{(6)}x_5^{(1)}x_6^{(1)}+
\medskip
 x_1^{(0)}x_2^{(0)}x_3^{(3)}x_4^{(11)}x_5^{(2)}x_6^{(1)}\\
&+
 x_1^{(0)}x_2^{(0)}x_3^{(11)}x_4^{(3)}x_5^{(2)}x_6^{(1)} +
 x_1^{(0)}x_2^{(0)}x_3^{(5)}x_4^{(5)}x_5^{(3)}x_6^{(4)}+
 x_1^{(0)}x_2^{(0)}x_3^{(5)}x_4^{(3)}x_5^{(5)}x_6^{(4)}+
\medskip
 x_1^{(0)}x_2^{(0)}x_3^{(3)}x_4^{(5)}x_5^{(5)}x_6^{(4)}\\
&+
 x_1^{(0)}x_2^{(0)}x_3^{(3)}x_4^{(12)}x_5^{(1)}x_6^{(1)}+
 x_1^{(0)}x_2^{(0)}x_3^{(11)}x_4^{(4)}x_5^{(1)}x_6^{(1)}+
 x_1^{(0)}x_2^{(0)}x_3^{(7)}x_4^{(8)}x_5^{(1)}x_6^{(1)}+
\medskip
 x_1^{(0)}x_2^{(0)}x_3^{(7)}x_4^{(7)}x_5^{(1)}x_6^{(2)}\\
&+  
x_1^{(0)}x_2^{(0)}x_3^{(13)}x_4^{(2)}x_5^{(1)}x_6^{(1)} +
 x_1^{(0)}x_2^{(0)}x_3^{(14)}x_4^{(1)}x_5^{(1)}x_6^{(1)}+
 x_1^{(0)}x_2^{(0)}x_3^{(6)}x_4^{(5)}x_5^{(3)}x_6^{(3)}+
\medskip
 x_1^{(0)}x_2^{(0)}x_3^{(5)}x_4^{(3)}x_5^{(6)}x_6^{(3)}\\
&+
 x_1^{(0)}x_2^{(0)}x_3^{(3)}x_4^{(6)}x_5^{(5)}x_6^{(3)} + 
x_1^{(0)}x_2^{(0)}x_3^{(6)}x_4^{(3)}x_5^{(3)}x_6^{(5)}+
 x_1^{(0)}x_2^{(0)}x_3^{(3)}x_4^{(3)}x_5^{(6)}x_6^{(5)}+
\medskip
 x_1^{(0)}x_2^{(0)}x_3^{(3)}x_4^{(6)}x_5^{(3)}x_6^{(5)}\\
&+
x_1^{(0)}x_2^{(0)}x_3^{(5)}x_4^{(3)}x_5^{(3)}x_6^{(6)}+
 x_1^{(0)}x_2^{(0)}x_3^{(3)}x_4^{(5)}x_5^{(3)}x_6^{(6)}+
 x_1^{(0)}x_2^{(0)}x_3^{(3)}x_4^{(3)}x_5^{(5)}x_6^{(6)}+
\medskip
 x_1^{(0)}x_2^{(0)}x_3^{(3)}x_4^{(3)}x_5^{(3)}x_6^{(8)}\\
&+ 
x_1^{(0)}x_2^{(0)}x_3^{(3)}x_4^{(3)}x_5^{(4)}x_6^{(7)} +
x_1^{(0)}x_2^{(0)}x_3^{(3)}x_4^{(5)}x_5^{(2)}x_6^{(7)}+
 x_1^{(0)}x_2^{(0)}x_3^{(3)}x_4^{(6)}x_5^{(1)}x_6^{(7)}+
\medskip
x_1^{(0)}x_2^{(0)}x_3^{(3)}x_4^{(3)}x_5^{(9)}x_6^{(2)}\\
&+
 x_1^{(0)}x_2^{(0)}x_3^{(3)}x_4^{(3)}x_5^{(10)}x_6^{(1)} + 
x_1^{(0)}x_2^{(0)}x_3^{(5)}x_4^{(3)}x_5^{(7)}x_6^{(2)}+
 x_1^{(0)}x_2^{(0)}x_3^{(5)}x_4^{(7)}x_5^{(3)}x_6^{(2)}+
\medskip
 x_1^{(0)}x_2^{(0)}x_3^{(7)}x_4^{(5)}x_5^{(3)}x_6^{(2)},\\
\zeta_{20}  &=x_1^{(3)}x_2^{(3)}x_3^{(6)}x_4^{(5)}x_5^{(2)}x_6^{(1)}+
x_1^{(3)}x_2^{(3)}x_3^{(6)}x_4^{(4)}x_5^{(3)}x_6^{(1)}+
x_1^{(3)}x_2^{(3)}x_3^{(6)}x_4^{(3)}x_5^{(4)}x_6^{(1)}+
\medskip
x_1^{(3)}x_2^{(3)}x_3^{(6)}x_4^{(2)}x_5^{(5)}x_6^{(1)}\\
&+
x_1^{(3)}x_2^{(3)}x_3^{(6)}x_4^{(1)}x_5^{(6)}x_6^{(1)}+
x_1^{(3)}x_2^{(3)}x_3^{(5)}x_4^{(6)}x_5^{(1)}x_6^{(2)}+
x_1^{(3)}x_2^{(3)}x_3^{(5)}x_4^{(5)}x_5^{(2)}x_6^{(2)}+
\medskip
x_1^{(3)}x_2^{(3)}x_3^{(5)}x_4^{(4)}x_5^{(3)}x_6^{(2)}\\
&+
x_1^{(3)}x_2^{(3)}x_3^{(5)}x_4^{(3)}x_5^{(4)}x_6^{(2)}+
x_1^{(3)}x_2^{(3)}x_3^{(5)}x_4^{(2)}x_5^{(5)}x_6^{(2)}+
x_1^{(3)}x_2^{(3)}x_3^{(5)}x_4^{(1)}x_5^{(6)}x_6^{(2)}+
\medskip
x_1^{(3)}x_2^{(3)}x_3^{(5)}x_4^{(5)}x_5^{(1)}x_6^{(3)}\\
&+
x_1^{(3)}x_2^{(3)}x_3^{(3)}x_4^{(6)}x_5^{(2)}x_6^{(3)}+
x_1^{(3)}x_2^{(3)}x_3^{(6)}x_4^{(2)}x_5^{(3)}x_6^{(3)}+
x_1^{(3)}x_2^{(3)}x_3^{(6)}x_4^{(1)}x_5^{(4)}x_6^{(3)}+
\medskip
x_1^{(3)}x_2^{(3)}x_3^{(5)}x_4^{(2)}x_5^{(4)}x_6^{(3)}\\
&+
x_1^{(3)}x_2^{(3)}x_3^{(3)}x_4^{(4)}x_5^{(4)}x_6^{(3)}+
x_1^{(3)}x_2^{(3)}x_3^{(3)}x_4^{(2)}x_5^{(6)}x_6^{(3)}+
x_1^{(3)}x_2^{(3)}x_3^{(3)}x_4^{(6)}x_5^{(1)}x_6^{(4)}+
\medskip
x_1^{(3)}x_2^{(3)}x_3^{(3)}x_4^{(5)}x_5^{(2)}x_6^{(4)}\\
&+
x_1^{(3)}x_2^{(3)}x_3^{(3)}x_4^{(4)}x_5^{(3)}x_6^{(4)}+
x_1^{(3)}x_2^{(3)}x_3^{(3)}x_4^{(3)}x_5^{(4)}x_6^{(4)}+
x_1^{(3)}x_2^{(3)}x_3^{(3)}x_4^{(2)}x_5^{(5)}x_6^{(4)}+
\medskip
x_1^{(3)}x_2^{(3)}x_3^{(3)}x_4^{(1)}x_5^{(6)}x_6^{(4)}\\
\end{array}$$

\newpage
$$ \begin{array}{ll}
&+
x_1^{(3)}x_2^{(3)}x_3^{(5)}x_4^{(3)}x_5^{(1)}x_6^{(5)}+
x_1^{(3)}x_2^{(3)}x_3^{(6)}x_4^{(1)}x_5^{(2)}x_6^{(5)}+
x_1^{(3)}x_2^{(3)}x_3^{(5)}x_4^{(2)}x_5^{(2)}x_6^{(5)}+
\medskip
x_1^{(3)}x_2^{(3)}x_3^{(3)}x_4^{(4)}x_5^{(2)}x_6^{(5)}\\
&+
x_1^{(3)}x_2^{(3)}x_3^{(5)}x_4^{(1)}x_5^{(3)}x_6^{(5)}+
x_1^{(3)}x_2^{(3)}x_3^{(3)}x_4^{(3)}x_5^{(3)}x_6^{(5)}+
x_1^{(3)}x_2^{(3)}x_3^{(3)}x_4^{(1)}x_5^{(5)}x_6^{(5)}+
\medskip
x_1^{(3)}x_2^{(3)}x_3^{(6)}x_4^{(1)}x_5^{(1)}x_6^{(6)}\\
&+
x_1^{(3)}x_2^{(3)}x_3^{(5)}x_4^{(2)}x_5^{(1)}x_6^{(6)}+
x_1^{(3)}x_2^{(3)}x_3^{(3)}x_4^{(4)}x_5^{(1)}x_6^{(6)}+
x_1^{(3)}x_2^{(3)}x_3^{(3)}x_4^{(3)}x_5^{(2)}x_6^{(6)}+
\medskip
x_1^{(3)}x_2^{(3)}x_3^{(6)}x_4^{(6)}x_5^{(1)}x_6^{(1)},\\
\zeta_{23} &= x_1^{(3)}x_2^{(3)}x_3^{(5)}x_4^{(5)}x_5^{(5)}x_6^{(2)}+
 x_1^{(3)}x_2^{(3)}x_3^{(5)}x_4^{(5)}x_5^{(6)}x_6^{(1)}+
 x_1^{(3)}x_2^{(3)}x_3^{(3)}x_4^{(5)}x_5^{(8)}x_6^{(1)}+
\medskip
 x_1^{(3)}x_2^{(3)}x_3^{(5)}x_4^{(3)}x_5^{(8)}x_6^{(1)}\\
&+
 x_1^{(3)}x_2^{(3)}x_3^{(3)}x_4^{(6)}x_5^{(7)}x_6^{(1)}+
 x_1^{(3)}x_2^{(3)}x_3^{(5)}x_4^{(7)}x_5^{(4)}x_6^{(1)}+
x_1^{(3)}x_2^{(3)}x_3^{(7)}x_4^{(5)}x_5^{(4)}x_6^{(1)} + 
\medskip
 x_1^{(3)}x_2^{(3)}x_3^{(3)}x_4^{(9)}x_5^{(4)}x_6^{(1)}\\
&+
 x_1^{(3)}x_2^{(3)}x_3^{(9)}x_4^{(3)}x_5^{(4)}x_6^{(1)}+
 x_1^{(3)}x_2^{(3)}x_3^{(3)}x_4^{(9)}x_5^{(3)}x_6^{(2)}+
 x_1^{(3)}x_2^{(3)}x_3^{(9)}x_4^{(3)}x_5^{(3)}x_6^{(2)}+
\medskip
x_1^{(3)}x_2^{(3)}x_3^{(5)}x_4^{(9)}x_5^{(2)}x_6^{(1)}\\
&+
 x_1^{(3)}x_2^{(3)}x_3^{(9)}x_4^{(5)}x_5^{(2)}x_6^{(1)} +
x_1^{(3)}x_2^{(3)}x_3^{(5)}x_4^{(10)}x_5^{(1)}x_6^{(1)}+
 x_1^{(3)}x_2^{(3)}x_3^{(9)}x_4^{(6)}x_5^{(1)}x_6^{(1)}+
\medskip
 x_1^{(3)}x_2^{(3)}x_3^{(3)}x_4^{(11)}x_5^{(2)}x_6^{(1)}\\
&+
 x_1^{(3)}x_2^{(3)}x_3^{(11)}x_4^{(3)}x_5^{(2)}x_6^{(1)} +
 x_1^{(3)}x_2^{(3)}x_3^{(5)}x_4^{(5)}x_5^{(3)}x_6^{(4)}+
 x_1^{(3)}x_2^{(3)}x_3^{(5)}x_4^{(3)}x_5^{(5)}x_6^{(4)}+
\medskip
 x_1^{(3)}x_2^{(3)}x_3^{(3)}x_4^{(5)}x_5^{(5)}x_6^{(4)}\\
&+
 x_1^{(3)}x_2^{(3)}x_3^{(3)}x_4^{(12)}x_5^{(1)}x_6^{(1)}+
 x_1^{(3)}x_2^{(3)}x_3^{(11)}x_4^{(4)}x_5^{(1)}x_6^{(1)}+
 x_1^{(3)}x_2^{(3)}x_3^{(7)}x_4^{(8)}x_5^{(1)}x_6^{(1)}+
\medskip
 x_1^{(3)}x_2^{(3)}x_3^{(7)}x_4^{(7)}x_5^{(1)}x_6^{(2)}\\
&+  
x_1^{(3)}x_2^{(3)}x_3^{(13)}x_4^{(2)}x_5^{(1)}x_6^{(1)} +
 x_1^{(3)}x_2^{(3)}x_3^{(14)}x_4^{(1)}x_5^{(1)}x_6^{(1)}+
 x_1^{(3)}x_2^{(3)}x_3^{(6)}x_4^{(5)}x_5^{(3)}x_6^{(3)}+
\medskip
 x_1^{(3)}x_2^{(3)}x_3^{(5)}x_4^{(3)}x_5^{(6)}x_6^{(3)}\\
&+
 x_1^{(3)}x_2^{(3)}x_3^{(3)}x_4^{(6)}x_5^{(5)}x_6^{(3)} + 
x_1^{(3)}x_2^{(3)}x_3^{(6)}x_4^{(3)}x_5^{(3)}x_6^{(5)}+
 x_1^{(3)}x_2^{(3)}x_3^{(3)}x_4^{(3)}x_5^{(6)}x_6^{(5)}+
\medskip
 x_1^{(3)}x_2^{(3)}x_3^{(3)}x_4^{(6)}x_5^{(3)}x_6^{(5)}\\
&+
x_1^{(3)}x_2^{(3)}x_3^{(5)}x_4^{(3)}x_5^{(3)}x_6^{(6)}+
 x_1^{(3)}x_2^{(3)}x_3^{(3)}x_4^{(5)}x_5^{(3)}x_6^{(6)}+
 x_1^{(3)}x_2^{(3)}x_3^{(3)}x_4^{(3)}x_5^{(5)}x_6^{(6)}+
\medskip
 x_1^{(3)}x_2^{(3)}x_3^{(3)}x_4^{(3)}x_5^{(3)}x_6^{(8)}\\
&+ 
x_1^{(3)}x_2^{(3)}x_3^{(3)}x_4^{(3)}x_5^{(4)}x_6^{(7)} +
x_1^{(3)}x_2^{(3)}x_3^{(3)}x_4^{(5)}x_5^{(2)}x_6^{(7)}+
 x_1^{(3)}x_2^{(3)}x_3^{(3)}x_4^{(6)}x_5^{(1)}x_6^{(7)}+
\medskip
x_1^{(3)}x_2^{(3)}x_3^{(3)}x_4^{(3)}x_5^{(9)}x_6^{(2)}\\
&+
 x_1^{(3)}x_2^{(3)}x_3^{(3)}x_4^{(3)}x_5^{(10)}x_6^{(1)} + 
x_1^{(3)}x_2^{(3)}x_3^{(5)}x_4^{(3)}x_5^{(7)}x_6^{(2)}+
 x_1^{(3)}x_2^{(3)}x_3^{(5)}x_4^{(7)}x_5^{(3)}x_6^{(2)}+
\medskip
 x_1^{(3)}x_2^{(3)}x_3^{(7)}x_4^{(5)}x_5^{(3)}x_6^{(2)}.
\end{array}$$
\end{thm}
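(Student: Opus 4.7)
The plan is to follow the same two-stage strategy used throughout Section \ref{pl2}: first decompose $QP_n^{\otimes 6}$ according to admissible weight vectors, then cut down from $S_6$-invariants to $GL_6$-invariants via the extra generator $\theta_6.$ Fix $n$ with $14\leq n\leq 25.$ By Theorem \ref{dlsc6} we already have $\dim QP_n^{\otimes 6}$ and, in parallel, an explicit admissible monomial basis built from the sets $\mathscr C(d,n)$ together with $\varphi(\mathscr C_{(n-6)/2}^{\otimes 6})$ (using Kameko's up-map when $n$ is even). Applying Singer's and Kameko's criteria to the minimal spike in $P_n^{\otimes 6}$ one first lists the finite set $\Omega(n)$ of weight vectors that can occur for an admissible monomial of degree $n$; this gives the direct-sum splitting
\[
QP_n^{\otimes 6} \;\cong\; \bigoplus_{\omega\in\Omega(n)} \bigl((QP_n^{\otimes 6})^{0}(\omega)\oplus (QP_n^{\otimes 6})^{>0}(\omega)\bigr),
\]
together with the upper bound $\dim[QP_n^{\otimes 6}]^{GL_6}\leq \sum_{\omega}\dim[QP_n^{\otimes 6}(\omega)]^{GL_6}.$

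Next, for each piece $QP_n^{\otimes 6}(\omega)$ I would compute $[QP_n^{\otimes 6}(\omega)]^{S_6}$ exactly as in the proofs of Lemmas \ref{bdc1-1}, \ref{bdc1-2}, Lemma \ref{bdc2-1} and the $n=8,10,11,13$ verifications of Section \ref{pl2}: write $[f]_\omega=\sum\gamma_j[{\rm adm}_j]_\omega,$ apply the five transpositions $\theta_1,\ldots,\theta_5$ (which together generate $S_6$), expand $\theta_i(f)$ back into admissible monomials modulo $(\overline{\mathcal A}P^{\otimes 6}_n\cap P^{\otimes 6}_n(\omega))+P^{\otimes 6}_n(<\omega),$ and read off the linear system on the $\gamma_j$'s from $\theta_i(f)+f\equiv_\omega 0.$ The subspaces $(QP_n^{\otimes 6})^{0}(\omega)$ typically contribute a single $S_6$-fixed class (the symmetrization of a representative monomial) while $(QP_n^{\otimes 6})^{>0}(\omega)$ very often contributes zero. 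The additional relation $\theta_6(f)+f\equiv_\omega 0$ then kills most of the remaining $S_6$-invariants, leaving (by the table) a single $GL_6$-invariant class for $n\in\{14,15,16,17,20,23\}$ and nothing for the six other degrees. For the even degrees $n\in\{16,18,20,22,24\}$ one should also recycle the previous-section computations via Kameko's inequality
\[
\dim[QP_n^{\otimes 6}]^{GL_6}\leq \dim[{\rm Ker}(\widetilde{Sq^0_*})_n]^{GL_6}+\dim[QP^{\otimes 6}_{(n-6)/2}]^{GL_6},
\]
together with the already-established vanishing of $[QP_m^{\otimes 6}]^{GL_6}$ for small $m$, to reduce the size of the systems that need to be solved.

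The converse inequality, and the explicit identification of the generators $\zeta_{14},\zeta_{15},\zeta_{16},\zeta_{17},\zeta_{20},\zeta_{23},$ is handled dually. For each listed sum $\zeta_n\in [P_n^{\otimes 6}]^*$ one verifies $\zeta_n\in {\rm Ann}_{\overline{\mathcal A}}[P_n^{\otimes 6}]^{*}$ by checking that $(\zeta_n)Sq^{2^i}=0$ for the finitely many $i$ with $2^i\leq n/2$ (the unstable condition takes care of the rest); this is a long but mechanical Cartan-formula computation, identical in flavor to the checks for $\zeta$ in Theorem \ref{dlc2} and $\widetilde{\zeta}$ in Theorem \ref{dlc3}. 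Pairing $\zeta_n$ with the $S_6$-symmetric class of $P_n^{\otimes 6}$ produced in the first stage shows that $\langle [\zeta_n],-\rangle$ is nontrivial on $[QP_n^{\otimes 6}]^{GL_6}$ (as in the non-degeneracy arguments at the ends of the proofs of Theorems \ref{dlc2} and \ref{dlc3}), so $[\zeta_n]$ is a basis of the one-dimensional coinvariant space, completing the claim.

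The main obstacle, as always in this circle of problems, is book-keeping: for $n=17,20,23,24,25$ the spaces $QP_n^{\otimes 6}$ have dimensions between $3135$ and $9920,$ so the linear systems coming from $\theta_1,\ldots,\theta_6$ on each weight-vector piece are enormous. The practical way to control them is the decomposition $QP_n^{\otimes 6}(\omega)\cong (QP_n^{\otimes 6})^{0}(\omega)\oplus (QP_n^{\otimes 6})^{>0}(\omega)$ combined with the homomorphisms $\varphi_{(u,v)}\colon P^{\otimes 6}\to P^{\otimes 5}$ used in Remark \ref{nxM}: projecting to rank five lets us reuse the known $QP^{\otimes 5}_m(\omega)$ bases from \cite{Sum1,Sum2,Tin,Phuc4} to detect inadmissibilities and invariants, and to confirm the output with the MAGMA package \cite{Magma} as indicated at the end of the introduction to Section \ref{pl2}. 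Once this is in place, the final table and the explicit $\zeta_n$'s drop out.
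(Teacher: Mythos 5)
Your proposal is correct and tracks the paper's own method. In fact, the paper gives no explicit proof for this theorem at all: it states Theorem \ref{dlsc6} (the dimension table for $QP_n^{\otimes 6}$, $14\leq n\leq 25$) and then says only "the application of Theorem \ref{dlsc6} yields the next technical result," leaving the $S_6$-then-$GL_6$ weight-vector computations implicit, exactly as is done explicitly for the lower degrees $n\leq 13$ and for $n=26$ later in the same subsection. Your outline -- weight-vector splitting via Singer/Kameko criteria, solving the $\theta_1,\dots,\theta_5$ linear systems on each $(QP_n^{\otimes 6})^{0}(\omega)$ and $(QP_n^{\otimes 6})^{>0}(\omega)$, then imposing $\theta_6$, using the Kameko-kernel inequality for the even degrees, verifying each $\zeta_n$ via $Sq^{2^i}$-annihilation and the unstable condition, and pairing against the invariant classes to pin down generators, with $\varphi_{(u,v)}$ projections to rank five and MAGMA as a cross-check -- is precisely the strategy the paper runs on the degrees it does spell out. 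The one nit is that the nondegeneracy pairing must be taken against a basis of the full $GL_6$-invariant space (not merely an $S_6$-symmetric class), but the paper's own arguments in Theorems \ref{dlc2} and \ref{dlc3} make the same distinction and you clearly have it in mind.
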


Applying the representation in the lambda algebra of $Tr_6^{\mathcal A},$ one gets
$$ \begin{array}{ll}
Tr_6^{\mathcal A}([\zeta_{14}]) &= [\psi_6(\zeta_{14})] = [\lambda^{2}_0\lambda_3^2\lambda_2\lambda_6 + \lambda^{2}_0\lambda_3^2\lambda_4^2 + \lambda^{2}_0\lambda_3\lambda_5\lambda_4\lambda_2 + \lambda^{2}_0\lambda_7\lambda_1\lambda_5\lambda_1 + \delta(\lambda^{2}_0\lambda_3^2\lambda_9 + \lambda^{2}_0\lambda_3\lambda_9\lambda_3)]\\
\medskip
&= h_0^{2}d_0=h_2Ph_2\in {\rm Ext}_{\mathcal A}^{6, 6+14}(\mathbb Z/2, \mathbb Z/2),\\
\medskip
Tr_6^{\mathcal A}([\zeta_{15}]) &= [\psi_6(\zeta_{15})] = [\lambda_0^{5}\lambda_{15}] = h_0^{5}h_4\in {\rm Ext}_{\mathcal A}^{6, 6+15}(\mathbb Z/2, \mathbb Z/2),\\
Tr_6^{\mathcal A}([\zeta_{16}]) &= [\psi_6(\zeta_{16})] = [\lambda_1^{2}\lambda_3^2\lambda_2\lambda_6 + \lambda_1^{2}\lambda_3^2\lambda_4^2 + \lambda_1^{2}\lambda_3\lambda_5\lambda_4\lambda_2 + \lambda_1^{2}\lambda_7\lambda_1\lambda_5\lambda_1 + \delta(\lambda_1^{2}\lambda_3^2\lambda_9 + \lambda_1^{2}\lambda_3\lambda_9\lambda_3)]\\
\medskip
&=h_1^{2}d_0=h_3Ph_1 = c_0^{2}\in {\rm Ext}_{\mathcal A}^{6, 6+16}(\mathbb Z/2, \mathbb Z/2),\\
Tr_6^{\mathcal A}([\zeta_{17}]) &= [\psi_6(\zeta_{17})] = [\lambda_0^{2}\lambda_3^{3}\lambda_8 + \lambda_0^{2}(\lambda_3\lambda_5^{2} + \lambda_0^{2}\lambda_3^{2}\lambda_7)\lambda_4 + \lambda_0^{2}\lambda_7\lambda_5\lambda_3\lambda_2 + \lambda_0^{2}\lambda_3^{2}\lambda_5\lambda_6\\
& + \delta(\lambda_0^{2}\lambda_3\lambda_5\lambda_{10} + \lambda_0^{2}\lambda_3\lambda_{12}\lambda_3 + \lambda_0^{2}\lambda_4\lambda_7^{2} + \lambda^{3}_0\lambda_{11}\lambda_7)]\\
\medskip
&=h_0^{2}e_0 = h_0h_2d_0\in {\rm Ext}_{\mathcal A}^{6, 6+17}(\mathbb Z/2, \mathbb Z/2),\\
Tr_6^{\mathcal A}([\zeta_{20}]) &= [\psi_6(\zeta_{20})] = [\lambda_3^4\lambda_2\lambda_6 + \lambda_3^4\lambda_4^2 + \lambda_3^{3}\lambda_5\lambda_4\lambda_2 + \lambda_3^{2}\lambda_7\lambda_1\lambda_5\lambda_1 + \delta(\lambda_3^4\lambda_9 + \lambda_3^{3}\lambda_9\lambda_3)]\\
\medskip
&=h_2^{2}d_0=h_0h_2e_0 = h_0^{2}g_1\in {\rm Ext}_{\mathcal A}^{6, 6+20}(\mathbb Z/2, \mathbb Z/2),\\
Tr_6^{\mathcal A}([\zeta_{23}]) &= [\psi_6(\zeta_{23})] = [\lambda_3^{5}\lambda_8 + \lambda_3^{2}(\lambda_3\lambda_5^{2} + \lambda_0^{2}\lambda_3^{2}\lambda_7)\lambda_4 + \lambda_3^{2}\lambda_7\lambda_5\lambda_3\lambda_2 + \lambda_3^{4}\lambda_5\lambda_6\\
& + \delta(\lambda_3^{3}\lambda_5\lambda_{10} + \lambda_3^{3}\lambda_{12}\lambda_3 + \lambda_3^{2}\lambda_4\lambda_7^{2} + \lambda_3^{2}\lambda_0\lambda_{11}\lambda_7)]\\
\medskip
&=h_2^{2}e_0 = h_0h_2g_1\in {\rm Ext}_{\mathcal A}^{6, 6+23}(\mathbb Z/2, \mathbb Z/2).
\end{array}$$
On the other side, for each $13\leq n\leq 25,$ due to Bruner \cite{Bruner}, the elements $h_0^{2}d_0,$ $h_0^{5}h_4,$ $h_1^{2}d_0,$ $h_0^{2}e_0,$ $h_2^{2}d_0,$ and $h_2^{2}e_0$ are non-zero in ${\rm Ext}_{\mathcal A}^{6, 6+14}(\mathbb Z/2, \mathbb Z/2),$ ${\rm Ext}_{\mathcal A}^{6, 6+15}(\mathbb Z/2, \mathbb Z/2),$ ${\rm Ext}_{\mathcal A}^{6, 6+16}(\mathbb Z/2, \mathbb Z/2),$ ${\rm Ext}_{\mathcal A}^{6, 6+17}(\mathbb Z/2, \mathbb Z/2),$ ${\rm Ext}_{\mathcal A}^{6, 6+20}(\mathbb Z/2, \mathbb Z/2),$ and ${\rm Ext}_{\mathcal A}^{6, 6+23}(\mathbb Z/2, \mathbb Z/2),$ respectively. At the same time, ${\rm Ext}_{\mathcal A}^{6, 6+n}(\mathbb Z/2, \mathbb Z/2) = 0$ otherwise. Combining these data, we obtain the following corollary.
\begin{corl}
Let $n$ be a positive integer with $13\leq n\leq 25.$ Then, the sixth transfer
$$Tr_6^{\mathcal A}: \mathbb Z/2 \otimes_{GL_6} {\rm Ann}_{\overline{\mathcal A}}[P_{n}^{\otimes 6}]^{*}\to {\rm Ext}_{\mathcal A}^{6, 6+n}(\mathbb Z/2, \mathbb Z/2)$$
is an isomorphism.
\end{corl}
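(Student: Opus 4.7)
The plan is to reduce the statement to two ingredients already assembled in the paper: the explicit computation of the domain in each degree from the preceding theorem together with the $n=13$ computation carried out in Remark and the surrounding text, and Bruner's tabulation of the codomain \cite{Bruner}; and then to use the lambda-algebra representation of $Tr_6^{\mathcal A}$ to exhibit a match between generators.

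First, I would partition $\{13,\ldots,25\}$ into three groups according to the computed data. For $n=13$, the refutation of the Moetele--Mothebe dimension of $QP_{13}^{\otimes 6}$ in Remark yields $[QP_{13}^{\otimes 6}]^{GL_6}=0$ once the $GL_6$-invariants of each weight-vector summand $QP_{13}^{\otimes 6}(\overline{\omega}_{(i)})$ for $1\leq i\leq 4$ are checked to vanish by the same $\theta_i$-relation technique used for $\overline{\omega}_{(2)}$; this, combined with ${\rm Ext}_{\mathcal A}^{6,6+13}(\mathbb Z/2,\mathbb Z/2)=0$ from \cite{Bruner}, makes $Tr_6^{\mathcal A}$ a trivial isomorphism. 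For $n\in\{18,19,21,22,24,25\}$, the preceding theorem gives a vanishing domain and \cite{Bruner} gives a vanishing codomain, so the transfer is again a trivial isomorphism for free.

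The substantive cases are $n\in\{14,15,16,17,20,23\}$. In each, the preceding theorem provides a one-dimensional domain generated by an explicit class $[\zeta_n]$, and \cite{Bruner} gives a one-dimensional codomain. To close the argument I would invoke the representation $\psi_6\colon [P^{\otimes 6}_n]^{*}\to \Lambda^{6,n}$ of $Tr_6^{\mathcal A}$ established by Ch\ohorn n and H\`a \cite{C.H2}, computed term-by-term on $\zeta_n$ by the recursive formula $\psi_6(x_1^{(j_1)}\cdots x_6^{(j_6)})=\sum_{k\geq j_6}\psi_5((x_1^{(j_1)}\cdots x_5^{(j_5)})Sq^{k-j_6})\,\lambda_k$. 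After grouping like monomials in $\Lambda$ and normalizing via the Adem relations \eqref{qh}, I expect to identify $\psi_6(\zeta_n)$, modulo an explicit coboundary built from the differential \eqref{vp}, with a standard cobar cycle for $h_0^{2}d_0$, $h_0^{5}h_4$, $h_1^{2}d_0=c_0^{2}$, $h_0^{2}e_0$, $h_2^{2}d_0$, and $h_2^{2}e_0$ for $n=14,15,16,17,20,23$ respectively. Since each of these classes is non-zero in the one-dimensional codomain by \cite{Bruner}, the transfer is surjective, hence an isomorphism.

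The main obstacle will be the $\Lambda$-reduction of $\psi_6(\zeta_n)$ for the longer classes, particularly $\zeta_{14}$, $\zeta_{17}$, $\zeta_{20}$, $\zeta_{23}$, each of which is a sum of more than thirty divided-power monomials. The computation is routine but error-prone: one must track the admissibility of each $\lambda$-string produced, apply the Adem relations to bring non-admissible products to normal form, and then isolate coboundaries that account for discrepancies between the raw $\psi_6$-output and a chosen textbook representative of the target Ext class. A MAGMA verification of the type the author describes is therefore a sensible safety net. As a benefit, the same procedure simultaneously confirms the $n=13$ case independently of \cite{Bruner} by showing that $\psi_6$ sends any candidate class into a coboundary, and it transparently extends to the parallel rank $7$ and $8$ analyses mentioned in the Appendix.
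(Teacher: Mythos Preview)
Your proposal is correct and follows essentially the same approach as the paper: split the range according to whether the domain computed in the preceding theorem (and the $n=13$ discussion) is zero or one-dimensional, invoke Bruner \cite{Bruner} for the codomain, and in the six nontrivial degrees use the $\psi_6$-representation of $Tr_6^{\mathcal A}$ to match each $[\zeta_n]$ with the listed nonzero Ext class. The paper carries out exactly these $\psi_6$-computations explicitly (with the same coboundary corrections you anticipate), so your outline aligns with the paper's argument; your final remark about $\psi_6$ ``confirming'' the $n=13$ case is superfluous, since the domain there is already trivial.
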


Next, we investigate the behavior of the sixth transfer in some internal degrees $\geq 26.$ More precisely, we consider generic degree $n_s = 6(2^{s}-1) + 10.2^{s},$ for all $s\geq 0.$ Let us recall that due to Mothebe, Kaelo, and Ramatebele \cite{MKR}, for $s = 0,$ the cohit $\mathbb Z/2$-module   $QP_{n_0}^{\otimes 6}= QP_{10}^{\otimes 6}$ has dimension $945.$ For $s > 0,$ we remark that $n_s = 2^{s+3} + 2^{s+2} + 2^{s+1} + 2^{s} + 2^{s-1} + 2^{s-1} - 6,$ so $\mu(n_s) = 6$ for arbitrary $s > 1,$ which implies that by Kameko's theorem \cite{Kameko}, the iterated homomorphism $((\widetilde {Sq^0_*})_{n_{s}})^{s-1}: QP^{\otimes 6}_{n_s}  \longrightarrow QP^{\otimes 6}_{n_1}$ is an isomorphism of $\mathbb Z/2[GL_6]$-modules, for every $s > 0.$ Thus, we need to compute the dimension of $QP_{n_1}^{\otimes 6} = QP_{26}^{\otimes 6}.$ We observe that the Kameko map $(\widetilde {Sq^0_*})_{26}: QP^{\otimes 6}_{26} \longrightarrow QP^{\otimes 6}_{10}$ is an epimorphism, and $\dim QP^{\otimes 6}_{10} = 945$; so to compute the dimension of $QP_{26}^{\otimes 6},$ we need only to determine the kernel of $(\widetilde {Sq^0_*})_{26}.$ 

Suppose that $t$ is an admissible monomial in $P^{\otimes 6}_{26}$ such that $[t]\in {\rm Ker}((\widetilde {Sq^0_*})_{26}).$ Since $z = t_1^{15}t_2^{7}t_3^{3}t_4\in P^{\otimes 6}_{26}$ is the minimal spike monomial, and $\omega(z) = (4,3,2,1),$ by Singer's criterion on hit polynomials, $\omega_1(t)\geq 4.$ Since $\deg(t)$ is even, either $\omega_1(t) = 4$ or $\omega_1(t) = 6.$ If $\omega_1(t) = 6,$ then $t = t_it_jt_kt_lt_mt_ny^2,$ with $1\leq i<j<k<l<m<n\leq 6,$ and $y\in P^{\otimes 6}_{10}.$ Since $t$ is admissible, by Kameko's criterion on inadmissible monomials, we must have that $y$ is admissible, which implies that $(\widetilde {Sq^0_*})_{26}([t]) = [y]\neq [0],$ which contradicts with the fact that $[t]\in {\rm Ker}((\widetilde {Sq^0_*})_{26}).$ So, $\omega_1(t) = 4,$ and $t$ is of the form $t_it_jt_kt_l\underline{t}^2,$ in which $1\leq i<j<k<l\leq 6,$ and $\underline{t}$ is an admissible monomial of degree $11$ in $\mathcal A$-module $P^{\otimes 6}.$ 
%Noting that $\deg(\underline{t})$ is odd, and that $t_1^{7}t_2^{3}t_3\in P_{11}^{\otimes 6}$ is the minimal spike with $\omega(t_1^{7}t_2^{3}t_3) = (3,2,1),$ so either $\omega_1(\underline{t}) = 3$ or $\omega_1(\underline{t}) = 5.$ This means that either $\underline{t} = t_it_jt_kz^{2},\, 1\leq i<j<k\leq 6,\, z\in P_{4}^{\otimes 6}$ or $\underline{t} = t_it_jt_kt_lt_mz^{2},\, 1\leq i<j<k<l<m\leq 6,\, z\in P_{3}^{\otimes 6}.$ Since $\underline{t}$ is admissible, $z$ is, too. We have shown above that $\dim QP^{\otimes 6}_{3} = 41,$ and $\dim QP^{\otimes 6}_{4} = 85.$ Furthermore, it is easy to check that $QP^{\otimes 6}_{3}\cong (QP^{\otimes 6}_{3})^{0}(1,1)\bigoplus (QP^{\otimes 6}_{3})^{0}(3,0),$ where $\dim (QP^{\otimes 6}_{3})^{0}(1,1) = 21,\, \dim (QP^{\otimes 6}_{3})^{0}(3,0) = 20,$ and that $QP^{\otimes 6}_{4}\cong (QP^{\otimes 6}_{4})^{0}(2,1)\bigoplus (QP^{\otimes 6}_{4})^{0}(4,0),$ where $\dim (QP^{\otimes 6}_{4})^{0}(2,1) = 70,$ and $\dim (QP^{\otimes 6}_{4})^{0}(4,0) = 15.$ 
As computed above, $\omega(\underline{t})$ belongs to $\{(3,2,1),\, (3,4),\, (5,1,1),\, (5,3)\}.$ 
%$$ \begin{array}{ll}
 %QP_{11}^{\otimes 6}&\cong \bigoplus (QP_{11}^{\otimes 6})^{0}(3,2,1)\bigoplus (QP_{11}^{\otimes 6})^{0}(3,4)\bigoplus (QP_{11}^{\otimes 6})^{0}(5,1,1)\bigoplus (QP_{11}^{\otimes 6})^{0}(5,3)\\
%&\quad \bigoplus (QP_{11}^{\otimes 6})^{> 0}(3,2,1)\bigoplus (QP_{11}^{\otimes 6})^{> 0}(3,4)\bigoplus (QP_{11}^{\otimes 6})^{> 0}(5,1,1)\bigoplus (QP_{11}^{\otimes 6})^{> 0}(5,3)
%\end{array}
%$$
%where $$ \begin{array}{ll}
%\medskip
%&\dim (QP_{11}^{\otimes 6})^{0}(3,2,1) = 880,\ \ \dim (QP_{11}^{\otimes 6})^{0}(3,4) = 60,\\
%\medskip
%& \dim (QP_{11}^{\otimes 6})^{0}(5,1,1) = 90,\ \ \dim (QP_{11}^{\otimes 6})^{0}(5,3) = 60,\\
%\medskip
%&\dim (QP_{11}^{\otimes 6})^{> 0}(3,2,1) = 16,\ \ \dim (QP_{11}^{\otimes 6})^{> 0}(3,4) = 24,\\
%& \dim (QP_{11}^{\otimes 6})^{>0}(5,1,1) = 30,\ \ \dim (QP_{11}^{\otimes 6})^{>0}(5,3) = 45.
%\end{array}$$ 
%These results are easily inferred directly from the calculations by Mothebe, Kaelo, and Ramatebele \cite{MKR} on the dimension of the cohit space $QP_{11}^{\otimes 6}.$ 
Therefore, the weight vector of $t$ is one of the following sequences: $$(4, 3,2,1),\ (4, 3,4),\ (4, 5,1,1),\ (4, 5,3).$$
Then, one has an isomorphism 
$$ \begin{array}{ll}
 {\rm Ker}((\widetilde {Sq^0_*})_{26})&\cong (QP_{26}^{\otimes 6})^{0}\bigoplus (QP_{26}^{\otimes 6})^{>0}(4,3,2,1)\bigoplus (QP_{26}^{\otimes 6})^{>0}(4,3,4)\\
&\quad\quad\quad\quad\bigoplus (QP_{26}^{\otimes 6})^{>0}(4,5,1,1)\bigoplus (QP_{26}^{\otimes 6})^{>0}(4,5,3).
\end{array}$$
According to Walker, and Wood \cite{Walker-Wood}, $QP_{26}^{\otimes 5}$ has dimension $1024.$ We remark that sine $\mu(26) = 4,$ by Sum \cite{Sum1,Sum2}, one gets $\dim (QP_{26}^{\otimes 5})^{0}  = 64\times \binom{5}{4} = 320.$ Since $QP_{26}^{\otimes 5}\cong (QP_{26}^{\otimes 5})^{0}\bigoplus (QP_{26}^{\otimes 5})^{>0},$ $\dim (QP_{26}^{\otimes 5})^{>0} = 1024 - 320 = 704.$ Then, the dimension of $(QP_{26}^{\otimes 6})^{0}$ reads as follows: $$\dim (QP_{26}^{\otimes 6})^{0} = 64\times \binom{6}{4} + 704\times \binom{6}{5} = 960 + 4224 = 5184.$$ 
By direct calculations, we find that 
%$$ \begin{array}{ll}
%&\dim (QP_{26}^{\otimes 6})^{>0}(4,3,2,1) + \dim (QP_{26}^{\otimes 6})^{>0}(4,3,4)\\
%& + \dim (QP_{26}^{\otimes 6})^{>0}(4,5,1,1)+\dim (QP_{26}^{\otimes 6})^{>0}(4,5,3) = 3636.
%\end{array}$$
$$ \begin{array}{ll}
\medskip
&\dim (QP_{26}^{\otimes 6})^{>0}(4,3,2,1)  = 2843, \ \ \dim (QP_{26}^{\otimes 6})^{>0}(4,3,4) = 247,\\
& \dim (QP_{26}^{\otimes 6})^{>0}(4,5,1,1) = 336,\ \ \dim (QP_{26}^{\otimes 6})^{>0}(4,5,3) = 210,
\end{array}$$
Thus, since $QP_{26}^{\otimes 6}\cong  {\rm Ker}((\widetilde {Sq^0_*})_{26})\bigoplus QP_{10}^{\otimes 6},$ one gets 
$$  \begin{array}{ll}
 \dim QP_{26}^{\otimes 6} &= \dim {\rm Ker}((\widetilde {Sq^0_*})_{26})+\dim QP_{10}^{\otimes 6}\\
& = \dim (QP_{26}^{\otimes 6})^{0}+\dim (QP_{26}^{\otimes 6})^{>0}(4,3,2,1)   + \dim (QP_{26}^{\otimes 6})^{>0}(4,3,4) \\
&\quad+  \dim (QP_{26}^{\otimes 6})^{>0}(4,5,1,1) + \dim (QP_{26}^{\otimes 6})^{>0}(4,5,3) + \dim QP_{10}^{\otimes 6}\\
&= 5184 + 2843+247+336+210 + 945 = 9765.
\end{array}$$
%but it is not easy to make that. To overcome this difficulty, we use the algorithm in MAGMA, and notice that $\dim QP_{n_1}^{\otimes 6} = \dim V -\dim H = 169911 - 160146 = 9765$, and that the $GL_6$-invariants space $[QP_{n_1}^{\otimes 6}]^{GL_6}$ is trivial. S
Applying this result, we obtain that the invariants $[(QP_{26}^{\otimes 6})(4,3,2,1)]^{GL_6},$ $[(QP_{26}^{\otimes 6})(4,3,4)]^{GL_6},$ $[(QP_{26}^{\otimes 6})(4,5,1,1)]^{GL_6},$ and $[(QP_{26}^{\otimes 6})(4,5,3)]^{GL_6}$ are zero, and so $[{\rm Ker}((\widetilde {Sq^0_*})_{26})]^{GL_6}$ is zero. Now, as shown above, $QP_{n_s}^{\otimes 6}\cong QP_{26}^{\otimes 6},$ for all $s\geq 1.$ Combining these with the fact that $[QP_{10}^{\otimes 6}]^{GL_6} = 0$, and $\dim [QP_{26}^{\otimes 6}]^{GL_6}\leq \dim [{\rm Ker}((\widetilde {Sq^0_*})_{26})]^{GL_6} + \dim [QP_{10}^{\otimes 6}]^{GL_6},$ we may state that 

\begin{thm}\label{dl26}
For each non-negative integer $s,$ the coinvariant $\mathbb Z/2 \otimes_{GL_6} {\rm Ann}_{\overline{\mathcal A}}[P_{n_s}^{\otimes 6}]^{*}$ is trivial.
\end{thm}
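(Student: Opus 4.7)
The plan is to reduce everything, via Kameko's theorem, to a single computation in internal degree $26$, and then to decompose the relevant kernel by weight vectors and apply the group-action techniques already used in Subsections~\ref{s2.1} and \ref{pl2}. First I would dispose of the case $s=0$: here $n_0=10$, and the previous calculation in Subsection~\ref{pl2} has already established $[QP_{10}^{\otimes 6}]^{GL_6}=0$, so the dual coinvariant is trivial. For $s\geq 1$, observe that
\[
n_s \;=\; 6(2^{s}-1) + 10\cdot 2^{s} \;=\; 2^{s+3}+2^{s+2}+2^{s+1}+2^{s}+2\cdot 2^{s-1}-6,
\]
so $\mu(n_s)=6$ for every $s\geq 1$. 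By Kameko's theorem \cite{Kameko}, the iterated squaring operation
$(\widetilde{Sq^{0}_{*}})^{s-1}_{n_s}\colon QP^{\otimes 6}_{n_s}\to QP^{\otimes 6}_{n_1}=QP^{\otimes 6}_{26}$
is an isomorphism of $\mathbb Z/2[GL_6]$-modules. Hence it suffices to show that $[QP^{\otimes 6}_{26}]^{GL_6}=0$.

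Next I would exploit the short-exact-type decomposition coming from the Kameko epimorphism
$(\widetilde{Sq^{0}_{*}})_{26}\colon QP^{\otimes 6}_{26}\twoheadrightarrow QP^{\otimes 6}_{10}$, which yields the estimate
\[
\dim[QP^{\otimes 6}_{26}]^{GL_6}\;\leq\;\dim[{\rm Ker}((\widetilde{Sq^{0}_{*}})_{26})]^{GL_6}\;+\;\dim[QP^{\otimes 6}_{10}]^{GL_6}.
\]
The second summand is $0$ by the earlier case. So the crux is to prove $[{\rm Ker}((\widetilde{Sq^{0}_{*}})_{26})]^{GL_6}=0$. To analyse this kernel, I would use the minimal spike $t_1^{15}t_2^{7}t_3^{3}t_4$ of weight $(4,3,2,1)$ together with Singer's criterion on hit polynomials to conclude that any admissible $t$ with $[t]\in {\rm Ker}$ satisfies $\omega_1(t)\in\{4,6\}$; the case $\omega_1(t)=6$ is ruled out by Kameko's criterion on inadmissible monomials, since then $t=t_1t_2t_3t_4t_5t_6\,y^2$ with $y$ admissible of degree $10$, giving a nonzero Kameko image. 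Therefore $\omega_1(t)=4$, i.e.\ $t=t_it_jt_kt_l\underline{t}^2$ with $\underline{t}$ admissible of degree $11$; combined with the list of weight vectors for admissible degree-$11$ monomials in $P^{\otimes 6}$ (namely $(3,2,1),(3,4),(5,1,1),(5,3)$, as recorded above for the degree-$11$ case), this gives
\[
{\rm Ker}((\widetilde{Sq^{0}_{*}})_{26})\;\cong\;(QP^{\otimes 6}_{26})^{0}\,\bigoplus\,\bigoplus_{\omega\in\Omega}(QP^{\otimes 6}_{26})^{>0}(\omega),
\]
where $\Omega=\{(4,3,2,1),(4,3,4),(4,5,1,1),(4,5,3)\}$.

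From here, I would compute $[(QP^{\otimes 6}_{26})(\omega)]^{GL_6}$ for each $\omega\in\Omega$ separately, following the recipe used repeatedly in Section~3 and Subsection~\ref{pl2}: enumerate the admissible basis of the relevant block, describe the $S_6$-orbit decomposition, suppose a representative $f\equiv_{\omega}\sum\gamma_j{\rm adm}_j$ lies in the $S_6$-invariant subspace and resolve the system $\theta_i(f)+f\equiv_{\omega}0$ for $1\leq i\leq 5$ (which determines the coefficients $\gamma_j$ up to a one-parameter family over each orbit), and finally impose $\theta_6(f)+f\equiv_{\omega}0$ to kill the remaining free parameters. The same argument applied to the summand $(QP^{\otimes 6}_{26})^{0}$ (via the standard identification with $\bigoplus_{k\leq 5}\binom{6}{k}(QP^{\otimes k}_{26})^{>0}$) shows that this summand contributes nothing to the $GL_6$-invariants either, since no admissible spike of degree $26$ on fewer than six variables is $GL_6$-symmetric. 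Combining the four vanishing statements with the earlier case $s=0$ gives the theorem.

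The main obstacle, as in the other cases treated in this paper, will be the sheer size of the admissible bases in each weight block---especially the block of dimension $2843$ corresponding to $\omega=(4,3,2,1)$. I would therefore organise the argument by first splitting each $(QP^{\otimes 6}_{26})^{>0}(\omega)$ into $S_6$-submodules generated by a small number of admissible monomials (using the technique of Lemmas \ref{bdc1-1}--\ref{bdc1-2}), thereby reducing the $S_6$-invariant computation to linear systems of manageable size, and only then applying $\theta_6$. Verification of the explicit dimension counts and of the admissibility/inadmissibility of boundary monomials can be offloaded to the MAGMA implementation \cite{Magma} already used elsewhere in this Appendix, which gives an independent cross-check of the claimed triviality of each $[(QP^{\otimes 6}_{26})(\omega)]^{GL_6}$.
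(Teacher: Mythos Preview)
Your proposal is correct and follows essentially the same route as the paper: reduce $s\geq 1$ to $s=1$ via the iterated Kameko isomorphism, then bound $[QP^{\otimes 6}_{26}]^{GL_6}$ by $[{\rm Ker}((\widetilde{Sq^{0}_{*}})_{26})]^{GL_6}+[QP^{\otimes 6}_{10}]^{GL_6}$, identify the four admissible weight vectors $(4,3,2,1),(4,3,4),(4,5,1,1),(4,5,3)$ via the minimal spike and Kameko's criterion, and verify that each weight block has trivial $GL_6$-invariants by the $S_6$-orbit/$\theta_i$ technique. One small slip: $\mu(n_1)=\mu(26)=4$, not $6$ (the paper only claims $\mu(n_s)=6$ for $s>1$); this is harmless since for $s=1$ the iterated map $(\widetilde{Sq^{0}_{*}})^{0}$ is the identity anyway.
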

Now, it is well-known that (see, for instance, Bruner \cite{Bruner}), the elements $h_1Ph_1$, $h_2^{2}g_1 = h_4Ph_2$, and $D_2$ are non-zero in ${\rm Ext}_{\mathcal A}^{6, 6+n_0}(\mathbb Z/2, \mathbb Z/2)$, ${\rm Ext}_{\mathcal A}^{6, 6+n_1}(\mathbb Z/2, \mathbb Z/2)$, and ${\rm Ext}_{\mathcal A}^{6, 6+n_2}(\mathbb Z/2, \mathbb Z/2),$ respectively. Combining these data, one gets the corollary below.
\begin{corl}\label{hqpl1}
$Tr_6^{\mathcal A}$ does not detect the non-zero elements $h_1Ph_1$, $h_4Ph_2,$ and $D_2.$ Consequently, the transfer homomorphism  $$Tr_6^{\mathcal A}: \mathbb Z/2 \otimes_{GL_6} {\rm Ann}_{\overline{\mathcal A}}[P_{n_s}^{\otimes 6}]^{*}\to {\rm Ext}_{\mathcal A}^{6, 6+n_s}(\mathbb Z/2, \mathbb Z/2)$$ is not an epimorphism for $0\leq s\leq 2.$
\end{corl}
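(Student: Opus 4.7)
The plan is to obtain this corollary as an immediate consequence of Theorem~\ref{dl26} combined with the known non-triviality of the indicated classes in the cohomology of $\mathcal A$. More precisely, since the map
$$Tr_6^{\mathcal A}: \mathbb Z/2 \otimes_{GL_6} {\rm Ann}_{\overline{\mathcal A}}[P_{n_s}^{\otimes 6}]^{*}\to {\rm Ext}_{\mathcal A}^{6, 6+n_s}(\mathbb Z/2, \mathbb Z/2)$$
has trivial domain for every $s\geq 0$ by Theorem~\ref{dl26}, the map itself must be identically zero. Thus every non-zero element of the target automatically fails to lie in the image, which proves both assertions simultaneously.

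For the explicit verification, I would first specialize $n_s = 6(2^s-1) + 10\cdot 2^s$ to $s=0,1,2$, obtaining $n_0 = 10$, $n_1 = 26$, $n_2 = 58$. Then I would appeal to Theorem~\ref{dl26} to conclude $\mathbb Z/2 \otimes_{GL_6} {\rm Ann}_{\overline{\mathcal A}}[P_{n_s}^{\otimes 6}]^{*} = 0$ in each of these three degrees. For completeness, one could also remark that this matches the earlier computation $[QP_{10}^{\otimes 6}]^{GL_6} = 0$ carried out in Subsection~\ref{pl2} directly, and that the $s=1,2$ cases reduce to the $s=1$ case via Kameko's iterated squaring operation, since $\mu(n_s) = 6$ for $s\geq 1$ and therefore $((\widetilde{Sq^0_*})_{n_s})^{s-1}$ is an isomorphism of $\mathbb Z/2[GL_6]$-modules for $s\geq 1$.

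Next, I would invoke Bruner's tables \cite{Bruner} to confirm that $h_1Ph_1 \in {\rm Ext}_{\mathcal A}^{6,6+10}(\mathbb Z/2,\mathbb Z/2)$, $h_2^{2}g_1 = h_4Ph_2 \in {\rm Ext}_{\mathcal A}^{6,6+26}(\mathbb Z/2,\mathbb Z/2)$ and $D_2 \in {\rm Ext}_{\mathcal A}^{6,6+58}(\mathbb Z/2,\mathbb Z/2)$ are all non-zero. Combining with the previous paragraph, the pre-image of each of these non-zero classes under $Tr_6^{\mathcal A}$ is a subset of the trivial domain; hence none of these classes lies in the image, giving the non-detection statement. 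In particular, $Tr_6^{\mathcal A}$ is not an epimorphism in the bidegrees $(6, 6+n_s)$ for $s \in \{0,1,2\}$.

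There is essentially no new obstacle at this stage: all the technical work has been absorbed into Theorem~\ref{dl26} (whose proof required the intricate weight-vector decomposition of ${\rm Ker}((\widetilde{Sq^0_*})_{26})$ into the four summands indexed by $(4,3,2,1)$, $(4,3,4)$, $(4,5,1,1)$, $(4,5,3)$, together with the verification that each $GL_6$-invariant subspace is trivial) and into Bruner's cohomological computations. The hardest part of the argument — the dimension count $\dim QP_{26}^{\otimes 6} = 9765$ and the vanishing of $[{\rm Ker}((\widetilde{Sq^0_*})_{26})]^{GL_6}$ — has already been done upstream, so the corollary itself follows by pure bookkeeping.
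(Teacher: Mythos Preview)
Your proposal is correct and follows essentially the same approach as the paper: combine Theorem~\ref{dl26} (triviality of the domain for all $s\geq 0$) with Bruner's tables \cite{Bruner} (non-triviality of $h_1Ph_1$, $h_4Ph_2=h_2^{2}g_1$, and $D_2$ in the relevant bidegrees) to conclude that these elements cannot lie in the image of $Tr_6^{\mathcal A}$. The paper states this deduction in one sentence immediately before the corollary, and your additional remarks on the Kameko iteration and the explicit values $n_0=10$, $n_1=26$, $n_2=58$ are correct elaborations already absorbed into the proof of Theorem~\ref{dl26}.
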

It should be noted that this statement for the element $h_1Ph_1$ has also been proved by Ch\ohorn n, and H\`a \cite{C.H1, C.H2} using another method. In addition, following Bruner \cite{Bruner}, ${\rm Ext}_{\mathcal A}^{6, 6+n_3}(\mathbb Z/2, \mathbb Z/2) =0.$ So, the following is an immediate consequence of the these results and Theorem \ref{dl26}.

\begin{corl}\label{hqpl2}
The sixth transfer $Tr_6^{\mathcal A}: \mathbb Z/2 \otimes_{GL_6} {\rm Ann}_{\overline{\mathcal A}}[P_{n_3}^{\otimes 6}]^{*}\to {\rm Ext}_{\mathcal A}^{6, 6+n_3}(\mathbb Z/2, \mathbb Z/2)$ is an isomorphism.
\end{corl}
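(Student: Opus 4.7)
The plan is to combine two ingredients already in hand so that both the source and target of the transfer vanish, whence the map is vacuously an isomorphism.

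First, I would invoke Theorem~\ref{dl26}, which shows that $\mathbb Z/2 \otimes_{GL_6} {\rm Ann}_{\overline{\mathcal A}}[P_{n_s}^{\otimes 6}]^{*} = 0$ for every non-negative integer $s$; specializing to $s = 3$ (so $n_3 = 6(2^{3}-1) + 10\cdot 2^{3} = 122$) gives that the domain of
\[
Tr_6^{\mathcal A}: \mathbb Z/2 \otimes_{GL_6} {\rm Ann}_{\overline{\mathcal A}}[P_{n_3}^{\otimes 6}]^{*} \longrightarrow {\rm Ext}_{\mathcal A}^{6, 6+n_3}(\mathbb Z/2, \mathbb Z/2)
\]
is the zero space.

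Second, I would quote Bruner's machine computations \cite{Bruner} in the bidegree $(6, 6+122)$. Those tables show ${\rm Ext}_{\mathcal A}^{6, 6+n_3}(\mathbb Z/2, \mathbb Z/2) = 0$; note that this is precisely why $s = 3$ is separated out from the cases $s = 0,1,2$ handled by Corollary~\ref{hqpl1} (there the codomain contained nonzero classes $h_1Ph_1$, $h_4Ph_2$, $D_2$ which lay outside the image and prevented surjectivity). For $s = 3$ no such obstruction exists.

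Since both the source and target are the trivial $\mathbb Z/2$-vector space, the homomorphism $Tr_6^{\mathcal A}$ must be the zero map between zero spaces, and this is (trivially) an isomorphism. There is no real obstacle to overcome: the heavy lifting has already been done in establishing Theorem~\ref{dl26}, namely the determination of $\dim QP_{26}^{\otimes 6} = 9765$, the weight-vector decomposition of ${\rm Ker}((\widetilde{Sq^0_*})_{26})$, and the vanishing of each $GL_6$-invariant summand; given these, the corollary requires only the citation of Bruner's Ext table to conclude.
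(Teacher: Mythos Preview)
Your proposal is correct and follows essentially the same approach as the paper: the paper likewise derives the corollary as an immediate consequence of Theorem~\ref{dl26} (triviality of the domain for all $s$) together with Bruner's computation that ${\rm Ext}_{\mathcal A}^{6, 6+n_3}(\mathbb Z/2, \mathbb Z/2) = 0$, concluding that $Tr_6^{\mathcal A}$ is a trivial isomorphism in this bidegree.
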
 

\subsection{The behavior of $Tr_7^{\mathcal A},$ and $Tr_8^{\mathcal A}$  in internal degrees $\leq 15$}\label{pl3}

This section is devoted to investigate the behavior of the Singer transfer of ranks 7 and 8 in internal degrees $\leq 15.$ First of all, we have the following remark.

\begin{rema}
The calculations by Moetele, and Mothebe \cite{MM} showed that $\dim QP_{13}^{\otimes 7} = 5406,$ and $\dim QP_{13}^{\otimes 8} = 18920.$ However, these results are not true. Of course, as shown above (see Remark \ref{nxM}), their calculations are incorrect for the dimension of $QP_{13}^{\otimes 6},$ and therefore, the results for the dimensions of $QP_{13}^{\otimes 7},$ and $QP_{13}^{\otimes 8}$ are, too. We shall prove that $QP_{13}^{\otimes 7},$ and $QP_{13}^{\otimes 8}$ have dimensions $5334,$ and $18592,$ respectively. Indeed, we remark that since the Kameko homomorphism $(\widetilde {Sq^0_*})_{13}: QP_{13}^{\otimes 7}\to QP_{3}^{\otimes 7}$ is an epimorphism of $\mathbb Z/2$-vector spaces, $QP_{13}^{\otimes 7}\cong {\rm Ker}((\widetilde {Sq^0_*})_{13})\bigoplus QP_{3}^{\otimes 7}.$ Since $QP_{3}^{\otimes 7} = (QP_{3}^{\otimes 7})^{0},$ $\dim QP_{3}^{\otimes 7} = \binom{7}{1} + \binom{7}{2}+ \binom{7}{3}  = 63.$ We now determine the kernel of $(\widetilde {Sq^0_*})_{13}.$ We see that if $t$ is an admissible monomial such that $[t]$ belongs to ${\rm Ker}((\widetilde {Sq^0_*})_{13}),$ then the weight vector $\omega(t)\in \{(3,3,1),\, (3, 5),\, (5, 2, 1),\, (5, 4)\}.$ This can be explained as follows: It is straightforward to see that $t_1^{7}t_2^{3}t_3^{3}\in P^{\otimes 7}_{13}$ is the minimal spike monomial, and $\omega(t_1^{7}t_2^{3}t_3^{3}) = (3,3,1).$ So, since $[t]\in {\rm Ker}((\widetilde {Sq^0_*})_{13}),\, [t]\neq [0],$ and $\deg(t)$ is odd, by Singer's criterion on hit polynomials, either $\omega_1(t) = 3$ or $\omega_1(t) = 5.$ Then, either $t = t_it_jt_ky^{2},\, 1\leq i<j<k\leq 7,\, y\in P_{5}^{\otimes 7}$ or $t = t_it_jt_kt_lt_my^{2},\, 1\leq i<j<k<l<m\leq 7,\, y\in P_{4}^{\otimes 7}.$ Since $t$ is admissible, due to Kameko's criterion on inadmissible monomials, $y$ is admissible. Since $QP_{4}^{\otimes 7} = (QP_{4}^{\otimes 7})^{0},$ and $QP_{5}^{\otimes 7} = (QP_{5}^{\otimes 7})^{0},$ one gets 
$$\begin{array}{ll}
\medskip
\dim QP_{4}^{\otimes 7} &= 2\times\binom{7}{2} + 2\times\binom{7}{3} + \binom{7}{4} = 147,\\
\dim QP_{5}^{\otimes 7} &= 3\times\binom{7}{3} + 3\times\binom{7}{4}+ \binom{7}{5} = 231.
\end{array}$$ Furthermore, it is straightforward to check that 
$$ \begin{array}{ll}
\medskip
QP_{4}^{\otimes 7}&\cong (QP_{4}^{\otimes 7})^{0}(2, 1)\bigoplus (QP_{4}^{\otimes 7})^{0}(4, 0),\\
QP_{5}^{\otimes 7}&\cong (QP_{5}^{\otimes 7})^{0}(3, 1)\bigoplus (QP_{5}^{\otimes 7})^{0}(5, 0),
\end{array}$$
where 
$$ \begin{array}{ll}
\medskip
 &\dim (QP_{4}^{\otimes 7})^{0}(2, 1) = 112,\ \dim (QP_{4}^{\otimes 7})^{0}(4, 0) = 35,\\
&\dim (QP_{5}^{\otimes 7})^{0}(3, 1) = 210,\ \dim (QP_{5}^{\otimes 7})^{0}(5, 0) = 21.
\end{array}$$
 Thus, we deduce that the weight vector of $t$ is one of the following sequences:  $$ (3,3,1),\ (3, 5),\ (5, 2, 1),\ (5, 4),$$ which yields that ${\rm Ker}((\widetilde {Sq^0_*})_{13}) \cong (QP_{13}^{\otimes 7})^{0}\bigoplus \mathbb V,$ where $$\mathbb V = (QP_{13}^{\otimes 7})^{ > 0}(3,3,1)\bigoplus (QP_{13}^{\otimes 7})^{ > 0}(3,5)\bigoplus (QP_{13}^{\otimes 7})^{ > 0}(5,2,1)\bigoplus (QP_{13}^{\otimes 7})^{ > 0}(5,4).$$
Now, basing the calculations of $(QP_{13}^{\otimes 6})^{>0}$ above, we have $$\dim (QP_{13}^{\otimes 6})^{>0} = \sum_{1\leq i\leq 4}\dim (QP_{13}^{\otimes 6})^{> 0}(\overline{\omega}_{(i)})= 60+15 + 144+40 = 259.$$
On the other hand, by Peterson \cite{Peterson}, Kameko \cite{Kameko}, the present author \cite{Phuc4}, Sum \cite{Sum1, Sum2}, we see that 
$$ \dim  (QP_{13}^{\otimes 3})^{>0} = 3,\ \dim (QP_{13}^{\otimes 4})^{>0} = 23,\ \ \dim (QP_{13}^{\otimes 5})^{>0} = 105.$$
Combining these data, one gets
$$  \dim (QP_{13}^{\otimes 7})^{0} = \sum_{1\leq  k\leq 6}\binom{7}{k}\dim (QP_{13}^{\otimes\, k})^{>0} =3\times\binom{7}{3} + 23\times \binom{7}{4} + 105\times \binom{7}{5} + 259\times\binom{7}{6} = 4928,$$
and $\dim \mathbb V = 343,$ where
$$ \begin{array}{ll}
\medskip
&\dim (QP_{13}^{\otimes 7})^{ > 0}(3,3,1) = 21,\ \ \dim (QP_{13}^{\otimes 7})^{ > 0}(3,5) = 28,\\
&\dim (QP_{13}^{\otimes 7})^{ > 0}(5,2,1) = 168,\ \ \dim (QP_{13}^{\otimes 7})^{ > 0}(5,4) = 126.
\end{array}$$ 
Of course, the dimension of $\mathbb V$ is obtained by direct calculations using Kameko's criterion on inadmissible monomials. Therefore, we may assert that $$\dim QP_{13}^{\otimes 7} = \dim {\rm Ker}((\widetilde {Sq^0_*})_{13}) +\dim QP_{3}^{\otimes 7} = 4928+343+63= 5334.$$
Similarly, we get
$$ \begin{array}{ll}
\medskip
 \dim QP_{13}^{\otimes 8} &= \sum_{1\leq  k\leq 7}\binom{8}{k}\dim (QP_{13}^{\otimes\, k})^{>0} + \dim (QP_{13}^{\otimes 8})^{>0}\\
\medskip
&= 3\times\binom{8}{3} + 23\binom{8}{4} + 105\times \binom{8}{5} + 259\times\binom{8}{6} + 406\times \binom{8}{7} + 434 = 18592.
\end{array}$$
\end{rema}

Now, the following theorem is obtained by using the above results and direct calculations.
\begin{thm}\label{dl78}
Let $n$ be a positive integer with $n\leq 15.$ Then, the dimensions of $QP^{\otimes 7}_{n}$ and $QP^{\otimes 8}_{n}$ are given by the following table:

\centerline{
\scalebox{0.9}{
\begin{tabular}{c|ccccccccccccccccc}
$n$  &$1$ & $2$ & $3$ & $4$ & $5$ & $6$ & $7$ &$8$ & $9$ & $10$ & $11$ & $12$ & $13$ & $14$ & $15$ \cr
\hline
\ $\dim QP_{n}^{\otimes 7}$ & $7$ & $21$ & $63$  & $147$ & $231$ & $427$ & $729$ &$1238$ & $1785$ & $2792$ & $3900$  & $3983$ & $5334$ & $7091$& $9472$  \cr
\hline
\ $\dim QP_{n}^{\otimes 8}$ & $8$ & $28$ & $92$ &$238$ &$434$ & $868$ & $1598$ & $2863$ & $4515$ & $7412$ & $11151$  & $13209$ & $18592$ & $25872$ & $35723$  \cr
\end{tabular}}
}
\end{thm}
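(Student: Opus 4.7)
The plan is to compute $\dim QP_{n}^{\otimes 7}$ and $\dim QP_{n}^{\otimes 8}$ for $1\leq n\leq 15$ by iteratively applying the direct summand decomposition
\[
QP_n^{\otimes h} \cong (QP_n^{\otimes h})^{0} \bigoplus (QP_n^{\otimes h})^{>0},
\]
together with the combinatorial identity $\dim (QP_{n}^{\otimes h})^{0} = \sum_{1\leq k\leq h-1}\binom{h}{k}\dim (QP_{n}^{\otimes\, k})^{>0}$ (see \cite{MKR}), which reduces the computation of $(QP_n^{\otimes h})^{0}$ for $h=7,8$ to known data for ranks $k\leq 6$. First I would tabulate $\dim (QP_{n}^{\otimes k})^{>0}$ for $1\leq k\leq 6$ and $1\leq n\leq 15$, drawing on the works of Peterson \cite{Peterson}, Kameko \cite{Kameko}, Sum \cite{Sum1, Sum2, Sum}, the present writer \cite{Phuc4, Phuc5, Phuc6}, Tin \cite{Tin, Tin3}, together with the corrected values of $\dim (QP_{13}^{\otimes 6})^{>0}$ given in Remark \ref{nxM}. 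Substituting these into the identity yields $\dim (QP_{n}^{\otimes 7})^{0}$ and $\dim (QP_{n}^{\otimes 8})^{0}$ for every $n\leq 15$.

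Next, I would compute $\dim (QP_{n}^{\otimes h})^{>0}$ for $h=7,8$ by a weight-vector analysis. For each $n$, the minimal spike $z_n\in P^{\otimes h}_n$ provides an upper bound on the weight vectors: Singer's criterion on hit polynomials forces any admissible $t$ with $[t]\neq [0]$ to satisfy $\omega(t)\leq \omega(z_n)$ in the left lexicographic order and $\omega_1(t)\equiv n\pmod 2$. Since any $t$ with $\omega_1(t)=h$ satisfies $t = t_1\cdots t_h\underline{t}^{2}$ with $\underline{t}\in P^{\otimes h}_{(n-h)/2}$ admissible (by Kameko's criterion on inadmissible monomials), the contribution of $\omega_1(t)=h$ equals $\dim QP^{\otimes h}_{(n-h)/2}$, which is determined inductively from cases already handled. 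For the remaining weight vectors one has $t = t_{i_1}\cdots t_{i_r}\underline{t}^{2}$ with $r<h$, and the $r$ factors can be distributed over the $h$ variables, so one multiplies the count of admissible monomials of a given weight vector in lower rank by an appropriate multinomial coefficient, then subtracts the monomials with some vanishing variable (which appear in $(QP^{\otimes h}_n)^{0}$).

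The key step is thus to enumerate, for each relevant weight vector $\omega$, a basis of $QP_{n}^{\otimes h}(\omega)^{>0}$ by running the admissible/inadmissible test (Kameko's criterion plus direct Cartan-formula reductions). For $n\leq 12$ and $h=7,8$ these enumerations mostly reduce to lifting the corresponding rank-6 calculations already carried out in Subsection \ref{pl2}, together with the analogous rank-5 and rank-4 data from \cite{Sum1,Sum2,Sum,Tin,Tin3}. For $n=13,14,15$ and $h=7,8$ one must be especially careful, since as Remark \ref{nxM} shows, the previous counts of Moetele--Mothebe \cite{MM} contain inadmissible monomials such as $t_1t_2^{2}t_3^{5}t_4^{2}t_5^{2}t_6$, and these mistakes propagate to higher ranks. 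I would therefore verify each candidate admissible monomial by explicit Cartan-formula expansion of the form $t\equiv_{\omega}\sum X$ with $X<t$, and double-check the totals with the MAGMA algebra system \cite{Magma}.

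The main obstacle will be the case $n=13$ (and to a lesser extent $n=14,15$) for $h=8$: the enumeration of admissible monomials in $(QP_{13}^{\otimes 8})^{>0}$ requires checking a large list of candidates of weight vectors of the form $(3,3,1),(3,5),(5,2,1),(5,4)$, precisely those that caused Moetele--Mothebe's error in rank $6$. In particular, showing that $\dim (QP_{13}^{\otimes 8})^{>0}=434$ (so that $\dim QP_{13}^{\otimes 8}=18592$ rather than $18920$) hinges on correctly identifying the nine families of inadmissible monomials coming from the rank-6 correction, together with all their lifts to rank $7$ and $8$. The remaining cases $n\leq 12$ are more routine: they follow the same pattern as the rank-6 calculations of Subsection \ref{pl2}, and the totals can be read off mechanically once the rank-$\leq 6$ data and the combinatorial identity are in place.
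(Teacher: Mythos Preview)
Your proposal is correct and follows essentially the same route as the paper: the paper's ``proof'' is the terse statement that the theorem ``is obtained by using the above results and direct calculations,'' together with the note that $QP_{n}^{\otimes 7}=(QP_{n}^{\otimes 7})^{0}$ for $n\leq 6$, $QP_{n}^{\otimes 8}=(QP_{n}^{\otimes 8})^{0}$ for $n\leq 7$, and that the cases $8\leq n\leq 12$ are already in \cite{MKR}. Your plan---apply the formula $\dim (QP_{n}^{\otimes h})^{0}=\sum_{k}\binom{h}{k}\dim(QP_{n}^{\otimes k})^{>0}$, then enumerate admissible monomials in $(QP_n^{\otimes h})^{>0}$ via weight vectors, Kameko's criterion, and Cartan-formula reductions, with special care at $n=13$ because of the correction in Remark~\ref{nxM}---is exactly what the paper does in the surrounding Remarks, only spelled out more explicitly. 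The numbers you quote (e.g.\ $\dim(QP_{13}^{\otimes 8})^{>0}=434$, giving $18592$) match the paper.

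One slip to fix: you wrote that the minimal spike $z_n$ gives an \emph{upper} bound $\omega(t)\leq\omega(z_n)$. Singer's criterion goes the other way: if $\omega(t)<\omega(z_n)$ then $t$ is hit, so the minimal spike furnishes a \emph{lower} bound $\omega(t)\geq\omega(z_n)$ on admissible classes. This is how the paper uses it (e.g.\ ``$\omega_1(t)\geq 4$'' in the $n=26$ analysis), and your subsequent discussion of $\omega_1(t)$ is consistent with the correct direction, so this is just a misstatement rather than a gap in the argument.
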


It should be noted that $QP_{n}^{\otimes 7} = (QP_{n}^{\otimes 7})^{0}$ for $n\leq 6,$ and $QP_{n}^{\otimes 8} = (QP_{n}^{\otimes 8})^{0}$ for $n\leq 7.$ The theorem has been proved by Mothebe, Kaelo, and Ramatebele \cite{MKR} for the dimension of $QP_{7}^{\otimes 7}$ and of $QP_{n}^{\otimes h}$ for $h = 7,\, 8$ and $8\leq n\leq 12.$ 

\begin{note}
Let $h$ be an even positive integer with $h\in \{6, 8, 10, 12\}.$ Consider $n= 12$ and the kernel ${\rm Ker}((\widetilde {Sq^0_*})_{12}) $ of the Kameko homomorphism $(\widetilde {Sq^0_*})_{12}: QP^{\otimes h}_{12} \longrightarrow QP^{\otimes h}_{\frac{12-h}{2}}.$ For each $0\leq d\leq 3,$ we put
$$ \mathscr C(d, 12):= \big\{t_l^{2^{d}-1}\mathsf{q}_{l}(t)|\, t\in \mathscr {C}^{\otimes h-1}_{13-2^{d}},\, 1\leq l\leq h\big\}\setminus \varphi\big(\mathscr C^{\otimes h}_{\frac{12-h}{2}}\big),$$
where the map $\mathsf{q}_{l}: P^{\otimes h-1}\to P^{\otimes h}$ is a homomorphism of $\mathcal A$-algebras and determined by 
$$ \mathsf{q}_{l}(t_j) = \left\{ \begin{array}{ll}
{t_j}&\text{if }\;1\leq j \leq l-1, \\
t_{j+1}& \text{if}\; l\leq j \leq h-1.
\end{array} \right.$$
According Mothebe, and Uys \cite{Mothebe}, if $f\in \mathscr C(d, 12),$ then $f$ is admissible. Moreover, $(\widetilde {Sq^0_*})_{12}([f]) = [0],$ which implies that $[f]\in {\rm Ker}((\widetilde {Sq^0_*})_{12}).$ So, based upon Subsections \ref{pl2}, \ref{pl3}, and the results in \cite{MKR}, we notice that
\begin{corl}
For $h\in \{6,8, 10, 12\}$, the dimension of ${\rm Ker}((\widetilde {Sq^0_*})_{12})$ is given as follows:
$$ \begin{array}{ll}
\medskip
\dim {\rm Ker}((\widetilde {Sq^0_*})_{12}) &=  \sum_{0\leq d\leq 3}|\mathscr C(d)| - \big(\big|\bigcap_{0\leq d\leq 3}\mathscr C(d)\big| + \big|\varphi\big(\mathscr C^{\otimes h}_{\frac{12-h}{2}}\big)\big|\big)\\
& =\left\{\begin{array}{ll} 
960 &\mbox{if $h = 6$},\\
13181&\mbox{if $h = 8$},\\
100870&\mbox{if $h = 10$},\\
550846&\mbox{if $h = 12$},\\
\end{array}\right.
\end{array}$$
where $\varphi$ is the up Kameko map $P_{\frac{12-h}{2}}^{\otimes h}\longrightarrow P_{12}^{\otimes h},\, t\longmapsto \prod_{1\leq j\leq h}t_jt^{2}.$\\  Moreover, ${\rm Ker}((\widetilde {Sq^0_*})_{12})$ is an $\mathbb Z/2$-vector space with a basis consisting of all the classes represented by the admissible monomials in $\mathscr C(d, 12).$
\end{corl}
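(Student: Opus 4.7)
The plan is to identify $\mathrm{Ker}((\widetilde{Sq^0_*})_{12})$ with the admissible classes in $QP^{\otimes h}_{12}$ lying outside the image of the up-Kameko map $\varphi$, and to present such classes explicitly as lifts of admissibles in rank $h-1$. First, for each $d \in \{0,1,2,3\}$ and each $l \in \{1,\dots,h\}$, I would invoke the Mothebe--Uys transport lemma \cite{Mothebe} to conclude that whenever $t \in \mathscr C^{\otimes h-1}_{13-2^d}$, the monomial $t_l^{2^d-1}\mathsf q_l(t)$ is admissible of degree $12$ in $P^{\otimes h}$. Since each such monomial has at least one exponent ($a_l = 2^d-1 \in \{0,1,3,7\}$) that prevents it from belonging to $\varphi(\mathscr C^{\otimes h}_{(12-h)/2})$ after the explicit exclusion, every class in $\mathscr C(d,12)$ lies in $\mathrm{Ker}((\widetilde{Sq^0_*})_{12})$, as $(\widetilde{Sq^0_*})_{12}$ vanishes on all classes outside $\varphi(\mathscr C^{\otimes h}_{(12-h)/2})$.

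Next, I would establish completeness: every admissible monomial in $P^{\otimes h}_{12}$ either lies in $\varphi(\mathscr C^{\otimes h}_{(12-h)/2})$ or belongs to some $\mathscr C(d,12)$. The key combinatorial observation is that any admissible monomial $\prod_{j=1}^h t_j^{a_j}$ of degree $12$ must contain an exponent $a_j \in \{0,1,3,7\}$ whenever it is \emph{not} of the form $\prod t_j \cdot z^{2}$ with all $a_j$ odd $\geq 1$. For $h \in \{6,8,10,12\}$, this follows from Singer's criterion on hit monomials applied to the minimal spike $t_1^{7}t_2^{3}t_3 t_4$ (weight $(4,2,1)$): the admissibility constraint $\omega_1(t)\geq 4$, combined with $\sum a_j = 12$, forces every admissible exponent pattern outside the $\varphi$-image to contain a value in $\{0,1,3,7\}$, yielding the dichotomy. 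Therefore $\bigcup_{d=0}^{3}\mathscr C(d,12)$ is an admissible-monomial basis of $\mathrm{Ker}((\widetilde{Sq^0_*})_{12})$.

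Finally, apply inclusion--exclusion. The cardinalities $|\mathscr C^{\otimes h-1}_{13-2^d}|$ (equivalently $\dim QP^{\otimes h-1}$ in degrees $12$, $11$, $9$, $5$) are supplied by the works of Sum \cite{Sum1, Sum2, Sum, Sum4}, Mothebe--Kaelo--Ramatebele \cite{MKR}, Tin \cite{Tin, Tin3} and by Theorem~\ref{dl78} of this paper for $h=7,8$; for $h = 10, 12$, these are obtained by iterating the rank-stratification formula $\dim QP^{\otimes h}_n = \sum_{k=1}^{h-1}\binom{h}{k}\dim (QP^{\otimes k}_n)^{>0} + \dim (QP^{\otimes h}_n)^{>0}$. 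A direct count then shows that every pairwise, triple, and quadruple intersection of the sets $\mathscr C(d,12)$ coincides with the four-fold intersection $\bigcap_d \mathscr C(d,12)$ (the monomials that simultaneously realize exponents in $\{0,1,3,7\}$ at distinct positions), which is why the inclusion--exclusion collapses to the stated compact formula.

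The principal obstacle is the exhaustive verification of the exponent-pattern dichotomy of step two for the higher ranks $h = 10, 12$, where the total number of admissible weight vectors is substantial; I would handle this with the MAGMA computer algebra system \cite{Magma}, cross-checking the final tallies $960$, $13181$, $100870$, $550846$ against the independent identity $\dim \mathrm{Ker}((\widetilde{Sq^0_*})_{12}) = \dim QP^{\otimes h}_{12} - \dim QP^{\otimes h}_{(12-h)/2}$. The case $h=6$, for which Subsection~\ref{pl2} already yields $\dim \mathrm{Ker} = 960$, serves as the consistency benchmark.
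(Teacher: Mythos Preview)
The paper does not supply a proof of this corollary: it is stated inside a \texttt{note} environment as an empirical observation drawn from the explicit computations in Subsections~\ref{pl2} and~\ref{pl3} together with \cite{MKR}. For $h=6$ the paper verifies the claim by a straight dimension count: it knows $\dim QP^{\otimes 6}_{12}$ and $\dim QP^{\otimes 6}_{3}$ from \cite{MKR}, subtracts to obtain $\dim\mathrm{Ker}((\widetilde{Sq^0_*})_{12})=960$, and then checks that the set $B(12)=\bigcup_d\mathscr C(d,12)\setminus\varphi(\mathscr C^{\otimes 6}_3)$ has exactly $960$ elements. No structural argument of the kind you outline is given; the displayed counting identity is simply recorded, not derived.

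Your attempt to supply a genuine proof has two gaps. First, your completeness step needs the \emph{converse} of Mothebe--Uys: you argue that an admissible monomial outside the $\varphi$-image has some exponent $a_l\in\{0,1,3,7\}$, but to place it in $\mathscr C(d,12)$ you must also know that deleting the $l$-th variable leaves an \emph{admissible} monomial in $P^{\otimes h-1}_{13-2^d}$. The Mothebe--Uys lemma only transports admissibility from rank $h-1$ to rank $h$, not backwards, so the exponent dichotomy by itself does not establish that $\bigcup_d\mathscr C(d,12)$ exhausts the kernel. The paper sidesteps this by matching cardinalities against an independently known dimension.

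Second, your inclusion--exclusion collapse is arithmetically inconsistent with the stated formula. If every pairwise, triple and quadruple intersection equalled the common four-fold intersection $I$, then
\[
\bigl|\textstyle\bigcup_{d}\mathscr C(d,12)\bigr|=\sum_d|\mathscr C(d,12)|-\binom{4}{2}|I|+\binom{4}{3}|I|-|I|=\sum_d|\mathscr C(d,12)|-3|I|,
\]
whereas the corollary asserts $\sum_d|\mathscr C(d)|-|I|-|\varphi(\mathscr C^{\otimes h}_{(12-h)/2})|$. These agree only if $2|I|=-|\varphi(\cdots)|$, which is impossible for nonempty sets. So your explanation of why the formula takes its particular shape cannot be correct; the identity in the paper is a numerical coincidence observed a posteriori (and the notation $\mathscr C(d)$ versus $\mathscr C(d,12)$ is already ambiguous there), not a collapsed inclusion--exclusion. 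If you want a rigorous route, follow the paper's implicit strategy: compute $\dim QP^{\otimes h}_{12}-\dim QP^{\otimes h}_{(12-h)/2}$ directly for each $h$ and verify that the admissible monomials in $\bigcup_d\mathscr C(d,12)$ number exactly that many.
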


\end{note}

Applying Theorem \ref{dl78}, we claim that

\begin{thm}\label{dl79}
For $h\in \{7, 8\},$ and let $n$ be a positive integer with $n\leq 15.$ Then, the dimension of the coinvariant $\mathbb Z/2 \otimes_{GL_h} {\rm Ann}_{\overline{\mathcal A}}[P_{n}^{\otimes h}]^{*}$ is determined by
$$ \begin{array}{ll}
\dim \mathbb Z/2 \otimes_{GL_h} {\rm Ann}_{\overline{\mathcal A}}[P_{n}^{\otimes h}]^{*}=\dim [QP_{n}^{\otimes h}]^{GL_h}
&=\left\{\begin{array}{ll}
1 &\mbox{if $n = 15$},\\[1mm]
0 &\mbox{otherwise},
\end{array}\right.
\end{array}$$
Moreover
$$ \begin{array}{ll}
\mathbb Z/2 \otimes_{GL_h} {\rm Ann}_{\overline{\mathcal A}}[P_{n}^{\otimes h}]^{*}
&=\left\{\begin{array}{ll}
\langle [x_1^{(0)}x_2^{(0)}x_3^{(0)}x_4^{(0)}x_5^{(0)}x_6^{(0)}x_7^{(15)}] \rangle &\mbox{if $h = 7,$ $n = 15$},\\[1mm]
\langle [x_1^{(0)}x_2^{(0)}x_3^{(0)}x_4^{(0)}x_5^{(0)}x_6^{(0)}x_7^{(0)}x_8^{(15)}] \rangle &\mbox{if $h = 8,$ $n = 15$}.
\end{array}\right.
\end{array}$$
\end{thm}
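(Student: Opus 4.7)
The plan is to extend the method of Subsection~\ref{pl2}, which handled the rank~6 case, to ranks~7 and~8. Since Theorem~\ref{dl78} already furnishes the admissible monomial bases of $QP^{\otimes h}_n$ for $h\in\{7,8\}$ and $n\leq 15$, the task reduces to computing the $GL_h$-invariants in each case, after which dualization gives $\mathbb{Z}/2\otimes_{GL_h}\mathrm{Ann}_{\overline{\mathcal{A}}}[P^{\otimes h}_n]^*\cong [QP^{\otimes h}_n]^{GL_h}$.

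First, for $1\leq n\leq h-1$ one has $QP^{\otimes h}_n=(QP^{\otimes h}_n)^0$, and the decomposition $\dim(QP^{\otimes h}_n)^0=\sum_{1\leq k\leq h-1}\binom{h}{k}\dim(QP^{\otimes k}_n)^{>0}$ is $S_h$-compatible with permutation of variables. Writing a generic $S_h$-invariant as an orbit sum of admissible monomials and imposing the $\theta_j$-relations for $1\leq j<h$ (exactly as in the rank~6 calculation carried out at the start of Subsection~\ref{pl2}) already forces triviality, and the remaining $\theta_h$-relation then certifies $[QP^{\otimes h}_n]^{GL_h}=0$.

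For $7\leq n\leq 14$ I would proceed by weight-vector decomposition. For each such $n$, Singer's criterion on hit monomials together with the form of the minimal spike in $P^{\otimes h}_n$ pins the admissible weight vectors to a very small list (mirroring the lists $\widetilde\omega_{(i)}$, $\overline\omega_{(i)}$ used in the rank~6 case). This yields an $S_h$-isomorphism $QP^{\otimes h}_n\cong\bigoplus_\omega QP^{\otimes h}_n(\omega)$. For each $\omega$, a candidate $S_h$-invariant $f\equiv_\omega\sum\gamma_j\mathrm{adm}_j$ must satisfy $\theta_i(f)+f\equiv_\omega 0$ for $1\leq i<h$, which produces a system of linear relations on the $\gamma_j$'s; computing each $\theta_i(\mathrm{adm}_j)$ in the admissible basis of $P^{\otimes h}_n(\omega)$ via Cartan's formula and the admissibility straightening reduces this to the solvability of an explicit matrix equation. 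The $\theta_h$-relation then kills the remaining one-parameter families, giving $[QP^{\otimes h}_n(\omega)]^{GL_h}=0$, and summing over $\omega$ yields $[QP^{\otimes h}_n]^{GL_h}=0$.

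For $n=15$, since $n-h$ is even, the Kameko squaring operation $(\widetilde{Sq^0_*})_{15}\colon QP^{\otimes h}_{15}\to QP^{\otimes h}_{(15-h)/2}$ is an epimorphism of $\mathbb{Z}/2[GL_h]$-modules, so
\[
\dim[QP^{\otimes h}_{15}]^{GL_h}\leq\dim[\ker((\widetilde{Sq^0_*})_{15})]^{GL_h}+\dim[QP^{\otimes h}_{(15-h)/2}]^{GL_h}.
\]
The second summand vanishes by the case $n<15$ treated above. A weight-vector analysis of $\ker((\widetilde{Sq^0_*})_{15})$, identical in spirit to what was done for rank~6 in degree~15 (where the invariant was generated by the spike class $\zeta_{15}=x_1^{(0)}\cdots x_5^{(0)}x_6^{(15)}$), will show that the only potential invariant comes from the spike component and is represented by $x_1^{(0)}\cdots x_{h-1}^{(0)}x_h^{(15)}$. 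This candidate is visibly $\overline{\mathcal{A}}$-annihilated (the unstable condition leaves only $Sq^{2^i}$ for $0\leq i\leq 3$ to check, each of which vanishes on $x_h^{(15)}$ by the divided-power Steenrod action formula), and its image $\lambda_0^{h-1}\lambda_{15}=\psi_h(x_1^{(0)}\cdots x_{h-1}^{(0)}x_h^{(15)})$ under the lambda representation of $Tr_h^{\mathcal{A}}$ represents $h_0^{h-1}h_4$, giving a nonzero class.

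The chief obstacle will be the sheer size of the admissible bases when $h=8$ and $n$ is close to $15$ (with $\dim QP^{\otimes 8}_{14}=25872$ and $\dim QP^{\otimes 8}_{15}=35723$); controlling the $\theta_i$-action on such large bases by hand is impractical. The plan is to exploit the $S_h$-orbit decomposition within each weight component to reduce the unknowns $\gamma_j$ to one per orbit, then set up and solve the resulting small linear systems, with the MAGMA verification already mentioned in the Appendix serving as an independent cross-check on the final invariant counts.
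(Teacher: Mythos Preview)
Your overall strategy matches the paper's: compute $[QP_n^{\otimes h}]^{GL_h}$ from the admissible bases of Theorem~\ref{dl78}, split by weight vectors, impose the $\theta_i$-relations for $S_h$-invariance and then the $\theta_h$-relation, and invoke Kameko's map to cut the problem down when it is available. The paper in fact sketches only the single case $h=7$, $n=11$; there it \emph{begins} with the Kameko epimorphism $(\widetilde{Sq^0_*})_{11}\colon QP_{11}^{\otimes 7}\to QP_2^{\otimes 7}$ (so Kameko is used in your ``middle range'' too, not only at $n=15$), then decomposes the kernel by the weight vectors $(3,2,1),(3,4),(5,1,1),(5,3)$ and checks that the $GL_7$-invariants vanish on each piece.

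There is one concrete error in your $n=15$ step. You assert that $n-h$ is even so that Kameko applies, but for $h=8$ one has $15-8=7$, which is odd; no monomial of degree $15$ in eight variables has all exponents odd, the Kameko map is identically zero, and $\ker((\widetilde{Sq^0_*})_{15})=QP_{15}^{\otimes 8}$ itself. Your displayed inequality then gives no reduction whatsoever, and the case $h=8$, $n=15$ must be handled directly by the weight-vector method you already outline for $7\leq n\leq 14$. Your parenthetical appeal to the rank~$6$, degree~$15$ computation has the same defect: $15-6=9$ is odd, so that case in Subsection~\ref{pl2} was not done via a nontrivial Kameko reduction either. Once you repair the parity oversight and treat $(h,n)=(8,15)$ by direct weight decomposition, the argument coincides with the paper's.
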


We shall give the sketch of proof of the case $h = 7,\, n = 11.$ The proofs of other cases use similar idea. First of all, we observe that since the homomorphism $(\widetilde {Sq^0_*})_{11}: QP_{11}^{\otimes 7}\to QP_{2}^{\otimes 7}$ is an epimorphism of $\mathbb Z/2[GL_7]$-modules, one has an isomorphism $QP_{11}^{\otimes 7}\cong {\rm Ker}((\widetilde {Sq^0_*})_{11})\bigoplus QP_{2}^{\otimes 7}.$ By Theorem \ref{dl78}, $QP_{2}^{\otimes 7}$ is $21$-dimensional with the basis $\{[t_it_j]|\, 1\leq i < j\leq 7\}.$ Denote by ${\rm adm}_k$ the admissible monomials of the form $t_it_j,\, 1\leq i<j\leq 7.$ If $g\in [QP_{2}^{\otimes 7}]^{GL_7},$ then $g\equiv \sum_{1\leq k\leq 21}\beta_k{\rm adm}_k,$ in which $\beta_k\in \mathbb Z/2.$ Applying the linear maps $\theta_i$ and the relations $\theta_i(g)\equiv g$ for $1\leq i\leq 7,$ one gets $\beta_k =  0,\, 1\leq k\leq 21,$ which implies that $[QP_{2}^{\otimes 7}]^{GL_7} = 0.$ Thus, $$ \dim [QP_{11}^{\otimes 7}]^{GL_7}\leq \dim [{\rm Ker}((\widetilde {Sq^0_*})_{11})]^{GL_7}.$$
We now show that
$${\rm Ker}((\widetilde {Sq^0_*})_{11})\cong QP_{11}^{\otimes 7}(3, 2,1)\bigoplus QP_{11}^{\otimes 7}(3, 4)\bigoplus QP_{11}^{\otimes 7}(5, 1,1)\bigoplus QP_{11}^{\otimes 7}(5, 3).$$
Indeed, suppose that $t$ is an admissible monomial in $P_{11}^{\otimes 7}$ such that $[t]\in {\rm Ker}((\widetilde {Sq^0_*})_{11}).$ Notice that $t_1^{7}t_2^{3}t_3\in P_{11}^{\otimes 7}$ is the minimal spike monomial with $\omega(t_1^{7}t_2^{3}t_3) = (3,2,1),$ and $\deg(t)$ is odd. So, since $[t]\in {\rm Ker}((\widetilde {Sq^0_*})_{11}),$ and $[t]\neq [0],$ one has that either $\omega_1(t) = 3$ or $\omega_1(t) = 5.$ This means that either $t = t_it_jt_ku^{2},\, 1\leq i<j<k\leq 7,\, u\in P_{4}^{\otimes 7}$ or  $t = t_it_jt_kt_lt_mu^{2},\, 1\leq i<j<k<l<m\leq 7,\, u\in P_{3}^{\otimes 7}.$ By Kameko's criterion on inadmissible monomials, we must have that $u$ is an admissible monomial. So, as shown above, we see that if $u\in P_{3}^{\otimes 7},$ then $\omega(u)\in \{(1,1),\, (5, 0)\},$ and that if $u\in P_{4}^{\otimes 7},$ then $\omega(u)\in \{(2,1),\, (4, 0)\}.$ Therefore, one must have that $\omega(t)\in \{(3,2,1),\, (3,4),\, (5,1,1),\, (5,3)\}.$ This leads to the above assert.  

We have shown that 
$$ \begin{array}{ll}
\medskip
&\dim (QP_{11}^{\otimes 6})^{> 0}(3,2,1) = 16,\ \ \dim (QP_{11}^{\otimes 6})^{> 0}(3,4) = 24,\\
& \dim (QP_{11}^{\otimes 6})^{>0}(5,1,1) = 30,\ \ \dim (QP_{11}^{\otimes 6})^{>0}(5,3) = 45,
\end{array}$$
which implies $\dim (QP_{11}^{\otimes 6})^{> 0} = 16 + 24 + 30 + 45 = 115.$ On the other hand, by Peterson \cite{Peterson}, $\dim (QP_{11}^{\otimes h})^{> 0} = 0$ for $h = 1,\, 2,$ by Kameko \cite{Kameko}, $\dim (QP_{11}^{\otimes 3})^{> 0} = 8,$ by Sum \cite{Sum1, Sum2}, $\dim (QP_{11}^{\otimes 4})^{> 0} = 32,$ and by Tin \cite{Tin}, $\dim (QP_{11}^{\otimes 5})^{> 0} = 75,$ and therefore 
$$ \dim (QP_{11}^{\otimes 7})^{0} = 8\times \binom{7}{3} + 32\times \binom{7}{4} + 75\times \binom{7}{5} +  115\times \binom{7}{6} = 3780.$$
Then, by direct computations, it may be concluded that $QP_{11}^{\otimes 7}(3, 2,1) = (QP_{11}^{\otimes 7})^{0}(3, 2,1),$ and 
$$ \begin{array}{ll}
\medskip
&\dim QP_{11}^{\otimes 7}(3, 2,1) = 2352,\ \dim QP_{11}^{\otimes 7}(3, 4) = 392,\\
& \dim QP_{11}^{\otimes 7}(5, 1,1) = 540,\ \dim QP_{11}^{\otimes 7}(5,3) = 595.
\end{array}$$
We also note that $${\rm Ker}((\widetilde {Sq^0_*})_{11})\cong (QP_{11}^{\otimes 7})^{0}\bigoplus (QP_{11}^{\otimes 7})^{>0}(3, 4)\bigoplus (QP_{11}^{\otimes 7})^{>0}(5, 1,1)\bigoplus (QP_{11}^{\otimes 7})^{>0}(5, 3),$$
where $$ \dim (QP_{11}^{\otimes 7})^{>0}(3, 4) = 14,\ \dim (QP_{11}^{\otimes 7})^{>0}(5, 1,1) = 15,\ \dim (QP_{11}^{\otimes 7})^{>0}(5,3) = 70.$$
Thus, ${\rm Ker}((\widetilde {Sq^0_*})_{11})$ has dimension $3879.$ From these results, it is not too difficult to verify that $[{\rm Ker}((\widetilde {Sq^0_*})_{11})]^{GL_7}$ is zero, and so is $\mathbb Z/2 \otimes_{GL_7} {\rm Ann}_{\overline{\mathcal A}}[P_{11}^{\otimes 7}]^{*}$ since $\dim (\mathbb Z/2 \otimes_{GL_7} {\rm Ann}_{\overline{\mathcal A}}[P_{11}^{\otimes 7}]^{*}) = \dim [QP_{11}^{\otimes 7}]^{GL_7}\leq \dim [{\rm Ker}((\widetilde {Sq^0_*})_{11})]^{GL_7}.$ 

\medskip

Now, by Theorem \ref{dl79}, it can be easily seen that 
$$ \begin{array}{ll}
 Tr_7^{\mathcal A}([x_1^{(0)}x_2^{(0)}x_3^{(0)}x_4^{(0)}x_5^{(0)}x_6^{(0)}x_7^{(15)}]) &= [\psi_7(x_1^{(0)}x_2^{(0)}x_3^{(0)}x_4^{(0)}x_5^{(0)}x_6^{(0)}x_7^{(15)})]\\
\medskip
& = [\lambda_0^{6}\lambda_{15}] = h_0^{6}h_4\in {\rm Ext}_{\mathcal A}^{7, 7+15}(\mathbb Z/2, \mathbb Z/2),\\
 Tr_8^{\mathcal A}([x_1^{(0)}x_2^{(0)}x_3^{(0)}x_4^{(0)}x_5^{(0)}x_6^{(0)}x_7^{(0)}x_8^{(15)}]) &= [\psi_8(x_1^{(0)}x_2^{(0)}x_3^{(0)}x_4^{(0)}x_5^{(0)}x_6^{(0)}x_7^{(0)}x_8^{(15)})] \\
&= [\lambda_0^{7}\lambda_{15}] = h_0^{7}h_4\in {\rm Ext}_{\mathcal A}^{8, 8+15}(\mathbb Z/2, \mathbb Z/2).
\end{array}$$
On the other hand, according to Bruner \cite{Bruner}, for each $1\leq n\leq 15,$ one has that ${\rm Ext}_{\mathcal A}^{7, 7+n}(\mathbb Z/2, \mathbb Z/2) = 0$ for $n\not\in \{11, 15\},$ and ${\rm Ext}_{\mathcal A}^{8, 8+n}(\mathbb Z/2, \mathbb Z/2) = 0$ for $n\neq 15,$ and that the elements $h_0^{2}Ph_2 = h_1^{2}Ph_1,$ $h_0^{6}h_4,$ $h_0^{7}h_4$ are non-zero in ${\rm Ext}_{\mathcal A}^{7, 7+11}(\mathbb Z/2, \mathbb Z/2),$ ${\rm Ext}_{\mathcal A}^{7, 7+15}(\mathbb Z/2, \mathbb Z/2),$ ${\rm Ext}_{\mathcal A}^{8, 8+15}(\mathbb Z/2, \mathbb Z/2),$ respectively. As a consequence, we immediately obtain the following.
\begin{corl}
For $h\in \{7, 8\},$ and $1\leq n\leq 15,$ the cohomological transfer
$$Tr_h^{\mathcal A}: \mathbb Z/2 \otimes_{GL_h} {\rm Ann}_{\overline{\mathcal A}}[P_{n}^{\otimes h}]^{*}\to {\rm Ext}_{\mathcal A}^{h, h+n}(\mathbb Z/2, \mathbb Z/2)$$
is not an epimorphism if $h = 7,\, n = 11,$ and is an isomorphism otherwise.
\end{corl}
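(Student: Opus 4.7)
The plan is to reduce the corollary to three ingredients already in place: the computation of the domain dimensions provided by Theorem \ref{dl79}, Bruner's computation \cite{Bruner} of the codomain ${\rm Ext}_{\mathcal A}^{h, h+n}(\mathbb Z/2, \mathbb Z/2)$ for $h\in\{7,8\}$ in the stems under consideration, and the representation of $Tr_h^{\mathcal A}$ at the $E_1$-level of the Adams spectral sequence via the map $\psi_h$ into the lambda algebra $\Lambda$. Since both the source and target are trivial in almost every bidegree $(h,h+n)$ with $n\leq 15$, the corollary splits into a handful of cases that can be dispatched one at a time.

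First I would dispose of the cases in which both spaces are zero. By Theorem \ref{dl79}, $\mathbb Z/2\otimes_{GL_h}{\rm Ann}_{\overline{\mathcal A}}[P_{n}^{\otimes h}]^{*}=0$ for $1\leq n\leq 14$ (and for all $n\neq 15$ when $h=8$), while Bruner's tables give ${\rm Ext}_{\mathcal A}^{7,7+n}(\mathbb Z/2,\mathbb Z/2)=0$ for $n\notin\{11,15\}$ and ${\rm Ext}_{\mathcal A}^{8,8+n}(\mathbb Z/2,\mathbb Z/2)=0$ for $n\neq 15$. In each such bidegree the map $Tr_h^{\mathcal A}$ is the zero homomorphism between trivial groups, hence a (vacuous) isomorphism.

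Next I would handle the one truly interesting negative case, $h=7$, $n=11$. Here Theorem \ref{dl79} gives $\mathbb Z/2\otimes_{GL_7}{\rm Ann}_{\overline{\mathcal A}}[P_{11}^{\otimes 7}]^{*}=0$, whereas ${\rm Ext}_{\mathcal A}^{7,7+11}(\mathbb Z/2,\mathbb Z/2)=\langle h_0^{2}Ph_2\rangle=\langle h_1^{2}Ph_1\rangle$ is one-dimensional and non-zero by \cite{Bruner}. Thus $Tr_7^{\mathcal A}$ factors through the zero space but has non-zero target, so it cannot be surjective and the element $h_0^{2}Ph_2$ is not detected.

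Finally, for the two remaining positive cases $h=7,n=15$ and $h=8,n=15$, both source and target are one-dimensional; it suffices to show the image of the indicated generator is non-zero. For the generator $\zeta := x_1^{(0)}\cdots x_{h-1}^{(0)}x_h^{(15)}\in [P_{15}^{\otimes h}]^{*}$ supplied by Theorem \ref{dl79}, one computes directly from the definition of $\psi_h$ that
\[
\psi_h(\zeta) = \lambda_0^{h-1}\lambda_{15}\in \Lambda^{h,15},
\]
which is a cycle in $\Lambda$ representing $h_0^{h-1}h_4\in {\rm Ext}_{\mathcal A}^{h,h+15}(\mathbb Z/2,\mathbb Z/2)$. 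According to \cite{Bruner}, $h_0^{6}h_4$ and $h_0^{7}h_4$ are non-zero, so $Tr_h^{\mathcal A}([\zeta])\neq 0$ and the map is forced to be an isomorphism between two one-dimensional spaces.

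The only step that carries any technical weight is the identification $\psi_h(\zeta)=\lambda_0^{h-1}\lambda_{15}$ and the verification that this cycle represents $h_0^{h-1}h_4$; however, this is an immediate unwinding of the recursive formula for $\psi_h$ on a product involving only the elementary divided powers $x_j^{(0)}$ and a single $x_h^{(15)}$, coupled with the classical identification $[\lambda_{2^{i}-1}]=h_i$. Thus no new obstacle arises beyond organizing the bookkeeping, and the corollary follows.
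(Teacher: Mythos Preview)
Your proposal is correct and follows essentially the same approach as the paper: both combine Theorem \ref{dl79} for the domain, Bruner's tables \cite{Bruner} for the codomain, and the $\psi_h$-representation to identify $Tr_h^{\mathcal A}([\zeta])=[\lambda_0^{h-1}\lambda_{15}]=h_0^{h-1}h_4$ in the $n=15$ cases, then read off the $h=7,\,n=11$ failure from the mismatch $0\to \langle h_0^{2}Ph_2\rangle$.
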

Noting that this statement for $h = 7,\, n = 11$ has also been shown by Ch\ohorn n, and H\`a \cite{C.H1, C.H2} using other techniques.

\end{document}